\documentclass{amsart}
\renewcommand\baselinestretch{1.2}

\usepackage{helvet}
\usepackage{amsmath, amsfonts, amssymb} \usepackage[english]{babel}
\usepackage[cmyk]{xcolor}      \usepackage{graphicx}   \usepackage{tikz}
\usetikzlibrary{arrows.meta,calc,positioning}
\usepackage{url}        \usepackage{bm}         \usepackage{multirow}
\usepackage{array}
\usepackage{booktabs}
\usepackage{enumitem}
\usepackage{epsfig}
\usepackage{algorithm}
\usepackage{algorithmic}
\usepackage{latexsym, mathrsfs, subfigure, stmaryrd}

\usepackage[numbers,sort&compress]{natbib}
\usepackage[colorlinks=true,linkcolor=blue!55!black,citecolor=blue!55!black,urlcolor=blue!55!black]{hyperref} 

\usepackage{appendix}

\theoremstyle{plain}
\newtheorem{theorem}{Theorem}[section]

\newtheorem{lemma}[theorem]{Lemma}
\newtheorem{proposition}[theorem]{Proposition}

\newtheorem*{remark}{Remark}
\newtheorem{definition}[theorem]{Definition}
\newtheorem{assumption}[theorem]{Assumption}

\numberwithin{equation}{section}

\usepackage{geometry}
\newcommand{\red}{\color{red}}

\newcommand\la{\langle}
\newcommand\ra{\rangle}

\newcommand{\mc}[1]{\mathcal{#1}}
\newcommand{\mb}[1]{\mathbb{#1}}
\newcommand{\ms}[1]{\mathscr{#1}}
\newcommand{\mr}[1]{\mathrm{#1}}
\newcommand{\mf}[1]{\mathbf{#1}}

\newcommand{\wh}[1]{\widehat{#1}}

\begin{document}
\title[Spectral Convergence of RFM in Multiple Dimensions]{Spectral Convergence of Random Feature Method in Multiple Dimensions}

\author[P. Ming]{Pingbing Ming}
\author[H. Yu]{Hao Yu}
\address{SKLMS, Institute of Computational Mathematics and Scientific/Engineering Computing, Academy of Mathematics and Systems Science, Chinese Academy of Sciences, Beijing 100190, China} \address{School of Mathematical Sciences, University of Chinese Academy of Sciences, Beijing 100049, China}
\email{mpb@lsec.cc.ac.cn} \email{yuhao@amss.ac.cn}
\date{}
\keywords{Random feature method, random Fourier features, spectral convergence, simultaneous approximation, leverage-score sampling, kernel interpolation spaces, ill-conditioning}
\begin{abstract}
We first prove spectral convergence of the random feature method (RFM) for multidimensional targets in Sobolev, Gevrey, ultra-analytic, and bandlimited classes.
The analysis establishes general high-probability approximation estimates in the interpolation scale generated by a kernel integral operator. On a single event determined only by the sampled features, one random space approximates every target in a prescribed source ball; moreover, for each target, a single coefficient vector defines an approximant that attains spectral accuracy simultaneously in all admissible error norms. For both regularity-adapted frequency distributions and uniform distributions on growing frequency windows, the resulting rates range from super-exponential to algebraic, depending on the regularity of the target. Second, we establish abstract error estimates for strong- and weak-form RFM discretizations, thereby converting the preceding approximation bounds into convergence estimates for multidimensional second-order elliptic boundary value and eigenvalue problems. Finally, for random feature matrices (RFMtxs), we prove super-exponential singular-value decay with Fourier features and exponential decay with $\tanh$ features, together with corresponding condition-number lower bounds. The analysis identifies a common mechanism: the same spectral approximation that yields high accuracy also drives severe ill-conditioning.
\end{abstract}

\maketitle

\section{Introduction} \label{Section: Introduction}

Recently, learning-based methods have emerged as an active paradigm for solving partial differential equations (PDEs). Inherently meshfree, these methods enjoy the flexibility to handle complex geometries and high-dimensional problems, facilitating the integration of experimental data. However, current learning-based PDE solvers still face significant challenges, such as high computational costs and a lack of strategies to consistently improve accuracy.

A parallel line of work analyzes learning-based and data-driven discretizations through the classical numerical-analysis concepts of approximation, stability, and conditioning. Random features (RFs) provide a particularly transparent setting for this program. Their hidden parameters are sampled from a prescribed distribution and only the output coefficients are optimized, so training reduces to a linear least-squares problem. Introduced as scalable approximations of kernel machines \cite{rahimi2007random}, RFs now form a useful interface between kernel approximation, randomized numerical linear algebra, and shallow neural networks. These developments lead to a unified question: once the trial space and sampling mechanism are specified, what are the approximation rate, the stability of the residual map, and the conditioning of the resulting algebraic system?

Against this background, recent efforts have sought to bridge traditional numerical solvers with machine learning through extreme learning machines, the random feature method (RFM), and related randomized neural networks. The core idea is to approximate the solution by a neural network with prescribed inner-layer weights, thereby reducing training to a linear least-squares problem for the outer-layer weights. Representative examples include RFM discretizations for stationary and time-dependent problems \cite{ChenChiEYang2022RFM,ChenELuo2023TimeRFM} and dimension-robust solvers for Kolmogorov equations \cite{Gonon2023BlackScholes}. A hallmark of these approaches is the spectral or near-spectral accuracy observed even on complicated geometries \cite{ChenChiEYang2022RFM,DongLi2021LocalELM,SunDongWang2024LRNNDG}.

A critical issue, however, is the severe ill-conditioning of the random feature matrix (RFMtx) in high-accuracy computations \cite{ChenESun2024Optimization}.
Moreover, a multidimensional convergence theory must answer three questions simultaneously: how the sampling law should reflect the regularity of the target, whether one sampled space works uniformly over an entire target class, and whether the same reconstruction controls the several derivative norms required by PDE stability estimates. Standard fixed-target or fixed-norm bounds do not provide this combination, nor do they explain spectral approximation and rapid singular-value decay within a common framework.

Our analysis is built on concentration of the empirical feature operator around its population counterpart, an approach developed for kernel quadrature and RF regression \cite{bach2017equivalence,rudi2017generalization}. The central new ingredient is an abstract estimate on the interpolation scale of the kernel integral operator that preserves the quantifiers needed for simultaneous approximation. Combined with an effective-dimension bound, this estimate yields a single high-probability event, independent of the target, on which every target admits one reconstruction that works simultaneously across all admissible error norms. The resulting approximation theory also supplies the decisive low-dimensional approximation mechanism in our analysis of singular-value decay.

\subsection{Our contributions}

The main results can be summarized as follows.

\begin{enumerate}[leftmargin=*]
\item \emph{Simultaneous, target-uniform spectral approximation.}
Theorem~\ref{Theorem: abstract error estimate for regression in interpolation spaces} supplies the abstract approximation principle which, combined with the effective-dimension estimates in Section~\ref{Section: Estimate of d(lambda) and the convergence rate, interpolation}, yields the following twofold uniformity.
With probability at least $1-\delta$, a single sampled space contains a spectrally accurate approximant for every target in a source ball, and each target admits a single coefficient vector, independent of the error norm, whose associated approximant attains spectral accuracy simultaneously in all admissible error norms. This error-norm simultaneity is enabled by formulating the ridge problem in $\mc{H}^{\theta}$; when $\theta>0$, this regression norm is stronger than $L^2=\mc{H}^{0}$. In particular, if $p\le\theta$, the same $\mc{H}^{\theta}$-ridge reconstruction and the same target-independent event control the entire admissible weaker-norm scale. With $\bar p=\max\{p,2\theta-1\}$, the rate is governed by $s-\bar p$ and saturates below $p=2\theta-1$. If $p\ge\theta$, the stronger error norm retains the unsaturated rate $\lambda^{(s-p)/(2(1-\theta))}$, but the sampling condition depends on $p$ through $\gamma=1-p$. This distinction identifies when one feature sample controls a full norm scale and when stronger derivative estimates require a stronger sampling condition, which is essential for the subsequent PDE analysis.

\item \emph{Multidimensional spectral rates for random Fourier features.}
Theorems~\ref{Theorem: interpolation improved convergence rate of RFM} and \ref{Theorem: optimal growing bandwidth leverage rates} establish multidimensional rates for both regularity-adapted reference measures and uniform measures on explicitly growing frequency windows. The latter is a particularly simple sampling strategy widely used in practice. The resulting rates and the corresponding sampling strategies are summarized in Table~\ref{Table: comparison of reference measures and sample size rates}. Importantly, the error estimates are not restricted to the conventional $L^2$ norm; they hold simultaneously in the general Sobolev norms $W^{t,p}(\Omega)$, $1\le p\le\infty$.

\end{enumerate}

\begin{table}[!htbp]
\centering
\small
\caption{Sample-size convergence rates and reference frequency measures in Theorems~\ref{Theorem: interpolation improved convergence rate of RFM} and \ref{Theorem: optimal growing bandwidth leverage rates}. For fixed $\delta$, set $L_N:=N/\log(N/\delta)$ and $J:=\lfloor c_0L_N^{1/d}\rfloor$, where $c_0=c_0(d)>0$ is sufficiently small. The uniform reference measure is $\mr d\tau_{S_J}(w)=(2S_J)^{-d}1_{Q_{S_J}}(w)\mr dw$; the last column records only the bandwidth $S_J$.}
\label{Table: comparison of reference measures and sample size rates}
\renewcommand{\arraystretch}{1.35}
\setlength{\tabcolsep}{4pt}
\begin{tabular}{@{}
  >{\raggedright\arraybackslash}m{0.21\textwidth}
  >{\centering\arraybackslash}m{0.23\textwidth}
  >{\centering\arraybackslash}m{0.31\textwidth}
  >{\centering\arraybackslash}m{0.18\textwidth}@{}}
\toprule
Target class
& Convergence rate
& Reference measure\newline in Theorem~\ref{Theorem: interpolation improved convergence rate of RFM}
& Bandwidth\newline in Theorem~\ref{Theorem: optimal growing bandwidth leverage rates} \\
\midrule
\emph{Sobolev.}
& $L_N^{-(s-t)/d}$
& $\mr d\tau(w)\propto(1+|w|^2)^{-\bar s}\mr dw$
& $S_J=J/(4R_*)$ \\
\addlinespace
\shortstack[l]{\emph{Stretched exponential}\\$(s\ge1)$.}
& $\exp(-cL_N^{1/(sd)})$
& $\mr d\tau(w)\propto\exp(-2\bar\kappa|w|^{1/s})\mr dw$
& $S_J=J/(4R_*)$ \\
\addlinespace
\shortstack[l]{\emph{Super-exponential}\\$(s>1)$.}
& $\exp(-cL_N^{1/d}\log L_N)$
& $\mr d\tau(w)\propto\exp(-2\bar\kappa|w|^s)\mr dw$
& $\displaystyle S_J=\frac{(J\log J)^{1/s}}{4R_*}$ \\
\addlinespace
\emph{Bandlimited.}
& $\exp(-cL_N^{1/d}\log L_N)$
& $\mr d\tau_S(w)\propto 1_{Q_S}(w)\mr dw$
& $S_J=S$ \\
\bottomrule
\end{tabular}
\end{table}

The parameter $M$ in Theorem~\ref{Theorem: interpolation improved convergence rate of RFM} is an auxiliary complexity scale. For fixed $\delta$ and sufficiently large $N$, the sampling condition permits $M\asymp L_N$, where $L_N:=N/\log(N/\delta)$, which gives the sample-size rates in Table~\ref{Table: comparison of reference measures and sample size rates}. The Sobolev case is algebraic, whereas the Gevrey case has the stretched-exponential rate $\exp(-cL_N^{1/(sd)})$, with the exponential endpoint $s=d=1$. The ultra-analytic and bandlimited cases have rate $\exp(-cL_N^{1/d}\log L_N)$, which is super-exponential in the linear resolution scale $L_N^{1/d}$.

\begin{enumerate}[leftmargin=*,start=3]

\item \emph{Consequences for PDE and eigenvalue solvers.}
Combining the approximation estimates with elliptic regularity, a strong-form stability estimate, C\'ea's lemma, and compact-operator spectral approximation, we obtain error estimates for strong- and weak-form RFM discretizations of elliptic boundary value problems and for the associated eigenspaces and eigenvalues.

\item \emph{Rapid singular-value decay and ill-conditioning of random feature matrices.}
For collocation matrices generated by Fourier and $\tanh$ features, we prove, respectively, super-exponential and exponential decay of high-index singular values, together with corresponding lower bounds on their condition numbers. These results give a quantitative explanation for the severe ill-conditioning observed in high-accuracy RFM computations.
\end{enumerate}

Compared with our one-dimensional predecessor \cite{MingYu2026Spectral}, the present paper is not merely a tensor-product extension. The earlier work uses a direct one-dimensional construction to derive expectation bounds in prescribed $W^{\sigma,p}$ norms and relates one-dimensional spectral approximation to matrix ill-conditioning. Here we instead develop an operator-theoretic framework on multidimensional bounded domains, based on kernel interpolation scales, effective dimension, and empirical-operator concentration. This framework yields target-independent high-probability events and norm-independent reconstructions, covers both regularity-adapted sampling and uniform sampling on growing frequency windows, transfers the estimates to multidimensional elliptic boundary value and eigenvalue problems, and proves multidimensional singular-value decay for Fourier and $\tanh$ features. Thus the spatial setting, the probabilistic quantifiers, and the simultaneous norm control are all strengthened.

A principal distinction of the first two contributions is their twofold uniformity. For each regularity class and feature representation $\varrho$, the high-probability event depends only on the sampled features, so the single space $\operatorname{span}\{\phi_\varrho(\cdot,v_j^\varrho):1\le j\le N\}$ works for every target in the class. For each target $u$, a single coefficient vector $\alpha^\varrho(u)$ defines an approximant $u_N^\varrho$ that simultaneously attains the corresponding spectral convergence rate in every admissible error norm. Thus the random event is uniform over the target class, while the reconstruction is uniform over the error norms. To the best of our knowledge, this combination of target-uniform high-probability control, norm-independent reconstruction, and simultaneous spectral convergence throughout an admissible Sobolev/interpolation scale has not previously been established for RF approximation.

\subsection{Related work}

For learning-based PDE solvers, random-weight approaches include RFM discretizations for stationary and time-dependent problems \cite{ChenChiEYang2022RFM,ChenELuo2023TimeRFM} and dimension-robust methods for Kolmogorov equations \cite{Gonon2023BlackScholes}. Recent extensions address discontinuous interface problems \cite{SongChiYangChengChen2026Discontinuity}, structure-adaptive approximation of high-dimensional elliptic equations \cite{LinghuDongWang2026StructureAdaptive}, and nonlinear evolution equations \cite{ZhouFuWangFeng2026DiscreteTimeRFM}. Our analysis is complementary to these algorithmic developments: it isolates the approximation, PDE stability, and conditioning mechanisms needed to explain the accuracy and numerical behavior of the resulting linear least-squares discretizations.

\textbf{Theoretical foundations of RFM approximation.} RFMs belong to the broader class of randomized neural architectures, including extreme learning machines and deep randomized networks \cite{huang2006extreme,gallicchio2020deep}. Their approximation theory includes universal and activation-specific results \cite{huang2006universal,sun2019approximation}, uniform random-feature approximation \cite{rahimi2007random,Rahimi2008UniformAO}, and inverse approximation theory \cite{Weinan2020TowardsAM}. At infinite width, these models are related to Gaussian processes \cite{neal1996priors,williams1996computing}.

Within the kernel framework, Rudi and Rosasco \cite{rudi2017generalization} show that RFs can attain kernel-learning rates with substantially fewer features than data, while Bach \cite{bach2017equivalence} obtains kernel-quadrature bounds uniform over an RKHS unit ball in a prescribed $L^2$ error norm.
Leverage-score and adaptive sampling further reduce feature complexity \cite{Rudi2018LeverageSampling,Kammonen2020AdaptiveRFF,Liu2020SurrogateRFF, Chen2021FastLeverageKRR}. RF architectures have also been analyzed for high-dimensional PDEs and dynamical systems \cite{Gonon2023BlackScholes,GononGrigoryevaOrtega2023, deryck2025approximation,neufeld2025full}, and extended to operator learning between infinite-dimensional spaces \cite{nelsen2021random,nelsen2024operator}.

Spectral approximation of Gevrey targets is classical for $hp$-finite elements \cite{gui1986h1Dimension,guo1986hpversion,melenk2004hp, feischl2020exponential}.
Related neural-network results establish geometric or exponential rates for smooth, analytic, and generalized bandlimited functions \cite{mhaskar1996neural,weinan2018exponential,montanelli2019deep, DeRyck2021app_tanh}. For randomized models, spectral convergence in a fixed $L^2$-type best-approximation problem was proved by Fabiani \cite{Fabiani2025RPNN}, while our one-dimensional predecessor gives spectral RFM estimates in prescribed $W^{\sigma,p}$ norms, in expectation \cite{MingYu2026Spectral}. Fu and Wang construct a sample-dependent operator that is uniform over a Sobolev ball and simultaneous in integer $H^m$ norms, but with algebraic rates \cite{FuWang2026OptimalSobolev}. In contrast, our leverage-score analysis combines target-uniform high-probability control, one norm-independent reconstruction, and spectral convergence across the full admissible norm scale.

Simultaneous approximation of a function and its derivatives is classical \cite{MeirSharma1966,DaiXu2011} and has been used to construct common approximants in Lebesgue and Sobolev norms \cite{FeffermanHajdukRobinson2022}. Such control is natural in PDE discretization, where the same trial function enters interior, boundary, and derivative-dependent estimates. The point specific to the present RF result is the conjunction of a target-uniform random event, a single reconstruction for each target, an entire admissible norm scale, and spectral rather than algebraic convergence.

\textbf{Conditioning of RFMtxs.} Conditioning has been studied in probabilistic high-dimensional regimes \cite{ChenSchaefferWard2022,ChenSchaeffer2024}; for structured Fourier matrices, exponential ill-conditioning is known even for contiguous submatrices \cite{Barnett2022Fourier}. Numerical work has documented the resulting difficulties in high-accuracy RFM computations \cite{ChenESun2024Optimization} and motivated preconditioning strategies \cite{TanChen2026Preconditioned}. Our analysis instead derives multidimensional decay rates directly from low-dimensional approximation of bounded smooth features, thereby placing spectral convergence and severe ill-conditioning within the same approximation-theoretic mechanism.

The remainder of the paper is organized as follows. Section~\ref{Section: Approximation error by interpolation} develops the abstract interpolation-space estimate. Section~\ref{Section: Estimate of d(lambda) and the convergence rate, interpolation} specializes it to random Fourier features and relates Fourier decay to spatial regularity. Section~\ref{Section: RFM PDE solvers} transfers the estimates to elliptic boundary value and eigenvalue problems, and Section~\ref{Section: Exponential ill conditionality of random feature matrices} studies singular values and condition numbers. Section~\ref{Section: Conclusion} summarizes the main results. The appendices contain the proofs and technical estimates.

\subsection{Notation}
We denote by $\mb{N}$ the set of nonnegative integers and by $\mb{N}_{+}$ the set of positive integers. For a finite set $\mc{I}$, $|\mc{I}|$ denotes its cardinality. For a vector $\boldsymbol{\alpha}$, $|\boldsymbol{\alpha}|_{p}$ denotes its $\ell^{p}$-norm, and $|\boldsymbol{\alpha}|:=|\boldsymbol{\alpha}|_{2}$. Throughout the paper, $d$ denotes the ambient dimension. Unless stated otherwise, $\Omega\subset\mb R^d$ denotes a bounded Lipschitz domain.

Let $(\mc{X},\rho)$ be a measure space. For $1\le p\le\infty$, $L^{p}(\mc{X},\rho)$ denotes the corresponding Lebesgue space. When $\rho$ is the Lebesgue measure on $\Omega$, we write $L^{p}(\Omega)$. In the complex case, the inner product on $L^{2}(\mc{X},\rho)$ is $\langle f,g\rangle_{\rho}=\int_{\mc{X}}f(x)\overline{g(x)}\mr{d}\rho(x)$. $H^{s}(\Omega)$ and $W^{s,p}(\Omega)$ denote the standard Sobolev spaces\cite{Adams2003Sobolev}.

Hilbert-space statements are understood over $\mb{R}$ or $\mb{C}$ as appropriate. In the complex case, inner products are linear in the first argument and conjugate-linear in the second. For a Hilbert space $\mc{H}$, $\langle\cdot,\cdot\rangle_{\mc{H}}$ and $\|\cdot\|_{\mc{H}}$ denote its inner product and norm.
$\|\cdot\|$ denotes the operator norm when the domain and range are clear. Let $\mc{H}_1$, $\mc{H}_2$ be two Hilbert spaces, and $v \in \mc{H}_1, u \in \mc{H}_2$. The rank-one operator $u \otimes v : \mc{H}_1 \to \mc{H}_2$ is defined as $(u \otimes v)x = \langle x, v \rangle_{\mc{H}_1} u$ for $x \in \mc{H}_1$.
For self-adjoint operators $A$ and $B$ on the same Hilbert space, $A\preceq B$ means that $B-A$ is positive semidefinite, and $A\succeq B$ means $B\preceq A$.
For a positive self-adjoint operator $A$, fractional powers $A^{\alpha}$ are defined by spectral calculus; negative powers are understood on the positive spectral subspace. We write $\operatorname{tr}(A)$ for the trace of a trace-class operator $A$.

For a multi-index $\eta=(\eta_{1},\ldots,\eta_{d})\in\mb{N}^{d}$, set $|\eta|:=\eta_1+\cdots+\eta_d$, $\eta!:=\eta_1!\cdots\eta_d!$, $\partial^{\eta}:=\partial_{1}^{\eta_{1}}\cdots\partial_{d}^{\eta_{d}}$, and $w^{\eta}:=w_{1}^{\eta_{1}}\cdots w_{d}^{\eta_{d}}$ for $w\in \mb{R}^{d}$. The Fourier transform of $f$ in the sense of distributions is denoted by $\hat{f}$. In particular, for any $f \in L^1(\mb{R}^d)$, the Fourier transform $\hat{f}$ is defined as
\begin{equation*}
\hat{f}(\xi) = \frac{1}{(2\pi)^{d/2}} \int_{\mb{R}^d} f(x) e^{-i x \cdot \xi} \, \mr{d} x.
\end{equation*}
The notation $a \lesssim b$ means $a \le C b$ for some constant $C$ independent of $b$. An unsubscripted $\lesssim$ implies that $C$ is universal, while subscripts indicate its dependence on specific parameters. We write $a\simeq b$ if both $a\lesssim b$ and $b\lesssim a$ hold. Finally, $a\propto b$ denotes equality up to a positive normalization constant.

\section{Approximation in kernel interpolation spaces} \label{Section: Approximation error by interpolation}
This section proves an abstract approximation estimate for RFs in the interpolation scale generated by the kernel integral operator. The estimate accommodates independent choices of source regularity $s$, regression norm $\theta$, and error norm $p$ within their admissible ranges, thereby providing a unified route from kernel effective-dimension bounds to approximation rates across a broad class of target norms.

\subsection{RKHS, interpolation spaces and feature representation} \label{subsection: RKHS, interpolation spaces and feature representation}
Let $(\mc{X},\rho)$ be a measure space. A kernel $k:\mc{X}\times\mc{X}\to\mb{C}$ is called Hermitian positive definite if $k(x,y)=\overline{k(y,x)}$ and, for all $n\in\mb{N}_{+}$ and $\{x_i\}_{i=1}^{n}\subset\mc{X}$, the matrix $(k(x_i,x_j))_{1\le i,j\le n}$ is positive semidefinite. Given such a kernel $k$, there is a unique Hilbert space $\mc{H}_{k}$ such that $k(\cdot,x)\in\mc{H}_{k}$ for all $x\in\mc{X}$, and $f(x)= \la f, k(\cdot,x)\ra_{\mc{H}_{k}}$ for all $f\in\mc{H}_{k}$ and $x\in\mc{X}$. The second property is known as the reproducing property and $\mc{H}_{k}$ is referred to as the RKHS~\cite{Berlinet2004RKHS,PaulsenRaghupathi2016} associated with $k$. We abbreviate $\mc{H}_{k}$ as $\mc{H}$ when no confusion arises.

Assume that $k$ is measurable and satisfies the trace condition $\int_{\mc{X}}k(x,x)\mr{d}\rho(x)<\infty$. We also assume that the canonical embedding $I_{\rho}:\mc{H}_{k}\to L^{2}(\mc{X},\rho)$, $I_{\rho}f=[f]_{\rho}$, is injective, so that functions in $\mc{H}_{k}$ are identified unambiguously with their $L^{2}(\rho)$ equivalence classes. The associated integral operator $\Sigma:L^{2}(\mc{X},\rho)\to L^{2}(\mc{X},\rho)$ is defined by
\[
(\Sigma f)(x)=\int_{\mc{X}}k(x,y)f(y)\mr{d}\rho(y).
\]
Under these assumptions, $\Sigma$ is self-adjoint, positive, and trace class~\cite{Simon2005trace}. Let $\{(\lambda_j,e_j)\}_{j\ge1}$ be the positive eigenpairs of $\Sigma$, with the eigenvalues arranged in decreasing order and the eigenfunctions orthonormal in $L^{2}(\mc{X},\rho)$. Then, the spectral decomposition $\Sigma = \sum_{j=1}^{\infty}\lambda_{j}e_{j}\otimes e_{j}$ holds. Without loss of generality, we assume that $\{e_j\}_{j=1}^{\infty}$ forms a complete basis of $L^2(\mathcal{X}, \rho)$, otherwise we work on the closed positive spectral subspace of $\Sigma$, namely $\overline{\operatorname{span}}\{e_j:\lambda_j>0\}$.
All spectral powers below are taken on this positive spectral subspace.

Following \cite{Steinwart2012Mercer,Long2025optimal}, for $a\in\mb R$ define
\[
\begin{aligned}
\mathcal D(\Sigma^a)
&:=\left\{f=\sum_{j=1}^{\infty}f_je_j\in L^2(\mc X,\rho):
\sum_{j=1}^{\infty}\lambda_j^{2a}|f_j|^2<\infty\right\},\\
\Sigma^af&:=\sum_{j=1}^{\infty}\lambda_j^af_je_j,
\qquad f\in\mathcal D(\Sigma^a).
\end{aligned}
\]
Thus $\Sigma^a$ is bounded on $L^2$ for $a\ge0$ and is generally unbounded for $a<0$. For every $a\in\mb R$, let $\mc H^a$ be the completion of $\operatorname{span}\{e_j:j\ge1\}$ under
\begin{equation*} \label{eq: interpolation between RKHS and L2 via spectral decomposition}
\left\|\sum_{j=1}^{\infty}f_je_j\right\|_{\mc H^a}^2
:=\sum_{j=1}^{\infty}\lambda_j^{-a}|f_j|^2.
\end{equation*}
For $a\ge0$, this space is identified with $\{f\in L^2:\sum_j\lambda_j^{-a}|f_j|^2<\infty\}$, and the displayed norm equals $\|\Sigma^{-a/2}f\|_{L^2}$. For $a<0$, $\mc H^a$ is equivalently the completion of $L^2$ in this norm, or the anti-dual of $\mc H^{-a}$ with $L^2$ as pivot. By \cite[Theorem 2.11]{Steinwart2012Mercer}, $\Sigma^{\frac{1}{2}}$ is an isometry from $L^{2}(\mc{X}, \rho)$ to $\mc{H}$ and $\mc{H}^{1}=\mc{H}$. Additionally, $\mc{H}^{0}=L^{2}(\mc{X}, \rho)$ and $\mc{H}^{a}\hookrightarrow\mc{H}^{b}$ for $a\ge b$.
The following interpolation identity is used later to connect the spectral scale with Sobolev spaces.
\begin{proposition}[{\cite[Theorem 4.6]{Steinwart2012Mercer}}] \label{Proposition: interpolation between RKHS and L2}
For $a\in(0,1)$, $\mc{H}^{a}=(L^{2}(\mc{X}, \rho),\mc{H})_{a,2}$.
\end{proposition}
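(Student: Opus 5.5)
The plan is to reduce the statement to the interpolation of weighted $\ell^2$ spaces, using the spectral basis $\{e_j\}$ of $\Sigma$ as a common diagonalizing coordinate system. Via $f\mapsto (f_j)_{j\ge1}$ with $f_j=\la f,e_j\ra_\rho$, the space $L^2(\mc X,\rho)$ (restricted to the closed positive spectral subspace, as agreed above) is isometric to $\ell^2$. Since $\mc H^1=\mc H$ isometrically by \cite[Theorem 2.11]{Steinwart2012Mercer}, $\mc H$ is isometric to the weighted space $\ell^2(\lambda_j^{-1})$. Both spaces embed compatibly in $L^2$, so $(L^2,\mc H)$ is a compatible couple, and the claim becomes $(\ell^2,\ell^2(\lambda^{-1}))_{a,2}=\ell^2(\lambda^{-a})$ with equivalent norms.

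The core step is to compute the $K$-functional explicitly. For $f=\sum_j f_je_j$ and $t>0$, define
\[
K(t,f)^2 \simeq \inf_{f=g+h}\bigl(\|g\|_{L^2}^2+t^2\|h\|_{\mc H}^2\bigr)=\sum_j\inf_{g_j+h_j=f_j}\bigl(|g_j|^2+t^2\lambda_j^{-1}|h_j|^2\bigr).
\]
Using the squared form, which is equivalent to the $K$-functional up to a factor $\sqrt2$, the minimization decouples coordinatewise. Each scalar problem gives
\[
\frac{t^2\lambda_j^{-1}}{1+t^2\lambda_j^{-1}}\,|f_j|^2=\frac{t^2}{\lambda_j+t^2}\,|f_j|^2 .
\]

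Next, insert this into $\|f\|_{a,2}^2=\int_0^\infty t^{-2a}K(t,f)^2\,\frac{\mr dt}{t}$ and apply Tonelli to exchange sum and integral:
\[
\|f\|_{a,2}^2\simeq\sum_j|f_j|^2\int_0^\infty \frac{t^{2-2a}}{\lambda_j+t^2}\frac{\mr dt}{t}.
\]
The substitution $t=\lambda_j^{1/2}s$ turns the inner integral into $\lambda_j^{-a}C_a$, where $C_a=\int_0^\infty s^{1-2a}(1+s^2)^{-1}\mr ds$. This constant is finite exactly when $a\in(0,1)$, and this is the only place the range of $a$ enters. Hence $\|f\|_{a,2}\simeq \|f\|_{\mc H^a}$ for all $f\in L^2$, including the case where both sides are infinite.

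To finish, note that the interpolation space consists of those $f\in L^2+\mc H=L^2$ with finite $\|f\|_{a,2}$. For $a>0$, $\mc H^a$ is identified with $\{f\in L^2:\sum_j\lambda_j^{-a}|f_j|^2<\infty\}$, so the two sets coincide and their norms are equivalent. The main obstacle is justifying that the infimum genuinely decouples coordinatewise. The upper bound follows by choosing coordinatewise optimal $g_j,h_j$ and checking that the resulting $g,h$ lie in $L^2$ and $\mc H$. The lower bound uses Parseval in both norms for an arbitrary admissible decomposition. Both rely on the diagonal description of $\mc H$ and on the injectivity of $I_\rho$, which ensures the coordinates are unambiguous.
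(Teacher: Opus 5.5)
Your proof is correct. The paper gives no argument for this proposition; it imports Theorem 4.6 of Steinwart--Scovel. The standard route to that result is the reduction you make. The eigenbasis identifies $L^2(\mc X,\rho)$ and $\mc H=\mc H^1$ with $\ell^2$ and $\ell^2(\lambda_j^{-1})$, and one then invokes the general interpolation identity for weighted $\ell^2$ (or $L^2$) spaces, the same kind of result the paper later quotes from Peetre for whole-space Fourier weights.

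You differ in that you do not use that identity as a black box but prove the special case directly. The coordinatewise minimization gives the squared $K$-functional exactly as $\sum_j t^2|f_j|^2/(\lambda_j+t^2)$. Your substitution then yields $C_a\|f\|_{\mc H^a}^2\le\|f\|_{a,2}^2\le 2C_a\|f\|_{\mc H^a}^2$ with $C_a=\pi/(2\sin\pi a)$. This buys explicit equivalence constants and shows precisely where $a<1$ (as $s\to0$) and $a>0$ (as $s\to\infty$) are needed.

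The decoupling step you flag is handled correctly. Parseval gives the lower bound for every admissible splitting. The optimal $h=\sum_j\lambda_j(\lambda_j+t^2)^{-1}f_je_j$ lies in $\mc H^1$ because $\lambda_j/(\lambda_j+t^2)^2\le 1/(4t^2)$.

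The citation covers one case that you delegate to the paper's standing convention: $\ker\Sigma\neq\{0\}$ with the full $L^2(\mc X,\rho)$ as the first space. This costs one line. Since $\mc H\subset\overline{\operatorname{span}}\{e_j\}$, the component $f_\perp$ of $f$ in $\ker\Sigma$ is the same in every splitting. Hence the squared $K$-functional of $f$ equals that of $f-f_\perp$ plus $\|f_\perp\|_{L^2}^2$. Since $\int_0^\infty t^{-2a-1}\|f_\perp\|_{L^2}^2\,\mr{d}t=\infty$ unless $f_\perp=0$, the interpolation space lies in the positive spectral subspace, and your computation applies unchanged.
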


Let $(\mc{V},\tau)$ be a measurable parameter space equipped with a probability measure $\tau$. Consider a parametric feature function $\phi\in L^{2}(\mc{X}\times\mc{V},\rho\otimes\tau;\mb{C})$, where $\mc{X}$ and $\mc{V}$ denote the input and weight domains, respectively. We assume that $k$ admits an RF representation of the form
\begin{equation} \label{eq: kernel itself as expectation section2}
k(x,y)=\int_{\mc{V}}\phi(x,v)\overline{\phi(y,v)}\mr{d}\tau(v).
\end{equation}
The corresponding feature operator $\mc{T}:L^{2}(\mc{V},\mr{d}\tau)\to L^{2}(\mc{X},\mr{d}\rho)$ and its adjoint operator $\mc{T}^{*}$ are
\[
(\mc{T}g)(x):=\int_{\mc{V}}g(v)\phi(x,v)\mr{d}\tau(v), \quad (\mc{T}^{*}f)(v)=\int_{\mc{X}}f(x)\overline{\phi(x,v)}\mr{d}\rho(x).
\]
It follows from Fubini's theorem that $\Sigma=\mc{T}\mc{T}^{*}$, or equivalently,
\begin{equation} \label{eq: kernel integral operator as expectation section2}
\Sigma=\int_{\mc{V}}\phi(\cdot,v)\otimes\phi(\cdot,v)\mr{d}\tau(v).
\end{equation}
Furthermore, the RKHS admits the feature-space characterization~\cite[Theorem~11.3]{PaulsenRaghupathi2016}
\begin{equation}
\label{eq: RKHS feature-space characterization}
\mc{H}_{k}=\operatorname{Ran}(\mc{T}),\qquad
\|f\|_{\mc{H}_{k}}=\inf\left\{\|g\|_{L^{2}(\mc{V},\mr{d}\tau)}:\mc{T}g=f\right\}.
\end{equation}

\subsection{Ridge approximation in \texorpdfstring{$\mc{H}^{\theta}$}{H\string^theta}} \label{subsection: ridge approximation in interpolation norm}
We now formulate the ridge approximation problem in the interpolation space introduced above. Specifically, let $N\in\mb{N}_{+}$ and draw $v_1,\ldots,v_N$ independently from $q\mr{d}\tau$, where $q$ is a probability density with respect to $\tau$ satisfying the support condition specified below. Define the RF operator $\Phi: \mb{C}^{N} \to L^{2}(\mc{X},\mr{d}\rho)$ by
\begin{equation*}
\begin{aligned}
(\Phi \boldsymbol{\beta})(x) = \sum_{j=1}^{N} \beta_{j} q(v_{j})^{-\frac{1}{2}} \phi(x,v_{j}) .
\end{aligned}
\end{equation*}
Assume the target function $u \in \mc{H}^{s}$ with $s\ge0$, which is the source condition. For $\theta\le s$, we consider the minimization problem
\begin{equation} \label{ridge regression for bounding approximation error, interpolation}
\begin{aligned}
\boldsymbol{\beta}^{*} = \mathop{\arg\min}\limits_{\boldsymbol{\beta} \in \mb{C}^{N}}\left\|u-\Phi \boldsymbol{\beta}\right\|_{\mc{H}^{\theta}}^{2}+\lambda N|\boldsymbol{\beta}|^{2},
\end{aligned}
\end{equation}
where $\lambda >0$ is a regularization parameter. Then, for $p\le s$, we measure the error of the RF solution as
\begin{equation*}
\mc{E}(N,s,\theta,p) := \sup_{\|u\|_{\mc{H}^{s}}\le1} \left\|u-\Phi \boldsymbol{\beta}^{*}\right\|_{\mc{H}^{p}} .
\end{equation*}
We aim to prove upper bounds for $\mc{E}(N,s,\theta,p)$ and $|\boldsymbol{\beta}^{*}|$ in this part. As for Problem (\ref{ridge regression for bounding approximation error, interpolation}), although practically implementing $\mc{H}^{\theta}$-norm is more difficult than $L^{2}$-norm ($\theta=0$) in general cases, it demonstrates the possibility of solving problems by RFs under a norm stronger than $L^{2}$-norm. In particular, with the relation clarified in Proposition \ref{Proposition: relation between fractional Sobolev and interpolation space of RKHS on bounded domains}, it has implications for solving PDEs in Sobolev spaces.

Inspired by \cite{Long2025optimal}, we refined the concepts of maximum RF dimension and the effective dimension of the kernel integral operator. For $\theta<1$, $\lambda>0$, and $\gamma>0$, denote
\begin{equation} \label{def: r(x) used in concentration}
r(x) = \left(x^{1-\theta} + \lambda\right)^{-\frac{\gamma}{2(1-\theta)}} x^{\frac{\gamma-1}{2}} ,\quad \text{for $x\ge0$}.
\end{equation}
Set
\[
\ell_{\lambda}(v;\theta,\gamma) :=\|r(\Sigma)\phi(\cdot,v)\|_{L^{2}(\mc{X},\rho)}^{2}.
\]
Let $q$ be a probability density with respect to $\tau$ such that $q>0$ $\tau$-almost everywhere on $\{\ell_{\lambda}>0\}$. The quotient $\ell_{\lambda}/q$ is understood $\tau$-almost everywhere and is set to zero on $\{\ell_{\lambda}=q=0\}$. We define
\begin{equation*}
\begin{aligned}
& d_{\max}(q,\lambda;\theta,\gamma)
:=\operatorname*{\tau\text{-}ess\,sup}_{v\in\mc V}
\frac{\ell_{\lambda}(v;\theta,\gamma)}{q(v)}, \\
& d(\lambda;\theta,\gamma) := \operatorname{tr}\left(\Sigma r^{2}(\Sigma)\right) .
\end{aligned}
\end{equation*}
By \eqref{eq: kernel integral operator as expectation section2},
\begin{equation*}
\begin{aligned}
d(\lambda;\theta,\gamma)
&=\int_{\mc V}\ell_{\lambda}(v;\theta,\gamma)\mr d\tau(v)\\
&=\int_{\mc V}\frac{\ell_{\lambda}(v;\theta,\gamma)}{q(v)}q(v)\mr d\tau(v)
\le d_{\max}(q,\lambda;\theta,\gamma).
\end{aligned}
\end{equation*}
Equality is attained by the normalized leverage-score density defined below.
\begin{definition}[Leverage score sampling] \label{Definition: Leverage score sampling}
Assume $0<d(\lambda;\theta,\gamma)<\infty$. The normalized leverage-score density with respect to $\tau$ is
\begin{equation} \label{eq: leverage score sampling density}
q^{*}_{\lambda}(v;\theta,\gamma)=\frac{\ell_{\lambda}(v;\theta,\gamma)}{d(\lambda;\theta,\gamma)}.
\end{equation}
Sampling features from $q^{*}_{\lambda}\mr{d}\tau$ is referred to as leverage-score sampling \cite{bach2017equivalence,Rudi2018LeverageSampling,Kammonen2020AdaptiveRFF,Li2020RFMethods,Liu2020SurrogateRFF,Chen2021FastLeverageKRR,Liu2021DistributedRF}.
\end{definition}

For a fixed penalty level $\lambda>0$, we call $\lambda$ admissible for $(N,\delta,q,\theta,\gamma)$ if
\begin{equation} \label{ineq: N > d_max(lambda) ln(d(lambda)/delta), interpolation}
N\ge 3d_{\max}(q,\lambda;\theta,\gamma)\max\left\{\ln(14d(\lambda;\theta,\gamma)/\delta),1\right\}.
\end{equation}
This admissibility condition is the sampling threshold used in the empirical-operator concentration estimate below. Conversely, for $N\in\mb{N}_{+}$, the associated critical penalty level is defined by
\[
\varsigma_{N}(\delta,q,\theta,\gamma) = \inf\left\{\lambda>0: \text{\eqref{ineq: N > d_max(lambda) ln(d(lambda)/delta), interpolation} holds} \right\},
\]
analogously to the construction in \cite{Long2025optimal}. For fixed $q,\theta,\gamma$, the functions $d_{\max}(q,\lambda;\theta,\gamma)$ and $d(\lambda;\theta,\gamma)$ are nonincreasing in $\lambda$, so every penalty level strictly above $\varsigma_N(\delta,q,\theta,\gamma)$ is admissible.

\subsection{Main abstract estimate and its interpretation} \label{subsection: main abstract interpolation estimate}
The following theorem is the central estimate of this section. It controls both the approximation error and the Euclidean norm of the associated ridge-regression coefficient vector. It applies to the real or complex Hilbert-space setting described above. The high-probability event in the theorem depends only on the sampled features and is therefore uniform over the unit ball of $\mc{H}^{s}$.
\begin{theorem} \label{Theorem: abstract error estimate for regression in interpolation spaces}
Let $N\in\mb{N}_{+}$, $0\le\theta\le s\le1$, $p\le s$, $\max(\theta,p)<1$, and $0<\delta<1$. Assume $\sum_{j=1}^{\infty}\lambda_{j}^{1-\max(\theta,p)} <\infty$.
(1) If $p\le \theta$, denote $\bar{p} = \max(p,2\theta-1)$. For any $\lambda>0$, with probability at least $1-\delta$,
\begin{equation*}
\begin{aligned}
& \mc{E}(N,s,\theta,p) \le 16 \max(\lambda,\varsigma_{N})^{\frac{s-\bar{p}}{2(1-\theta)}} \left\|\Sigma\right\|^{\frac{\bar{p}-p}{2}}, \\
& |\boldsymbol{\beta}^{*}| \le 16 N^{-\frac{1}{2}} \lambda^{\frac{s-1}{2(1-\theta)}} \max(1,\varsigma_{N}/\lambda)^{\frac{s-\theta}{2(1-\theta)}},
\end{aligned}
\end{equation*}
where $\varsigma_{N} = \varsigma_{N}(\delta,q,\theta,1-\theta)$. (2) If $p\ge \theta$, for any $\lambda>0$ satisfying \eqref{ineq: N > d_max(lambda) ln(d(lambda)/delta), interpolation} with $\gamma=1-p$, with probability at least $1-\delta$,
\begin{equation*}
\begin{aligned}
& \mc{E}(N,s,\theta,p) \le 16 \lambda^{\frac{s-p}{2(1-\theta)}} , \quad \text{and }\quad |\boldsymbol{\beta}^{*}| \le 16 N^{-\frac{1}{2}} \lambda^{\frac{s-1}{2(1-\theta)}} .
\end{aligned}
\end{equation*}
In particular, this holds for every $\lambda>\varsigma_{N}(\delta,q,\theta,1-p)$.
\end{theorem}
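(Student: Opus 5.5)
The plan is to reduce the ridge problem to a regularized operator inequality for a whitened empirical covariance. Then transfer the resulting operator bounds from the population operator to the empirical one on a single concentration event that depends only on $v_1,\dots,v_N$. The proof of (1) and (2) would run in parallel, with (2) differing only in which weighted concentration inequality is invoked.

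\emph{Step 1: whitening and a closed form for the residual.} Set $\Sigma_N:=N^{-1}\Phi\Phi^{*}=N^{-1}\sum_{j}q(v_j)^{-1}\phi(\cdot,v_j)\otimes\phi(\cdot,v_j)$, so that $\mb E\Sigma_N=\Sigma$ by \eqref{eq: kernel integral operator as expectation section2}. Put $S:=\Sigma^{1-\theta}$ and $S_N:=\Sigma^{-\theta/2}\Sigma_N\Sigma^{-\theta/2}$. The latter is well defined because $\sum_j\lambda_j^{1-\theta}<\infty$ gives $\Sigma^{-\theta/2}\phi(\cdot,v)\in L^2$ for $\tau$-a.e.\ $v$. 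Writing $\tilde\Phi=\Sigma^{-\theta/2}\Phi$, the normal equations give
\[
\boldsymbol\beta^{*}=(\tilde\Phi^{*}\tilde\Phi+\lambda N)^{-1}\tilde\Phi^{*}\Sigma^{-\theta/2}u .
\]
The push-through identity then yields
\[
u-\Phi\boldsymbol\beta^{*}=\Sigma^{\theta/2}\lambda(S_N+\lambda)^{-1}\Sigma^{-\theta/2}u .
\]
Writing $u=\Sigma^{s/2}w$ with $\|w\|_{L^2}\le1$ and $\Sigma^{(s-\theta)/2}=S^{a}$ with $a=\frac{s-\theta}{2(1-\theta)}\in[0,\tfrac12]$, we get $\mc E(N,s,\theta,p)=\|S^{b}R_\lambda S^{a}\|$. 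Here $b=\frac{\theta-p}{2(1-\theta)}$ and $R_\lambda=\lambda(S_N+\lambda)^{-1}$. Similarly
\[
|\boldsymbol\beta^{*}|^2=N^{-1}\langle (S_N+\lambda)^{-2}S_N S^a w,S^aw\rangle\le N^{-1}\lambda^{-1}\|(S_N+\lambda)^{-1/2}S^a\|^2 .
\]

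\emph{Step 2: concentration.} On the event $\|(S+\lambda)^{-1/2}(S-S_N)(S+\lambda)^{-1/2}\|\le\tfrac12$ we have $S+\lambda\preceq 2(S_N+\lambda)$, hence $\|(S+\lambda)^{1/2}(S_N+\lambda)^{-1/2}\|\le\sqrt2$. I would obtain this event from a Bernstein inequality for sums of i.i.d.\ self-adjoint operators with intrinsic-dimension (effective rank) control. The summands are $N^{-1}q(v_j)^{-1}(S+\lambda)^{-1/2}\Sigma^{-\theta/2}\phi_j\otimes(\cdot)$. Their norm is exactly $\ell_\lambda(v_j;\theta,1-\theta)/q(v_j)\le d_{\max}$, since $r(\Sigma)=(S+\lambda)^{-1/2}\Sigma^{-\theta/2}$ when $\gamma=1-\theta$. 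The trace of the variance proxy is $\le d_{\max}\,d(\lambda;\theta,1-\theta)$. The constants $3$ and $14$ in \eqref{ineq: N > d_max(lambda) ln(d(lambda)/delta), interpolation} are those of that inequality.

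\emph{Step 3: case (1), $p\le\theta$.} Then $b\ge0$. First take $\lambda'=\max(\lambda,\varsigma_N)$ (or slightly larger, then let it decrease to this value). Since $x\mapsto \lambda/(x+\lambda)$ is increasing in $\lambda$, we have $R_\lambda\preceq R_{\lambda'}$. The event is fixed at $\lambda'$; it is monotone in $\lambda'$, so a limiting event at $\varsigma_N$ suffices. If $b\le\tfrac12$, split
\[
\|S^bR_\lambda S^a\|\le\|S^bR_\lambda^{1/2}\|\,\|R_\lambda^{1/2}S^a\| .
\]
Each factor is controlled by replacing $R_\lambda$ by $R_{\lambda'}$ in the quadratic form, then $(S_N+\lambda')^{-1}$ by $2(S+\lambda')^{-1}$. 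This uses Cordes' inequality $\|A^tB^t\|\le\|AB\|^t$ for $t\in[0,1]$ to handle the fractional powers. The result is $\lambda'^{a}\lambda'^{b}$ up to constants, i.e.\ $\lambda'^{(s-p)/(2(1-\theta))}$. If $b>\tfrac12$, i.e.\ $p<2\theta-1$, write $S^b=S^{b-1/2}S^{1/2}$ and bound $\|S^{b-1/2}\|=\|\Sigma\|^{(\bar p-p)/2}$. This produces the saturation at $\bar p=2\theta-1$ and the stated factor. For the coefficient bound, $\|(S_N+\lambda)^{-1/2}S^a\|\le\sqrt2\|(S+\lambda')^{-1/2}S^a\|\,(\lambda'/\lambda)^{1/2}$ up to the same comparison, which gives the factor $\max(1,\varsigma_N/\lambda)^{a}$ after optimizing.

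\emph{Step 4: case (2), $p\ge\theta$.} Now $b\le0$ and $S^b$ is unbounded, so Cordes alone fails. Here the weighted score with $\gamma=1-p$ enters. I would prove the analogous Bernstein bound for the operator
\[
(S+\lambda)^{-\frac{1-p}{2(1-\theta)}}S^{\frac{\theta-p}{2(1-\theta)}}\,(S-S_N)\,(\cdots)^{*},
\]
whose single-sample norm is $\ell_\lambda(v;\theta,1-p)/q$. This gives control of $\|S^{b}(S+\lambda)^{\cdots}(S_N+\lambda)^{-1}\cdots\|$, and hence $\lambda^{a+b}$ without saturation.

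I expect this step to be the main obstacle. It requires choosing the right two-sided weighting so that a Neumann-series or resolvent-identity argument closes with absolute constants despite the negative power on the left. Concretely, I would try $R_\lambda=(S+\lambda)^{-1}\lambda+(S+\lambda)^{-1}(S-S_N)R_\lambda$ and bound the second term by the weighted concentration times the already-established $\sqrt2$ comparison.
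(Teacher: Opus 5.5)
Your architecture is essentially the paper's. It uses the whitened representation $\mathcal E=\|S^bR_\lambda S^a\|$, Minsker's intrinsic-dimension Bernstein bound for $r(\Sigma)(\hat\Sigma-\Sigma)r(\Sigma)$, a Loewner comparison of $S_N+\lambda$ with $S+\lambda$, the saturation split at $b=\tfrac12$, and in case (2) the resolvent identity with weight $\gamma=1-p$. In case (1) you combine $R_\lambda\preceq R_{\lambda'}$, the factorization $R_\lambda=R_\lambda^{1/2}R_\lambda^{1/2}$, and operator monotonicity of the inverse. That is a valid and slightly simpler substitute for the paper's Cordes-based factorization of $(\tilde\Sigma+\lambda)^{-1}$, and in fact needs no Cordes step. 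As written, however, three steps do not go through.

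\textbf{Concentration level.} The stated admissibility condition does not deliver level $\tfrac12$. With $U=\sigma^2=d_{\max}/N$, Minsker's exponent at level $t$ is $\frac{N}{d_{\max}}\cdot\frac{t^2/2}{1+t/3}$. At $t=\tfrac12$ this equals $\frac{3N}{28d_{\max}}$, so you would need $N\ge\frac{28}{3}d_{\max}\ln(14d/\delta)$. When $d_{\max}/N$ is near $\tfrac13$, even the precondition $t\ge\frac16(U+\sqrt{U^2+36\sigma^2})$ fails. The constants $3$ and $14$ only support $t\approx0.935$. The paper takes $t=\frac{15}{16}$, which gives exponent $\frac{75N}{224d_{\max}}$ with $\frac{224}{75}<3$. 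This yields $S_N+\lambda\succeq\frac1{16}(S+\lambda)$, i.e.\ $\|(S+\lambda)^{1/2}(S_N+\lambda)^{-1/2}\|\le4$ rather than $\sqrt2$. That is where the constant $16$ in the statement comes from; with it, your case-(1) split gives exactly $4\cdot4$.

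\textbf{Coefficient bound.} Your Step~1 inequality carries a spurious $\lambda^{-1}$. Since $y(y+\lambda)^{-2}\le(y+\lambda)^{-1}$, the correct estimate is $|\boldsymbol\beta^*|^2\le N^{-1}\|(S_N+\lambda)^{-1/2}S^a\|^2$. Your version requires $\lambda y\le y+\lambda$, which fails for $\lambda>1$. Combined with your Step~3 comparison it would give $\lambda^{\frac{s-1}{2(1-\theta)}-\frac12}$ instead of $\lambda^{\frac{s-1}{2(1-\theta)}}$.

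\textbf{Case (2) event.} Case (2) assumes admissibility only for $\gamma=1-p$. The comparison you call already established came from the $\gamma=1-\theta$ event of Step~2, and using both events via a union bound gives probability only $1-2\delta$. The missing observation is that the $\gamma=1-p$ event alone implies the comparison. Indeed, $\|r(\Sigma)(\hat\Sigma-\Sigma)r(\Sigma)\|\le\frac{15}{16}$ gives $\Sigma-\hat\Sigma\preceq\frac{15}{16}r(\Sigma)^{-2}$. Moreover $r(x)^{-2}=(x^{1-\theta}+\lambda)^{\frac{1-p}{1-\theta}}x^{p}\le x^{\theta}(x^{1-\theta}+\lambda)$, so $S_N+\lambda\succeq\frac1{16}(S+\lambda)$ on the same event.

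No Neumann series is then needed. Put $c=\frac{1-p}{2(1-\theta)}$, so that $c-b=\tfrac12$ with your $b\le0$. With $Z=r(\Sigma)(\hat\Sigma-\Sigma)r(\Sigma)$, write $S-S_N=-S^{-b}(S+\lambda)^{c}Z(S+\lambda)^{c}S^{-b}$. One application of your resolvent identity bounds the correction term by $\lambda\|(S+\lambda)^{c-1}\|\,\|Z\|\,\|S^{-b}(S+\lambda)^{b}\|\,\|(S+\lambda)^{1/2}(S_N+\lambda)^{-1/2}\|\,\|(S_N+\lambda)^{a-1/2}\|\,\|(S_N+\lambda)^{-a}S^{a}\|$. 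This is at most $\lambda^{c}\cdot\frac{15}{16}\cdot1\cdot4\cdot\lambda^{a-1/2}\cdot4=15\lambda^{\frac{s-p}{2(1-\theta)}}$, where the last factor uses Cordes ($16^a\le4$). The population term satisfies $\lambda\|S^{a+b}(S+\lambda)^{-1}\|\le\lambda^{a+b}=\lambda^{\frac{s-p}{2(1-\theta)}}$, which completes the constant $16$. The same comparison at $\lambda$ gives the case-(2) coefficient bound.
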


The proof is given in Appendix~\ref{Section: proofs approximation interpolation}. It first reduces the ridge error to operator norms involving the empirical resolvent $(\widetilde{\Sigma}+\lambda I)^{-1}$, and then bounds these norms on a high-probability concentration event.

\begin{remark}
Theorem \ref{Theorem: abstract error estimate for regression in interpolation spaces} has the following interpretation. If $p\le\theta$, the regression norm is stronger than the error norm, and the same sampling event with $\varsigma_{N}(\delta,q,\theta,1-\theta)$ controls the error simultaneously for all admissible $p$ in this range. In particular, for $2\theta-1\le p\le\theta$, choosing $\lambda\lesssim\varsigma_{N}$ yields the spectral rate predicted by the source smoothness. The threshold $\bar{p}=\max(p,2\theta-1)$ records a saturation in weaker norms: when $p<2\theta-1$, lowering the error norm does not further improve the rate because the coefficient estimate is limited by the concentration of $\tilde{\Sigma}$. If $p\ge\theta$, the error norm is stronger than the regression norm, and the admissible penalty depends on $1-p$; thus larger $p$ requires a correspondingly stronger regularization condition. Finally, the coefficient bound shows that $|\boldsymbol{\beta}^{*}|$ increases as $\lambda\to0^{+}$, while $s=1$ and $\lambda\gtrsim\varsigma_{N}$ give the scale $|\boldsymbol{\beta}^{*}|\lesssim N^{-\frac{1}{2}}$.
\end{remark}

\section{Uniform approximation of regularity classes by random Fourier features} \label{Section: Estimate of d(lambda) and the convergence rate, interpolation}

\subsection{Two equivalent forms of random Fourier features}
\label{subsection: two equivalent forms of random Fourier features}
We introduce two real implementations of random Fourier features~\cite{rahimi2007random} generated by the same frequency measure $\mr{d}\tau(w)$. They are distinguished at the level of the finite-dimensional trial space, but they induce the same population kernel.

\paragraph{\textbf{Randomly shifted cosine features.}}
Consider the parameter space $\mc{V}_{\operatorname{ph}}=\mb{R}^{d}\times[0,\pi]$ equipped with the probability measure $\mr{d}\mu_{\tau}^{\operatorname{ph}}(w,b)=\pi^{-1}\mr{d}\tau(w)\mr{d}b$. For $v=(w,b)\in \mc{V}_{\operatorname{ph}}$, the feature function is defined by $\phi_{\operatorname{ph}}(x,(w,b))=\sqrt{2}\cos(w^{\top}x+b)$. The corresponding finite expansion is
\begin{equation} \label{Eq: linear combination of random Fourier features, cos(wx+b)}
u_{N}^{\operatorname{ph}}(x) = \sum_{j=1}^{N} \alpha_{j}\sqrt{2}\cos \left(w_{j}^{\top} x + b_{j}\right).
\end{equation}
According to \eqref{eq: kernel itself as expectation section2}, the corresponding kernel is
\begin{equation} \label{eq: translation invariant kernel for random Fourier feature}
\begin{aligned}
k(x,y) & = \frac{2}{\pi}\int_{\mb{R}^{d}}\int_{0}^{\pi} \cos(w^{\top}x+b)\cos(w^{\top}y+b)\mr{d}b \mr{d}\tau(w) \\
& = \frac{1}{\pi}\int_{\mb{R}^{d}}\int_{0}^{\pi} \cos(w^{\top}(x-y))\mr{d}b \mr{d}\tau(w) \\
& = \int_{\mb{R}^{d}} \cos(w^{\top}(x-y))\mr{d}\tau(w) .
\end{aligned}
\end{equation}

\paragraph{\textbf{Cosine-sine features from the complex exponential representation.}}
Alternatively, consider $\mc V_{\operatorname{cx}}=\mb R^d$ equipped with $\mr d\mu_{\tau}^{\operatorname{cx}}(w)=\mr d\tau(w)$ and use the complex Fourier feature $\phi_{\operatorname{cx}}(x,w)=e^{\mathrm{i}w^{\top}x}$. According to \eqref{eq: kernel itself as expectation section2}, if $\tau(A)=\tau(-A)$ for every Borel set $A\subset\mb{R}^{d}$, this feature induces
\[
k_{\operatorname{cx}}(x,y) =\int_{\mb{R}^{d}}e^{\mathrm{i}w^{\top}(x-y)}\mr{d}\tau(w) = \int_{\mb{R}^{d}}\cos(w^{\top}(x-y))\mr{d}\tau(w).
\]
This kernel is identical to the one in \eqref{eq: translation invariant kernel for random Fourier feature}, and therefore induces the same real RKHS and associated integral operator $\Sigma$. Writing $\gamma_j=a_j-\mathrm{i}b_j$ gives the equivalent real-valued expansion
\begin{equation} \label{Eq: linear combination of random Fourier features, cosine-sine}
u_N^{\operatorname{cs}}(x)
:=\operatorname{Re}\sum_{j=1}^{N}\gamma_j e^{\mathrm{i}w_j^{\top}x}
=\sum_{j=1}^{N}\left[a_j\cos(w_j^{\top}x)+b_j\sin(w_j^{\top}x)\right].
\end{equation}
Thus each sampled frequency contributes the two real components $\psi_{\operatorname{cs}}(x,w)=\left(\cos(w^{\top}x),\sin(w^{\top}x)\right)$. We use $\varrho\in\{\operatorname{ph},\operatorname{cx}\}$ to index the two representations and write $(\mc V_{\varrho},\mu_{\tau}^{\varrho},\phi_{\varrho})$ for the corresponding parameter space, sampling measure, and feature map.

To apply the spectral framework above on the whole space $L^2(\Omega)$, we need the associated kernel integral operators to be injective. All frequency measures considered below have symmetrized support with nonempty interior. The following lemma therefore implies that every such operator is injective and that its positive spectral subspace $(\ker\Sigma)^\perp$ coincides with $L^2(\Omega)$.

\begin{lemma}[Injectivity of translation-invariant kernel integral operators] \label{Lemma: injectivity of translation-invariant kernel integral operators}
Let $\Omega\subset\mb R^d$ be a bounded domain, let $\tau$ be a finite nonnegative Borel measure on $\mb R^d$, and let $\Sigma$ be the integral operator on $L^2(\Omega)$ induced by the kernel \eqref{eq: translation invariant kernel for random Fourier feature}. Denote the symmetrization of $\tau$ by $\tau_s(A):=[\tau(A)+\tau(-A)]/2$ for Borel sets $A\subset\mb R^d$. If $\operatorname{supp}\tau_s$ has nonempty interior, then $\ker\Sigma=\{0\}$; hence every eigenvalue of $\Sigma$ is strictly positive.
\end{lemma}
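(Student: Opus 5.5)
We will show that $\langle \Sigma f,f\rangle_{L^2(\Omega)}=0$ forces $f=0$. Since $\Sigma$ is positive and self-adjoint, $\ker\Sigma=\{0\}$ then follows. Fix $f\in L^2(\Omega)$ and extend it by zero to $\mb R^d$. Because $\Omega$ is bounded, this extension lies in $L^1(\mb R^d)\cap L^2(\mb R^d)$. Its Fourier transform $\hat f$ is therefore continuous; by Paley--Wiener it is in fact the restriction of an entire function on $\mb C^d$, so $\hat f$ is real-analytic on $\mb R^d$.

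First we compute the quadratic form. Write $\cos(w^\top(x-y))=\operatorname{Re} e^{\mathrm i w^\top x}e^{-\mathrm i w^\top y}$ and use Fubini, which is justified because $\tau$ is finite and $f\in L^1(\Omega)$. Equivalently, symmetrize $\cos(w^\top(x-y))$ over $w\mapsto -w$ and integrate against $\tau_s$. Either way we obtain
\[
\langle\Sigma f,f\rangle=\int_{\mb R^d}\int_\Omega\int_\Omega \cos(w^\top(x-y))f(y)\overline{f(x)}\,\mr dy\,\mr dx\,\mr d\tau(w)
=(2\pi)^d\int_{\mb R^d}|\hat f(w)|^2\,\mr d\tau_s(w).
\]
The second equality uses $\int\int e^{\mathrm i w^\top(x-y)}\overline{f(x)}f(y)\,\mr dx\,\mr dy=(2\pi)^d|\hat f(w)|^2$. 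The cosine is the average of $e^{\mathrm i w^\top(x-y)}$ and $e^{-\mathrm i w^\top(x-y)}$, and the second term corresponds to evaluating at $-w$. Hence averaging the two gives integration against $\tau_s$.

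Now suppose $\Sigma f=0$. Then $\int|\hat f|^2\,\mr d\tau_s=0$, so $\hat f=0$ $\tau_s$-almost everywhere. By continuity of $\hat f$, it vanishes on $\operatorname{supp}\tau_s$. Indeed, if $\hat f(w_0)\neq0$ with $w_0\in\operatorname{supp}\tau_s$, then $|\hat f|^2$ is bounded below on a neighborhood of $w_0$. That neighborhood has positive $\tau_s$-measure by the definition of support, which is a contradiction. Since $\operatorname{supp}\tau_s$ has nonempty interior, $\hat f$ vanishes on a nonempty open set. Real-analyticity (the identity theorem) then gives $\hat f\equiv0$ on $\mb R^d$. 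Plancherel now yields $f=0$ in $L^2(\mb R^d)$, hence in $L^2(\Omega)$.

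Finally, $\Sigma$ is a positive compact operator, so injectivity means $0$ is not an eigenvalue. Every eigenvalue is therefore strictly positive, and the eigenfunctions span $(\ker\Sigma)^\perp=L^2(\Omega)$.

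The main point needing care is the analyticity of $\hat f$, which is what upgrades vanishing on an open set to vanishing everywhere. This follows because $f$ has compact support: differentiation under the integral in $\hat f(\xi)=(2\pi)^{-d/2}\int_\Omega f(x)e^{-\mathrm i x\cdot\xi}\,\mr dx$ is allowed for complex $\xi$, since $|e^{-\mathrm i x\cdot\xi}|\le e^{R|\operatorname{Im}\xi|}$ on $\Omega\subset B_R$. This makes $\hat f$ entire.
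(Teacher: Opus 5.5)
Your proof is correct and takes essentially the same route as the paper's. You use Fubini to rewrite $\langle\Sigma f,f\rangle$ as $(2\pi)^d\int|\hat f|^2\,\mathrm{d}\tau_s$, use continuity to get vanishing on $\operatorname{supp}\tau_s$, and conclude via the Paley--Wiener entire extension, the identity theorem, and injectivity of the Fourier transform. Your explicit $\pm w$ averaging and your direct argument that $\hat f$ is entire fill in details that the paper handles by rewriting the kernel with $\tau_s$ and by citation.
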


\begin{remark}
The support assumption on $\tau_s$ is essential. Without it, injectivity of $\Sigma$ may fail. For example, if $\tau$ is a finite discrete measure, then the associated Fourier kernel has finite rank, and hence the corresponding integral operator has a nontrivial null space on the infinite dimensional space $L^{2}(\Omega)$.
\end{remark}
The proof is given in Appendix~\ref{Section: proofs uniform approximation random Fourier features}.

\subsection{Approximation with regularity-adapted reference measures}

We now derive convergence rates uniformly over several target classes. For $\kappa,a>0$, define
\[
\mc F_{\kappa,a}(\Omega):=\{u:\|u\|_{\kappa,a}<\infty\}, \qquad \|u\|_{\kappa,a}:= \inf_{U|_{\Omega}=u}\left\|e^{\kappa|\cdot|^{a}}\widehat
U\right\|_{L^{2}(\mb{R}^{d})},
\]
where the infimum is taken over all $U\in L^{2}(\mb{R}^{d})$ whose restriction to $\Omega$ is $u$ and whose weighted Fourier norm is finite. For $S>0$, let $\mc B_S(\Omega)$ be the space of restrictions to $\Omega$ of functions in $L^2(\mb R^d)$ bandlimited to $[-S,S]^d$, equipped with the norm
\[
\|u\|_{\mc B_{S}(\Omega)} :=\inf\{\|U\|_{L^{2}(\mb{R}^{d})}:U|_{\Omega}=u,\ \operatorname{supp}\widehat U\subset[-S,S]^{d}\}.
\]
For any normed space $X$, denote its closed unit ball by $\mb B_X:=\{u\in X:\|u\|_X\le1\}$.

Theorem~\ref{Theorem: interpolation improved convergence rate of RFM} is the first result in this paper that establishes spectral convergence for random Fourier feature approximation. It combines regularity-adapted reference measures with optimal leverage-score sampling and controls both the approximation error and the coefficient norm. The theorem applies separately to the two feature representations introduced in Subsection~\ref{subsection: two equivalent forms of random Fourier features}. We use $\varrho\in\{\operatorname{ph},\operatorname{cx}\}$ to denote the phase and complex representations, respectively, and write $u_N^{\varrho}$ and $\alpha^\varrho(u)$ for the corresponding approximant and coefficient vector, where $N$ is the number of sampled frequencies. For real-valued targets, the complex representation is realized by the cosine--sine expansion \eqref{Eq: linear combination of random Fourier features, cosine-sine}.

For $\rho>0$, write $Q_\rho:=(-\rho,\rho)^d$. Suppose that $\Omega$ is contained, after a translation, in $Q_R$, and let $\widetilde\Omega$ be a translated copy of $Q_R$ containing $\Omega$. For each case in Theorem~\ref{Theorem: interpolation improved convergence rate of RFM}, choose $\mr d\tau$ and $\lambda$ as specified there. For each representation $\varrho$, let $q^*_{\lambda,\varrho}$ be the leverage-score density with respect to $\mu_\tau^\varrho$ for the kernel integral operator on $\Omega$ in case (1) and on $\widetilde\Omega$ in cases (2)--(4). Draw $\{v_j^\varrho\}_{j=1}^N$ independently from $q^*_{\lambda,\varrho}\mr d\mu_\tau^\varrho$. For a density $q$ on $\mc V_{\varrho}$ and points $v_j\in\mc V_{\varrho}$, define $|\alpha|_{\ell^{2}(q)}:= (\sum_{j=1}^{N}q(v_j)|\alpha_j|^2)^{1/2}$ for $\alpha\in\mb C^N$.

\begin{theorem}[Regularity-adapted reference measures] \label{Theorem: interpolation improved convergence rate of RFM}
Let $\Omega$, $R$, and $\widetilde\Omega$ be as above. Let $\nu\ge0$ and $0<\delta<1$. Let $M>0$ and $N\in\mb N_{+}$ satisfy $M\ge e\delta/14$ and $N\ge 3M\ln(14M/\delta)$. In (2)--(4), assume additionally that $M$ is sufficiently large in terms of the fixed parameters. In (2) and (3), let $0<\kappa\le\bar\kappa$.

For each case and representation $\varrho$, with probability at least $1-\delta$, every admissible target $u$ admits coefficients $\alpha^{\varrho}(u)$ such that $u_N^{\varrho}$ satisfies the stated bounds for all admissible error indices. The event depends only on the corresponding sampled features.

\emph{(1) Sobolev ball.} Let $\nu+d/2<s\le\bar{s}$ and $\mr{d}\tau\simeq (1+|w|^{2})^{-\bar{s}}\mr{d}w$. There is a constant $c_1=c_1(d,\bar s,\nu,\Omega)>0$.
Set $\lambda=c_{1}M^{-2(\bar{s}-\nu)/d}$. Then, for all $u\in\mb B_{H^s(\Omega)}$ and $\max\{0,2\nu-\bar{s}\}\le t\le \nu$,
\[
\begin{aligned}
&\|u-u_N^{\varrho}\|_{H^t(\Omega)}
\lesssim_{d,\bar{s},s,\nu,t,\Omega}M^{-(s-t)/d},\\
&|\alpha^{\varrho}(u)|_{\ell^{2}(q^*_{\lambda,\varrho})}
\lesssim_{d,\bar{s},s,\nu,\Omega}
N^{-1/2}M^{(\bar{s}-s)/d}.
\end{aligned}
\]

\emph{(2) Stretched-exponential Fourier ball.} Let $s\ge1$ and $\mr{d}\tau\propto e^{-2\bar\kappa|w|^{1/s}}\mr{d}w$. There exist constants $c_{\lambda},a_{\lambda},a_{\mathrm e}>0$ and $a_{\mathrm c}\ge0$, depending only on $\kappa,\bar\kappa,d,s,R$. Set $\lambda=c_{\lambda}\exp(-a_{\lambda}M^{1/(sd)})$. Then, for all $u\in\mb B_{\mc F_{\kappa,1/s}(\Omega)}$, $t\ge0$, and $1\le p\le\infty$,
\[
\begin{aligned}
&\|u-u_N^{\varrho}\|_{W^{t,p}(\Omega)}
\lesssim_{\kappa,\bar\kappa,d,s,t,p,R}
\exp(-a_{\mathrm e}M^{1/(sd)}),\\
&|\alpha^{\varrho}(u)|_{\ell^{2}(q^*_{\lambda,\varrho})}
\lesssim_{\kappa,\bar\kappa,d,s,R}
N^{-1/2}\exp(a_{\mathrm c}M^{1/(sd)}).
\end{aligned}
\]

\emph{(3) Super-exponential Fourier ball.} Let $s>1$ and $\mr{d}\tau\propto e^{-2\bar\kappa|w|^{s}}\mr{d}w$. There exist constants $c_{\lambda},a_{\lambda},a_{\mathrm e}>0$ and $a_{\mathrm c}\ge0$, depending only on $\kappa,\bar\kappa,d,s,R$. Set $\lambda=c_{\lambda}\exp(-a_{\lambda}M^{1/d}\ln M)$. Then, for all $u\in\mb B_{\mc F_{\kappa,s}(\Omega)}$, $t\ge0$, and $1\le p\le\infty$,
\[
\begin{aligned}
&\|u-u_N^{\varrho}\|_{W^{t,p}(\Omega)}
\lesssim_{\kappa,\bar\kappa,d,s,t,p,R}
\exp(-a_{\mathrm e}M^{1/d}\ln M),\\
&|\alpha^{\varrho}(u)|_{\ell^{2}(q^*_{\lambda,\varrho})}
\lesssim_{\kappa,\bar\kappa,d,s,R}
N^{-1/2}\exp(a_{\mathrm c}M^{1/d}\ln M).
\end{aligned}
\]

\emph{(4) Bandlimited ball.} Let $S>0$ and $\mr{d}\tau=(2S)^{-d}1_{(-S,S)^{d}}(w)\mr{d}w$. There exist positive constants $c_{\lambda},a_{\lambda}$, and $a_{\mathrm e}$, depending only on $d,S,R$. Set $\lambda=c_{\lambda}\exp(-a_{\lambda}M^{1/d}\ln M)$. Then, for all $u\in\mb B_{\mc B_{S}(\Omega)}$, $t\ge0$, and $1\le p\le\infty$,
\[
\begin{aligned}
&\|u-u_N^{\varrho}\|_{W^{t,p}(\Omega)}
\lesssim_{d,t,p,S,R}\exp(-a_{\mathrm e}M^{1/d}\ln M),\\
&|\alpha^{\varrho}(u)|_{\ell^{2}(q^*_{\lambda,\varrho})}
\lesssim_{d,S,R}N^{-1/2}.
\end{aligned}
\]
All constants are independent of $M,N,\delta$, and $u$; they may depend on the fixed domain and the displayed regularity and kernel parameters.
\end{theorem}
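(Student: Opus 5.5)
The plan is to obtain every case from Theorem~\ref{Theorem: abstract error estimate for regression in interpolation spaces}, used with leverage-score sampling. For that density, $d_{\max}(q^*_\lambda,\lambda;\theta,\gamma)=d(\lambda;\theta,\gamma)$, so admissibility \eqref{ineq: N > d_max(lambda) ln(d(lambda)/delta), interpolation} reduces to $N\ge 3d(\lambda)\max\{\ln(14d(\lambda)/\delta),1\}$. Suppose $\lambda$ is chosen so that $d(\lambda;\theta,\gamma)\le M$. Then $N\ge 3M\ln(14M/\delta)$ together with $M\ge e\delta/14$ makes $\lambda$ admissible. The proof therefore reduces to three ingredients:
\begin{enumerate}
\item[(a)] identify the target class and the error norms with the interpolation scale $\mc H^a$ of the Fourier kernel;
\item[(b)] bound $d(\lambda)$, equivalently the eigenvalue counting function of $\Sigma$;
\item[(c)] convert the coefficient bound $|\boldsymbol\beta^*|$ into the $\ell^2(q^*)$ bound via $\alpha_j=q(v_j)^{-1/2}\beta_j$, since $|\alpha|_{\ell^2(q)}=|\boldsymbol\beta|$.
\end{enumerate}
The event in Theorem~\ref{Theorem: abstract error estimate for regression in interpolation spaces} depends only on the features, which gives uniformity over the target ball. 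The single $\boldsymbol\beta^*$ gives simultaneity in the error norms. Lemma~\ref{Lemma: injectivity of translation-invariant kernel integral operators} guarantees that $\Sigma$ is injective, so $\mc H^0=L^2$.

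\emph{Case (1).} For $\mr d\tau\simeq(1+|w|^2)^{-\bar s}\mr dw$ the RKHS on a Lipschitz $\Omega$ is $H^{\bar s}(\Omega)$ with equivalent norms, using the Stein extension operator. Proposition~\ref{Proposition: interpolation between RKHS and L2} then gives $\mc H^a=H^{a\bar s}(\Omega)$ for $a\in(0,1)$. Weyl-type asymptotics give $\lambda_j\simeq j^{-2\bar s/d}$; this can be obtained by comparison with Sobolev embedding entropy numbers.

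I will take $s\to s/\bar s$, $\theta=\nu/\bar s$, and $p=t/\bar s$. The condition $s>\nu+d/2$ makes $\sum_j\lambda_j^{1-\theta}<\infty$. Using part (1) of the abstract theorem with $\gamma=1-\theta$, we have $r^2(x)\,x\lesssim x^{1-\theta}/(x^{1-\theta}+\lambda)$. Hence $d(\lambda)\simeq\#\{j:\lambda_j^{1-\theta}\gtrsim\lambda\}\simeq\lambda^{-d/(2(\bar s-\nu))}$, so $\lambda=c_1M^{-2(\bar s-\nu)/d}$ gives $d\le M$. The error exponent $(s-\bar p)/(2(1-\theta))$ in these units is $(s-t)/d$ provided $t\ge 2\nu-\bar s$; this is exactly the stated lower limit on $t$. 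The coefficient bound follows the same way.

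\emph{Cases (2)--(4).} I work on the cube $\widetilde\Omega$ and restrict the result to $\Omega$. For a target in the Fourier ball, I would take $\theta$ close to $1$ but below it, and aim to show that $u\in\mc H^1=\mc H_k$ with norm bounded via \eqref{eq: RKHS feature-space characterization}. Write $U=\mc T g$ with $g=\widehat U/\sqrt{\text{density of }\tau}$. Then $\|g\|^2\lesssim\int|\widehat U|^2e^{2\bar\kappa|w|^{a}}$, which is finite only if $\kappa\ge\bar\kappa$. This is the wrong direction, so I will need fractional source regularity: take $s<1$ with a weighted Hölder interpolation between the $e^{\kappa}$-weight and $L^2$.

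Derivative norms $W^{t,p}$ are more delicate. The idea is to take $\theta$ (or $p$ in the abstract theorem) close to $1$, so that $\mc H^{p}$ controls $e^{c|w|^a}$-weighted norms, which dominate every Sobolev norm on the cube. Then Sobolev embedding on $\widetilde\Omega$ handles $1\le p\le\infty$, at the cost of constants depending on $t,p$.

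The main obstacle is step (b) for these analytic kernels. One needs eigenvalue decay such as $\lambda_j\lesssim\exp(-cj^{1/(sd)})$ in case (2), and $\exp(-cj^{1/d}\ln j)$ in the super-exponential and bandlimited cases, on the cube. I would prove it by the low-rank approximation mechanism used later in Section~\ref{Section: Exponential ill conditionality of random feature matrices}. Truncate the frequencies at $|w|\le W$, with tail mass about $e^{-2\bar\kappa W^{a}}$. Then Taylor-expand $e^{iw^\top x}$ on $Q_R$ to degree $n$ in each variable, which gives a rank $\lesssim n^d$ approximation with error $(eWR/n)^n/\dots$. Balancing $W$ and $n$ yields the stated rates. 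The Weyl inequality for $\Sigma$ then bounds $\lambda_{n^d+1}$, and summing the decaying spectrum bounds $d(\lambda)$ by $M$ with $\lambda=c_\lambda\exp(-a_\lambda M^{1/(sd)})$ or $c_\lambda\exp(-a_\lambda M^{1/d}\ln M)$. In the bandlimited case the source condition can be taken with $s=1$ directly, since $\tau$ is uniform on the band. This explains why the coefficient bound there is $N^{-1/2}$.
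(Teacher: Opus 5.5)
Your reduction matches the paper's. Leverage-score sampling gives $d_{\max}=d(\lambda)$. Then $d(\lambda;\theta,1-\theta)\le M$, together with $M\ge e\delta/14$ and $N\ge3M\ln(14M/\delta)$, yields $\varsigma_N\le\lambda$. Theorem~\ref{Theorem: abstract error estimate for regression in interpolation spaces}(1) is then applied on a feature-only event, and $|\alpha|_{\ell^2(q^*)}=|\boldsymbol\beta^*|$. Your indices also agree with the paper's: $\theta=\nu/\bar s$, $\sigma=s/\bar s$, $\zeta=t/\bar s$ in case (1), source index $\sigma=\kappa/\bar\kappa$ in (2)--(3), and $\sigma=1$ in (4).

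You differ genuinely only in the spectral step (b). The paper dominates the radial multiplier by a product multiplier, using Lemma~\ref{Lemma: monotonicity of certain integral operators} and H\"older comparisons of $|w|^{1/s}$ and $|w|^s$ with coordinate sums. It then reads off tensor-product eigenvalues and invokes Widom's one-dimensional asymptotics, followed by the summation lemmas of Appendix~C.

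Your direct $d$-dimensional low-rank argument works. For $f\perp\mc P_n$ (polynomials of total degree at most $n$ on $\widetilde\Omega$), $\langle\Sigma f,f\rangle=\int|\langle f,e^{\mathrm i w^\top\cdot}\rangle|^2\mr d\tau(w)\le|\widetilde\Omega|\,\|f\|^2\bigl[(\sqrt d\,WR)^{2n+2}/((n+1)!)^2+\tau(|w|>W)\bigr]$. Courant--Fischer then bounds $\lambda_{\dim\mc P_n+1}$ (a Weyl perturbation bound also suffices, with a worse constant). The choices $W\asymp n$, $W^s\asymp n\ln n$, and $W=\sqrt d\,S$ give $\lambda_j\lesssim e^{-cj^{1/(sd)}}$ and $e^{-cj^{1/d}\ln j}$, as required.

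What each route buys:
\begin{itemize}
\item Your route is non-asymptotic and self-contained, and it avoids the tensor-product reduction. It is the mechanism the paper itself uses in Section~\ref{Section: Exponential ill conditionality of random feature matrices} and, through the explicit PSWF bound, in Lemma~\ref{Lemma: growing bandwidth PSWF effective dimension}.
\item The paper's route yields sharp limiting exponents, namely the elliptic-integral constant at $s=1$ and $2-2/s$ in case (3), which feed Table~\ref{Table: explicit rate constants in regularity adapted theorem}.
\end{itemize}
Since the theorem only asserts the existence of $a_\lambda$, $a_{\mathrm e}$, $a_{\mathrm c}$, your cruder constants suffice.

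\emph{Index choice.} Do not take $\theta$ or the error index close to $1$. Theorem~\ref{Theorem: abstract error estimate for regression in interpolation spaces} requires both to be at most $\sigma=\kappa/\bar\kappa$, which may be small. Nothing is gained by taking them large anyway. For the exponential kernels, and for $k_S$ via Bernstein's inequality, $\mc H^\zeta(\widetilde\Omega)$ embeds into every $W^{t,p}(\widetilde\Omega)$ for every $\zeta>0$. This follows from the isometric minimal-norm extension at $\zeta=1$ followed by interpolation (Proposition~\ref{Proposition: relation between fractional Sobolev and interpolation space of RKHS on bounded domains}(2),(3)). The paper takes $\theta=\sigma/2$ and $\zeta=\sigma/4$, and $\theta=1/2$, $\zeta=1/4$ in case (4).

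\emph{Source embedding for $\sigma<1$.} This must come from real interpolation of the restriction map, between the endpoint pairs $L^2(\mb R^d)\to L^2(\widetilde\Omega)$ and $\mc H_k(\mb R^d)\to\mc H_k(\widetilde\Omega)$, combined with Proposition~\ref{Proposition: interpolation between RKHS and L2}. A moment (H\"older) inequality in $\Sigma$ will not do, because $U\notin\mc H_k$ when $\kappa<\bar\kappa$.
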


One admissible explicit choice of the rate constants in cases (2)--(4) is listed in Table~\ref{Table: explicit rate constants in regularity adapted theorem}.
Put $\sigma:=\kappa/\bar\kappa$ and $\xi_s:=1-1/s$. In case (2), let $b_2$ denote the right-hand side of \eqref{eq: def of kappa_s in subexponential and exponential convergence rate, interpolation} with $\theta=\sigma/2$ and the decay parameter there replaced by $\bar\kappa$. For case (3), set
\[
b_3:=\frac{2^{-1/d}}
{e\bigl(\xi_s^{-1}(1-\sigma/2)^{-1}+3/2\bigr)}.
\]
\begin{table}[!htbp]
\centering
\small
\caption{Explicit admissible rate constants in Theorem~\ref{Theorem: interpolation improved convergence rate of RFM}.}
\label{Table: explicit rate constants in regularity adapted theorem}
\renewcommand{\arraystretch}{1.35}
\begin{tabular}{cccc}
\toprule
Case & $a_\lambda$ & $a_{\mathrm e}$ & $a_{\mathrm c}$ \\
\midrule
(2) & $b_2/2$
& $\dfrac{3\sigma}{4(2-\sigma)}a_\lambda$
& $\dfrac{1-\sigma}{2-\sigma}a_\lambda$ \\
(3) & $\dfrac12(1-\sigma/2)\xi_s b_3$
& $\dfrac{3\sigma}{4(2-\sigma)}a_\lambda$
& $\dfrac{1-\sigma}{2-\sigma}a_\lambda$ \\
(4) & $\dfrac{2^{1-1/d}}{28e}$
& $3a_\lambda/4$
& $0$ \\
\bottomrule
\end{tabular}
\end{table}

In case (1), $s$ specifies the Sobolev regularity of the target class, $\bar{s}$ determines the polynomial decay of the reference frequency measure and hence the kernel smoothness scale, and $\nu$ is the highest Sobolev order controlled by the approximation estimate. In cases (2) and (3), $\kappa$ and $\bar\kappa$ play the corresponding target and reference roles: $\kappa$ quantifies the Fourier decay of the target class, whereas $\bar\kappa$ determines the reference frequency distribution and hence the kernel interpolation scale.

The parameter $M$ is an auxiliary complexity scale. For fixed $\delta$ and sufficiently large $N$, the sampling condition permits $M\asymp L_N$, where $L_N:=N/\log(N/\delta)$, which gives the sample-size rates reported in Table~\ref{Table: comparison of reference measures and sample size rates}. Case (1) gives the algebraic rate $L_N^{-(s-t)/d}$, whereas case (2) gives the stretched-exponential rate $\exp(-cL_N^{1/(sd)})$, with the exponential endpoint $s=d=1$. Cases (3) and (4) yield $\exp(-cL_N^{1/d}\log L_N)$, which is super-exponential in the linear resolution scale $L_N^{1/d}$.

A key strength of Theorem~\ref{Theorem: interpolation improved convergence rate of RFM} is its twofold uniformity. With probability at least $1-\delta$, the single sampled space $\operatorname{span}\{\phi_{\varrho}(\cdot,v_j^{\varrho}):1\le j\le N\}$ contains an approximant attaining the asserted convergence rate for every target $u$ in the prescribed class. For each such $u$, one can choose a single coefficient vector $\alpha^{\varrho}(u)$ so that the corresponding approximant $u_N^{\varrho}$ satisfies all admissible error estimates simultaneously, indexed by $t$ in case (1) and by $(t,p)$ in cases (2)--(4). Thus the high-probability event is uniform over the target class, while the selected approximant is uniform over the error norms. Both uniformities follow from the abstract error estimate in Theorem~\ref{Theorem: abstract error estimate for regression in interpolation spaces}: the high-probability event is independent of the target, and the resulting error bounds hold simultaneously throughout the admissible interpolation scale.

The proof is given in Appendix~\ref{Section: proof uniform approximation regularity balls}.

\subsection{Approximation with growing-bandwidth uniform reference measures}

This subsection establishes spectral convergence for a simple, broadly applicable reference-measure design: a uniform distribution on a frequency cube whose support expands at an explicit rate. The bandwidth growth law adapts this readily implementable construction to different target regularities, and the features are sampled from the associated leverage-score distribution. All results below apply separately to each fixed representation $\varrho\in\{\operatorname{ph},\operatorname{cx}\}$. Since $\tau_S$ is symmetric, the two representations induce the same kernel integral operator and hence the same effective dimension, while their leverage-score densities and sampled features are representation dependent. We therefore use $q^*_{\lambda,\varrho}$ and $\mu_{\tau_S}^{\varrho}$ for the corresponding density and parameter measure. For real-valued targets in the complex representation, taking the real part gives the cosine--sine realization \eqref{Eq: linear combination of random Fourier features, cosine-sine}; the error bounds are preserved by contractivity, and $|\gamma_j|^2=a_j^2+b_j^2$ preserves the weighted coefficient norm.

Figure~\ref{Figure: fixed and growing reference frequency measures} summarizes the fixed and growing reference measures used below.
\begin{figure}[!htbp]
\centering
\begingroup
\definecolor{ink}{HTML}{1F2937}
\definecolor{muted}{HTML}{667085}
\definecolor{grid}{HTML}{D7DEE8}
\definecolor{panel}{HTML}{F8FAFC}
\definecolor{sobolev}{HTML}{2563EB}
\definecolor{stretch}{HTML}{E58A18}
\definecolor{superexp}{HTML}{A44A9D}
\definecolor{band}{HTML}{198C67}
\tikzset{ axis/.style={draw=grid, line width=.45pt, -{Latex[length=1.8mm]}}, profile/.style={line width=1.45pt, line cap=round, line join=round}, panelbox/.style={draw=grid, fill=panel, rounded corners=2.5mm, line width=.6pt}, smalllabel/.style={font=\scriptsize, text=muted}, lawtitle/.style={font=\small\bfseries, text=ink}, formula/.style={font=\scriptsize, text=ink}, }
\newcommand{\freqsquare}[5]{\draw[draw=#4, fill=#4!#5, line width=.7pt, rounded corners=.4mm]
(#1-#3/2,#2-#3/2) rectangle (#1+#3/2,#2+#3/2); \draw[draw=#4!75, line width=.45pt, -{Latex[length=1.35mm]}] (#1-#3/2-.10,#2) -- (#1+#3/2+.10,#2); \draw[draw=#4!75, line width=.45pt, -{Latex[length=1.35mm]}] (#1,#2-#3/2-.10) -- (#1,#2+#3/2+.10); }
\newcommand{\linearbenchmark}[3]{\draw[draw=muted!48, dashed, line width=.55pt, rounded corners=.4mm]
(#1-#3/2,#2-#3/2) rectangle (#1+#3/2,#2+#3/2); }
\resizebox{\textwidth}{!}{\begin{tikzpicture}[x=1cm,y=1cm,font=\small,text=ink]
\path[use as bounding box] (-5pt,-5pt) rectangle ([xshift=5pt,yshift=5pt]20.6,10.25);

\path[panelbox] (0,0) rectangle (9.95,10.25);
\node[anchor=west,font=\large\bfseries] at (.42,9.82)
  {(a) Reference measures in Theorem 3.2};
\node[anchor=west,smalllabel] at (.43,9.42)
  {Schematic one-dimensional profiles; normalizing constants are suppressed.};

\draw[axis] (.72,7.42) -- (4.42,7.42) node[below left=1pt,smalllabel] {$r=|w|$};
\draw[axis] (.86,7.25) -- (.86,8.55);
\draw[profile,sobolev,domain=0:3.35,samples=80]
  plot ({.86+\x},{7.42+.94/(1+1.55*\x*\x)});
\node[anchor=west,lawtitle,text=sobolev] at (4.72,8.18) {Polynomial tail};
\node[anchor=west,formula] at (4.72,7.72)
  {$\mathrm d\tau(w)\asymp(1+|w|^2)^{-\bar s}\,\mathrm dw$};
\node[anchor=west,formula] at (4.72,7.30)
  {target: $H^s(\Omega)$};

\draw[axis] (.72,5.38) -- (4.42,5.38) node[below left=1pt,smalllabel] {$r=|w|$};
\draw[axis] (.86,5.21) -- (.86,6.51);
\draw[profile,stretch,domain=0:3.35,samples=80]
  plot ({.86+\x},{5.38+.94*exp(-1.18*sqrt(\x))});
\node[anchor=west,lawtitle,text=stretch] at (4.72,6.14) {Stretched-exponential tail};
\node[anchor=west,formula] at (4.72,5.68)
  {$\mathrm d\tau(w)\propto e^{-2\bar\kappa|w|^{1/s}}\,\mathrm dw$, $s\ge1$};
\node[anchor=west,formula] at (4.72,5.26)
  {target: $\mathcal F_{\kappa,1/s}(\Omega)$};

\draw[axis] (.72,3.34) -- (4.42,3.34) node[below left=1pt,smalllabel] {$r=|w|$};
\draw[axis] (.86,3.17) -- (.86,4.47);
\draw[profile,superexp,domain=0:3.35,samples=80]
  plot ({.86+\x},{3.34+.94*exp(-.63*\x*\x)});
\node[anchor=west,lawtitle,text=superexp] at (4.72,4.10) {Super-exponential tail};
\node[anchor=west,formula] at (4.72,3.64)
  {$\mathrm d\tau(w)\propto e^{-2\bar\kappa|w|^s}\,\mathrm dw$, $s>1$};
\node[anchor=west,formula] at (4.72,3.22)
  {target: $\mathcal F_{\kappa,s}(\Omega)$};

\draw[axis] (.72,1.30) -- (4.42,1.30) node[below left=1pt,smalllabel] {$w_1$};
\draw[axis] (.86,1.13) -- (.86,2.43);
\draw[profile,band] (1.12,1.30) -- (1.12,2.20) -- (3.40,2.20) -- (3.40,1.30);
\fill[band!14] (1.12,1.30) rectangle (3.40,2.20);
\draw[profile,band] (1.12,1.30) -- (1.12,2.20) -- (3.40,2.20) -- (3.40,1.30);
\node[smalllabel,text=band] at (1.12,1.08) {$-S$};
\node[smalllabel,text=band] at (3.40,1.08) {$S$};
\node[anchor=west,lawtitle,text=band] at (4.72,2.06) {Compact spectral support};
\node[anchor=west,formula] at (4.72,1.60)
  {$\mathrm d\tau(w)=(2S)^{-d}1_{(-S,S)^d}(w)\,\mathrm dw$};
\node[anchor=west,formula] at (4.72,1.18)
  {target: $\mathcal B_S(\Omega)$};

\begin{scope}[xshift=-.4cm]
\path[panelbox] (10.65,0) rectangle (21.00,10.25);
\node[anchor=west,font=\large\bfseries] at (11.07,9.82)
  {(b) Growing uniform supports in Theorem 3.4};
\node[anchor=west,formula] at (11.07,9.31)
  {$\displaystyle \mathrm d\tau_{S_J}(w)=(2S_J)^{-d}1_{Q_{S_J}}(w)\,\mathrm dw$,};
\node[anchor=west,formula] at (11.07,8.87)
  {$Q_{S_J}=(-S_J,S_J)^d$: uniform inside the cube and zero outside.};

\node[anchor=west,lawtitle,text=sobolev] at (11.07,8.16)
  {Linear bandwidth growth};
\node[anchor=west,formula] at (11.07,7.72)
  {$S_J=J/(4R_*)$};
\node[anchor=west,smalllabel] at (11.07,7.31)
  {Sobolev and stretched-exponential targets};
\draw[-{Latex[length=2mm]},draw=sobolev!70,line width=.7pt]
  (11.55,5.80) -- (20.45,5.80);
\freqsquare{12.50}{5.80}{.58}{sobolev}{12}
\freqsquare{15.40}{5.80}{1.08}{sobolev}{14}
\freqsquare{18.85}{5.80}{1.58}{sobolev}{16}
\node[smalllabel] at (12.50,5.05) {$J_1$};
\node[smalllabel] at (15.40,4.93) {$J_2$};
\node[smalllabel] at (18.85,4.72) {$J_3$};

\node[anchor=west,lawtitle,text=superexp] at (11.07,4.15)
  {Sublinear log-corrected $1/s$-power growth};
\node[anchor=west,formula] at (11.07,3.71)
  {$S_J=(J\log J)^{1/s}/(4R_*)$, $s>1$, and $S_J/J\to0$};
\node[anchor=west,smalllabel] at (11.07,3.30)
  {Super-exponential Fourier targets};
\draw[-{Latex[length=2mm]},draw=superexp!70,line width=.7pt]
  (11.55,2.08) -- (20.45,2.08);
\linearbenchmark{12.50}{2.08}{.58}
\linearbenchmark{15.40}{2.08}{1.08}
\linearbenchmark{18.85}{2.08}{1.58}
\freqsquare{12.50}{2.08}{.52}{superexp}{11}
\freqsquare{15.40}{2.08}{.73}{superexp}{13}
\freqsquare{18.85}{2.08}{.92}{superexp}{15}
\node[smalllabel] at (12.50,1.40) {$J_1$};
\node[smalllabel] at (15.40,1.27) {$J_2$};
\node[smalllabel] at (18.85,1.10) {$J_3$};
\draw[draw=muted!48, dashed, line width=.55pt]
  (11.12,.48) rectangle (11.42,.78);
\node[anchor=west,smalllabel] at (11.58,.63)
  {dashed outlines: linear-growth benchmark};
\end{scope}

\end{tikzpicture}}
\endgroup \caption{Reference frequency measures underlying Theorems \ref{Theorem: interpolation improved convergence rate of RFM} and \ref{Theorem: optimal growing bandwidth leverage rates}. Panel (a) compares schematic one-dimensional profiles of the four reference measures. Panel (b) illustrates the two bandwidth growth laws for the cube-supported uniform reference measure.}
\label{Figure: fixed and growing reference frequency measures}
\end{figure}

\begin{proposition}[Leverage approximation of a growing-bandwidth target]
\label{Proposition: leverage approximation growing bandwidth target}
Suppose that $\Omega$ is contained, after a translation, in $Q_R$. Let $0<a,\delta<1$ and $N\in\mb N_+$. Suppose that $U\in L^2(\mb R^d)$ and $\operatorname{supp}\widehat U\subset\overline{Q_S}$. Let $J\ge\max\{2SR,2\}$ be an integer. For the constant $C_{a,d}\ge1$ supplied by Lemma~ \ref{Lemma: growing bandwidth PSWF effective dimension}, set
\begin{equation}
\label{eq: lambda growing bandwidth PSWF}
\lambda_{J,S}
=C_{a,d}\left(\frac{\pi}{S}\right)^{da}
J^{-1}(1+SR)^a\left(\frac {SR}{J}\right)^{2aJ},
\end{equation}
and assume
\begin{equation}
\label{eq: sample condition growing bandwidth leverage}
N\ge6J^d\log(28J^d/\delta).
\end{equation}
Fix $\varrho\in\{\operatorname{ph},\operatorname{cx}\}$ and draw $\{v_j^\varrho\}_{j=1}^N$ independently from $q^*_{\lambda_{J,S},\varrho}(\,\cdot\,;1-a,a) \mr{d}\mu_{\tau_S}^{\varrho}$. Then, with probability at least $1-\delta$, Problem~ \ref{ridge regression for bounding approximation error, interpolation} has an RF solution $u_{N,S}^{\varrho}$ satisfying, for all $t\ge0$ and $1\le p\le\infty$,
\begin{equation}
\label{eq: finite bandwidth leverage error explicit}
\begin{aligned}
\|U-u_{N,S}^{\varrho}\|_{W^{t,p}(\Omega)}
&\le
C_{d,t,p,a,R}(1+dS^2)^{(2t+d+1)/4}
J^{-1/2}(1+SR)^{a/2}
(SR/J)^{aJ}
\|U\|_{L^2(\mb R^d)}.
\end{aligned}
\end{equation}
The coefficients satisfy
\begin{equation}
\label{eq: growing bandwidth leverage coefficient bound}
|\alpha^{\varrho}|_{\ell^2(q^*_{\lambda_{J,S},\varrho}
(\,\cdot\,;1-a,a))}
\le16N^{-1/2}(S/\pi)^{d/2}
\|U\|_{L^2(\mb R^d)}.
\end{equation}
\end{proposition}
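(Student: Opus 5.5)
We would derive the proposition from Theorem~\ref{Theorem: abstract error estimate for regression in interpolation spaces}, applied on the cube $\widetilde\Omega\supset\Omega$ with the kernel induced by $\tau_S$, regression norm $\theta=0$, and source and error indices chosen so that case (2) applies with $\gamma=1-p=a$. This means taking $p=1-a$, which is why the leverage density is $q^*(\cdot;1-a,a)$.

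\textbf{Source class.} We identify $U|_{\widetilde\Omega}$ with an element of the RKHS $\mc H=\mc H^1$. Since $\operatorname{supp}\widehat U\subset\overline{Q_S}$, we can write
\[
U(x)=(2\pi)^{-d/2}\int_{Q_S}\widehat U(w)e^{\mathrm i w^\top x}\mr dw=\mc T g ,
\qquad g(w)=(2\pi)^{-d/2}(2S)^d\widehat U(w).
\]
By \eqref{eq: RKHS feature-space characterization},
\[
\|U\|_{\mc H}\le\|g\|_{L^2(\tau_S)}=(2\pi)^{-d/2}(2S)^{d/2}\|U\|_{L^2}=(S/\pi)^{d/2}\|U\|_{L^2}.
\]
This produces the factor $(S/\pi)^{d/2}$ in \eqref{eq: growing bandwidth leverage coefficient bound}. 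For the phase representation, $e^{\mathrm iw^\top x}$ is rewritten through the shifted cosines with the same norm bound. So $s=1$, and the coefficient bound of Theorem~\ref{Theorem: abstract error estimate for regression in interpolation spaces}(2), $|\boldsymbol\beta^*|\le16N^{-1/2}\lambda^{0}$, gives \eqref{eq: growing bandwidth leverage coefficient bound} after undoing the scaling $q^{-1/2}$ (that is, $\alpha_j=\beta_jq(v_j)^{-1/2}$, so $|\alpha|_{\ell^2(q)}=|\beta|$).

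\textbf{Sampling condition.} We must verify \eqref{ineq: N > d_max(lambda) ln(d(lambda)/delta), interpolation} with $\gamma=a$, $\theta=0$ and $\lambda=\lambda_{J,S}$. For leverage sampling, $d_{\max}=d(\lambda;0,a)=\operatorname{tr}\bigl(\Sigma^{a}(\Sigma+\lambda)^{-a}\bigr)$. Lemma~\ref{Lemma: growing bandwidth PSWF effective dimension} is meant to supply the bound $d(\lambda_{J,S};0,a)\le 2J^d$. Its proof uses the prolate-spheroidal eigenvalue decay of the bandlimited kernel on a cube of side $2R$, which behaves like $(SR/J)^{2J}$ per direction. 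The choice \eqref{eq: lambda growing bandwidth PSWF} makes the tail sum $\sum_{j>J^d}(\lambda_j/\lambda)^a$ bounded by a constant times $J^d$. Then \eqref{eq: sample condition growing bandwidth leverage} implies $N\ge3\cdot2J^d\ln(14\cdot2J^d/\delta)$, which is exactly the admissibility condition.

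\textbf{Error bound in $W^{t,p}(\Omega)$.} Part (2) gives
\[
\|U-u_{N,S}\|_{\mc H^{1-a}}\le16\lambda^{a/2}\|U\|_{\mc H}.
\]
Here $\lambda^{a/2}$ reproduces $J^{-1/2}(1+SR)^{a/2}(SR/J)^{aJ}$ up to the factor $(\pi/S)^{da/2}$, and combined with $\|U\|_{\mc H}\le (S/\pi)^{d/2}\|U\|_{L^2}$ this is harmless after adjusting the $S$-power. Transferring this bound to $W^{t,p}(\Omega)$ is the main obstacle, since $\mc H^{1-a}$ is only an interpolation space. Our plan is to note that the error $E=U-u_{N,S}$ is itself bandlimited to $\overline{Q_S}$: every feature has frequency in $Q_S$. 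Moreover, $E\in\mc H$, with $\|E\|_{\mc H}$ controlled by the same concentration event through the $s=p=1$ bound. For $E=\mc T h$ with $h\in L^2(\tau_S)$, every derivative obeys the pointwise bound
\[
|\partial^\eta E(x)|\le\int|w^\eta||h|\,\mr d\tau_S\le(dS^2)^{|\eta|/2}\|h\|_{L^2(\tau_S)} .
\]
This controls $W^{t,\infty}$, and hence $W^{t,p}$ on bounded $\Omega$. To keep the small factor, we would instead use Bernstein/Nikolskii-type inequalities for functions with spectrum in $\overline{Q_S}$ together with the interpolation inequality $\|E\|_{L^2}\le\|E\|_{\mc H^{1-a}}$ (recall $\Sigma\preceq\|\Sigma\|\le1$), and then interpolate between the smallness in $\mc H^{1-a}$ and the boundedness in $\mc H$. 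The resulting polynomial factor $(1+dS^2)^{(2t+d+1)/4}$ comes from a Sobolev embedding $H^{t+(d+1)/2}\hookrightarrow W^{t,\infty}$ applied to the bandlimited function. If the smallness does not pass directly to a bandlimited $L^2(\mb R^d)$ extension, the fallback is to use the PSWF expansion on $\widetilde\Omega$, whose eigenfunctions extend bandlimited with controlled norms.
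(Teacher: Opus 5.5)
Your source bound $\|U|_{Q_R}\|_{\mc H}\le(S/\pi)^{d/2}\|U\|_{L^2(\mb R^d)}$ and the identity $|\alpha^\varrho|_{\ell^2(q)}=|\boldsymbol\beta^*|$ are correct and match the paper. The probabilistic step, however, rests on a misreading of the notation.

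In $q^*_\lambda(\cdot;\theta,\gamma)$ the first slot is the \emph{regression} index $\theta$, not the error index. So the prescribed density $q^*_{\lambda_{J,S},\varrho}(\cdot;1-a,a)$ means ridge regression in $\mc H^{1-a}$ with $\gamma=a=1-\theta$. That is case (1) of Theorem~\ref{Theorem: abstract error estimate for regression in interpolation spaces} with $\theta=p=1-a$ and $s=1$, not case (2) with $\theta=0$. Two consequences follow.
\begin{itemize}
\item Lemma~\ref{Lemma: growing bandwidth PSWF effective dimension} controls $d(\lambda_{J,S};1-a,a)=\sum_{\boldsymbol n}\Lambda_{\boldsymbol n}^a/(\Lambda_{\boldsymbol n}^a+\lambda_{J,S})$. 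It does not control your $d(\lambda_{J,S};0,a)=\sum_{\boldsymbol n}\Lambda_{\boldsymbol n}^a/(\Lambda_{\boldsymbol n}+\lambda_{J,S})^a$.
\item For the prescribed density, $d_{\max}(q,\lambda;0,a)$ is not $d(\lambda;0,a)$. The hypothesis $N\ge6J^d\log(28J^d/\delta)=3(2J^d)\ln(14\cdot2J^d/\delta)$ is exactly admissibility for $(\theta,\gamma)=(1-a,a)$ only.
\end{itemize}

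More seriously, your route cannot reach the stated rate. Case (2) with $\theta=0$, $p=1-a$, $s=1$ gives $16\lambda^{a/2}$. But $\lambda_{J,S}^{a/2}$ carries $J^{-a/2}(1+SR)^{a^2/2}(SR/J)^{a^2J}$, which is strictly weaker than $J^{-1/2}(1+SR)^{a/2}(SR/J)^{aJ}$, and the powers of $S$ no longer cancel. Your claim that $\lambda^{a/2}$ reproduces the target factor is an arithmetic slip. In case (1) the exponent is $\frac{s-\bar p}{2(1-\theta)}=\frac{a}{2a}=\frac12$, so $16\lambda_{J,S}^{1/2}$ gives exactly the stated factor.

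You could try running your $\theta=0$ argument at penalty $\lambda_{J,S}^{1/a}$, which fixes the exponent. But then you only have $\ell_{\lambda^{1/a}}(\cdot;0,a)\le2\,\ell_{\lambda}(\cdot;1-a,a)$, so the available bound is $d_{\max}\le4J^d$, which the stated sample condition does not cover.

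The transfer to $W^{t,p}(\Omega)$ also fails as planned. The error $E=U-u_{N,S}^\varrho$ is \emph{not} in $\mc H$. Elements of $\mc H_{k_S}(Q_R)$ are restrictions of Paley--Wiener functions, and $U|_{Q_R}\in\mc H$. So $E\in\mc H$ would force the finite cosine sum $u_{N,S}^\varrho$ to agree on $Q_R$ with an $L^2(\mb R^d)$ function that extends to an entire function. By analyticity it would then agree on all of $\mb R^d$, which is impossible unless $u_{N,S}^\varrho\equiv0$. Hence:
\begin{itemize}
\item there is no $h\in L^2(\tau_S)$ with $E=\mc T h$;
\item the $s=p=1$ bound you invoke is unavailable, which is why Theorem~\ref{Theorem: abstract error estimate for regression in interpolation spaces} requires $\max(\theta,p)<1$;
\item Bernstein or Nikolskii inequalities would need a global norm of $E$, and that norm is not small.
\end{itemize}

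The missing ingredient is the deterministic embedding of Proposition~\ref{Proposition: relation between fractional Sobolev and interpolation space of RKHS on bounded domains}(3). Interpolate the identity on $L^2(Q_R)$ (norm $1$) with the map $\mc H_{k_S}(Q_R)\to H^{\ell/(1-a)}(Q_R)$. The latter has norm at most $b_S^{1/2}(1+dS^2)^{\ell/(2(1-a))}$, by the minimum-norm bandlimited extension and Plancherel. This yields $\|f\|_{H^\ell(Q_R)}\lesssim b_S^{(1-a)/2}(1+dS^2)^{\ell/2}\|f\|_{\mc H^{1-a}_{k_S}(Q_R)}$.

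Now take $b_S=(\pi/S)^d$ and $\ell=t+(d+1)/2$, and use $H^\ell\hookrightarrow W^{t,p}$. The factor $(\pi/S)^{da/2}$ from $\lambda_{J,S}^{1/2}$, the factor $(\pi/S)^{d(1-a)/2}$ from the embedding, and the factor $(S/\pi)^{d/2}$ from the source norm cancel exactly. This is how \eqref{eq: finite bandwidth leverage error explicit} arises.
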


The proof is given in Appendix~\ref{Subsection: proof growing bandwidth leverage approximation}.

\begin{theorem}[Growing uniform reference measures]
\label{Theorem: optimal growing bandwidth leverage rates}
Let $\Omega$ and $R$ be as in Proposition \ref{Proposition: leverage approximation growing bandwidth target}, let $0<\delta<1$ and $N\in\mb N_+$, and put $R_*=\max\{R,1\}$. Let $J$ be a sufficiently large integer satisfying \eqref{eq: sample condition growing bandwidth leverage}. For each case and each fixed $\varrho\in\{\operatorname{ph},\operatorname{cx}\}$, sample as in Proposition~\ref{Proposition: leverage approximation growing bandwidth target} with $a=1/2$ and $S=S_J$. Then, with probability at least $1-\delta$, every admissible target admits an approximation $u_{N,J}^{\varrho}$ satisfying the corresponding bounds simultaneously for all admissible $t$ and $p$.
The constants are independent of $J,N,\delta$, and the target.

\emph{(1) Sobolev ball.} Let $s>0$, $1\le p\le\infty$, and $u\in\mb B_{W^{s,p}(\Omega)}$. Take $S_J=J/(4R_*)$. Then, for all $0\le t\le s$,
\begin{equation}
\label{eq: optimal Sobolev growing leverage rate}
\|u-u_{N,J}^{\varrho}\|_{W^{t,p}(\Omega)}
\le CJ^{-(s-t)}.
\end{equation}

\emph{(2) Stretched-exponential Fourier ball.} Let $s\ge1$, $\kappa_0>0$, and $u\in\mb B_{\mc F_{\kappa_0,1/s}(\Omega)}$. Take $S_J=J/(4R_*)$. Then, for all $t\ge0$ and $1\le p\le\infty$,
\begin{equation}
\label{eq: optimal stretched exponential growing leverage rate}
\|u-u_{N,J}^{\varrho}\|_{W^{t,p}(\Omega)}
\le C\exp\left(-cJ^{1/s}\right).
\end{equation}
The endpoint $s=1$ is the analytic case.

\emph{(3) Super-exponential Fourier ball.} Let $s>1$ and $\kappa_0>0$. Assume $u\in\mb B_{\mc F_{\kappa_0,s}(\Omega)}$, and take $S_J=(J\log J)^{1/s}/(4R_*)$.
Then, for all $t\ge0$ and $1\le p\le\infty$,
\begin{equation}
\label{eq: optimal superexponential growing leverage rate}
\|u-u_{N,J}^{\varrho}\|_{W^{t,p}(\Omega)}
\le C\exp\left(-cJ\log J\right).
\end{equation}

The fixed-bandwidth bandlimited case is already covered by Theorem \ref{Theorem: interpolation improved convergence rate of RFM}(4).

For all sufficiently large $N$, choose $J=\left\lfloor c_0\bigl(N/\log(N/\delta)\bigr)^{1/d}\right\rfloor$, where $c_0=c_0(d)>0$ is sufficiently small, and put $L_{N,\delta}:=N/\log(N/\delta)$. Then \eqref{eq: sample condition growing bandwidth leverage} holds, and the three rates are, respectively,
\begin{equation}
\label{eq: optimal rates in sample number N}
L_{N,\delta}^{-(s-t)/d},\qquad
\exp(-cL_{N,\delta}^{1/(sd)}),\qquad
\exp(-cL_{N,\delta}^{1/d}\log L_{N,\delta}).
\end{equation}
Thus \eqref{eq: optimal rates in sample number N} agrees with the sample-size orders in Theorem \ref{Theorem: interpolation improved convergence rate of RFM}; the conversion uses $J^d\asymp N/\log(N/\delta)$.
\end{theorem}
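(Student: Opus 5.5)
The plan is to reduce every case to Proposition~\ref{Proposition: leverage approximation growing bandwidth target}. The target is split into a band-limited part, which the random features approximate, and a high-frequency tail, which is bounded deterministically. The split must be done uniformly over the target ball so that the single sampling event of the Proposition serves every target.

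\textbf{Step 1: extension and frequency splitting.} Given $u$ in the relevant unit ball, take an extension $U$ on $\mathbb R^d$. In cases (2)--(3) this is a near-minimizer in the definition of $\|u\|_{\kappa_0,a}$. In case (1) it is a Stein extension $E u\in W^{s,p}(\mathbb R^d)$, cut off to a fixed neighborhood of $\Omega$.

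Let $\chi_S$ be a smooth Fourier multiplier equal to $1$ on $Q_{S/2}$ and supported in $Q_S$, and set $U_S:=\chi_S(D)U$, with $S=S_J$. Then $\operatorname{supp}\widehat{U_S}\subset\overline{Q_S}$, and we write $u-u_{N,J}=(U-U_S)|_\Omega+(U_S|_\Omega-u_{N,J})$.

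The second term is controlled by the Proposition with $a=1/2$. This is legitimate because $J\ge 4R_*S_J\ge 2SR$ for the linear law, and for large $J$ also for the sublinear law of case (3).

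\textbf{Step 2: the random term.} With $a=1/2$, estimate \eqref{eq: finite bandwidth leverage error explicit} gives
\[
\|U_S-u_{N,S}\|_{W^{t,p}(\Omega)}\lesssim (1+S^2)^{(2t+d+1)/4}(1+SR)^{1/4}(SR/J)^{J/2}\|U_S\|_{L^2}.
\]
For the linear law, $SR/J\le 1/4$, so the factor $(SR/J)^{J/2}$ is at most $2^{-J}$ and beats every polynomial in $J$. In case (3), $SR/J\le (J\log J)^{1/s}/(4J)$, so $(SR/J)^{J/2}\le \exp(-cJ\log J)$ because $s>1$.

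The remaining requirement is that $\|U_S\|_{L^2}$ be bounded uniformly over the ball. This is immediate in cases (2)--(3). In case (1) it also holds, since a $W^{s,p}$ function supported in a fixed bounded set lies in $L^2$ up to a constant (using $p\ge1$ and Sobolev embedding or boundedness).

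\textbf{Step 3: the deterministic tail.} In case (2), Plancherel and Hausdorff--Young/Sobolev give, for $p=2$,
\[
\|U-U_S\|_{W^{t,2}}\lesssim \sup_{|w|\ge S/2}(1+|w|)^{t}e^{-\kappa_0|w|^{1/s}}\|e^{\kappa_0|\cdot|^{1/s}}\widehat U\|_{L^2}\lesssim e^{-cS^{1/s}}.
\]
For general $p$, I would pass to $W^{t+d,2}\hookrightarrow W^{t,\infty}$ on the bounded set $\Omega$, then use $L^\infty\subset L^p$. Case (3) is identical with $e^{-\kappa_0 S^s}$, and $S_J^s\asymp J\log J$ gives $\exp(-cJ\log J)$.

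Case (1) is the main obstacle, because $p\ne2$ and a bare Fourier tail is not available. Here I would use a Littlewood--Paley / Jackson-type estimate for smooth multipliers,
\[
\|(I-\chi_S(D))U\|_{W^{t,p}(\mathbb R^d)}\lesssim S^{-(s-t)}\|U\|_{W^{s,p}},
\]
which holds for $1\le p\le\infty$ because $\chi_S$ is a dilated Schwartz kernel; for $t,s$ non-integer I would interpolate. Since $S_J\asymp J$, this gives $J^{-(s-t)}$. The random term must also be bounded in $W^{t,p}$. For that, the exponential factor $2^{-J}$ absorbs $(1+S^2)^{(2t+d+1)/4}$.

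Every bound is independent of the target beyond its norm, and the event is that of the Proposition. That event depends only on $\lambda_{J,S}$ and the features, so it is uniform over the ball. Likewise $u_{N,J}$ does not depend on $(t,p)$, which gives the simultaneity in $t$ and $p$.

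\textbf{Step 4: sample-size conversion.} Finally, for $J=\lfloor c_0 L_{N,\delta}^{1/d}\rfloor$ with $c_0$ small, $6J^d\log(28J^d/\delta)\le 6c_0^d\,\frac{N}{\log(N/\delta)}\log(28N/\delta)\le N$ for large $N$. This verifies \eqref{eq: sample condition growing bandwidth leverage}. Substituting $J\asymp L_{N,\delta}^{1/d}$ into the three bounds yields \eqref{eq: optimal rates in sample number N}.
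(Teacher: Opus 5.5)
Your plan matches the paper's proof. It uses a target-independent event from Proposition~\ref{Proposition: leverage approximation growing bandwidth target}, justified by the uniformity in Theorem~\ref{Theorem: abstract error estimate for regression in interpolation spaces} and homogeneity. It feeds a bandlimited surrogate with spectrum in $\overline{Q_{S_J}}$ into \eqref{eq: finite bandwidth leverage error explicit}, bounds the truncation tail deterministically, and converts $J$ to $N$. The only differences in route are two. In cases (2)--(3) the paper uses the sharp cutoff $1_{Q_S}\widehat U$. In case (1) it cites \cite[Theorem D.1]{MingYu2026Spectral} for the bandlimited approximant $V_J$, whereas you derive it from an extension, a smooth cutoff $\chi_S(D)$ and a Jackson estimate. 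These are the same construction in substance.

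There is, however, one false step in case (1). You assert that $\|U_S\|_{L^2}$ is bounded uniformly over $\mb B_{W^{s,p}(\Omega)}$, because a compactly supported $W^{s,p}$ function lies in $L^2$. This fails when $p<2$ and $s<d(1/p-1/2)$. For example, take $d=3$, $p=1$, $s=1$ and $0\in\Omega$: a cut-off of $|x|^{-8/5}$ belongs to $W^{1,1}$ but not to $L^2$, so every extension $U$ has $\|U\|_{L^2}=\infty$.

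What is true is Young's inequality for the kernel $S^{d}\check\chi(S\,\cdot)$ of $\chi_S(D)$, with $1/r=3/2-1/p$. You also need this just to know that $U_S\in L^2(\mb R^d)$, so that the Proposition applies at all. It gives
\[
\|\chi_S(D)U\|_{L^2(\mb R^d)}\lesssim S^{d(1/p-1/2)_+}\|U\|_{L^p(\mb R^d)}\lesssim S^{d(1/p-1/2)_+}\|u\|_{W^{s,p}(\Omega)}.
\]
For $p\ge2$ use the compact support of $U$ instead. This is precisely the paper's bound $\|V_J\|_{L^2}\le CS_J^{\beta_p}\|u\|_{W^{s,p}(\Omega)}$ with $\beta_p=d(1/p-1/2)_+$. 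The factor is polynomial in $J$, so it is absorbed by $2^{-J}$ just as $(1+dS_J^2)^{(2t+d+1)/4}$ is. With this correction your argument yields \eqref{eq: optimal Sobolev growing leverage rate}.
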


The proof is given in Appendix~\ref{Subsection: proof growing uniform reference measures}.

Besides matching the regularity-dependent rates of Theorem~\ref{Theorem: interpolation improved convergence rate of RFM}, Theorem~\ref{Theorem: optimal growing bandwidth leverage rates} preserves its twofold uniformity: the sampling event is target-uniform, while for each target one reconstruction attains the estimates simultaneously in all admissible $W^{t,p}$ norms. It also replaces the regularity-adapted reference measures by a particularly simple one: the uniform measure on $Q_{S_J}$, with the features drawn from its associated leverage-score distribution. For both Sobolev and stretched-exponential Fourier classes, the regularity-independent bandwidth $S_J=J/(4R_*)$, where $J\asymp L_{N,\delta}^{1/d}$, yields the corresponding algebraic and spectral rates without prior knowledge of the target regularity; equivalently, the frequency-cube side length grows as $N^{1/d}$ up to logarithmic factors. For the higher, super-exponential Fourier regularity, the faster rate is obtained by the slower bandwidth growth $S_J=(J\log J)^{1/s}/(4R_*)=o(J)$, which concentrates the reference measure on a narrower frequency window.

\subsection{Spatial interpretation of the regularity conditions}
\label{subsection: spatial interpretation of the regularity conditions}

Theorems~\ref{Theorem: interpolation improved convergence rate of RFM} and \ref{Theorem: optimal growing bandwidth leverage rates} are formulated in Sobolev spaces and in the weighted Fourier classes $\mc F_{\kappa,a}(\Omega)$. The latter classes make the approximation rates transparent, whereas the following conditions relate them to standard smoothness assumptions in the physical variable $x$.

For completeness, we recall the Gevrey classes used below~\cite{rodino1993linear}. For $s\ge1$, a function $f\in C^{\infty}(\Omega)$ belongs to $G^{s}(\Omega)$ if, for every compact subset $K\subset\Omega$, there exist positive constants $M,C$, independent of $\beta$ and $x\in K$, such that
\[
|\partial^{\beta}f(x)|\le M C^{|\beta|_{1}}(\beta!)^{s} \qquad (\beta\in\mb N_{0}^{d},\ x\in K).
\]
For $s>1$, let $G_{0}^{s}(\Omega)$ denote the subspace of functions in $G^{s}(\Omega)$ with compact support in $\Omega$. The classes are nested: $G^{s}(\Omega)\subset G^{t}(\Omega)$ whenever $1\le s\le t$, and $G^{1}(\Omega)$ is the space of analytic functions on $\Omega$. Moreover, both inclusions
\[
G^{1}(\Omega)\subset\bigcap_{s>1}G^{s}(\Omega), \qquad \bigcup_{s\ge1}G^{s}(\Omega)\subset C^{\infty}(\Omega)
\]
are strict.

\begin{assumption}[Spatial regularity conditions]
\label{Assumption: regularities}
\leavevmode
\begin{enumerate}
\item[(a)] \emph{Sobolev regularity.}
$u\in H^s(\Omega)$ with $s>d/2+\nu$.
\item[(b)] \emph{Gevrey regularity.}
For some $s>1$, there exists a bounded open set $\Omega'$ with $\overline\Omega\Subset\Omega'$ such that $u$ extends to a function in $G^s(\Omega')$.
\item[(c)] \emph{Analytic regularity.}
For some $\rho>0$, $u$ admits an analytic continuation to $\{\zeta\in\mb C^d:|\operatorname{Im}\zeta|<\rho\}$. Moreover, $u(\cdot+\mathrm i y)\in L^2(\mb R^d)$ for $|y|<\rho$, and
\[
\sup_{|y|\le\kappa} \|u(\cdot+\mathrm i y)\|_{L^2(\mb R^d)}<\infty \qquad\text{for every }0<\kappa<\rho.
\]
\item[(d)] \emph{Super-exponential Fourier decay.}
For some $s>1$ and $\kappa>0$, $u$ has an extension $U$ to $\mb R^d$ satisfying $e^{\kappa|\cdot|^s}\widehat U\in L^2(\mb R^d)$.
\item[(e)] \emph{Bandlimited regularity.}
For some $S>0$, $u$ has an extension $U\in L^2(\mb R^d)$ satisfying $\operatorname{supp}\widehat U\subset[-S,S]^d$.
\end{enumerate}
\end{assumption}

\begin{lemma}
\label{Lemma: Fourier decay from Gevrey and analytic regularity}
Under Assumption~\ref{Assumption: regularities}, the following statements hold.
\begin{enumerate}
\item[(i)] If (b) holds, then there exists $\kappa>0$ such that $\|u\|_{\kappa,1/s}<\infty$.
\item[(ii)] If (c) holds, then $\|u\|_{\kappa,1}<\infty$ for every $0<\kappa<\rho$.
\end{enumerate}
\end{lemma}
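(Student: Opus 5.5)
For (i), we construct an extension of $u$ that is compactly supported and Gevrey, and then read off the Fourier decay; the Paley--Wiener-type estimate for Gevrey functions does the rest. Choose an open set $\Omega''$ with $\overline\Omega\Subset\Omega''\Subset\Omega'$. Take a cutoff $\chi\in G_0^{s}(\Omega')$ with $\chi\equiv1$ on $\Omega''$; it exists because $s>1$. Set $U:=\chi u$ on $\Omega'$ and $U:=0$ outside. Then $U|_\Omega=u$ and $U\in G_0^s(\mb R^d)$. The Leibniz rule and the uniform Gevrey bounds on the compact set $K=\operatorname{supp}\chi$ give
\[
|\partial^\beta U(x)|\le M' C'^{|\beta|}(\beta!)^s .
\]
The constants are uniform because $\sum_{\gamma\le\beta}\binom{\beta}{\gamma}(\gamma!)^s((\beta-\gamma)!)^s\le(\beta!)^s 2^{|\beta|}$.

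Integrating by parts, for each multi-index $\beta$ we get $|w^\beta\widehat U(w)|\le(2\pi)^{-d/2}|K|\,M'C'^{|\beta|}(\beta!)^s$. Choosing $\beta=me_i$ with $i$ the index of the largest coordinate, so that $|w_i|\ge|w|/\sqrt d$, yields
\[
|\widehat U(w)|\lesssim \inf_{m}\, C''^m (m!)^s |w|^{-m}.
\]
Optimizing over $m$ with $m\approx (|w|/C'')^{1/s}$ and using Stirling gives $|\widehat U(w)|\lesssim \exp(-2\kappa_1|w|^{1/s})$ for some $\kappa_1>0$. This is the main step. The obstacle is handling $(m!)^s$ carefully enough to keep the exponent $1/s$ and not lose it to polynomial factors. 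Those factors are absorbed by shrinking $\kappa$. Taking $\kappa=\kappa_1/2$ then makes $e^{\kappa|w|^{1/s}}\widehat U$ decay like $e^{-\kappa|w|^{1/s}}$, which is in $L^2$. Hence $\|u\|_{\kappa,1/s}\le\|e^{\kappa|\cdot|^{1/s}}\widehat U\|_{L^2}<\infty$.

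For (ii), take $U:=u|_{\mb R^d}\in L^2(\mb R^d)$, which is an extension of $u|_\Omega$. Fix $0<\kappa<\rho$ and pick $\kappa<\kappa'<\rho$. We use the multidimensional Paley--Wiener theorem for tubes: the translate identity $\widehat{u(\cdot+\mathrm iy)}(w)=e^{-y\cdot w}\widehat U(w)$ holds for $|y|<\rho$. To justify it, first treat Schwartz-type approximations, or apply Cauchy's theorem in each variable on a rectangle and pass to the limit using the uniform bound $\sup_{|y|\le\kappa'}\|u(\cdot+\mathrm iy)\|_{L^2}<\infty$. Plancherel then gives $\int e^{-2y\cdot w}|\widehat U(w)|^2\mr dw\le C_{\kappa'}$ for all $|y|\le\kappa'$.

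Cover $\mb R^d$ by finitely many cones around unit directions $\omega_k$, chosen so that $\omega_k\cdot w\ge|w|\kappa/\kappa'$ on the $k$-th cone. Take $y=-\kappa'\omega_k$ on the $k$-th cone. Summing over the cones gives $\int e^{2\kappa|w|}|\widehat U|^2\mr dw<\infty$, so $\|u\|_{\kappa,1}<\infty$. In this part the delicate point is the rigorous justification of the contour shift under only $L^2$ control on horizontal slices. We would handle it by mollifying in $x$ and applying dominated convergence.
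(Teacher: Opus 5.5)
Your proposal is correct and follows the paper's route: in (i) the paper uses the same Gevrey cutoff extension $U=\chi\widetilde u\in G_0^s(\mathbb{R}^d)$ (write $\chi\widetilde u$, with $\widetilde u$ the $G^s(\Omega')$ extension, rather than $\chi u$), and in (ii) it likewise takes $U=u$. The only difference is that the paper cites two standard results where you re-derive them. For the decay $|\widehat U(w)|\le Ce^{-c|w|^{1/s}}$ it cites the Fourier characterization of compactly supported Gevrey functions (Rodino, Theorem~1.6.1), which your integration-by-parts and Stirling argument reproves. For (ii) it cites the tube Paley--Wiener theorem (Reed--Simon, Theorem~IX.13), which covers your contour shift and cone covering, including the justification you flag as delicate.
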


The proof is given in Appendix~\ref{Section: proofs uniform approximation random Fourier features}. More generally, if $u\in\mc F_{\kappa,a}(\Omega)$, then Fourier multiplication and Plancherel's identity give
\[
\|\partial^\beta u\|_{L^2(\Omega)} \le C_{\kappa,a}^{|\beta|+1}(|\beta|!)^{1/a} \|u\|_{\kappa,a} \qquad (\beta\in\mb N_0^d).
\]
Thus $a=1/s<1$ yields the derivative growth of a Gevrey class of order $s>1$, the endpoint $a=1$ corresponds to analytic regularity, and $a>1$ imposes an ultra-analytic derivative bound stronger than analyticity. Bandlimited functions satisfy every exponential Fourier weight and obey the Bernstein estimate $\|\partial^\beta U\|_2\le(\sqrt d\,S)^{|\beta|}\|U\|_2$.

\paragraph{Gelfand--Shilov interpretation of super-exponential decay.}
To make the physical-space content of Assumption~\ref{Assumption: regularities}(d) precise, we recall the Roumieu Gelfand--Shilov classes. For $\alpha,\beta>0$, the space $S_\alpha^\beta(\mb R^d)$ consists of all $f\in C^\infty(\mb R^d)$ for which there exist $C,h>0$ such that
\begin{equation}
\label{eq: Roumieu Gelfand Shilov definition}
\sup_{x\in\mb R^d}|x^\mu\partial^\nu f(x)|
\le C h^{|\mu|+|\nu|}(\mu!)^\alpha(\nu!)^\beta,
\qquad \mu,\nu\in\mb N_0^d.
\end{equation}
With this convention, the lower index controls decay in the physical variable and the upper index controls derivative growth; the Fourier transform interchanges the two indices. The Roumieu space is nontrivial exactly when $\alpha+\beta\ge1$. On the critical line $\alpha+\beta=1$, the existential quantifier in $h$ is essential: the corresponding Beurling space, defined by requiring \eqref{eq: Roumieu Gelfand Shilov definition} for every $h>0$ with a constant depending on $h$, is trivial~\cite[Chapter~6]{NicolaRodino2010}.

The reciprocal indices associated with Assumption~\ref{Assumption: regularities}(d) admit the following equivalent characterizations; see, for example, \cite[Definition~6.1.1, Theorem~6.1.6, and Proposition~6.1.7]{NicolaRodino2010}.

\begin{proposition}[Gelfand--Shilov characterization]
\label{Proposition: Gelfand Shilov characterization}
Let $s>1$, set $q=s/(s-1)$, $\alpha=1-1/s=1/q$, and $\beta=1/s$, and let $f\in L^2(\mb R^d)$. Then the following assertions are equivalent.
\begin{enumerate}
\item[(i)] $f\in S_{1-1/s}^{\,1/s}(\mb R^d)$.
\item[(ii)] There exist $a,\kappa>0$ such that
\[
e^{a|x|^{s/(s-1)}}f\in L^2(\mb R^d), \qquad e^{\kappa|\xi|^s}\widehat f\in L^2(\mb R^d).
\]
\item[(iii)] The function $f$ has a $C^\infty$ representative, and there exist $C,A,a>0$ such that
\[
|\partial^\nu f(x)| \le C A^{|\nu|}(\nu!)^{1/s}e^{-a|x|^{s/(s-1)}}, \qquad x\in\mb R^d,\quad \nu\in\mb N_0^d.
\]
\item[(iv)] The function $f$ has its canonical Schwartz representative, and there exist $C,a,b>0$ such that
\[
|f(x)|\le C e^{-a|x|^{s/(s-1)}}, \qquad |\widehat f(\xi)|\le C e^{-b|\xi|^s}, \qquad x,\xi\in\mb R^d.
\]
\end{enumerate}
The constants in the different assertions need not coincide. In particular, when a pointwise estimate is converted into a weighted $L^2$ estimate, the weight exponent must in general be chosen strictly smaller than the exponent in the pointwise bound.
\end{proposition}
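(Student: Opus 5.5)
The plan is to route all four assertions through one intermediate characterization: \emph{separate} decay of $f$ and of $\widehat f$. Specifically, I would first show that (i) is equivalent to
\[
\text{(S)}\qquad \sup_{x}|x^\mu f(x)|\le Ch^{|\mu|}(\mu!)^{\alpha},\qquad \sup_{\xi}|\xi^\nu \widehat f(\xi)|\le Ch^{|\nu|}(\nu!)^{\beta}\qquad(\mu,\nu\in\mb N_0^d),
\]
for some $C,h>0$. The indices are $\alpha=1-1/s$ and $\beta=1/s$, so we are on the critical line $\alpha+\beta=1$.

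\textbf{Step 1: (i) implies (S).} The first bound in (S) is the case $\nu=0$ of \eqref{eq: Roumieu Gelfand Shilov definition}. For the second, write $\xi^\nu\widehat f=\widehat{(-\mathrm i\partial)^\nu f}$. Then bound the $L^\infty$ norm by the $L^1$ norm of $\partial^\nu f$, controlled by $\sup|(1+|x|^2)^{d}\partial^\nu f|$. This costs only a fixed number of extra powers of $x$, which are absorbed by enlarging $C$.

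\textbf{Step 2: (S) is equivalent to the pointwise bounds in (iv).} This is a Stirling computation. Since
\[
\inf_{k}\frac{h^k(k!)^{\alpha}}{|x|^k}\asymp \exp\bigl(-c|x|^{1/\alpha}\bigr)\quad\text{up to polynomial factors},
\]
and $1/\alpha=s/(s-1)$, optimizing the first estimate of (S) over $|\mu|=k$ gives $|f(x)|\le Ce^{-a|x|^{s/(s-1)}}$. The same computation with $1/\beta=s$ gives $|\widehat f(\xi)|\le Ce^{-b|\xi|^{s}}$. Conversely, the pointwise exponential bounds give back (S) by evaluating $\sup_r r^k e^{-ar^{1/\alpha}}$. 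Polynomial losses are absorbed by slightly shrinking $a,b$ and enlarging $h$.

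\textbf{Step 3: passing between (ii) and (iv).} To go from (iv) to (ii), choose weight exponents strictly smaller than $a,b$, so that the weighted functions are in $L^2$. To go from (ii) to (iv), first obtain $L^\infty$ bounds for $\widehat f$ by writing $\widehat{f}=\widehat{e^{-a'|x|^{q}}\cdot e^{a'|x|^q}f}$ locally. A cleaner route is to use $L^2$ bounds on $\xi^\nu\widehat f$ and $x^\mu f$, obtained by Cauchy--Schwarz against the weight, and then convert to $L^\infty$ via Sobolev embedding. The derivatives needed for that embedding come from the opposite-side decay. Again the exponent strictly decreases, which is exactly the caveat recorded after the statement.

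\textbf{Step 4: (S) implies (i) and (iii).} This is the main obstacle: the recombination theorem, which says that separate decay of $f$ and $\widehat f$ implies the mixed bounds \eqref{eq: Roumieu Gelfand Shilov definition} (the Chung--Chung--Kim / Kashpirovskii theorem).
\begin{itemize}
\item \emph{Attempt 1 (direct).} I would first try to bound $\|x^\mu\partial^\nu f\|_{L^2}^2$ by integrating by parts into $\langle x^{2\mu}f,\partial^{2\nu}f\rangle$-type terms plus lower-order commutators, and then apply Cauchy--Schwarz using both halves of (S). The difficulty is controlling the combinatorial growth of the commutator terms so that the factorial weights $(\mu!)^\alpha(\nu!)^\beta$ survive.
\item \emph{Attempt 2 (Hermite).} If the direct route becomes unwieldy, I would use the Hermite characterization: on the critical line, (S) is equivalent to Hermite coefficients decaying like $e^{-c\,n^{1/(2\alpha)}}$-type bounds along the two conjugate directions. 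From these, the mixed estimates follow by the standard ladder-operator bounds.
\end{itemize}
In practice I would cite \cite[Chapter~6]{NicolaRodino2010} for this step.

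\textbf{Conclusion.} Once (i) holds, (iii) follows by combining the derivative part of (i) with the decay part via the same Stirling optimization in $\mu$, applied with $\nu$ fixed. This yields $|\partial^\nu f|\le CA^{|\nu|}(\nu!)^{1/s}e^{-a|x|^{q}}$. Finally, (iii) implies (iv) is immediate for $f$; for $\widehat f$ it follows by Step 1 followed by Step 2.
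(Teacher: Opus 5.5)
Your outline is correct and follows essentially the paper's route: the paper gives no argument of its own here but cites Nicola--Rodino (Definition~6.1.1, Theorem~6.1.6, Proposition~6.1.7). You likewise do the Stirling and weight conversions by hand and take the one substantive input, the recombination (S)$\Rightarrow$(i), from that chapter.

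Two points to tighten. First, in Step~3 a global Sobolev embedding for $x^\mu f$ (or $\xi^\nu\widehat f$) produces mixed terms $x^{\mu'}\partial^{\gamma}f$ with $|\gamma|$ bounded, and these are not among the separate bounds you have. One integration by parts fixes this: it bounds $\|x^{\mu'}\partial^{\gamma}f\|_2^2$ by finitely many products $\|x^{2\mu'-\delta}f\|_2\,\|\partial^{2\gamma-\delta}f\|_2$, with factors polynomial in $|\mu'|$. The same identity with unbounded $\gamma$ is your Attempt~1, which in fact closes directly, since $\alpha+\beta=1$ keeps the excess factor in the resulting binomial sum at most $C^{|\mu|+|\nu|}$. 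Second, your Hermite fallback cannot work for $s\neq2$. For the Hermite functions $h_k$ one has $|\langle\widehat f,h_k\rangle|=|\langle f,h_k\rangle|$, so any size condition on Hermite coefficients is Fourier-invariant, whereas $\mathcal{F}$ maps $S^{1/s}_{1-1/s}$ onto $S^{1-1/s}_{1/s}\neq S^{1/s}_{1-1/s}$.
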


Applied to the extension $U$ in Assumption~\ref{Assumption: regularities}(d), Proposition~\ref{Proposition: Gelfand Shilov characterization} shows that the Fourier condition in (d), supplemented by $e^{a|x|^{s/(s-1)}}U\in L^2(\mb R^d)$ for some $a>0$, is equivalent to $U\in S_{1-1/s}^{1/s}(\mb R^d)$. Thus condition (d) supplies the frequency-side requirement of the Gelfand--Shilov characterization, while the additional weighted condition records the matching physical-space decay.

Together with Lemma~\ref{Lemma: Fourier decay from Gevrey and analytic regularity}, this characterization connects Assumption~\ref{Assumption: regularities} to the rate regimes in Theorems~\ref{Theorem: interpolation improved convergence rate of RFM} and \ref{Theorem: optimal growing bandwidth leverage rates}.
Condition (a) gives the Sobolev case of the former theorem, conditions (b) and (c) yield its stretched-exponential case, and conditions (d) and (e) give the super-exponential and bandlimited cases. The latter theorem follows the same correspondence, with $W^{s,p}(\Omega)$ in its Sobolev case. At the Fourier level, increasingly rapid decay, from polynomial through stretched-exponential, exponential, and super-exponential decay to compact spectral support, yields the corresponding hierarchy of approximation rates.

\section{RFM discretizations of PDE and eigenvalue problems}
\label{Section: RFM PDE solvers}
This section establishes abstract error estimates for RFM discretizations. These bounds are independent of the particular RF realization and rely only on the stability of the underlying PDEs and the approximation capacity of the trial space. Throughout, $V_{\mathrm{RF}}$ denotes a random linear trial space generated by finite RF expansions. Typical examples are the randomly shifted cosine space in \eqref{Eq: linear combination of random Fourier features, cos(wx+b)} and the equivalent cosine--sine space in \eqref{Eq: linear combination of random Fourier features, cosine-sine}; more generally, $V_{\mathrm{RF}}$ is the range of the feature operator $\Phi$ introduced in Section~\ref{subsection: ridge approximation in interpolation norm}. Here, $\mathbf{n}$ denotes the unit outward normal vector on $\partial\Omega$.

\subsection{Strong-form RFM}
Consider the boundary value problem of a general second order linear differential operator
\begin{equation} \label{elliptic equation with variable coefficients}
\left\{\begin{aligned}
Lu &:= \sum_{k,l=1}^{d} a_{kl}\partial_{kl}u+\sum_{k=1}^{d} b_{k}\partial_{k}u+cu=f, && x\in\Omega, \\
Bu &:= g_{1}\frac{\partial u}{\partial \mf n}+g_{2}u=g, && x\in\partial\Omega .
\end{aligned}\right.
\end{equation}
where $f \in L^{2}(\Omega)$ and $ g \in L^{2}(\partial \Omega)$.
{\red The operator $B$ accommodates general boundary conditions. If a Dirichlet condition is imposed on $\partial \Omega_{D} \subset \partial \Omega$ and a Neumann or Robin condition is imposed on $\partial \Omega_{N} = \partial \Omega \setminus \partial \Omega_{D}$, then $g_{1} = \tilde{g}_{1}\mathbf{1}_{\partial \Omega_{N}}$ and $g_{2} = \mathbf{1}_{\partial \Omega_{D}} + \tilde{g}_{2}\mathbf{1}_{\partial \Omega_{N}}$. } We impose the following assumptions on the coefficients of the operators $L$ and $B$.

\begin{assumption} \label{Assumption: Linf norm of the coefficients of linear differential operators L, B}
Let $A = (a_{kl})_{d\times d}$ be a symmetric matrix and $b = (b_{1}, \dots, b_{d})^{\top}$. There exist positive constants $\{\Lambda_{i}\}_{i =1}^{5}$ such that for a.e. $x \in \Omega$, $\|A(x)\|_{2} \leq \Lambda_{1}$, $|b(x)| \leq \Lambda_{2}$, $|c(x)| \leq \Lambda_{3}$, and $\|g_{1}\|_{L^{\infty}(\partial \Omega)} \le \Lambda_{4}$, $\|g_{2}\|_{L^{\infty}(\partial \Omega)} \le \Lambda_{5}$.
\end{assumption}

The strong-form loss function associated with Problem~\eqref{elliptic equation with variable coefficients} is defined as
\begin{equation}
\begin{aligned} \label{eq: strong-form loss for solving: elliptic equation with variable coefficients}
\mathcal{L}(u)= & \left\|L u-f\right\|_{L^{2}(\Omega)}^{2} + \gamma \left\|B u-g\right\|_{H^{s}(\partial \Omega)}^{2} ,
\end{aligned}
\end{equation}
where $\gamma > 0$ is the boundary penalty parameter, $s \ge 0$, and we assume $g \in H^s(\partial\Omega)$. We evaluate the boundary residual in the general $H^s(\partial\Omega)$ norm, as recent studies \cite{Muller2022Notes,Doyoon2025Trace,Zhou2026SSBEPINN} demonstrate that such stronger trace-space control may enhance the stability and accuracy of the PDE approximations.
The RFM solution is then obtained by \begin{equation*}
u_{\mathrm{RF}} \in \mathop{\arg\min}_{u \in V_{\mathrm{RF}}} \mathcal{L}(u).
\end{equation*}

Under certain conditions, the error of $u_{\mathrm{RF}}$ can be bounded by the approximation error of $u$ by $V_{\mathrm{RF}}$, as established in the following theorem.
\begin{theorem}[Strong-form RFM]
\label{Theorem: abstract error estimate for Strong-form RFM}
Assume that $\partial\Omega$ is smooth, $L$ is properly elliptic, and the coefficients of $L$ and $B$ are in $C^{\infty}(\bar{\Omega})$. Let $B$ be a normal boundary operator of order $l$ covering $L$, where $l=0$ for a Dirichlet boundary condition and $l=1$ for a Neumann or Robin boundary condition. Let $0\le s\le 3/2-l$, assume that $V_{\mathrm{RF}}\subset H^{2}(\Omega)$, and let $u\in H^{2}(\Omega)$ be a solution to Problem~\eqref{elliptic equation with
variable coefficients}. Then there exists a constant $C > 0$, independent of $u$ and $u_{\mathrm{RF}}$, such that $$ \inf_{v \in V}
\|u-u_{\mathrm{RF}}-v\|_{H^{s + l + 1/2}(\Omega)} \leq C \inf_{w \in V_{\mathrm{RF}}} \|u-w\|_{H^{2}(\Omega)}, $$ where $V = \{v \in C^{\infty}(\bar{\Omega}) :
L v = 0 \text{ in } \Omega, B v = 0 \text{ on } \partial\Omega\}$ is the null space. In particular, if the problem admits a unique solution, the estimate simplifies to $ \|u-u_{\mathrm{RF}}\|_{H^{s+l+1/2}(\Omega)} \leq C \inf_{w \in V_{\mathrm{RF}}} \|u-w\|_{H^{2}(\Omega)}. $
\end{theorem}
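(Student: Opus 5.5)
The plan is the standard least-squares argument: an a priori estimate for the boundary value problem in low-regularity norms, combined with quasi-optimality of the minimizer of $\mathcal L$. For any $w\in V_{\mathrm{RF}}$ we have $\mathcal L(u_{\mathrm{RF}})\le\mathcal L(w)$. Since $Lu=f$ and $Bu=g$, this gives
\[
\|L(u-u_{\mathrm{RF}})\|_{L^2(\Omega)}^2+\gamma\|B(u-u_{\mathrm{RF}})\|_{H^s(\partial\Omega)}^2\le \|L(u-w)\|_{L^2(\Omega)}^2+\gamma\|B(u-w)\|_{H^s(\partial\Omega)}^2 .
\]
The right-hand side is bounded by $C\|u-w\|_{H^2(\Omega)}^2$. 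For the interior term we use Assumption~\ref{Assumption: Linf norm of the coefficients of linear differential operators L, B}. For the boundary term we use the trace theorem. Since the coefficients are smooth, $B:H^2(\Omega)\to H^{2-l-1/2}(\partial\Omega)=H^{3/2-l}(\partial\Omega)$ is bounded, and the hypothesis $s\le 3/2-l$ gives the embedding $H^{3/2-l}(\partial\Omega)\hookrightarrow H^{s}(\partial\Omega)$. Taking the infimum over $w$ bounds the residual of $e:=u-u_{\mathrm{RF}}\in H^2(\Omega)$ in $L^2(\Omega)\times H^s(\partial\Omega)$ by $C\inf_{w}\|u-w\|_{H^2}$.

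The second step is a stability estimate for $(L,B)$ that maps the data space $L^2(\Omega)\times H^s(\partial\Omega)$ into $H^{s+l+1/2}(\Omega)$, modulo the kernel $V$. Because $L$ is properly elliptic and $B$ is normal and covers $L$, the Lopatinskii–Shapiro theory applies (Lions–Magenes, Roitberg, or the extended Agmon–Douglis–Nirenberg scale). It says that $(L,B)$ is Fredholm from $H^{r}(\Omega)$ to $H^{r-2}(\Omega)\times H^{r-l-1/2}(\partial\Omega)$ on suitable (possibly generalized) Sobolev scales. Its kernel is the finite-dimensional space $V$, which consists of $C^\infty(\bar\Omega)$ functions by elliptic regularity. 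This yields the a priori estimate
\[
\inf_{v\in V}\|e-v\|_{H^{r}(\Omega)}\le C\bigl(\|Le\|_{H^{r-2}}+\|Be\|_{H^{r-l-1/2}(\partial\Omega)}\bigr).
\]
Choose $r=s+l+1/2$, so that the boundary norm is exactly $H^s(\partial\Omega)$. Then $r-2=s+l-3/2\le0$, and $\|Le\|_{H^{r-2}}$ is dominated by $\|Le\|_{L^2}$ through the embedding of $L^2$ into the negative-order (dual) space. The upper bound $s\le3/2-l$ is exactly what makes $r\le2$, so that $e\in H^2$ lies in the domain of the estimate and $r-2\le0$. Combining this with the first step gives the claim.

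If the problem has a unique solution, then $V=\{0\}$ and the infimum disappears. The main obstacle I expect is justifying the a priori estimate at low or fractional regularity $r\in[l+1/2,2]$. There $r-2$ may be negative, and the standard ADN estimate requires $r\ge2$. I would handle this in one of two ways. The first is to cite the Lions–Magenes transposition theory, or Roitberg's spaces, in which $(L,B)$ is an isomorphism modulo finite-dimensional kernel and cokernel for all real $r$, with the Roitberg norm coinciding with the standard $H^r$ norm for $r\ge l+1/2$. The second is to interpolate between $r=2$ and a dual estimate obtained from the adjoint boundary problem. In either case the kernel correction is handled by the Fredholm property: a standard compactness (Peetre) argument upgrades the estimate with a lower-order term to the quotient-norm estimate stated above.
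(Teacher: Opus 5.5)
Your proposal is correct and follows the paper's proof. The three steps are the same: minimality of $u_{\mathrm{RF}}$; the bound $\sqrt{\mathcal L(w)}\lesssim\|u-w\|_{H^2(\Omega)}$ from the trace theorem together with $s\le 3/2-l$; and the Lions--Magenes Fredholm estimate at $r=s+l+1/2\le 2$, where the paper handles your anticipated obstacle exactly as in your first option, citing Lions--Magenes (Ch.~2, Thms.~5.4 and 7.4) with the interior data space $\Xi^{r-2}(\Omega)$ in place of $H^{r-2}$ and the embedding $L^2(\Omega)\hookrightarrow\Xi^{r-2}(\Omega)$.
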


This result accommodates various boundary conditions and problems with a nontrivial null space, such as the Laplace equation with a Neumann boundary condition.
In certain cases, the regularity assumptions on the coefficients and the domain boundary can be further relaxed~\cite{Geymonat1965,Grisvard2011}; this issue is beyond the scope of the present paper.

The proof is given in Appendix~\ref{Section: proof strong-form RFM}.

\subsection{Weak-form RFM}
Let $V$ be a Hilbert space with norm $\|\cdot\|_V$. Consider the variational problem: find $u^* \in V$ such that
\begin{equation}\label{eq: weak variational problem}
a(u^*,v)=F(v),\qquad \forall v\in V,
\end{equation}
where $a:V\times V\to\mathbb R$ is a bilinear form and $F:V\to\mathbb R$ is a bounded linear functional. Assume that $a$ is continuous and coercive, i.e., there exist constants $M,\alpha>0$ such that
\begin{equation}\label{continuity and coercivity of a}
|a(w,v)|\le M\|w\|_V\|v\|_V,\qquad a(v,v)\ge \alpha\|v\|_V^2,\qquad \forall w,v\in V.
\end{equation}
By the Lax--Milgram theorem, Problem \eqref{eq: weak variational problem} admits a unique solution. Let $V_{\mathrm{RF}}\subset V$ be the RF trial space. The weak-form RFM solution $u_{\mathrm{RF}}\in V_{\mathrm{RF}}$ is defined by
\begin{equation*}\label{weak form RFM}
a(u_{\mathrm{RF}},v)=F(v), \qquad \forall v\in V_{\mathrm{RF}}.
\end{equation*}
A direct application of C\'ea's lemma~\cite{Cea1964Approximation} gives

\begin{theorem}[Weak-form RFM] \label{thm: abstract error estimate for weak form RFM}
Under Assumption \eqref{continuity and coercivity of a}, it holds that
\begin{equation*} \label{eq: abstract weak form RFM error estimate}
\|u^{*}-u_{\mathrm{RF}}\|_{V}
\le \frac{M}{\alpha}\inf_{w\in V_{\mathrm{RF}}}\|u^{*}-w\|_{V}.
\end{equation*}
\end{theorem}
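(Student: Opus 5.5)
The plan is the standard C\'ea argument, which uses only the Galerkin orthogonality built into the definition of $u_{\mathrm{RF}}$ together with the two inequalities in \eqref{continuity and coercivity of a}. First we check that $u_{\mathrm{RF}}$ is well defined. Since $V_{\mathrm{RF}}$ is spanned by finitely many random features, it is a finite-dimensional, hence closed, subspace of $V$. The restriction of $a$ to $V_{\mathrm{RF}}\times V_{\mathrm{RF}}$ is still continuous and coercive with the same constants $M$ and $\alpha$. The Lax--Milgram theorem applied on $V_{\mathrm{RF}}$ therefore gives a unique discrete solution; in finite dimensions this is simply the invertibility of the Galerkin matrix, whose quadratic form is bounded below by $\alpha$.

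Second, we derive Galerkin orthogonality. Test \eqref{eq: weak variational problem} with an arbitrary $v\in V_{\mathrm{RF}}\subset V$ and subtract the discrete equation. This gives
\[
a(u^{*}-u_{\mathrm{RF}},v)=0 \qquad \text{for all } v\in V_{\mathrm{RF}}.
\]

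Third, fix any $w\in V_{\mathrm{RF}}$ and set $e:=u^{*}-u_{\mathrm{RF}}$. Note that $u_{\mathrm{RF}}-w\in V_{\mathrm{RF}}$. Coercivity, orthogonality, and continuity then yield
\[
\alpha\|e\|_V^2\le a(e,e)=a(e,u^{*}-w)+a(e,w-u_{\mathrm{RF}})=a(e,u^{*}-w)\le M\|e\|_V\|u^{*}-w\|_V .
\]
If $e=0$ there is nothing to prove. Otherwise we divide by $\alpha\|e\|_V$ to get $\|e\|_V\le (M/\alpha)\|u^{*}-w\|_V$. Since $w\in V_{\mathrm{RF}}$ was arbitrary, taking the infimum over $w$ gives the claim.

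No step is a genuine obstacle; the only point needing care is well-posedness of the discrete problem, which is immediate because the trial space is finite-dimensional. No symmetry of $a$ is required, so the constant is $M/\alpha$ rather than $\sqrt{M/\alpha}$. The estimate is deterministic: it holds for every realization of the sampled features, so it combines directly with the high-probability approximation bounds of Theorems~\ref{Theorem: interpolation improved convergence rate of RFM} and \ref{Theorem: optimal growing bandwidth leverage rates}.
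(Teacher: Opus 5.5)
Your proof is correct and is exactly the argument the paper intends: the paper obtains the theorem as a direct application of C\'ea's lemma, and you have written out that lemma's standard proof (discrete well-posedness on the finite-dimensional $V_{\mathrm{RF}}$, Galerkin orthogonality, then coercivity and continuity). Your remarks that no symmetry of $a$ is needed for the constant $M/\alpha$ and that the bound holds for every realization of the sampled features are also accurate.
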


We now give a concrete example. Consider the Neumann boundary value problem
\begin{equation} \label{eqs: elliptic equation with Neumann BC for weak form}
\begin{cases}
- \nabla \cdot\left(A(x)\nabla u\right) + c u = f, &  x \in \Omega, \\
A(x)\nabla u\cdot \mf{n} = g, &  x \in \partial \Omega,
\end{cases}
\end{equation}
where  $f\in L^{2}(\Omega)$ and $g\in H^{-1/2}(\partial\Omega)$. We make the following assumption on $A$ and $c$.
\begin{assumption} \label{assumption: coefficients for Neumann weak form}
Let $A\in L^{\infty}(\Omega;\mb{R}^{d\times d})$ be symmetric and let $c\in L^{\infty}(\Omega)$. There exist positive constants $\Lambda_{1}$, $\underline{a}$, $\Lambda_{3}$ and $\underline{c}$ such that $\underline{a}|\xi|^{2}\le \xi^{\top}A(x)\xi \le \Lambda_{1}|\xi|^{2}$ and $\underline{c}\le c(x)\le \Lambda_{3}$ for a.e. $x\in\Omega$ and all $\xi\in\mb{R}^{d}$.
\end{assumption}

The weak formulation of Problem \eqref{eqs: elliptic equation with Neumann BC for weak form} fits the abstract form \eqref{eq: weak variational problem} with $V=H^{1}(\Omega)$. More precisely,
\begin{equation} \label{eq: bilinear form a and linear form F}
\begin{aligned}
a(u,v) &= \int_{\Omega}\left(A(x)\nabla u\cdot \nabla v + c u v\right)\mr{d}x, \\
F(v) &= \int_{\Omega} f v \mr{d}x + \langle g, v\rangle_{\partial\Omega}.
\end{aligned}
\end{equation}
Here $\langle g,v\rangle_{\partial\Omega}$ denotes the duality pairing between $H^{-1/2}(\partial\Omega)$ and $H^{1/2}(\partial\Omega)$ through the trace of $v$; if $g\in L^{2}(\partial\Omega)$, it reduces to $\int_{\partial\Omega}gv\,\mr{d}s$. Under Assumption \ref{assumption: coefficients for Neumann weak form}, the bilinear form $a$ is continuous and coercive on $H^{1}(\Omega)$ with $M=\max\{\Lambda_{1},\Lambda_{3}\}$ and $\alpha=\min\{\underline{a},\underline{c}\}$.

\subsection{RFM for Eigenvalue Problems}
We now consider the RFM approximation for elliptic eigenvalue problems. Let $a(\cdot,\cdot)$ be a symmetric, continuous and coercive bilinear form as in the weak formulation, and let $m(\cdot,\cdot)$ be a symmetric, continuous and nonnegative bilinear form on $V\times V$. Assume that the solution operator $T:V\to V$ defined by $a(Tf,v)=m(f,v)$ for all $v\in V$ is compact. The continuous eigenvalue problem is to find $(\lambda,u)\in\mathbb R\times V$, $u\ne0$, such that
\begin{equation} \label{eq: abstract variational eigenvalue problem}
a(u,v)=\lambda m(u,v), \qquad \forall v\in V.
\end{equation}
The RFM eigenvalue approximation is to find $(\lambda_{\mathrm{RF}},u_{\mathrm{RF}})\in\mathbb R\times V_{\mathrm{RF}}$, $u_{\mathrm{RF}}\ne0$, such that
\begin{equation} \label{eq: abstract RF eigenvalue problem}
a(u_{\mathrm{RF}},v_{\mathrm{RF}})=\lambda_{\mathrm{RF}}
m(u_{\mathrm{RF}},v_{\mathrm{RF}}), \quad
\forall v_{\mathrm{RF}}\in V_{\mathrm{RF}}.
\end{equation}
Let $\lambda$ be an eigenvalue of \eqref{eq: abstract variational eigenvalue problem} with multiplicity $q$, and let $E$ be the corresponding eigenspace. Define the best approximation error of $E$ in $V_{\mathrm{RF}}$ by
\begin{equation*}
\eta_{\mathrm{RF}}(E):=\sup_{u\in E,\  \|u\|_{V}=1}
\inf_{w_{\mathrm{RF}}\in V_{\mathrm{RF}}}\|u-w_{\mathrm{RF}}\|_{V}.
\end{equation*}
The following theorem is a direct consequence of \cite[Theorems 9.12 and 9.13]{Boffi2010FEAeigenvalue}.
\begin{theorem}[RFM for eigenvalue problems]\label{Theorem: abstract error estimate for RFM eigenvalue problems}
Under the above assumptions, for $V_{\mathrm{RF}}$ sufficiently rich, the discrete problem \eqref{eq: abstract RF eigenvalue problem} has exactly $q$ eigenvalues $\{\lambda_{\mathrm{RF},j}\}_{j=1}^{q}$ converging to $\lambda$, counted with multiplicity. Let $E_{\mathrm{RF}}$ be the space spanned by the corresponding discrete eigenfunctions. Then, there exists a constant $C>0$, independent of $V_{\mathrm{RF}}$, such that
\begin{equation*}
\begin{aligned}
& \sup_{u\in E,\  \|u\|_V=1}\inf_{v\in E_{\mathrm{RF}}}\|u-v\|_V
\le C\eta_{\mathrm{RF}}(E), \\
& \max_{1\le j\le q}|\lambda-\lambda_{\mathrm{RF},j}|
\le C\eta_{\mathrm{RF}}^2(E).
\end{aligned}
\end{equation*}
\end{theorem}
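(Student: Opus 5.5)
The argument is the classical Babu\v{s}ka--Osborn theory for compact operators, as collected in \cite[Theorems 9.12 and 9.13]{Boffi2010FEAeigenvalue}. The random feature space enters only as a finite-dimensional subspace $V_{\mathrm{RF}}\subset V$, so everything rests on the conforming Galerkin setting. I would first recast both eigenproblems as operator eigenproblems.

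\emph{Step 1: operator formulation.} Introduce the discrete solution operator $T_{\mathrm{RF}}:V\to V_{\mathrm{RF}}$ by
\[
a(T_{\mathrm{RF}}f,v)=m(f,v)\qquad\text{for all } v\in V_{\mathrm{RF}}.
\]
This is well posed by coercivity, and $T_{\mathrm{RF}}=P_{\mathrm{RF}}T$, where $P_{\mathrm{RF}}$ is the $a$-orthogonal (Ritz) projection onto $V_{\mathrm{RF}}$. The correspondence between eigenpairs is $\lambda\leftrightarrow\mu=1/\lambda$ for $T$, and likewise for $T_{\mathrm{RF}}$. Since $a$ and $m$ are symmetric, $T$ and $T_{\mathrm{RF}}$ are self-adjoint with respect to the inner product $a(\cdot,\cdot)$, which is equivalent to the $V$-inner product.

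\emph{Step 2: uniform convergence.} The goal here is $\|T-T_{\mathrm{RF}}\|_{\mathcal L(V)}\to0$ as $V_{\mathrm{RF}}$ becomes rich. By C\'ea's lemma (Theorem~\ref{thm: abstract error estimate for weak form RFM}),
\[
\|(T-T_{\mathrm{RF}})f\|_V\le \frac{M}{\alpha}\inf_{w\in V_{\mathrm{RF}}}\|Tf-w\|_V .
\]
Compactness of $T$ makes $T(\mathbb B_V)$ precompact. Pointwise approximability on a precompact set is therefore uniform, provided ``sufficiently rich'' is read as $\sup_{g\in T(\mathbb B_V)}\inf_{w\in V_{\mathrm{RF}}}\|g-w\|_V\to0$. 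This is the step I expect to be the main obstacle. The statement does not make the richness hypothesis precise, and for random spaces it has to hold on the sampling event. I would therefore state the hypothesis in exactly this uniform form, which is the natural one given the target-uniform events of Theorems~\ref{Theorem: interpolation improved convergence rate of RFM} and \ref{Theorem: optimal growing bandwidth leverage rates}. Under it, norm convergence follows and spectral pollution is excluded, so exactly $q$ discrete eigenvalues converge to $\lambda$, counted with multiplicity.

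\emph{Step 3: rates.} Babu\v{s}ka--Osborn gives
\[
\hat\delta(E,E_{\mathrm{RF}})\le C\|(T-T_{\mathrm{RF}})|_E\|_{\mathcal L(V)}.
\]
By C\'ea, the right-hand side is bounded by $C\eta_{\mathrm{RF}}(E)$, since $T|_E=\lambda^{-1}I$. For the eigenvalues, the self-adjoint form of the estimate is
\[
|\mu-\mu_{\mathrm{RF},j}|\le C\Bigl(\sum_{i,k}|a((T-T_{\mathrm{RF}})\phi_i,\phi_k)|+\|(T-T_{\mathrm{RF}})|_E\|^2\Bigr),
\]
where $\{\phi_i\}$ is a basis of $E$. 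Galerkin orthogonality gives
\[
a((T-T_{\mathrm{RF}})\phi_i,\phi_k)=a((T-T_{\mathrm{RF}})\phi_i,(I-P_{\mathrm{RF}})\phi_k),
\]
which is bounded by $C\eta_{\mathrm{RF}}(E)^2$. Converting $\mu$ back to $\lambda=1/\mu$ only changes the constants, because $\lambda$ is bounded away from $0$.

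All constants depend only on $M$, $\alpha$, $m$, $\lambda$, and $E$, and not on $V_{\mathrm{RF}}$. This gives the two asserted bounds.
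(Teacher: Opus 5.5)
Your proposal is correct and follows the paper's route: the paper simply invokes the Babu\v{s}ka--Osborn results \cite[Theorems 9.12 and 9.13]{Boffi2010FEAeigenvalue}, and your Steps 1--3 (solution operators, norm convergence from compactness plus C\'ea, the gap bound, and the squared eigenvalue bound via Galerkin orthogonality) unpack exactly that machinery. One small slip: $T_{\mathrm{RF}}=P_{\mathrm{RF}}T$ is not self-adjoint in $a(\cdot,\cdot)$ on $V$, since its adjoint is $TP_{\mathrm{RF}}$; however, the general Babu\v{s}ka--Osborn product term $\|(T-T_{\mathrm{RF}})|_E\|\,\|T(I-P_{\mathrm{RF}})|_E\|$ is still bounded by $C\eta_{\mathrm{RF}}(E)^2$, so your eigenvalue estimate stands.
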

Theorem \ref{Theorem: abstract error estimate for RFM eigenvalue problems} indicates that the convergence rate of the eigenspace matches the best approximation error, whereas the eigenvalues converge at twice this rate.

We now give a concrete example. Consider the elliptic eigenvalue problem with a Neumann boundary condition
\begin{equation*}
\begin{cases}
- \nabla \cdot\left(A(x)\nabla u\right) + c u = \lambda u, &  x \in \Omega, \\
A(x)\nabla u\cdot \mf{n} = 0, &  x \in \partial \Omega .
\end{cases}
\end{equation*}
Under Assumption \ref{assumption: coefficients for Neumann weak form}, this problem fits the abstract formulation \eqref{eq: abstract variational eigenvalue problem} with $V=H^{1}(\Omega)$, $a$ defined as in \eqref{eq: bilinear form a and linear form F}, and $m(u,v)=\la u,v\ra_{L^{2}(\Omega)}$. Since $a$ is symmetric, continuous and coercive on $H^{1}(\Omega)$, and the embedding $H^{1}(\Omega)\hookrightarrow L^{2}(\Omega)$ is compact, the associated solution operator is compact.

\section{Singular values and exponential ill-conditioning of RFMtxs} \label{Section: Exponential ill conditionality of random feature matrices}
In this part, we analyze the exponential ill-conditioning of RFMtxs. In practical calculations, collocation points $\{x_{i}\}_{i = 1}^{n_{1}} \subset \Omega$, $\{x_{i}\}_{i = n_{1}+1}^{n_{1}+n_{2}} \subset \partial\Omega$, are selected for numerical integration to approximate the loss $\mc{L}(\boldsymbol{\alpha})$.
Denote the total number of collocation points by $n=n_{1}+n_{2}$. Without loss of generality, in this part we set $\gamma=1$ and $n>2N$. Assume $\Omega\subset(-1,1)^{d}$. Otherwise, a scaling argument can be applied. Define the operator $\widetilde{L}$ by $\widetilde{L} = L$ in $\Omega$ and $\widetilde{L} = B$ on $\partial\Omega$. The vector $\boldsymbol{\alpha}$ of trainable parameters can be obtained by solving the least squares problem $\mf{\Psi\boldsymbol{\alpha}} = \mf{F}$, where $\mf{\Psi} = \left(\Psi_{i j}\right) \in \mb{R}^{n \times 2 N}$ is the RFMtx with $\Psi_{i j} = \widetilde{L}\psi_{j}(x_{i})$, and $\mf{F} = (f(x_{1}), \ldots, f(x_{n_{1}}), g(x_{n_{1}+1}), \ldots, g(x_{n_{1}+n_{2}}))^{\top}$. Each column of $\mf{\Psi}$ corresponds to a feature and each row corresponds to a collocation point. For Fourier features, $\psi_{j}=\cos(k_{j}^{\top}\cdot)$ and $\psi_{j+N}=\sin(k_{j}^{\top}\cdot)$ for $1\le j\le N$. For $\tanh$ features, $\psi_{j} = \tanh(k_{j}^{\top}\cdot+v_{j})$ for $1\le j\le 2N$.

Let $\{\sigma_m(\mf{\Psi})\}_{m = 1}^{2N}$ be the singular values of $\mf{\Psi}$ in descending order. The following theorem estimates the decay rate of $\sigma_{m}$ and shows that the condition number of $\mf{\Psi}$ may be extremely large.
\begin{theorem}[Fast decay of singular values]
\label{Theorem: fast decay of singular values of RFMtx}
Let $M>0$ be sufficiently large in terms of $d$ and $S$, and let $m\in\mb N$ satisfy $3M\ln(15M)+1\le m\le2N$.

(1) If $\mf\Psi$ is generated by Fourier features with $k_j\in[-S,S]^d$, $1\le j\le N$, then
\begin{equation}
\label{ineq: Fourier fast decay of singular values}
\sigma_m
\lesssim_{\Omega,d,\{\Lambda_i\}_{i=1}^{5},S}
\sqrt{nN}\exp\bigl(-a_{\mathrm F}M^{1/d}\ln M\bigr),
\end{equation}
where one may take $a_{\mathrm F}:=3\cdot 2^{1-1/d}/(112e)$.

(2) If $\mf\Psi$ is generated by $\tanh$ features with $k_j\in[-S,S]^d$, $1\le j\le2N$, then
\begin{equation}
\label{ineq: tanh fast decay of singular values}
\sigma_m
\lesssim_{\Omega,d,\{\Lambda_i\}_{i=1}^{5},S}
\sqrt{nN}\exp\bigl(-a_{\mathrm T}M^{1/d}\bigr),
\end{equation}
where $a_{\mathrm T}=a_{\mathrm T}(d,S)>0$ can be chosen explicitly in terms of $d$ and $S$.
\end{theorem}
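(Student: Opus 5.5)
We bound $\sigma_m$ through the Eckart--Young (Courant--Fischer) characterization: $\sigma_m(\mf\Psi)\le\|\mf\Psi-\mf A\|_2\le\|\mf\Psi-\mf A\|_F$ for every matrix $\mf A$ of rank at most $m-1$. The task therefore reduces to exhibiting a low-rank approximation of $\mf\Psi$. We obtain one by approximating every column function $\psi_j$ by elements of a \emph{single} subspace $W\subset C^\infty(\bar\Omega)$ with $\dim W\le m-1$. If $w_j\in W$ satisfies $\|\widetilde L(\psi_j-w_j)\|_{L^\infty}\le\varepsilon$ on $\Omega\cup\partial\Omega$, then the matrix $\mf A_{ij}=\widetilde L w_j(x_i)$ has columns in the $(m-1)$-dimensional space $\{(\widetilde Lw(x_i))_i:w\in W\}$. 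Hence $\operatorname{rank}\mf A\le m-1$ and $\|\mf\Psi-\mf A\|_F\le\sqrt{n\cdot 2N}\,\varepsilon$, which accounts for the $\sqrt{nN}$ prefactor. By Assumption~\ref{Assumption: Linf norm of the coefficients of linear differential operators L, B}, $\|\widetilde L v\|_{L^\infty}\lesssim_{\{\Lambda_i\}}\|v\|_{W^{2,\infty}(\Omega)}$, using traces of first derivatives on $\partial\Omega$. It thus suffices to have $\varepsilon$ control in $W^{2,\infty}(\Omega)$, which is exactly what the simultaneous $W^{t,p}$ bounds of Theorem~\ref{Theorem: interpolation improved convergence rate of RFM} provide.

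\textbf{Fourier case.} Every feature $\cos(k_j^\top x)$, $\sin(k_j^\top x)$ with $k_j\in[-S,S]^d$ is, after multiplication by a fixed smooth cutoff, an element of a bandlimited ball. For a slightly larger $S'>S$, choose $U_j(x)=\cos(k_j^\top x)\chi(x)$, where $\widehat\chi$ is supported in $[-(S'-S),S'-S]^d$ and $\chi\ge c>0$ on $\Omega$. We cannot use $\chi\equiv1$ because of $L^2(\mb R^d)$, but we may instead divide: take $\chi$ bandlimited, positive on the bounded set $\widetilde\Omega$, and approximate $\psi_j\chi$, then multiply the approximant space by $1/\chi$, which preserves the dimension. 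A cleaner alternative is to note that $\psi_j|_\Omega\in\mc B_{S'}(\Omega)$ directly, via $U=\psi_j\cdot\chi$ with $\chi\equiv1$ near $\bar\Omega$ and $\widehat\chi$ compactly supported; such $\chi$ does not exist, so we use the division trick. Its norm $\|U_j\|_{L^2}$ is bounded uniformly in $j$. Apply Theorem~\ref{Theorem: interpolation improved convergence rate of RFM}(4) with $\delta$ fixed (for instance $\delta=1/2$, which explains $\ln(15M)$ versus $\ln(14M/\delta)$ after adjustment) and with $N$ replaced by $m-1\ge 3M\ln(15M)$ features. With positive probability there is one sampled space $W$ of dimension $\le m-1$ approximating the \emph{whole ball} uniformly in $W^{2,\infty}$ with error $\exp(-a_{\mathrm e}M^{1/d}\ln M)$. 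The target-uniformity of the event is essential here: one space serves all $2N$ columns simultaneously. Existence of a realization is all we need, since $\sigma_m$ is deterministic. Tracking $a_{\mathrm e}=3a_\lambda/4$ with $a_\lambda=2^{1-1/d}/(28e)$ and the loss from enlarging $S$ gives the stated constant $a_{\mathrm F}$.

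\textbf{$\tanh$ case.} The features $\tanh(k^\top x+v)$ are analytic in a strip of width $\pi/(2\sqrt d S)$ around $\mb R^d$, but they are not in $L^2(\mb R^d)$ and their poles constrain the strip. We therefore multiply by a fixed analytic cutoff, for instance a Gaussian $e^{-|x|^2}$, or again use a division by a positive bandlimited $\chi$. The goal is a function satisfying Assumption~\ref{Assumption: regularities}(c) with $\rho\simeq 1/(\sqrt d S)$ and norm uniform in $(k,v)$. The $v$-uniformity needs care, since $|v|$ may be large; however, $\tanh$ is then nearly constant on $\Omega$, and uniformity of $\sup_{|y|\le\kappa}\|\cdot\|_{L^2}$ follows from the distance to the poles being independent of $v$. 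Lemma~\ref{Lemma: Fourier decay from Gevrey and analytic regularity}(ii) places all features in a fixed ball of $\mc F_{\kappa,1}$. Theorem~\ref{Theorem: interpolation improved convergence rate of RFM}(2) with $s=1$ then yields a common space of dimension $\le m-1$ with $W^{2,\infty}$ error $\exp(-a_{\mathrm e}M^{1/d})$, and $a_{\mathrm T}$ depends on $d,S$ through $\kappa$.

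The main obstacle is this uniform embedding of the $\tanh$ features, uniform over arbitrary shifts $v_j$, into a single Fourier-weighted ball together with the cutoff and division construction. A secondary obstacle is the bookkeeping of the boundary rows, which requires $W^{2,\infty}$ control up to $\partial\Omega$.
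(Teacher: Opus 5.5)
Your proposal is correct and follows the paper's proof essentially step by step. Eckart--Young is the dual form of the paper's min--max argument with a $(2N-m+1)$-dimensional null subspace. Your bandlimited positive $\chi$ and Gaussian weight are the paper's $\varsigma_1(x)=\prod_i\frac{2}{\pi x_i}\sin(\frac{\pi x_i}{2})$ and $\varsigma_2(x)=e^{-|x|^2/4}$, removed afterwards by a smooth-multiplier norm equivalence on $(-1,1)^d$. The uniform-in-$(k,v)$ bound in $\mc F_{\kappa,1}$ that you flag as the main obstacle is proved in the paper by the contour shift you sketch, within a sub-strip $|k^\top\eta|\le 11\pi/24$ where $|\tanh|$ is bounded independently of $v$. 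This quantitative step is genuinely needed, since Lemma~\ref{Lemma: Fourier decay from Gevrey and analytic regularity}(ii) only gives finiteness of the norm.

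Three bookkeeping fixes are needed. First, take $\delta_0\in(14/15,1)$ rather than $\delta=1/2$, so that $14/\delta_0<15$ and $m-1\ge 3M\ln(15M)$ actually implies the sampling condition; positive probability is all you need. Second, use the randomly shifted cosine representation, or allow complex approximants in Eckart--Young, so that the common trial space has dimension $m-1$ rather than $2(m-1)$. Third, enlarging $S$ costs nothing in the exponent, because $a_\lambda=2^{1-1/d}/(28e)$ in case (4) is independent of $S$, so $a_{\mathrm F}=a_{\mathrm e}$ exactly.
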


\begin{theorem}[Lower bounds for condition number]
\label{Theorem: exponential ill conditionality of random feature matrices}
Suppose that there is a constant $\underline a>0$ such that $-\xi^{\top}A(x)\xi\ge \underline a|\xi|^2$ and $c(x)\ge0$ for all $x\in\Omega$ and $\xi\in\mb R^d$.
Let $M>0$ be sufficiently large in terms of $d$ and $S$, and assume $2N-1\ge3M\ln(15M)$. Let $a_{\mathrm F}$ and $a_{\mathrm T}$ be as in Theorem~\ref{Theorem: fast decay of singular values of RFMtx}. Use the convention $\sigma_1/\sigma_{2N}=\infty$ when $\sigma_{2N}=0$.

(1) \textbf{Fourier features.} Let $k_j\in[-S,S]^d$. Define $C,G\ge0$ by $C^2:=n_1^{-1}\sum_{i=1}^{n_1}c^2(x_i)$ and $G^2:=n_2^{-1}\sum_{i=n_1+1}^{n}g_2^2(x_i)$. Then
\[
\frac{\sigma_1}{\sigma_{2N}} \gtrsim_{\Omega,d,\{\Lambda_i\}_{i=1}^5,S} \frac{\sqrt{n_1}(\underline a+C)+\sqrt{n_2}G}{\sqrt{nN}} \exp\bigl(a_{\mathrm
F}M^{1/d}\ln M\bigr).
\]

(2) \textbf{Tanh features.} Let $k_j\in[-S,S]^d$ and $|v_j|\le dS$. Assume $b\equiv0$ and that, for some $r_0>0$ independent of $M$ and $N$, the interior collocation points satisfy
\begin{equation}
\label{assump: uniform directional spread of interior collocation points}
\inf_{\theta\in\mathbb S^{d-1}}
\left(
\max_{1\le i\le n_1}\theta^{\top}x_i
-\min_{1\le i\le n_1}\theta^{\top}x_i
\right)\ge2r_0.
\end{equation}
Then, setting $\widetilde a_{\mathrm T}:=\min\{a_{\mathrm T},d\ln2/(4e)\}$, we have
\begin{equation}
\label{ineq: tanh condition number lower bound}
\frac{\sigma_1}{\sigma_{2N}}
\gtrsim_{\Omega,d,\{\Lambda_i\}_{i=1}^5,S,r_0}
\frac{\underline a}{\sqrt{nN}}
\exp\bigl(\widetilde a_{\mathrm T}M^{1/d}\bigr).
\end{equation}
\end{theorem}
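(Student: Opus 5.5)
The plan is to compare the two ends of the spectrum: bound $\sigma_{2N}$ from above using Theorem~\ref{Theorem: fast decay of singular values of RFMtx} with $m=2N$, and bound $\sigma_1$ from below by testing $\mf\Psi$ against a single well-chosen coefficient vector. The hypothesis $2N-1\ge 3M\ln(15M)$ is exactly the admissibility condition $3M\ln(15M)+1\le m\le 2N$ for $m=2N$. Hence
\[
\sigma_{2N}\lesssim\sqrt{nN}\exp\bigl(-a_{\mathrm F}M^{1/d}\ln M\bigr)
\quad\text{or}\quad
\sigma_{2N}\lesssim\sqrt{nN}\exp\bigl(-a_{\mathrm T}M^{1/d}\bigr),
\]
with the same constants. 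It then suffices to prove a lower bound $\sigma_1\ge|\mf\Psi e|/|e|$ for some unit vector $e$, with no exponential factor. If $\sigma_{2N}=0$, the convention makes the claim trivial.

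\textbf{Fourier features.} I would take $e$ to be the unit vector selecting one cosine column $\psi_j=\cos(k_j^\top x)$; if needed, I would average or maximize over $j$ and over the sine column as well.

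On the boundary rows, $B\psi_j=g_1\partial_{\mf n}\psi_j+g_2\psi_j$, which is harder to control. I would treat the boundary and interior rows separately, noting that $\sigma_1\ge\max(\|\mf\Psi_{\rm int}\|,\|\mf\Psi_{\partial}\|)$.

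For the interior rows,
\[
L\cos(k^\top x)=-(k^\top Ak)\cos(k^\top x)-(b\cdot k)\sin(k^\top x)+c\cos(k^\top x).
\]
The term $-k^\top Ak\ge\underline a|k|^2$ has the same sign as $c\ge0$, but the cosine can vanish at collocation points, and the drift term has no sign. To handle this, I would combine the cosine and sine columns, using $\cos^2+\sin^2=1$. Summing the squared norms of the two columns $L\cos$ and $L\sin$ gives, pointwise,
\[
(k^\top Ak-c)^2+(b\cdot k)^2\ \ge\ \bigl(\underline a|k|^2+c(x_i)\bigr)^2 .
\]
This yields $\|\mf\Psi_{\rm int}\|^2\ge\tfrac12\sum_i\bigl(\underline a|k_j|^2+c(x_i)\bigr)^2\gtrsim n_1(\underline a|k_j|^2+C)^2$.

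The factor $|k_j|^2$ could be small, and this is the main obstacle. If the features include some $|k_j|\gtrsim 1$, the bound follows directly. Otherwise I would select the largest $|k_j|$, or accept a dependence of the implied constant on $\Omega$ and $S$ via a separate argument. For the boundary rows, the same cos/sin pairing gives a bound $\gtrsim\sqrt{n_2}\,G$, since the $g_2$ part dominates after pairing and the $g_1k$ term is absorbed. Combining the two pieces gives $\sigma_1\gtrsim\sqrt{n_1}(\underline a+C)+\sqrt{n_2}G$, and dividing by the bound on $\sigma_{2N}$ finishes part (1).

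\textbf{$\tanh$ features.} With $b\equiv0$, we have $L\tanh(k^\top x+v)=(k^\top Ak)\,\tanh''(\cdot)+c\tanh(\cdot)$. The plan is to pick one column $j$ with $|k_j|$ bounded below and show that $|\tanh''(k_j^\top x_i+v_j)|$ is bounded below at some interior collocation point.

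The spread assumption~\eqref{assump: uniform directional spread of interior collocation points} applied to $\theta=k_j/|k_j|$ gives points whose arguments differ by at least $2r_0|k_j|$. Since $|v_j|\le dS$ and $\Omega\subset(-1,1)^d$, all arguments lie in a bounded interval. Because $\tanh''$ vanishes only at $0$, at least one of the two extreme points has argument of modulus $\ge r_0|k_j|$, where $|\tanh''|\gtrsim_{S,d,r_0}|k_j|$-dependent constants.

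The sign interaction with $c\tanh$ is the delicate step. My first attempt would be to choose the point on the side where $\tanh''$ and $-\tanh$ carry the same sign, namely the negative argument when $k^\top Ak\le-\underline a|k|^2$. This gives $|L\psi_j(x_i)|\gtrsim\underline a$, so $\sigma_1\gtrsim\underline a$.

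Dividing by the bound on $\sigma_{2N}$ yields~\eqref{ineq: tanh condition number lower bound}. The truncation $\widetilde a_{\mathrm T}\le d\ln2/(4e)$ is presumably used when reconciling $M$ and $N$, or to absorb the $\sqrt{nN}$ factors.
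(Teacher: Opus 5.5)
Your high-level plan matches the paper's argument in the regime where some frequency is bounded below, say $|k_j|\ge(2\sqrt d)^{-1}$ for Fourier features or $|k_j|\ge\rho_{\mathrm T}=4/(7\sqrt d)$ for $\tanh$. That plan is: take $m=2N$ in Theorem~\ref{Theorem: fast decay of singular values of RFMtx} to bound $\sigma_{2N}$, and bound $\sigma_1$ below by a cosine--sine column pair. The cross terms cancel, so the pair has squared norm at least $\sum_{i\le n_1}(\underline a|k_j|^2+c(x_i))^2+\sum_{i>n_1}g_2^2(x_i)$.

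The regime you flag as ``the main obstacle'' is a genuine gap, and neither suggested fix closes it. No hypothesis bounds $|k_j|$ from below, and the implied constants may depend on $\Omega,d,\{\Lambda_i\},S,r_0$ but not on the frequencies. Suppose all $|k_j|\le\epsilon$, $c\equiv0$ and $g_2\equiv0$ (pure Neumann data). Then every entry of $\mf\Psi$ is $O(\epsilon)$, so $\sigma_1\to0$ as $\epsilon\to0$. The fast-decay bound on $\sigma_{2N}$ does not depend on $\epsilon$. Hence ``lower bound for $\sigma_1$ divided by the fast-decay bound'' can never give the claimed estimate uniformly. Taking the largest frequency $\rho_*$ only yields $\sigma_1\gtrsim\sqrt{n_1}\,\underline a\,\rho_*^2+\sqrt{n_1}C+\sqrt{n_2}G$ for Fourier features. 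For $\tanh$ it yields $\sigma_1\ge\mu_{\mathrm T}r_0\underline a\rho_*^3$ with $\mu_{\mathrm T}:=\min_{|z|\le2dS}|\tanh''z|/|z|$. So your claims $\sigma_1\gtrsim\sqrt{n_1}\underline a$ and $|L\psi_j(x_i)|\gtrsim\underline a$ fail here. A side remark: no choice of sign of the argument is needed for $\tanh$. Indeed $L\psi_j=(2q\operatorname{sech}^2z+c)\tanh z$ with $q=-k_j^{\top}Ak_j>0$, so the two terms never cancel.

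The missing idea is to show that, when the frequencies are small, $\sigma_{2N}$ carries the same power of $\rho_*$. This is done with an explicit near-null vector rather than the fast-decay theorem.

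\begin{itemize}
\item Set $T=\lfloor dM^{1/d}/(4e)\rfloor$. Then $D_T=\dim\mc P_T(\mb R^d)\le M$, which is fewer than the number of (nonzero) low-frequency columns.
\item Replace each feature by its degree-$T$ Taylor polynomial: $|k_j|^{-2}\operatorname{Re}P_T$ and $|k_j|^{-2}\operatorname{Im}P_T$ for Fourier features, $P_{T,k_j,v_j}$ for $\tanh$.
\item Dimension counting gives a unit vector $\gamma$ that annihilates the collocated $\widetilde L$-images of these polynomials.
\item The $W^{2,\infty}$ Taylor remainders are $|k|^2 2^{-(T-1)}/(T-1)!$ and $|k|^2(|k|/(2\rho_{\mathrm T}))^{T-1}$. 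For Fourier features, set $\beta_j=\gamma_j/|k_j|^2$, so $|\beta|\ge\rho_*^{-2}$; this gives $\sigma_{2N}\lesssim\rho_*^2\sqrt{nN}\,2^{-(T-1)}/(T-1)!$. For $\tanh$ it gives $\sigma_{2N}\lesssim\sqrt{nN}\rho_\gamma^2(\rho_\gamma/(2\rho_{\mathrm T}))^{T-1}$ with $\rho_\gamma\le\rho_*$.
\item The powers of $\rho_*$ then cancel in $\sigma_1/\sigma_{2N}$. What remains is $2^{T-1}(T-1)!\ge\exp(a_{\mathrm F}M^{1/d}\ln M)$ for Fourier features and $2^{T-1}\ge\frac14\exp(\frac{d\ln2}{4e}M^{1/d})$ for $\tanh$.
\end{itemize}

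That last factor is the actual source of the truncation $\widetilde a_{\mathrm T}=\min\{a_{\mathrm T},d\ln2/(4e)\}$. It has nothing to do with reconciling $M$ and $N$ or absorbing $\sqrt{nN}$. You must also exclude zero frequencies before normalizing by $|k_j|$: a zero sine column, or two constant $\tanh$ columns, would force $\sigma_{2N}=0$.
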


\begin{remark}
Assumption~\eqref{assump: uniform directional spread of interior collocation points} is a quantitative full-dimensionality condition on the interior point cloud. It follows, in particular, if there exists $x_\circ\in\mb R^d$ such that
\[
\overline B(x_\circ,r_0) \subset\operatorname{conv}\{x_i:1\le i\le n_1\}.
\]
Indeed, maximizing and minimizing a linear functional over the convex hull and then over the enclosed ball give a directional width of at least $2r_0$.
\end{remark}

The auxiliary estimates and the proofs of Theorems~\ref{Theorem: fast decay of singular values of RFMtx} and \ref{Theorem: exponential ill conditionality of random feature matrices} are given in Appendix~\ref{Section: proofs for singular value estimates and condition number lower bounds}. Their common key step is to approximate all feature columns simultaneously in a common low-dimensional trial space and then invoke the min--max characterization of singular values; the spectral-accuracy estimates of Section~\ref{Section: Estimate of d(lambda) and the convergence rate, interpolation} make the resulting residual exponentially or super-exponentially small. Accordingly, within the regularity hierarchy covered by Theorem~\ref{Theorem: interpolation improved convergence rate of RFM}, greater smoothness of the activation, as expressed by stronger Fourier regularity of the associated weighted feature family, leads to faster singular-value decay. This reveals an intrinsic tradeoff: the spectral approximation power of RFM is accompanied by severe exponential ill-conditioning, which presents a major obstacle to stable high-precision solution of the resulting least-squares systems.

Numerical experiments further suggest that singular values may decay more slowly as the dimension $d$ increases, with rank deficiency most pronounced in one dimension and partially mitigated in higher dimensions \cite{ChenESun2024Optimization}. This trend is qualitatively consistent with our bounds, whose decay exponents scale with $M^{1/d}$; for fixed $M$, this scale decreases as $d$ grows.

Localization by a partition of unity or a compatible domain decomposition can mitigate rapid singular-value decay: \cite[Theorem~3.7]{MingYu2026Spectral} gives two-sided global-to-local singular-value bounds in one dimension, while numerical evidence supports the benefit of localization and local feature filtering \cite{ChenESun2024Optimization,vanBeekDoleanMoseley2026Filtering}. Complementary remedies include overlapping Schwarz preconditioners and randomized sketching-based right preconditioners, both supported by analysis and numerical experiments \cite{ShangHeinleinMishraWang2025,TanChen2026Preconditioned}.

\section{Conclusion}\label{Section: Conclusion}

We establish a multidimensional approximation theory for RFM in the interpolation scale of the associated kernel integral operator and apply it to general second-order elliptic PDEs and eigenvalue problems. We prove high-probability convergence rates ranging from super-exponential to algebraic, depending on the regularity of the target, under both regularity-adapted sampling and uniform sampling on growing frequency windows. On a single target-independent event, one sampled space approximates an entire source ball and, for each target, one norm-independent coefficient vector defines an approximant that attains spectral accuracy simultaneously in all admissible error norms. The abstract RFM solver estimates then transfer these approximation bounds to convergence estimates for strong- and weak-form RFM discretizations. Additionally, we demonstrate super-exponential singular-value decay for RFMtxs generated by Fourier features and exponential decay for those generated by $\tanh$ features, together with corresponding condition-number lower bounds. The analysis identifies spectral approximation as the common mechanism behind high accuracy and severe ill-conditioning, and extends the one-dimensional Fourier analysis of \cite{MingYu2026Spectral} to a multidimensional, operator-theoretic framework.

A standard RF expansion employs basis functions of the form $h_j(x)=\eta(w_j^{\top}x+b_j)$. While the explicit approximation results in Section~\ref{Section: Estimate of d(lambda) and the convergence rate, interpolation} focus on trigonometric functions, the abstract estimate in Section~\ref{Section: Approximation error by interpolation} applies to any feature representation for which the kernel interpolation spaces, source conditions, and effective dimensions can be controlled. Extending spectral convergence to other activation families therefore reduces to establishing these three ingredients. The singular-value analysis in Section~\ref{Section: Exponential ill conditionality of random feature matrices} treats each feature column as a target to be approximated in a common low-dimensional space. Hence, the same mechanism extends to bounded feature families whose derivatives required by the PDE operator admit uniform spectral approximation; the result for bounded $\tanh$ features provides one concrete analytic example.

\begingroup
\renewcommand{\baselinestretch}{1.0}\small\selectfont
\setlength{\bibsep}{0pt plus 0.25pt}
\setlength{\emergencystretch}{2em}
\sloppy \hbadness=2000
\bibliographystyle{sn-mathphys-num}
\bibliography{reference}

@inproceedings{rahimi2007random,
  title={Random features for large-scale kernel machines},
  author={Rahimi, Ali and Recht, Benjamin},
  booktitle={Advances in Neural Information Processing Systems},
  volume={20},
  pages={1177--1184},
  year={2007}
}

@article{Ogawa1988Pseudo,
author = {Ogawa, Hidemitsu},
title = {An Operator Pseudo-Inversion Lemma},
journal = {SIAM J. Appl. Math.},
volume = {48},
number = {6},
pages = {1527-1531},
year = {1988},
doi = {10.1137/0148095},
}

@inproceedings{rudi2017generalization,
  title={Generalization properties of learning with random features},
  author={Rudi, Alessandro and Rosasco, Lorenzo},
  booktitle={Advances in Neural Information Processing Systems},
  volume={30},
  pages={3215--3225},
  year={2017}
}

@article{Minsker2017extensions,
title = {On some extensions of {B}ernstein's inequality for self-adjoint operators},
author = {Stanislav Minsker},
journal = {Stat. Probab. Lett.},
volume = {127},
pages = {111-119},
year = {2017},
doi = {10.1016/j.spl.2017.03.020},
}

@article{Vyacheslav1999ExtensionsBesov,
author = {Rychkov, Vyacheslav S.},
title = {On Restrictions and Extensions of the {B}esov and {T}riebel--{L}izorkin Spaces with Respect to {L}ipschitz Domains},
journal = {J. Lond. Math. Soc.},
volume = {60},
number = {1},
pages = {237-257},
year = {1999},
doi = {10.1112/S0024610799007723}
}

@book{rodino1993linear,
  title={Linear partial differential operators in Gevrey spaces},
  author={Rodino, Luigi},
  year={1993},
  publisher={World Scientific},
  address={Singapore},
  doi={10.1142/1550}
}

@book{NicolaRodino2010,
  author    = {Nicola, Fabio and Rodino, Luigi},
  title     = {Global Pseudo-Differential Calculus on Euclidean Spaces},
  series    = {Pseudo-Differential Operators: Theory and Applications},
  volume    = {4},
  publisher = {Birkh{\"a}user},
  address   = {Basel},
  year      = {2010},
  doi       = {10.1007/978-3-7643-8512-5}
}

@book{Reed1978Methods,
  title={II: {F}ourier Analysis, Self-Adjointness},
  author={Reed, M.  and  Simon, B.},
  series={Methods of Modern Mathematical Physics},
  volume={2},
  year={1975},
  publisher={Academic Press},
  address={New York}
}

@book{Simon2005trace,
  title={Trace ideals and their applications},
  author={Simon, Barry},
  number={120},
  year={2005},
  publisher={American Mathematical Society},
  address={Providence, RI},
  doi={10.1090/surv/120}
}

@article{widom1963asymptotic1,
    AUTHOR = {Widom, Harold},
     TITLE = {Asymptotic behavior of the eigenvalues of certain integral
              equations},
   JOURNAL = {Trans. Amer. Math. Soc.},
    VOLUME = {109},
      YEAR = {1963},
     PAGES = {278--295},
       DOI = {10.1090/S0002-9947-1963-0155161-0},
}

@article{widom1964asymptotic2,
    AUTHOR = {Widom, Harold},
     TITLE = {Asymptotic behavior of the eigenvalues of certain integral
              equations. {II}},
   JOURNAL = {Arch. Rational Mech. Anal.},
    VOLUME = {17},
      YEAR = {1964},
     PAGES = {215--229},
       DOI = {10.1007/BF00282438},
}

@book{Berlinet2004RKHS,
    AUTHOR = {Berlinet, Alain and Thomas-Agnan, Christine},
     TITLE = {Reproducing kernel {H}ilbert spaces in probability and statistics},
 PUBLISHER = {Springer},
   ADDRESS = {New York, NY},
      YEAR = {2004},
       DOI = {10.1007/978-1-4419-9096-9},
}

@article{bach2017equivalence,
  title={On the equivalence between kernel quadrature rules and random feature expansions},
  author={Bach, Francis},
  journal={J. Mach. Learn. Res.},
  volume={18},
  number={21},
  pages={1--38},
  year={2017}
}

@book{Artin1964gamma,
    AUTHOR = {E. Artin},
     TITLE = {The {G}amma function},
    SERIES = {Athena Series: Selected Topics in Mathematics},
 PUBLISHER = {Holt, Rinehart and Winston},
   ADDRESS = {New York},
      YEAR = {1964},
     PAGES = {vii+39},
}

@book{Cordes1987, 
title={Spectral Theory of Linear Differential Operators and Comparison Algebras}, 
author={Cordes, Heinz Otto}, 
year={1987}, 
publisher={Cambridge University Press}, 
address={Cambridge}, 
series={London Mathematical Society Lecture Note Series},
doi={10.1017/CBO9780511662836},
}

@book{Wendland2005Scattered,
    AUTHOR = {Wendland, H.},
     TITLE = {Scattered data approximation},
    SERIES = {Cambridge Monographs on Applied and Computational Mathematics},
    VOLUME = {17},
 PUBLISHER = {Cambridge University Press},
   ADDRESS = {Cambridge},
      YEAR = {2004},
     PAGES = {x+336},
       DOI = {10.1017/CBO9780511617539},
}

@article{Steinwart2012Mercer,
    AUTHOR = {Steinwart, I. and Scovel, C.},
     TITLE = {Mercer's theorem on general domains: on the interaction between measures, kernels, and {RKHS}s},
   JOURNAL = {Constr. Approx.},
    VOLUME = {35},
      YEAR = {2012},
    NUMBER = {3},
     PAGES = {363--417},
       DOI = {10.1007/s00365-012-9153-3},
}

@misc{Long2025optimal,
  author        = {Long, J. and Peng, X. and Wu, L.},
  title         = {Optimal Rates and Saturation for Noiseless Kernel Ridge Regression},
  year          = {2024},
  eprint        = {2402.15718},
  archiveprefix = {arXiv},
  note          = {arXiv:2402.15718v2},
  doi           = {10.48550/arXiv.2402.15718}
}

@article{Peetre1964interpolation,
    AUTHOR = {Peetre, J.},
     TITLE = {On an interpolation theorem of {F}oia\c s{} and {L}ions},
   JOURNAL = {Acta Sci. Math. (Szeged)},
  FJOURNAL = {Acta Universitatis Szegediensis. Acta Scientiarum Mathematicarum},
    VOLUME = {25},
    NUMBER = {3--4},
      YEAR = {1964},
     PAGES = {255--261},
}

@InProceedings{Muller2022Notes,
  title = 	 {Notes on Exact Boundary Values in Residual Minimisation},
  author =       {M\"{u}ller, Johannes and Zeinhofer, Marius},
  booktitle = 	 {Proceedings of Mathematical and Scientific Machine Learning},
  pages = 	 {231--240},
  year = 	 {2022},
  volume = 	 {190},
  series = 	 {Proceedings of Machine Learning Research},
  publisher =    {PMLR},
  address =      {Beijing, China},
}

@misc{Doyoon2025Trace,
  author        = {Kim, Doyoon and Song, Junbin},
  title         = {Trace Regularity PINNs: Enforcing $\mathrm{H}^{\frac{1}{2}}(\partial \Omega)$ for Boundary Data},
  year          = {2025},
  eprint        = {2510.16817},
  archiveprefix = {arXiv},
  note          = {arXiv:2510.16817},
  doi           = {10.48550/arXiv.2510.16817}
}

@misc{Zhou2026SSBEPINN,
  author        = {Zhou, Qixuan and Chen, Chuqi and Luo, Tao and Xiang, Yang},
  title         = {SSBE-PINN: A Sobolev Boundary Scheme Boosting Stability and Accuracy in Elliptic/Parabolic PDE Learning},
  year          = {2025},
  eprint        = {2508.10322},
  archiveprefix = {arXiv},
  note          = {Accepted for publication in Communications in Computational Physics; arXiv:2508.10322v2},
  doi           = {10.48550/arXiv.2508.10322}
}

@book{LionsMagenes1972,
  author    = {Lions, J.-L. and Magenes, E.},
  title     = {Non-Homogeneous Boundary Value Problems and Applications},
  volume    = {1},
  series    = {Grundlehren der mathematischen Wissenschaften},
  number    = {181},
  publisher = {Springer-Verlag},
  address   = {Berlin, Heidelberg},
  year      = {1972},
  pages     = {XVI+360},
  doi       = {10.1007/978-3-642-65161-8}
}

@article{Geymonat1965,
  author  = {Geymonat, Giuseppe},
  title   = {Sui problemi ai limiti per i sistemi lineari ellittici},
  journal = {Ann. Mat. Pura Appl.},
  volume  = {69},
  number  = {1},
  pages   = {207--284},
  year    = {1965},
  doi     = {10.1007/BF02414374},
}

@book{Grisvard2011,
  author    = {Grisvard, Pierre},
  title     = {Elliptic Problems in Nonsmooth Domains},
  series    = {Classics in Applied Mathematics},
  number    = {69},
  publisher = {Society for Industrial and Applied Mathematics},
  address   = {Philadelphia, PA},
  year      = {2011},
  isbn      = {978-1-61197-202-3},
  pages     = {xx+410},
  doi       = {10.1137/1.9781611972030}
}

@article{Cea1964Approximation,
    AUTHOR = {C\'ea, Jean},
     TITLE = {Approximation variationnelle des probl\`emes aux limites},
   JOURNAL = {Ann. Inst. Fourier (Grenoble)},
    VOLUME = {14},
    NUMBER = {2},
      YEAR = {1964},
     PAGES = {345--444},
       DOI = {10.5802/aif.181},
}

@article {Boffi2010FEAeigenvalue,
    AUTHOR = {Boffi, Daniele},
     TITLE = {Finite element approximation of eigenvalue problems},
   JOURNAL = {Acta Numer.},
    VOLUME = {19},
      YEAR = {2010},
     PAGES = {1--120},
       DOI = {10.1017/S0962492910000012},
}

@book {Adams2003Sobolev,
    AUTHOR = {Adams, Robert A. and Fournier, John J. F.},
     TITLE = {Sobolev spaces},
    SERIES = {Pure and Applied Mathematics},
    VOLUME = {140},
   EDITION = {Second},
 PUBLISHER = {Academic Press},
   ADDRESS = {Amsterdam},
      YEAR = {2003},
     PAGES = {xiv+305},
}

@book{PaulsenRaghupathi2016,
  author    = {Paulsen, Vern I. and Raghupathi, Mrinal},
  title     = {An Introduction to the Theory of Reproducing Kernel Hilbert Spaces},
  series    = {Cambridge Studies in Advanced Mathematics},
  volume    = {152},
  publisher = {Cambridge University Press},
  address   = {Cambridge},
  year      = {2016},
  doi       = {10.1017/CBO9781316219232},
}

@book{Hormander2003,
  author    = {H{\"o}rmander, Lars},
  title     = {The Analysis of Linear Partial Differential Operators I: Distribution Theory and Fourier Analysis},
  series    = {Classics in Mathematics},
  edition   = {2},
  publisher = {Springer},
  address   = {Berlin, Heidelberg},
  year      = {2003},
  doi       = {10.1007/978-3-642-61497-2},
}

@inproceedings{Liu2021DistributedRF,
  author    = {Liu, Yong and Liu, Jiankun and Wang, Shuqiang},
  title     = {Effective Distributed Learning with Random Features: Improved Bounds and Algorithms},
  booktitle = {International Conference on Learning Representations},
  year      = {2021}
}

@article{Kammonen2020AdaptiveRFF,
  author  = {Kammonen, Aku and Kiessling, Jonas and Plech{\'a}{\v c}, Petr and Sandberg, Mattias and Szepessy, Anders},
  title   = {Adaptive Random {Fourier} Features with {Metropolis} Sampling},
  journal = {Found. Data Sci.},
  volume  = {2},
  number  = {3},
  pages   = {309--332},
  year    = {2020},
  doi     = {10.3934/fods.2020014}
}

@inproceedings{Chen2021FastLeverageKRR,
  author    = {Chen, Yifan and Yang, Yun},
  title     = {Fast Statistical Leverage Score Approximation in Kernel Ridge Regression},
  booktitle = {Proceedings of The 24th International Conference on Artificial Intelligence and Statistics},
  series    = {Proceedings of Machine Learning Research},
  volume    = {130},
  pages     = {2935--2943},
  year      = {2021},
  address   = {Virtual},
  publisher = {PMLR}
}

@phdthesis{Li2020RFMethods,
  author = {Li, Zhu},
  title  = {On the Properties of Random Feature Methods},
  school = {University of Oxford},
  type   = {{DPhil} thesis},
  year   = {2020},
  doi    = {10.5287/ora-g7rga2w20}
}

@inproceedings{Rudi2018LeverageSampling,
  author    = {Rudi, Alessandro and Calandriello, Daniele and Carratino, Luigi and Rosasco, Lorenzo},
  title     = {On Fast Leverage Score Sampling and Optimal Learning},
  booktitle = {Advances in Neural Information Processing Systems},
  volume    = {31},
  pages     = {5677--5687},
  year      = {2018},
  publisher = {Curran Associates, Inc.},
  address   = {Montr\'eal, Canada}
}

@inproceedings{Liu2020SurrogateRFF,
  author    = {Liu, Fanghui and Huang, Xiaolin and Chen, Yudong and Yang, Jie and Suykens, Johan A. K.},
  title     = {Random {Fourier} Features via Fast Surrogate Leverage Weighted Sampling},
  booktitle = {Proceedings of the AAAI Conference on Artificial Intelligence},
  volume    = {34},
  number    = {04},
  pages     = {4844--4851},
  year      = {2020},
  doi       = {10.1609/aaai.v34i04.5920}
}

@article{Osipov2012ExplicitUpper,
  author  = {Osipov, Andrei},
  title   = {Certain Upper Bounds on the Eigenvalues Associated with Prolate Spheroidal Wave Functions},
  journal = {Appl. Comput. Harmon. Anal.},
  volume  = {35},
  number  = {2},
  pages   = {309--340},
  year    = {2013},
  doi     = {10.1016/j.acha.2013.03.002}
}

@article{Osipov2013CertainInequalities,
  author  = {Osipov, Andrei},
  title   = {Certain Inequalities Involving Prolate Spheroidal Wave Functions and Associated Quantities},
  journal = {Appl. Comput. Harmon. Anal.},
  volume  = {35},
  number  = {3},
  pages   = {359--393},
  year    = {2013},
  doi     = {10.1016/j.acha.2012.10.002}
}

@book{OsipovRokhlinXiao2013PSWF,
  author    = {Osipov, Andrei and Rokhlin, Vladimir and Xiao, Hong},
  title     = {Prolate Spheroidal Wave Functions of Order Zero: Mathematical Tools for Bandlimited Approximation},
  series    = {Applied Mathematical Sciences},
  volume    = {187},
  publisher = {Springer},
  address   = {New York},
  year      = {2013},
  doi       = {10.1007/978-1-4614-8259-8}
}

@misc{MingYu2026Spectral,
  author        = {Ming, Pingbing and Yu, Hao},
  title         = {Spectral Convergence of Random Feature Method in One Dimension},
  year          = {2025},
  eprint        = {2507.07371v2},
  archiveprefix = {arXiv},
  primaryclass  = {math.NA},
  note          = {arXiv:2507.07371v2},
  doi           = {10.48550/arXiv.2507.07371}
}

@misc{FuWang2026OptimalSobolev,
  author        = {Fu, Zhaohui and Wang, Yangshuai},
  title         = {Optimal Sobolev Approximation by Deterministic and Random Shallow Sigmoidal Networks},
  year          = {2026},
  eprint        = {2608.19797v2},
  archiveprefix = {arXiv},
  primaryclass  = {math.NA},
  note          = {arXiv:2608.19797v2},
  doi           = {10.48550/arXiv.2608.19797}
}

@article{MeirSharma1966,
  author  = {Meir, A. and Sharma, A.},
  title   = {Simultaneous Approximation of a Function and Its Derivatives},
  journal = {SIAM J. Numer. Anal.},
  volume  = {3},
  number  = {4},
  pages   = {553--563},
  year    = {1966},
  doi     = {10.1137/0703046}
}

@article{DaiXu2011,
  author  = {Dai, Feng and Xu, Yuan},
  title   = {Polynomial Approximation in {S}obolev Spaces on the Unit Sphere and the Unit Ball},
  journal = {J. Approx. Theory},
  volume  = {163},
  number  = {10},
  pages   = {1400--1418},
  year    = {2011},
  doi     = {10.1016/j.jat.2011.05.001}
}

@article{FeffermanHajdukRobinson2022,
  author  = {Fefferman, Charles L. and Hajduk, Karol W. and Robinson, James C.},
  title   = {Simultaneous Approximation in {L}ebesgue and {S}obolev Norms via Eigenspaces},
  journal = {Proc. Lond. Math. Soc.},
  volume  = {125},
  number  = {4},
  pages   = {759--777},
  year    = {2022},
  doi     = {10.1112/plms.12469}
}

@article{ChenChiEYang2022RFM,
  author  = {Chen, Jingrun and Chi, Xurong and E, Weinan and Yang, Zhouwang},
  title   = {Bridging Traditional and Machine Learning-Based Algorithms for Solving {PDE}s: The Random Feature Method},
  journal = {J. Mach. Learn.},
  volume  = {1},
  number  = {3},
  pages   = {268--298},
  year    = {2022},
  doi     = {10.4208/jml.220726}
}

@article{DongLi2021LocalELM,
  author  = {Dong, Suchuan and Li, Zongwei},
  title   = {Local Extreme Learning Machines and Domain Decomposition for Solving Linear and Nonlinear Partial Differential Equations},
  journal = {Comput. Methods Appl. Mech. Engrg.},
  volume  = {387},
  pages   = {114129},
  year    = {2021},
  doi     = {10.1016/j.cma.2021.114129}
}

@article{SunDongWang2024LRNNDG,
  author  = {Sun, Jingbo and Dong, Suchuan and Wang, Fei},
  title   = {Local Randomized Neural Networks with Discontinuous {Galerkin} Methods for Partial Differential Equations},
  journal = {J. Comput. Appl. Math.},
  volume  = {445},
  pages   = {115830},
  year    = {2024},
  doi     = {10.1016/j.cam.2024.115830}
}

@article{ChenELuo2023TimeRFM,
  author  = {Chen, Jingrun and E, Weinan and Luo, Yixin},
  title   = {The Random Feature Method for Time-Dependent Problems},
  journal = {East Asian J. Appl. Math.},
  volume  = {13},
  number  = {3},
  pages   = {435--463},
  year    = {2023},
  doi     = {10.4208/eajam.2023-065.050423}
}

@article{Gonon2023BlackScholes,
  author  = {Gonon, Lukas},
  title   = {Random Feature Neural Networks Learn {Black-Scholes} Type {PDE}s Without Curse of Dimensionality},
  journal = {J. Mach. Learn. Res.},
  volume  = {24},
  number  = {189},
  pages   = {1--51},
  year    = {2023}
}

@article{GononGrigoryevaOrtega2023,
  author  = {Gonon, Lukas and Grigoryeva, Lyudmila and Ortega, Juan-Pablo},
  title   = {Approximation Bounds for Random Neural Networks and Reservoir Systems},
  journal = {Ann. Appl. Probab.},
  volume  = {33},
  number  = {1},
  pages   = {28--69},
  year    = {2023},
  doi     = {10.1214/22-AAP1806}
}

@article{Fabiani2025RPNN,
  author  = {Fabiani, Gianluca},
  title   = {Random Projection Neural Networks of Best Approximation: Convergence Theory and Practical Applications},
  journal = {SIAM J. Math. Data Sci.},
  volume  = {7},
  number  = {2},
  pages   = {385--409},
  year    = {2025},
  doi     = {10.1137/24M1639890}
}

@article{ChenSchaeffer2024,
  author  = {Chen, Zhijun and Schaeffer, Hayden},
  title   = {Conditioning of Random {Fourier} Feature Matrices: Double Descent and Generalization Error},
  journal = {Inf. Inference},
  volume  = {13},
  number  = {2},
  pages   = {iaad054},
  year    = {2024},
  doi     = {10.1093/imaiai/iaad054}
}

@inproceedings{ChenSchaefferWard2022,
  author    = {Chen, Zhijun and Schaeffer, Hayden and Ward, Rachel},
  title     = {Concentration of Random Feature Matrices in High-Dimensions},
  booktitle = {Proceedings of Mathematical and Scientific Machine Learning},
  series    = {Proceedings of Machine Learning Research},
  volume    = {190},
  pages     = {287--302},
  year      = {2022},
  publisher = {PMLR},
  address   = {Beijing, China}
}

@article{Barnett2022Fourier,
  author  = {Barnett, Alex H.},
  title   = {How Exponentially Ill-Conditioned Are Contiguous Submatrices of the {Fourier} Matrix?},
  journal = {SIAM Rev.},
  volume  = {64},
  number  = {1},
  pages   = {105--131},
  year    = {2022},
  doi     = {10.1137/20M1336837}
}

@article{ChenESun2024Optimization,
  author  = {Chen, Jingrun and E, Weinan and Sun, Yifei},
  title   = {Optimization of Random Feature Method in the High-Precision Regime},
  journal = {Commun. Appl. Math. Comput.},
  volume  = {6},
  number  = {2},
  pages   = {1490--1517},
  year    = {2024},
  doi     = {10.1007/s42967-024-00389-8}
}

@article{ShangHeinleinMishraWang2025,
  author  = {Shang, Yong and Heinlein, Alexander and Mishra, Siddhartha and Wang, Fei},
  title   = {Overlapping {Schwarz} Preconditioners for Randomized Neural Networks with Domain Decomposition},
  journal = {Comput. Methods Appl. Mech. Engrg.},
  volume  = {442},
  pages   = {118011},
  year    = {2025},
  doi     = {10.1016/j.cma.2025.118011}
}

@article{TanChen2026Preconditioned,
  author  = {Tan, Longze and Chen, Jingrun},
  title   = {High-Precision Randomized Preconditioned Iterative Methods for the Random Feature Method},
  journal = {J. Comput. Appl. Math.},
  volume  = {481},
  pages   = {117255},
  year    = {2026},
  doi     = {10.1016/j.cam.2025.117255}
}

@article{vanBeekDoleanMoseley2026Filtering,
  author  = {{van Beek}, Jan Willem and Dolean, Victorita and Moseley, Ben},
  title   = {Local Feature Filtering for Scalable and Well-Conditioned Domain-Decomposed Random Feature Methods},
  journal = {Comput. Methods Appl. Mech. Engrg.},
  volume  = {449},
  pages   = {118583},
  year    = {2026},
  doi     = {10.1016/j.cma.2025.118583}
}

@article{SongChiYangChengChen2026Discontinuity,
  author  = {Song, Wentian and Chi, Xurong and Yang, Zhouwang and Cheng, Wan and Chen, Jingrun},
  title   = {Discontinuity-Capturing Random Feature Method for Interface Problems},
  journal = {Comput. Methods Appl. Mech. Engrg.},
  volume  = {453},
  pages   = {118841},
  year    = {2026},
  doi     = {10.1016/j.cma.2026.118841}
}

@misc{LinghuDongWang2026StructureAdaptive,
  author        = {Linghu, Jiale and Dong, Hao and Wang, Yangshuai},
  title         = {A Structure-Adaptive Random Feature Method for High-Dimensional Elliptic {PDE}s},
  year          = {2026},
  eprint        = {2607.19786},
  archiveprefix = {arXiv},
  primaryclass  = {math.NA},
  note          = {arXiv:2607.19786},
  doi           = {10.48550/arXiv.2607.19786}
}

@misc{ZhouFuWangFeng2026DiscreteTimeRFM,
  author        = {Zhou, Haoran and Fu, Zhaohui and Wang, Yangshuai and Feng, Xinlong},
  title         = {A Discrete-Time Random Feature Method for Nonlinear Evolution Equations with Implicit--Explicit {Runge--Kutta} Time Stepping},
  year          = {2026},
  eprint        = {2604.25502},
  archiveprefix = {arXiv},
  primaryclass  = {math.NA},
  note          = {arXiv:2604.25502},
  doi           = {10.48550/arXiv.2604.25502}
}

@article{huang2006extreme,
  author  = {Huang, Guang-Bin and Zhu, Qin-Yu and Siew, Chee-Kheong},
  title   = {Extreme Learning Machine: Theory and Applications},
  journal = {Neurocomputing},
  volume  = {70},
  number  = {1--3},
  pages   = {489--501},
  year    = {2006},
  doi     = {10.1016/j.neucom.2005.12.126}
}

@incollection{gallicchio2020deep,
  author    = {Gallicchio, Claudio and Scardapane, Simone},
  title     = {Deep Randomized Neural Networks},
  booktitle = {Recent Trends in Learning From Data},
  editor    = {Oneto, Luca and Navarin, Nicol{\`o} and Sperduti, Alessandro and Anguita, Davide},
  series    = {Studies in Computational Intelligence},
  volume    = {896},
  pages     = {43--68},
  publisher = {Springer International Publishing},
  address   = {Cham},
  year      = {2020},
  doi       = {10.1007/978-3-030-43883-8_3}
}

@article{huang2006universal,
  author  = {Huang, Guang-Bin and Chen, Lei and Siew, Chee-Kheong},
  title   = {Universal Approximation Using Incremental Constructive Feedforward Networks with Random Hidden Nodes},
  journal = {IEEE Trans. Neural Netw.},
  volume  = {17},
  number  = {4},
  pages   = {879--892},
  year    = {2006},
  doi     = {10.1109/TNN.2006.875977}
}

@misc{sun2019approximation,
  author        = {Sun, Yitong and Gilbert, Anna and Tewari, Ambuj},
  title         = {On the Approximation Properties of Random {ReLU} Features},
  year          = {2019},
  eprint        = {1810.04374},
  archiveprefix = {arXiv},
  primaryclass  = {stat.ML},
  note          = {arXiv:1810.04374v3},
  doi           = {10.48550/arXiv.1810.04374}
}

@inproceedings{Rahimi2008UniformAO,
  author    = {Rahimi, Ali and Recht, Benjamin},
  title     = {Uniform Approximation of Functions with Random Bases},
  booktitle = {2008 46th Annual Allerton Conference on Communication, Control, and Computing},
  pages     = {555--561},
  year      = {2008},
  doi       = {10.1109/ALLERTON.2008.4797607}
}

@article{Weinan2020TowardsAM,
  author  = {{E}, Weinan and Ma, Chao and Wu, Lei and Wojtowytsch, Stephan},
  title   = {Towards a Mathematical Understanding of Neural Network-Based Machine Learning: What We Know and What We Don't},
  journal = {CSIAM Trans. Appl. Math.},
  volume  = {1},
  number  = {4},
  pages   = {561--615},
  year    = {2020},
  doi     = {10.4208/csiam-am.SO-2020-0002}
}

@incollection{neal1996priors,
  author    = {Neal, Radford M.},
  title     = {Priors for Infinite Networks},
  booktitle = {Bayesian Learning for Neural Networks},
  series    = {Lecture Notes in Statistics},
  volume    = {118},
  pages     = {29--53},
  publisher = {Springer},
  address   = {New York},
  year      = {1996},
  doi       = {10.1007/978-1-4612-0745-0_2}
}

@inproceedings{williams1996computing,
  author    = {Williams, Christopher K. I.},
  title     = {Computing with Infinite Networks},
  booktitle = {Advances in Neural Information Processing Systems},
  volume    = {9},
  pages     = {295--301},
  publisher = {MIT Press},
  address   = {Cambridge, MA},
  year      = {1996}
}

@misc{deryck2025approximation,
  author        = {De Ryck, Tim and Mishra, Siddhartha and Shang, Yong and Wang, Fei},
  title         = {Approximation Theory and Applications of Randomized Neural Networks for Solving High-Dimensional {PDE}s},
  year          = {2025},
  eprint        = {2501.12145},
  archiveprefix = {arXiv},
  primaryclass  = {math.NA},
  note          = {arXiv:2501.12145},
  doi           = {10.48550/arXiv.2501.12145}
}

@article{neufeld2025full,
  author  = {Neufeld, Ariel and Schmocker, Philipp and Wu, Sizhou},
  title   = {Full Error Analysis of the Random Deep Splitting Method for Nonlinear Parabolic {PDE}s and {PIDE}s},
  journal = {Commun. Nonlinear Sci. Numer. Simul.},
  volume  = {143},
  pages   = {108556},
  year    = {2025},
  doi     = {10.1016/j.cnsns.2024.108556}
}

@article{nelsen2021random,
  author  = {Nelsen, Nicholas H. and Stuart, Andrew M.},
  title   = {The Random Feature Model for Input-Output Maps between {Banach} Spaces},
  journal = {SIAM J. Sci. Comput.},
  volume  = {43},
  number  = {5},
  pages   = {A3212--A3243},
  year    = {2021},
  doi     = {10.1137/20M133957X}
}

@article{nelsen2024operator,
  author  = {Nelsen, Nicholas H. and Stuart, Andrew M.},
  title   = {Operator Learning Using Random Features: A Tool for Scientific Computing},
  journal = {SIAM Rev.},
  volume  = {66},
  number  = {3},
  pages   = {535--571},
  year    = {2024},
  doi     = {10.1137/24M1648703}
}

@article{gui1986h1Dimension,
  author  = {Gui, Wenzhuang and Babu{\v{s}}ka, Ivo},
  title   = {The {$h$, $p$ and $h$-$p$} Versions of the Finite Element Method in One Dimension. Part II: The Error Analysis of the {$h$}- and {$h$-$p$} Versions},
  journal = {Numer. Math.},
  volume  = {49},
  number  = {6},
  pages   = {613--657},
  year    = {1986},
  doi     = {10.1007/BF01389734}
}

@article{guo1986hpversion,
  author  = {Guo, Benqi and Babu{\v{s}}ka, Ivo},
  title   = {The {$h$-$p$} Version of the Finite Element Method. Part 1: The Basic Approximation Results},
  journal = {Comput. Mech.},
  volume  = {1},
  number  = {1},
  pages   = {21--41},
  year    = {1986},
  doi     = {10.1007/BF00298636}
}

@book{melenk2004hp,
  author    = {Melenk, Jens M.},
  title     = {{$hp$}-Finite Element Methods for Singular Perturbations},
  series    = {Lecture Notes in Mathematics},
  volume    = {1796},
  publisher = {Springer},
  address   = {Berlin},
  year      = {2002},
  doi       = {10.1007/b84212}
}

@article{feischl2020exponential,
  author  = {Feischl, Michael and Schwab, Christoph},
  title   = {Exponential Convergence in {$H^1$} of {$hp$-FEM} for {Gevrey} Regularity with Isotropic Singularities},
  journal = {Numer. Math.},
  volume  = {144},
  number  = {2},
  pages   = {323--346},
  year    = {2020},
  doi     = {10.1007/s00211-019-01085-z}
}

@article{mhaskar1996neural,
  author  = {Mhaskar, Hrushikesh N.},
  title   = {Neural Networks for Optimal Approximation of Smooth and Analytic Functions},
  journal = {Neural Comput.},
  volume  = {8},
  number  = {1},
  pages   = {164--177},
  year    = {1996},
  doi     = {10.1162/neco.1996.8.1.164}
}

@article{weinan2018exponential,
  author  = {{E}, Weinan and Wang, Qingcan},
  title   = {Exponential Convergence of the Deep Neural Network Approximation for Analytic Functions},
  journal = {Sci. China Math.},
  volume  = {61},
  number  = {10},
  pages   = {1733--1740},
  year    = {2018},
  doi     = {10.1007/s11425-018-9387-x}
}

@article{montanelli2019deep,
  author  = {Montanelli, Hadrien and Yang, Haizhao and Du, Qiang},
  title   = {Deep {ReLU} Networks Overcome the Curse of Dimensionality for Generalized Bandlimited Functions},
  journal = {J. Comput. Math.},
  volume  = {39},
  number  = {6},
  pages   = {801--815},
  year    = {2021},
  doi     = {10.4208/jcm.2007-m2019-0239}
}

@article{DeRyck2021app_tanh,
  author  = {De Ryck, Tim and Lanthaler, Samuel and Mishra, Siddhartha},
  title   = {On the Approximation of Functions by tanh Neural Networks},
  journal = {Neural Netw.},
  volume  = {143},
  pages   = {732--750},
  year    = {2021},
  doi     = {10.1016/j.neunet.2021.08.015}
}
\endgroup

\appendix
\section{Proofs for approximation in interpolation spaces} \label{Section: proofs approximation interpolation}

This appendix proves the abstract estimate in Section~\ref{Section: Approximation error by interpolation}. We first transform the control of $\mc{E}(N,s,\theta,p)$ and $|\boldsymbol{\beta}^{*}|$ into bounding the norms of certain random operators. Denote $\tilde{\Phi}=\Sigma^{-\frac{\theta}{2}} \Phi$ and $\tilde{\Phi}^{*} = \Phi^{*} \Sigma^{-\frac{\theta}{2}}$, where the adjoint operator $\Phi^{*}:L^{2}(\mc{X},\mr{d}\rho)\to\mb{C}^{N}$,
\begin{equation*}
\left(\Phi^{*}f\right)_{j} = q(v_{j})^{-1 / 2} \la f, \phi(\cdot,v_{j})\ra_{L^{2}(\mc{X},\mr{d}\rho)}, \quad 1\le j\le N.
\end{equation*}
Since $\left\|u-\Phi \boldsymbol{\beta}\right\|_{\mc{H}^{\theta}} = \left\|\Sigma^{-\frac{\theta}{2}}(u-\Phi \boldsymbol{\beta})\right\|_{L^{2}}$, (\ref{ridge regression for bounding approximation error, interpolation}) has a unique solution from the usual normal equations and the matrix inversion lemma for operators\cite{Ogawa1988Pseudo}
\begin{equation} \label{solution of normal equations, interpolation}
\boldsymbol{\beta}^{*}=\left(\tilde{\Phi}^{*} \tilde{\Phi}+\lambda N I\right)^{-1} \tilde{\Phi}^{*}\Sigma^{-\frac{\theta}{2}}u
=N^{-1} \tilde{\Phi}^{*}\left(N^{-1} \tilde{\Phi} \tilde{\Phi}^{*}+\lambda I\right)^{-1} \Sigma^{-\frac{\theta}{2}}u.
\end{equation}
Denote the empirical integral operator $\hat{\Sigma}=N^{-1}\Phi\Phi^{*}$, which is characterized by
\begin{equation*}
\hat{\Sigma} = \frac{1}{N}\sum_{j = 1}^{N} q(v_{j})^{-1} \phi(\cdot,v_{j})\otimes_{L^{2}}\phi(\cdot,v_{j}).
\end{equation*}
Recall the expectation representation \eqref{eq: kernel integral operator as expectation section2} for $\Sigma$, which indicates $\Sigma = \mb{E}(\hat{\Sigma})$. Similarly, denote the empirical operator $\tilde{\Sigma}=N^{-1}\tilde{\Phi}\tilde{\Phi}^{*}=\Sigma^{-\frac{\theta}{2}}\hat{\Sigma}\Sigma^{-\frac{\theta}{2}}$.
\begin{lemma}[Resolvent reduction for ridge error and coefficient norm] \label{Lemma: bound uniform error mcE with random operator norms}
The uniform approximation error $\mc{E}$ and the solution vector $\boldsymbol{\beta}^{*}$ satisfy
\begin{equation*}
\begin{aligned}
& \mc{E}(N,s,\theta,p) \le \lambda\left\|\Sigma^{\frac{\theta-p}{2}}(\tilde{\Sigma}+\lambda I)^{-1} \Sigma^{\frac{s-\theta}{2}} \right\| , \\
& |\boldsymbol{\beta}^{*}| \le N^{-\frac{1}{2}} \left\|(\tilde{\Sigma}+\lambda I)^{-\frac{1}{2}}\Sigma^{\frac{s-\theta}{2}}\right\| .
\end{aligned}
\end{equation*}
\end{lemma}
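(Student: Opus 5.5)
We plan to work entirely from the closed form \eqref{solution of normal equations, interpolation}, so that both quantities become operators applied to $\Sigma^{-\theta/2}u$. By definition $\tilde\Sigma=N^{-1}\tilde\Phi\tilde\Phi^{*}$. Put $g:=\Sigma^{-\theta/2}u$. Since $u\in\mc H^{s}$ with $\|u\|_{\mc H^{s}}\le1$, we can write $u=\Sigma^{s/2}h$ with $\|h\|_{L^2}\le1$, hence $g=\Sigma^{\frac{s-\theta}{2}}h$. This is legitimate on the positive spectral subspace because $s\ge\theta$.

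\textbf{Error bound.} We have
\[
\Sigma^{-\theta/2}\Phi\boldsymbol\beta^{*}=\tilde\Phi\boldsymbol\beta^{*}=N^{-1}\tilde\Phi\tilde\Phi^{*}(\tilde\Sigma+\lambda I)^{-1}g=\tilde\Sigma(\tilde\Sigma+\lambda I)^{-1}g .
\]
It follows that
\[
\Sigma^{-\theta/2}(u-\Phi\boldsymbol\beta^{*})=\bigl(I-\tilde\Sigma(\tilde\Sigma+\lambda I)^{-1}\bigr)g=\lambda(\tilde\Sigma+\lambda I)^{-1}g .
\]
Next we use $\|f\|_{\mc H^{p}}=\|\Sigma^{-p/2}f\|_{L^2}=\|\Sigma^{\frac{\theta-p}{2}}\Sigma^{-\theta/2}f\|_{L^2}$. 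For $p\le\theta$ the operator $\Sigma^{(\theta-p)/2}$ is bounded. For $p>\theta$ it is unbounded, and the identity is read as the $\mc H^{p}$ norm, possibly infinite, so the displayed operator norm is understood in the extended sense. This gives
\[
\|u-\Phi\boldsymbol\beta^{*}\|_{\mc H^{p}}=\lambda\bigl\|\Sigma^{\frac{\theta-p}{2}}(\tilde\Sigma+\lambda I)^{-1}\Sigma^{\frac{s-\theta}{2}}h\bigr\|_{L^2}.
\]
Taking the supremum over $\|h\|\le1$ yields the first inequality.

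\textbf{Coefficient bound.} We compute $|\boldsymbol\beta^{*}|^2=N^{-2}\langle\tilde\Phi\tilde\Phi^{*}Rg,Rg\rangle$ with $R=(\tilde\Sigma+\lambda I)^{-1}$. This equals $N^{-1}\langle\tilde\Sigma Rg,Rg\rangle$. Since $\tilde\Sigma$ and $R$ commute, $R\tilde\Sigma R\preceq R$, and therefore
\[
|\boldsymbol\beta^{*}|^2\le N^{-1}\|R^{1/2}g\|^2=N^{-1}\bigl\|(\tilde\Sigma+\lambda I)^{-1/2}\Sigma^{\frac{s-\theta}{2}}h\bigr\|^2 .
\]
Taking the square root and the supremum over $h$ gives the second inequality.

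\textbf{Main obstacle.} The main care is functional-analytic, namely making sense of $\tilde\Sigma=\Sigma^{-\theta/2}\hat\Sigma\Sigma^{-\theta/2}$ as a bounded, nonnegative self-adjoint finite-rank operator. Each $\Sigma^{-\theta/2}\phi(\cdot,v_j)$ must lie in $L^2$. This holds for $\tau$-a.e.\ $v$ provided $\theta<1$, because
\[
\int\|\Sigma^{-\theta/2}\phi(\cdot,v)\|^2\,\mr d\tau(v)=\operatorname{tr}(\Sigma^{1-\theta})<\infty
\]
under the summability assumption. Consequently the sampled features are almost surely admissible, and the normal-equation/pseudo-inversion identity of \cite{Ogawa1988Pseudo} applies.
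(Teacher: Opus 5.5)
Your proposal is correct and follows the paper's proof essentially line by line. You derive $\Sigma^{-\theta/2}(u-\Phi\boldsymbol\beta^*)=\lambda(\tilde\Sigma+\lambda I)^{-1}\Sigma^{-\theta/2}u$ from the closed form and factor through $\Sigma^{(\theta-p)/2}$ and $\Sigma^{(s-\theta)/2}$; for the coefficients, your bound $R\tilde\Sigma R\preceq R$ is exactly the paper's use of $\|\tilde\Sigma^{1/2}(\tilde\Sigma+\lambda I)^{-1/2}\|\le1$, and your remark that $\tilde\Sigma$ is almost surely well defined because $\int\|\Sigma^{-\theta/2}\phi(\cdot,v)\|_{L^2}^2\,\mr d\tau(v)=\operatorname{tr}(\Sigma^{1-\theta})<\infty$ makes explicit a point the paper leaves implicit.
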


\begin{proof}[Proof of Lemma \ref{Lemma: bound uniform error mcE with random operator norms}]

With the solution \eqref{solution of normal equations, interpolation}, we have
\[
\begin{aligned}
\Phi\boldsymbol{\beta}^* &=\Sigma^{\frac{\theta}{2}}\tilde{\Sigma}
(\tilde{\Sigma}+\lambda I)^{-1}\Sigma^{-\frac{\theta}{2}}u .
\end{aligned}
\]
Since \(u\in\mathcal H^s\) and \(s\ge\theta\), we have \(u=\Sigma^{\frac{\theta}{2}}\Sigma^{-\frac{\theta}{2}}u\). Hence, \[
\begin{aligned}
\Phi\boldsymbol{\beta}^*-u
&=
\Sigma^{\frac{\theta}{2}}
\left[\tilde{\Sigma}(\tilde{\Sigma}+\lambda I)^{-1}-I\right]
\Sigma^{-\frac{\theta}{2}}u \\
&=
-\lambda\Sigma^{\frac{\theta}{2}}
(\tilde{\Sigma}+\lambda I)^{-1}\Sigma^{-\frac{\theta}{2}}u .
\end{aligned}
\]
Taking the \(\mc{H}^{p}\)-norm gives
\begin{equation*}
\begin{aligned}
\left\|u-\Phi\boldsymbol{\beta}^{*}\right\|_{\mc{H}^{p}} & \le \lambda\left\|\Sigma^{\frac{\theta-p}{2}}(\tilde{\Sigma}+\lambda I)^{-1} \Sigma^{\frac{s-\theta}{2}} \right\| \left\|\Sigma^{-\frac{s}{2}} u\right\|_{L^{2}} . \\
\end{aligned}
\end{equation*}
This gives the first estimate since \(\left\|\Sigma^{-\frac{s}{2}}u\right\|_{L^{2}} =\|u\|_{\mc{H}^{s}}\le1\).

For the coefficient vector, we compute \begin{equation*}
\begin{aligned}
\left|\boldsymbol{\beta}^{*}\right|^{2} & = N^{-1} \left\langle\tilde{\Sigma}(\tilde{\Sigma}+\lambda I)^{-1}\Sigma^{-\frac{\theta}{2}} u,(\tilde{\Sigma}+\lambda I)^{-1}\Sigma^{-\frac{\theta}{2}} u\right\rangle \\
& \le N^{-1} \left(\left\|\tilde{\Sigma}^{\frac{1}{2}}(\tilde{\Sigma}+\lambda I)^{-\frac{1}{2}}\right\| \left\|(\tilde{\Sigma}+\lambda I)^{-\frac{1}{2}}\Sigma^{\frac{s-\theta}{2}}\right\| \left\|\Sigma^{-\frac{s}{2}} u\right\|_{L^{2}} \right)^{2} .\\
\end{aligned}
\end{equation*}
The second estimate follows from $\left\|\tilde{\Sigma}^{\frac{1}{2}}(\tilde{\Sigma}+\lambda I)^{-\frac{1}{2}}\right\|\le1$ and \(\|u\|_{\mc{H}^{s}}\le1\), which completes the proof.
\end{proof}

Next, we study the concentration properties of the empirical operators and prepare tools for the subsequent control of the operator norms in Lemma \ref{Lemma: bound uniform error mcE with random operator norms}.

\begin{lemma}[Preconditioned empirical concentration and resolvent comparison] \label{Lemma: empirical operator concentration interpolation}
Let $N\in\mb{N}_{+}$, $0<\gamma\le1-\theta\le1$, and $0<\delta<1$. If the admissibility condition \eqref{ineq: N > d_max(lambda) ln(d(lambda)/delta), interpolation} holds, then
\begin{equation} \label{ineq: norm estimate for r(Sigma)(hatSigma-Sigma)r(Sigma)}
\left\|r(\Sigma)(\hat{\Sigma}-\Sigma)r(\Sigma)\right\|\le\frac{15}{16}.
\end{equation}
with probability at least $1-\delta$. On the same event, for every $a\in[0,1/2]$,
\[
\left\|(\Sigma^{1-\theta}+\lambda I)^{a}(\tilde{\Sigma}+\lambda I)^{-a}\right\|\le16^{a}.
\]
\end{lemma}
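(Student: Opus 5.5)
The first claim will follow from a matrix (operator) Bernstein inequality of Minsker type for sums of independent self-adjoint operators. The second claim will then be deduced deterministically from the first on the same event.

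\textbf{Concentration.} Set $X_j:=q(v_j)^{-1}\,r(\Sigma)\phi(\cdot,v_j)\otimes r(\Sigma)\phi(\cdot,v_j)$. Then
\[
r(\Sigma)(\hat\Sigma-\Sigma)r(\Sigma)=\frac1N\sum_{j=1}^N(X_j-\mb E X_j),\qquad \mb E X_j=r(\Sigma)\Sigma r(\Sigma).
\]
We need three facts.
\begin{itemize}
\item[(i)] Each $X_j$ has norm $\ell_\lambda(v_j)/q(v_j)\le d_{\max}$ almost surely.
\item[(ii)] Since $X_j^2=\|X_j\|X_j\preceq d_{\max}X_j$, the variance proxy satisfies $\mb E X_j^2\preceq d_{\max}\,\Sigma r^2(\Sigma)$.
\item[(iii)] The proxy has trace $d(\lambda)$ and norm $\|\Sigma r^2(\Sigma)\|\le1$.
\end{itemize}
Fact (iii) is where $\gamma\le1-\theta$ enters. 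The function $x\,r(x)^2=x^{\gamma}(x^{1-\theta}+\lambda)^{-\gamma/(1-\theta)}=\bigl(x^{1-\theta}/(x^{1-\theta}+\lambda)\bigr)^{\gamma/(1-\theta)}$ is at most $1$.

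Minsker's bound with intrinsic dimension $d(\lambda)/\|\Sigma r^2\|$ then gives, for $t=15/16$,
\[
\mb P\Bigl(\|\cdot\|\ge t\Bigr)\le 14\,\frac{d(\lambda)}{\|\Sigma r^2(\Sigma)\|}\exp\Bigl(-\frac{Nt^2/2}{d_{\max}(\|\Sigma r^2\|+t/3)}\Bigr).
\]
The admissibility condition \eqref{ineq: N > d_max(lambda) ln(d(lambda)/delta), interpolation} makes the right-hand side at most $\delta$. The constant $3$ and the $\ln(14d/\delta)$ term are what is needed here. The $\max\{\cdot,1\}$ in the condition handles the regime $t\ge$ the range where Minsker's bound is valid.

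The main obstacle is the normalization $\|\Sigma r^2(\Sigma)\|$ in the intrinsic dimension. It may be much smaller than $1$, and this inflates the prefactor. I would handle it by one of two routes. The first is to apply Minsker's inequality with the variance proxy replaced by an upper bound whose norm is exactly $1$, by adding a rank-one piece as in Rudi--Rosasco. The second is to use the fact that $\|\Sigma r^2\|$ is attained at $\lambda_1$ and is bounded below by an absolute constant in the regime $\lambda\lesssim\lambda_1^{1-\theta}$; the opposite regime is trivial. I would try the first route first.

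\textbf{Resolvent comparison.} Note that $r(x)^2\cdot x^{1-\gamma}\cdot(x^{1-\theta}+\lambda)^{\gamma/(1-\theta)}=1$. The key is to pass from $r(\Sigma)$ to the preconditioner $(\Sigma^{1-\theta}+\lambda)^{-1/2}\Sigma^{-\theta/2}$ in the $\tilde\Sigma$ frame. Write
\[
\tilde\Sigma+\lambda=\Sigma^{-\theta/2}(\hat\Sigma-\Sigma)\Sigma^{-\theta/2}+(\Sigma^{1-\theta}+\lambda).
\]
Conjugating by $B:=(\Sigma^{1-\theta}+\lambda)^{-1/2}$ gives
\[
B(\tilde\Sigma+\lambda)B=I+B\Sigma^{-\theta/2}(\hat\Sigma-\Sigma)\Sigma^{-\theta/2}B.
\]
It therefore suffices to show $\|B\Sigma^{-\theta/2}(\hat\Sigma-\Sigma)\Sigma^{-\theta/2}B\|\le15/16$. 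Write $B\Sigma^{-\theta/2}=g(\Sigma)\,r(\Sigma)$ with $g(x)=(x^{1-\theta}+\lambda)^{-1/2}x^{-\theta/2}/r(x)$. One computes $g(x)=\bigl(x^{1-\theta}/(x^{1-\theta}+\lambda)\bigr)^{(\gamma-(1-\theta))/(2(1-\theta))}\ge1$, and it is not bounded by $1$ when $\gamma<1-\theta$.

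This is a genuine difficulty. I would circumvent it by proving the concentration with $\gamma=1-\theta$ directly, in which case $g\equiv1$. For general $\gamma$ I would check that only the $\gamma=1-\theta$ event is used for the comparison. If instead the stated lemma intends the comparison for every admissible $\gamma$, then I would note that the admissibility condition is monotone in $\gamma$ through $d_{\max}$ and $d$, and reduce to $\gamma=1-\theta$.

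With $g\equiv1$ we obtain $B(\tilde\Sigma+\lambda)B\succeq I/16$. This is equivalent to $\tilde\Sigma+\lambda\succeq(\Sigma^{1-\theta}+\lambda)/16$, i.e. $\|(\Sigma^{1-\theta}+\lambda)^{1/2}(\tilde\Sigma+\lambda)^{-1/2}\|\le4$. The Cordes/Löwner--Heinz inequality $\|A^aC^a\|\le\|AC\|^a$ for $a\in[0,1]$ and positive $A,C$ then yields the bound $16^a$ for all $a\in[0,1/2]$.
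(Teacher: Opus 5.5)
Your architecture (Minsker's inequality, then a deterministic comparison and Cordes) is the same as the paper's. The comparison step, however, rests on a sign error. With $g=b/r$ and $b(x)=(x^{1-\theta}+\lambda)^{-1/2}x^{-\theta/2}$, one gets
\[
g(x)=(x^{1-\theta}+\lambda)^{\frac{\gamma-(1-\theta)}{2(1-\theta)}}\,x^{\frac{(1-\theta)-\gamma}{2}}
=\left(\frac{x^{1-\theta}}{x^{1-\theta}+\lambda}\right)^{\frac{(1-\theta)-\gamma}{2(1-\theta)}}\le 1 .
\]
For example, $\theta=0$ and $\gamma=1/2$ give $g=(x/(x+\lambda))^{1/4}$; your formula is $r/b$, not $b/r$.

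Equivalently, $r(x)^{-2}\le x+\lambda x^{\theta}$. This is the one place where $\gamma\le1-\theta$ is used; your fact (iii) holds for every $\gamma>0$.

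With $g\le1$, your conjugation argument works verbatim for every admissible $\gamma$, since $\|g(\Sigma)[r(\Sigma)(\hat\Sigma-\Sigma)r(\Sigma)]g(\Sigma)\|\le 15/16$. The paper uses only the one-sided form: $\Sigma-\hat\Sigma\preceq\frac{15}{16}r(\Sigma)^{-2}\preceq\frac{15}{16}(\Sigma+\lambda\Sigma^{\theta})$, hence $\hat\Sigma+\lambda\Sigma^{\theta}\succeq\frac1{16}(\Sigma+\lambda\Sigma^{\theta})$. It then conjugates by $\Sigma^{-\theta/2}$ and applies Cordes.

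Your fallback is not merely unnecessary but insufficient. Monotonicity of the admissibility condition in $\gamma$ gives the comparison only on the $\gamma=1-\theta$ concentration event. The lemma asserts it on the event of the first claim for the given $\gamma$. This matters in part (2) of the main theorem, where the first claim with $\gamma=1-p$ and the comparison are combined on one event of probability $1-\delta$. The inclusion of the $\gamma$-event in the $(1-\theta)$-event is exactly the inequality $g\le1$ that you claimed fails.

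In the concentration step, the obstacle you flag disappears if you use Minsker's inequality as the paper states it. There $\sigma^2$ may be any upper bound for $\|\sum_j\mb{E} X_j^2\|$, with prefactor $\operatorname{tr}(\sum_j\mb{E} X_j^2)/\sigma^2$. In Minsker's proof this is just the chord bound $e^{cx}-1\le\frac{x}{\sigma^2}(e^{c\sigma^2}-1)$ on $[0,\sigma^2]$. Take $\sigma^2=U=d_{\max}/N\le1/3$. Then:
\begin{itemize}
\item the prefactor is at most $14d(\lambda)$;
\item the threshold satisfies $\frac16(U+\sqrt{U^2+36\sigma^2})<2/3<15/16$;
\item the exponent is $-\frac{75N}{224\,d_{\max}}$.
\end{itemize}
Since $3\cdot\frac{75}{224}=\frac{225}{224}\ge1$, the admissibility condition gives failure probability at most $\delta$ with exactly the stated constants.

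Your rank-one padding instead yields prefactor $14(d(\lambda)+1-\|\Sigma r^2(\Sigma)\|)\ge14$. This does not close when $14d(\lambda)<e\delta$: the condition then only gives $N\ge3d_{\max}$, and $14e^{-225/224}>1$. Your second route, the claim that the regime $\lambda\gtrsim\lambda_1^{1-\theta}$ is trivial, is unsupported. In that regime $\|r(\Sigma)\hat\Sigma r(\Sigma)\|$ is controlled a priori only by $d_{\max}$.
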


To prove Lemma~\ref{Lemma: empirical operator concentration interpolation}, we use the Cordes inequality~\cite{Cordes1987} and Minsker's Bernstein inequality for self-adjoint operators~\cite{Minsker2017extensions}. Cordes' inequality yields the fractional resolvent comparison, whereas Minsker's inequality provides an intrinsic-dimension tail bound for $r(\Sigma)(\hat{\Sigma}-\Sigma)r(\Sigma)$, with the ambient dimension replaced by the effective rank of the variance operator.

\begin{lemma}[Cordes inequality~\cite{Cordes1987}] \label{Lemma: Cordes inequality}
Given two bounded, self-adjoint and positive operators $A$ and $B$, we have $\left\|A^{r} B^{r}\right\| \le \|A B\|^{r}$ for all $r \in[0,1]$.
\end{lemma}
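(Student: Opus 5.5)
We prove the Cordes inequality by reducing it to operator monotonicity of the power function $t\mapsto t^{r}$ on $[0,\infty)$ for $r\in[0,1]$ (the L\"owner--Heinz inequality). The cases $r=0$ and $r=1$ are trivial, so fix $r\in(0,1)$ and set $c:=\|AB\|$.

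\textbf{Step 1 (invertible $B$).} First assume $B\succeq\varepsilon I$ for some $\varepsilon>0$. For every $x$ we have $\|ABx\|^2\le c^2\|x\|^2$, that is, $BA^2B\preceq c^2 I$. Conjugating by $B^{-1}$, which preserves the order, gives $A^2\preceq c^2B^{-2}$. Applying L\"owner--Heinz to the positive operators $A^2$ and $c^2B^{-2}$ with exponent $r$ yields $A^{2r}\preceq c^{2r}B^{-2r}$. Conjugating back by $B^{r}$ gives $B^rA^{2r}B^r\preceq c^{2r}I$. Hence
\[
\|A^rB^r\|^2=\|B^rA^{2r}B^r\|\le c^{2r}=\|AB\|^{2r}.
\]

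\textbf{Step 2 (general $B$).} Replace $B$ by $B_\varepsilon:=B+\varepsilon I$. By Step 1,
\[
\|A^rB_\varepsilon^r\|\le\|AB_\varepsilon\|^r\le(\|AB\|+\varepsilon\|A\|)^r .
\]
As $\varepsilon\to0^+$, we have $B_\varepsilon^r\to B^r$ in operator norm, since $t\mapsto(t+\varepsilon)^r$ converges to $t^r$ uniformly on $[0,\|B\|]$ and the continuous functional calculus is norm-continuous in the function. Letting $\varepsilon\to0^+$ gives the claim.

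\textbf{Step 3 (L\"owner--Heinz), the main obstacle.} We must show that $0\preceq X\preceq Y$ implies $X^r\preceq Y^r$ for $r\in(0,1)$. We use the integral representation
\[
t^r=\frac{\sin(\pi r)}{\pi}\int_0^\infty \frac{t}{t+s}\,s^{r-1}\,\mathrm ds,\qquad t\ge0,
\]
which transfers to operators by spectral calculus, with the integral converging in norm for bounded $X$. Since $\frac{t}{t+s}=1-s(t+s)^{-1}$, it suffices to prove that $X\preceq Y$ implies $(Y+sI)^{-1}\preceq(X+sI)^{-1}$ for each $s>0$.

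For this inversion step, write $X_s:=X+sI$ and $Y_s:=Y+sI$, both invertible. From $X_s\preceq Y_s$ we get $Y_s^{-1/2}X_sY_s^{-1/2}\preceq I$, so $\|X_s^{1/2}Y_s^{-1/2}\|\le1$. Its adjoint then satisfies $\|Y_s^{-1/2}X_s^{1/2}\|\le1$, which gives $X_s^{1/2}Y_s^{-1}X_s^{1/2}\preceq I$ and hence $Y_s^{-1}\preceq X_s^{-1}$.

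Integrating this inequality against the positive weight $s^{r-1}$ completes the argument. Alternatively, one could obtain the inequality via Hadamard's three-lines theorem applied to $z\mapsto A^zB^z$, but the route above stays entirely within order-theoretic spectral calculus.
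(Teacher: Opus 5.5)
Your proof is correct. It also supplies something the paper does not: the paper gives no argument for this lemma and imports it from Cordes's monograph as a black box. It uses the lemma in two places. In Lemma~\ref{Lemma: empirical operator concentration interpolation} it passes from $a=1/2$ to all $a\in[0,1/2]$. In the proof of Theorem~\ref{Theorem: abstract error estimate for regression in interpolation spaces}(1) it converts the bounds on $\|\Sigma^{\frac{1-\theta}{2}}(\tilde{\Sigma}+\lambda I)^{-\frac{1}{2}}\|$ and its adjoint into bounds on fractional powers.

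You instead derive the inequality from the L\"owner--Heinz theorem; the two statements are in fact equivalent. Every step holds:
\begin{itemize}
\item $\|ABx\|^2=\langle BA^2Bx,x\rangle$ gives $BA^2B\preceq c^2I$.
\item Congruence by $B^{-1}$ and by $B^r$ preserves the order.
\item The regularization error is controlled by $\|A^r(B_\varepsilon^r-B^r)\|\le\|A\|^r\varepsilon^r$.
\item The integral representation, combined with the resolvent comparison $(Y+sI)^{-1}\preceq(X+sI)^{-1}$, is a complete proof that $t\mapsto t^r$ is operator monotone.
\end{itemize}

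Your Step~2 is not merely cosmetic for the paper's purposes. In the bound on $\|(\tilde{\Sigma}+\lambda I)^{-\frac{s-\theta}{2(1-\theta)}}\Sigma^{\frac{s-\theta}{2}}\|$, the right-hand factor $B=\Sigma^{\frac{1-\theta}{2}}$ is compact and hence not invertible. So either your regularization or the symmetry $\|A^rB^r\|=\|B^rA^r\|$, $\|AB\|=\|BA\|$ is needed before Step~1 applies.

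The citation keeps the appendix short; your route makes it self-contained and shows that operator monotonicity is the mechanism behind the fractional resolvent comparisons.

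If you want a shorter self-contained proof that bypasses L\"owner--Heinz, put $f(r):=\|A^rB^r\|$. Then $f(\alpha)^2=\|B^{\alpha}A^{2\alpha}B^{\alpha}\|$ is the spectral radius of a positive operator, and $\operatorname{spr}(XY)=\operatorname{spr}(YX)$. Hence $f((s+t)/2)^2=\operatorname{spr}(A^tB^tB^sA^s)\le f(t)f(s)$. Induction over dyadic rationals, starting from $f(0)=1$, together with norm continuity of $r\mapsto A^rB^r$ on $(0,1]$, gives $f(r)\le f(1)^r$.
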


\begin{proposition}[{Bernstein inequality for self-adjoint operators~\cite[Theorem 3.1 and Section 3.2]{Minsker2017extensions}}] \label{Proposition: Bernstein’s ineq for self-adjoint operators, Minsker2017}
Let $\mc H$ be a separable Hilbert space, and let $\{X_{i}\}_{i=1}^{n}$ be independent self-adjoint Hilbert--Schmidt random operators on $\mc H$ such that $\mb{E} X_{i}=0$ for $1\le i\le n$ and $\left\|\sum_{i=1}^{n} \mb{E} X_{i}^{2}\right\| \le \sigma^{2}$. Assume that $\left\|X_{i}\right\| \le U$ almost surely for all $1 \le i \le n$ and some positive $U \in \mb{R}$. Then, for any $t \ge\frac{1}{6}\left(U+\sqrt{U^{2}+36 \sigma^{2}}\right)$,
$$\mb{P}\left(\left\|\sum_{i=1}^{n} X_{i}\right\|>t\right) \le 14 \frac{\operatorname{tr}\left(\sum_{i=1}^{n} \mb{E} X_{i}^{2}\right)}{\sigma^{2}}
\exp\left(-\frac{t^{2} / 2}{\sigma^{2}+t U / 3}\right)$$.
\end{proposition}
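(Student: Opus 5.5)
Since this proposition is quoted from \cite[Theorem 3.1 and Section 3.2]{Minsker2017extensions}, the plan is to recall the structure of Minsker's argument and check that its hypotheses match ours, rather than to build a new proof. I would first prove the inequality when $\mc H$ is finite-dimensional and then pass to the separable case by compression. Write $S=\sum_{i}X_i$ and $V=\sum_i\mb E X_i^2$, so that $\|V\|\le\sigma^2$. By the symmetry $X_i\mapsto -X_i$ it suffices to bound $\mb P(\lambda_{\max}(S)>t)$ by $7\operatorname{tr}(V)\sigma^{-2}\exp\!\bigl(-\tfrac{t^2/2}{\sigma^2+tU/3}\bigr)$. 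A union bound over the two tails then gives the factor $14$.

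\textbf{Step 1 (dimension-free trace Markov bound).} Set $\varphi(x)=e^x-x-1$; it is nonnegative, convex, increasing on $[0,\infty)$ and vanishes at $0$. For $\vartheta>0$, the event $\lambda_{\max}(S)>t$ implies $\operatorname{tr}\varphi(\vartheta S)\ge\varphi(\vartheta t)$. Markov's inequality and $\mb E S=0$ then give
\[
\mb P(\lambda_{\max}(S)>t)\le\frac{\mb E\operatorname{tr}\bigl(e^{\vartheta S}-I\bigr)}{\varphi(\vartheta t)}.
\]
Subtracting $I$ is the key point: it replaces the ambient dimension by an intrinsic one.

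\textbf{Step 2 (Lieb/Tropp master bound).} Lieb's concavity theorem, in Tropp's form, gives $\mb E\operatorname{tr}e^{\vartheta S}\le\operatorname{tr}\exp\bigl(\sum_i\log\mb E e^{\vartheta X_i}\bigr)$. The scalar Bernstein bound $e^{x}\le1+x+\frac{x^2/2}{1-|x|/3}$ for $|x|<3$, applied spectrally with $\|X_i\|\le U$ and $\vartheta<3/U$, yields $\log\mb Ee^{\vartheta X_i}\preceq g(\vartheta)\mb EX_i^2$ with $g(\vartheta)=\frac{\vartheta^2/2}{1-\vartheta U/3}$. Hence
\[
\mb E\operatorname{tr}(e^{\vartheta S}-I)\le\operatorname{tr}\bigl(e^{g(\vartheta)V}-I\bigr).
\]
Convexity of $x\mapsto e^{gx}-1$ on $[0,\|V\|]$ then bounds this trace by $\frac{\operatorname{tr}V}{\|V\|}\bigl(e^{g\|V\|}-1\bigr)$. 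Because $\frac{e^{gx}-1}{x}$ is increasing in $x$, we may replace $\|V\|$ by $\sigma^2$, which gives $\frac{\operatorname{tr}V}{\sigma^2}e^{g\sigma^2}$.

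\textbf{Step 3 (choice of $\vartheta$ and the threshold).} Take $\vartheta=t/(\sigma^2+tU/3)$, which satisfies $\vartheta U<3$. The standard computation gives $e^{g\sigma^2-\vartheta t}\le\exp\!\bigl(-\frac{t^2/2}{\sigma^2+tU/3}\bigr)$. It remains to control the ratio $e^{\vartheta t}/\varphi(\vartheta t)$, which is at most $1+3/(\vartheta t)^2$. The hypothesis $t\ge\frac16\bigl(U+\sqrt{U^2+36\sigma^2}\bigr)$ is exactly the condition that makes $\vartheta t\ge1$; indeed it is equivalent to $t^2-tU/3\ge\sigma^2$. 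This bounds the ratio by an absolute constant, and tracking the constants gives $7$ per tail.

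\textbf{Step 4 (infinite dimension) and main obstacle.} Let $P_k$ be orthogonal projections onto an increasing sequence of finite-dimensional subspaces exhausting $\mc H$. The compressions $P_kX_iP_k$ satisfy the same hypotheses, with $\operatorname{tr}\sum_i\mb E(P_kX_iP_k)^2\le\operatorname{tr}V$ and $\bigl\|\sum_i\mb E(P_kX_iP_k)^2\bigr\|\le\sigma^2$. Since the $X_i$ are Hilbert--Schmidt, $P_kSP_k\to S$ in operator norm almost surely. Applying the bound at any level $t'<t$ and letting $k\to\infty$, then $t'\uparrow t$, gives the claim via Fatou and lower semicontinuity of the tail probability. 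I expect this limiting step to be the main obstacle, together with the bookkeeping of the constant $14$ under the stated threshold. The Lieb-concavity step is imported as a black box.
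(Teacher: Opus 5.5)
Your proposal is correct and is essentially Minsker's own argument, which the paper does not reproduce but imports by citation: the $\varphi(x)=e^x-x-1$ Markov step (so that $\operatorname{tr}(e^{\vartheta S}-I)$ replaces the ambient dimension), Tropp's Lieb-based master bound combined with the Bernstein moment-generating-function estimate, the chord bound $\operatorname{tr}(e^{gV}-I)\le\sigma^{-2}\operatorname{tr}(V)\,e^{g\sigma^2}$, the choice $\vartheta=t/(\sigma^2+tU/3)$ (for which $\vartheta t\ge1$ is exactly the stated threshold), and finite-dimensional compressions for the Hilbert--Schmidt case. The only thing to tidy is Step 4: the detour through levels $t'<t$ is unnecessary and in fact fails when $t$ equals the threshold, since then no admissible $t'<t$ exists; instead, use that $\|P_kSP_k\|\uparrow\|S\|$ for any bounded self-adjoint $S$, so the events $\{\|P_kSP_k\|>t\}$ increase to $\{\|S\|>t\}$ and continuity from below at the level $t$ itself finishes the proof (incidentally, your bound $1+3/(\vartheta t)^2\le4$ yields the constant $8$, which is better than $14$).
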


\begin{proof}[Proof of Lemma \ref{Lemma: empirical operator concentration interpolation}]
First, we write $r(\Sigma)(\hat{\Sigma}-\Sigma)r(\Sigma)$ as the sum of independent random self-adjoint operators. Denote $\phi_{j} = r(\Sigma)\phi(\cdot,v_{j})$ and
\begin{equation*}
\begin{aligned}
X_{j} & =\frac{1}{Nq(v_{j})}\phi_{j} \otimes \phi_{j} - \frac{1}{N} \Sigma r^{2}(\Sigma) .
\end{aligned}
\end{equation*}
Then, $r(\Sigma)(\hat{\Sigma}-\Sigma)r(\Sigma) = \sum_{j=1}^{N} X_{j}$, $\mb{E} X_{j}=0$ and
\begin{equation*}
\begin{aligned}
\|X_{j}\| & \le \frac{1}{N} \max\left(\operatorname*{\tau\text{-}ess\,sup}_{v\in\mc V} \frac{\ell_{\lambda}(v;\theta,\gamma)}{q(v)}, \|\Sigma r^{2}(\Sigma)\| \right)  = \frac{d_{\max}(q,\lambda)}{N},
\end{aligned}
\end{equation*}
where we used $\|\Sigma r^{2}(\Sigma)\| \le \operatorname{tr}\left(\Sigma r^{2}(\Sigma)\right)\le d_{\max}(q,\lambda)$ in the equality. To control $\mb{E} X_{j}^{2}$, we observe
\begin{equation*}
\begin{aligned}
\mb{E} X_{j}^{2} & \preceq \frac{1}{N^{2}}\mb{E} \left[\frac{1}{q(v_{j})^{2}} \|\phi_{j}\|_{L^{2}}^{2}\phi_{j} \otimes \phi_{j}\right] \\
& \preceq \frac{d_{\max}(q,\lambda)}{N^{2}}\mb{E} \left[\frac{1}{q(v_{j})}\phi_{j} \otimes \phi_{j}\right]  = \frac{d_{\max}(q,\lambda)}{N^{2}}\Sigma r^{2}(\Sigma) .
\end{aligned}
\end{equation*}
Using $0\preceq \Sigma r^{2}(\Sigma) \preceq I$, we obtain
\begin{equation*}
\begin{aligned}
& \left\|\sum_{j=1}^{N} \mb{E} X_{j}^{2}\right\| \le  \frac{d_{\max}(q,\lambda)}{N} \|\Sigma r^{2}(\Sigma)\|\le  \frac{d_{\max}(q,\lambda)}{N} , \\
& \operatorname{tr}\left(\sum_{j=1}^{N} \mb{E} X_{j}^{2}\right) \le  \frac{d_{\max}(q,\lambda)}{N} \operatorname{tr}(\Sigma r^{2}(\Sigma)) \le  \frac{d_{\max}(q,\lambda)d(\lambda)}{N} .
\end{aligned}
\end{equation*}
To apply Proposition \ref{Proposition: Bernstein’s ineq for self-adjoint operators, Minsker2017}, we set $t = 15/16$ and check
\begin{equation*}
\begin{aligned}
\frac{1}{6}\left(\frac{d_{\max}(q,\lambda)}{N} + \sqrt{\frac{d_{\max}^{2}(q,\lambda)}{N^{2}} + 36 \frac{d_{\max}(q,\lambda)}{N}}\right) < \frac{2}{3} \le t,
\end{aligned}
\end{equation*}
where we used $d_{\max}(q,\lambda)/N\le 1/3$. Then, by Proposition \ref{Proposition: Bernstein’s ineq for self-adjoint operators, Minsker2017},
\begin{equation*}
\begin{aligned}
\mb{P}\left(\left\|r(\Sigma)(\hat{\Sigma}-\Sigma)r(\Sigma)\right\|>\frac{15}{16}\right) \le 14 d(\lambda) \exp\left(-\frac{75 N}{224 d_{\max}(q,\lambda)}\right) \le \delta,
\end{aligned}
\end{equation*}
which gives the estimate (\ref{ineq: norm estimate for r(Sigma)(hatSigma-Sigma)r(Sigma)}). For the second estimate, since (\ref{ineq: norm estimate for r(Sigma)(hatSigma-Sigma)r(Sigma)}) implies $\Sigma - \hat{\Sigma} \preceq \frac{15}{16} r(\Sigma)^{-2}$, we have
\begin{equation*}
\begin{aligned}
\hat{\Sigma} + \lambda \Sigma^{\theta} & \succeq \frac{1}{16}\left(\Sigma+\lambda \Sigma^{\theta}\right)+\frac{15}{16}\left[\Sigma + \lambda \Sigma^{\theta} - r(\Sigma)^{-2}\right] \\
& \succeq \frac{1}{16}\left(\Sigma+\lambda \Sigma^{\theta}\right) ,
\end{aligned}
\end{equation*}
where the last line follows from the fact that, for all $x\ge0$,
\begin{equation*}
\begin{aligned}
\left(x^{1-\theta} + \lambda\right)^{\frac{\gamma}{1-\theta}} x^{1-\gamma} \le \left(x^{1-\theta} + \lambda\right) x^{\theta} , \end{aligned}
\end{equation*}
and $x + \lambda x^{\theta} - r(x)^{-2} \ge 0$. Hence, $\tilde{\Sigma} + \lambda I\succeq (\Sigma^{1-\theta}+\lambda I)/16$ and
\begin{equation*}
\begin{aligned}
\left\|(\Sigma^{1-\theta} + \lambda I)^{\frac{1}{2}}(\tilde{\Sigma} + \lambda I)^{-\frac{1}{2}}\right\|^{2} & = \left\|(\tilde{\Sigma} + \lambda I)^{-\frac{1}{2}}(\Sigma^{1-\theta} + \lambda I)(\tilde{\Sigma} + \lambda I)^{-\frac{1}{2}}\right\|  \le 16 .
\end{aligned}
\end{equation*}
Therefore, we have proved the second estimate for $a=1/2$.
Applying Lemma \ref{Lemma: Cordes inequality} with $A=(\Sigma^{1-\theta}+\lambda I)^{\frac{1}{2}}$, $B=(\tilde{\Sigma}+\lambda I)^{-\frac{1}{2}}$ and $r=2a$, we obtain
\[
\left\|(\Sigma^{1-\theta}+\lambda I)^{a}(\tilde{\Sigma}+\lambda I)^{-a}\right\| \le \left\|(\Sigma^{1-\theta}+\lambda I)^{\frac{1}{2}}(\tilde{\Sigma}+\lambda
I)^{-\frac{1}{2}}\right\|^{2a} \le 16^{a}
\]
for every $a\in[0,1/2]$, which proves the second estimate and completes the proof.
\end{proof}

Now, we are ready to prove Theorem \ref{Theorem: abstract error estimate for regression in interpolation spaces}.

\begin{proof}[Proof of Theorem \ref{Theorem: abstract error estimate for regression in interpolation spaces}]
The concentration estimate is applied below at an arbitrary admissible penalty level $\eta>0$ satisfying \eqref{ineq: N > d_max(lambda) ln(d(lambda)/delta), interpolation} with $\lambda=\eta$. The quantity $\varsigma_N$ is used only as the associated critical penalty level. When the infimum defining $\varsigma_N$ is not attained, estimates involving $\varsigma_N$ are obtained by applying the argument at admissible levels above $\varsigma_N$ and passing to the limit.

(1) By Lemma \ref{Lemma: bound uniform error mcE with random operator norms}, since $\tilde{\Sigma}$ is a positive self-adjoint operator and $s-\bar{p}\le 1-(2\theta-1)$, we decompose
\begin{align}
\mc{E}(N,s,\theta,p) & \le \lambda \left\|\Sigma^{\frac{\bar{p}-p}{2}}\right\| \left\|\Sigma^{\frac{\theta-\bar{p}}{2}}(\tilde{\Sigma}+\lambda I)^{-\frac{\theta-\bar{p}}{2(1-\theta)}}\right\| \left\| (\tilde{\Sigma}+\lambda I)^{-1+\frac{s-\bar{p}}{2(1-\theta)}} \right\| \left\|(\tilde{\Sigma}+\lambda I)^{-\frac{s-\theta}{2(1-\theta)}}\Sigma^{\frac{s-\theta}{2}}\right\| \notag\\
& \le \lambda^{\frac{s-\bar{p}}{2(1-\theta)}}\left\|\Sigma\right\|^{\frac{\bar{p}-p}{2}} \left\|\Sigma^{\frac{\theta-\bar{p}}{2}}(\tilde{\Sigma}+\lambda I)^{-\frac{\theta-\bar{p}}{2(1-\theta)}}\right\| \left\|(\tilde{\Sigma}+\lambda I)^{-\frac{s-\theta}{2(1-\theta)}}\Sigma^{\frac{s-\theta}{2}} \right\| .  \label{ineq: first decomp for mcE when theta>p}
\end{align}
Note that $\bar{p}\le\theta\le s$. Since $\theta<1$, the choice $\gamma=1-\theta$ is positive. Thus, for any admissible penalty level $\eta$ for $(N,\delta,q,\theta,1-\theta)$, Lemma \ref{Lemma: empirical operator concentration interpolation} with $\gamma=1-\theta$ and $a=1/2$ gives, with probability at least $1-\delta$,
\begin{equation*}
\begin{aligned}
\left\|\Sigma^{\frac{1-\theta}{2}}(\tilde{\Sigma}+\lambda I)^{-\frac{1}{2}}\right\|
& \le \left\|\Sigma^{\frac{1-\theta}{2}}(\Sigma^{1-\theta}+\eta I)^{-\frac{1}{2}}\right\| \left\|(\Sigma^{1-\theta}+\eta I)^{\frac{1}{2}} (\tilde{\Sigma}+\eta I)^{-\frac{1}{2}}\right\| \left\|(\tilde{\Sigma}+\eta I)^{\frac{1}{2}}(\tilde{\Sigma}+\lambda I)^{-\frac{1}{2}}\right\| \\
& \le 4 \max(1,\sqrt{\eta/\lambda}).
\end{aligned}
\end{equation*}
Similarly, we obtain $\left\|(\tilde{\Sigma}+\lambda I)^{-\frac{1}{2}}\Sigma^{\frac{1-\theta}{2}}\right\| \le 4 \max(1,\sqrt{\eta/\lambda})$. It then follows from Lemma \ref{Lemma: Cordes inequality} that
\begin{equation*}
\begin{aligned}
& \left\|\Sigma^{\frac{\theta-\bar{p}}{2}}(\tilde{\Sigma}+\lambda I)^{-\frac{\theta-\bar{p}}{2(1-\theta)}}\right\| \le (16 \max(1,\eta/\lambda))^{\frac{\theta-\bar{p}}{2(1-\theta)}}, \\
& \left\|(\tilde{\Sigma}+\lambda I)^{-\frac{s-\theta}{2(1-\theta)}}\Sigma^{\frac{s-\theta}{2}} \right\| \le (16 \max(1,\eta/\lambda))^{\frac{s-\theta}{2(1-\theta)}} .
\end{aligned}
\end{equation*}
Substituting the above bounds into (\ref{ineq: first decomp for mcE when theta>p}) yields the estimate for $\mc{E}$ in (1) with $\varsigma_{N}$ replaced by $\eta$. For every $\varepsilon>0$, choose an admissible $\eta\le\varsigma_N+\varepsilon$. Since the right-hand side is continuous and nondecreasing in $\eta$, the same estimate holds with $\varsigma_N+\varepsilon$ in place of $\eta$ with probability at least $1-\delta$. Letting $\varepsilon\downarrow0$ and using continuity of probability from above gives the stated estimate. Additionally, by Lemma \ref{Lemma: bound uniform error mcE with random operator norms} and the second inequality above,
\begin{equation*}
\begin{aligned}
|\boldsymbol{\beta}^{*}| & \le N^{-\frac{1}{2}} \left\|(\tilde{\Sigma}+\lambda I)^{\frac{s-1}{2(1-\theta)}}\right\| \left\|(\tilde{\Sigma}+\lambda I)^{-\frac{s-\theta}{2(1-\theta)}}\Sigma^{\frac{s-\theta}{2}}\right\| \\
& \le N^{-\frac{1}{2}} \lambda^{\frac{s-1}{2(1-\theta)}}  (16 \max(1,\eta/\lambda))^{\frac{s-\theta}{2(1-\theta)}} ,
\end{aligned}
\end{equation*}
The same limiting argument with $\eta\le\varsigma_N+\varepsilon$ proves the estimate for $|\boldsymbol{\beta}^{*}|$. (2) By Lemma \ref{Lemma: bound uniform error mcE with random operator norms} and $A^{-1}-B^{-1}=A^{-1}(B-A)B^{-1}$, we obtain
\begin{align}
\mc{E}(N,s,\theta,p) & \le \lambda\left\|\Sigma^{\frac{\theta-p}{2}}(\Sigma^{1-\theta}+\lambda I)^{-1} \Sigma^{\frac{s-\theta}{2}} \right\| + \lambda\left\|\Sigma^{\frac{\theta-p}{2}}\left[(\Sigma^{1-\theta}+\lambda I)^{-1} -(\tilde{\Sigma}+\lambda I)^{-1}\right] \Sigma^{\frac{s-\theta}{2}} \right\| \notag \\
& \le \lambda\left\|\Sigma^{\frac{s-p}{2}}(\Sigma^{1-\theta}+\lambda I)^{-1}\right\| + \lambda\left\|\Sigma^{\frac{\theta-p}{2}}(\Sigma^{1-\theta}+\lambda I)^{-1}(\tilde{\Sigma}-\Sigma^{1-\theta})(\tilde{\Sigma}+\lambda I)^{-1} \Sigma^{\frac{s-\theta}{2}} \right\| , \label{ineq: first decomp for mcE when theta<p}
\end{align}
where the first term is bounded by $$\lambda\left\|\Sigma^{\frac{s-p}{2}}(\Sigma^{1-\theta}+\lambda I)^{-\frac{s-p}{2(1-\theta)}}\right\|
\left\|(\Sigma^{1-\theta}+\lambda I)^{-1+\frac{s-p}{2(1-\theta)}}\right\| \le\lambda^{\frac{s-p}{2(1-\theta)}} . $$ Since $\theta\le p<1$, we have
$0<1-p\le1-\theta$, so the concentration estimate applies with $\gamma=1-p$. Let $r(x)$ be defined in (\ref{def: r(x) used in concentration}) with this choice of $\gamma$. Then, the second term in (\ref{ineq: first decomp for mcE when theta<p}) is bounded by
\begin{equation*}
\begin{aligned}
&  \lambda\left\|(\Sigma^{1-\theta}+\lambda I)^{\frac{2\theta-p-1}{2(1-\theta)}} \right\| \left\|r(\Sigma)(\hat{\Sigma}-\Sigma)r(\Sigma) \right\|  \left\| \Sigma^{\frac{p-\theta}{2}}(\Sigma^{1-\theta}+\lambda I)^{\frac{1-p}{2(1-\theta)}} (\tilde{\Sigma}+\lambda I)^{-1} \Sigma^{\frac{s-\theta}{2}} \right\| \\
& \le \frac{15}{16}\lambda^{\frac{1-p}{2(1-\theta)}}   \left\|\Sigma^{\frac{p-\theta}{2}} (\Sigma^{1-\theta}+\lambda I)^{\frac{\theta-p}{2(1-\theta)}}\right\| \left\|(\Sigma^{1-\theta}+\lambda I)^{\frac{1}{2}} (\tilde{\Sigma}+\lambda I)^{-1} \Sigma^{\frac{s-\theta}{2}} \right\| \\
& \le \frac{15}{16}\lambda^{\frac{1-p}{2(1-\theta)}}   \left\|(\Sigma^{1-\theta}+\lambda I)^{\frac{1}{2}} (\tilde{\Sigma}+\lambda I)^{-\frac{1}{2}}\right\| \left\| (\tilde{\Sigma}+\lambda I)^{\frac{s-1}{2(1-\theta)}} \right\| \left\| (\tilde{\Sigma}+\lambda I)^{\frac{\theta-s}{2(1-\theta)}}(\Sigma^{1-\theta}+\lambda I)^{\frac{s-\theta}{2(1-\theta)}} \right\| \\
& \le 15\lambda^{\frac{s-p}{2(1-\theta)}} ,
\end{aligned}
\end{equation*}
where we used the first claim of Lemma \ref{Lemma: empirical operator concentration interpolation} in the first inequality and the second claim of Lemma \ref{Lemma: empirical operator concentration interpolation} in the last inequality. The bound for $|\boldsymbol{\beta}^{*}|$ follows in the same way as in (1), using the admissibility of $\lambda$ with $\gamma=1-p$. This completes the proof.
\end{proof}

\section{Proofs for uniform approximation by random Fourier features}
\label{Section: proofs uniform approximation random Fourier features}

This appendix supplies the Fourier-specific ingredients used to derive the uniform approximation results in Section~\ref{Section: Estimate of d(lambda) and the convergence rate, interpolation}. The argument has three layers. We first show that the relevant kernel integral operators are injective, so the positive spectral subspace is all of $L^2(\Omega)$. Subsection~\ref{Section: effective dimension estimates for translation invariant kernels} then bounds the effective dimension in the four frequency-decay regimes, while Subsection~\ref{Subsection: Fourier descriptions and interpolation embeddings} identifies the target and error spaces with the appropriate kernel interpolation spaces. These ingredients are assembled in Subsection~\ref{Section: proof uniform approximation regularity balls} to prove the regularity-adapted approximation theorem. The remaining subsections adapt the same strategy to growing uniform frequency supports and prove Proposition~\ref{Proposition: leverage approximation growing bandwidth target} and Theorem~\ref{Theorem: optimal growing bandwidth leverage rates}.

\paragraph{Injectivity of the kernel integral operator.}
The following proof justifies working on the full space $L^2(\Omega)$ for all frequency measures used in Section~\ref{Section: Estimate of d(lambda) and the convergence rate, interpolation}.

\begin{proof}[Proof of Lemma \ref{Lemma: injectivity of translation-invariant kernel integral operators}]
Since $\tau_s$ is the symmetrized measure of $\tau$, the kernels defined by \eqref{eq: translation invariant kernel for random Fourier feature} using $\tau$ and $\tau_s$ coincide. Consequently, we have
$$ \begin{aligned}
k(x,y) & =\int_{\mb{R}^{d}}e^{\mathrm{i} w^{\top}(x-y)}\,\mr{d}\tau_s(w).
\end{aligned} $$
For any $f\in L^{2}(\Omega)$, let $F:=f\mathbf{1}_{\Omega}$ denote its zero extension outside $\Omega$. The boundedness of $\Omega$ ensures that $F\in L^{1}(\mb{R}^{d})\cap L^{2}(\mb{R}^{d})$. Applying Fubini's theorem yields
\[
\begin{aligned}
\langle \Sigma f,f\rangle_{L^{2}(\Omega)}
&=
\int_{\Omega}\int_{\Omega}
k(x,y)f(y)\overline{f(x)}\,\mr{d}y\,\mr{d}x  \\
&=
\int_{\mb{R}^{d}}
\left(
\int_{\Omega} f(y)e^{-i w^{\top}y}\,\mr{d}y
\right)
\overline{
\left(
\int_{\Omega} f(x)e^{-i w^{\top}x}\,\mr{d}x
\right)}
\,\mr{d}\tau_s(w) \\
&=
\int_{\mb{R}^{d}}
\left|\int_{\Omega} f(x)e^{-i w^{\top}x}\,\mr{d}x\right|^{2}
\,\mr{d}\tau_s(w).
\end{aligned}
\]
Now assume $\Sigma f=0$. Then $\langle \Sigma f,f\rangle_{L^{2}(\Omega)}=0$, and hence
\[
\int_{\Omega} f(x)e^{-i w^{\top}x}\,\mr{d}x=0, \quad \text{for } \tau_s\text{-a.e. } w .
\]
The function $w\mapsto \int_{\Omega} f(x)e^{-i w^{\top}x}\,\mr{d}x$ is continuous on $\mb{R}^{d}$. Therefore it vanishes on $\operatorname{supp}\tau_s$. Since $\operatorname{supp}\tau_s$ contains a nonempty open set, the Fourier transform of $F$ vanishes on a nonempty open subset of $\mb{R}^{d}$.

Since \(F\) has compact support, the Paley--Wiener--Schwartz theorem~\cite[Theorem 7.3.1]{Hormander2003} implies that its Fourier transform extends to an entire function on \(\mb{C}^{d}\). By the identity theorem for real-analytic functions, the Fourier transform of $F$ vanishes identically on $\mb{R}^{d}$. The injectivity of the Fourier transform then implies $F=0$ almost everywhere. Hence $f=0$ in $L^{2}(\Omega)$. Thus $\ker\Sigma=\{0\}$, which completes the proof.
\end{proof}

With the spectral subspace identified, we next estimate its effective dimension and then relate the resulting interpolation scale to the target and error spaces.

\subsection{Effective-dimension estimates for translation-invariant kernels}
\label{Section: effective dimension estimates for translation invariant kernels}

This subsection proves the spectral estimates used in Proposition~\ref{Proposition: upper bound for d(lambda;theta,gamma)} and hence in Theorem~\ref{Theorem: interpolation improved convergence rate of RFM}. We first state a Fourier-multiplier comparison lemma and the resulting four-regime effective-dimension bound.
We then prove the comparison lemma and apply it, together with the eigenvalue asymptotics of \cite{widom1963asymptotic1,widom1964asymptotic2}, in each decay regime.

We use the complete elliptic integral of the first kind
\[
\boldsymbol{K}(r):=\int_{0}^{\pi/2}\left(1-r^{2}\sin^{2}\theta\right)^{-1/2}\mr{d}\theta, \qquad 0\le r<1,
\]
and set $\Upsilon(x):=x^{x}$ for $x>0$.

\paragraph{Comparison and effective-dimension statements.}
To formulate the operator comparisons used below, consider an integral operator
\begin{equation} \label{eq: form of integral operators in Widom1963}
Tf(x) = \int_{\mb{R}^{d}} 1_{\Omega}(x) k(x-y) 1_{\Omega}(y) f(y) \mr{d} y ,
\end{equation}
where $k\in L^{1}(\mb{R}^{d})$ has a nonnegative Fourier transform
\[
K(\xi)=\int_{\mb{R}^{d}} k(x)e^{-\imath \xi^{\top}x}\mr{d}x.
\]
The following monotonicity lemma allows us to compare the eigenvalues of translation-invariant integral operators through their Fourier multipliers.

\begin{lemma} \label{Lemma: monotonicity of certain integral operators}
Let $\{T_j\}_{j=1}^{2}$ be integral operators of the form \eqref{eq: form of integral operators in Widom1963} with kernels $\{k_j\}_{j=1}^{2}$. Assume that
\[
k_j(x)=(2\pi)^{-d}\int_{\mb{R}^{d}}K_j(\xi)e^{\imath \xi^{\top}x}\mr{d}\xi,
\]
and that $0\le K_2(\xi)\le K_1(\xi)$ almost everywhere. Then $0\preceq T_2\preceq T_1$.
\end{lemma}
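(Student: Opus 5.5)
The plan is to compute the quadratic forms of $T_1$ and $T_2$ explicitly and compare them through Plancherel. Both operators are self-adjoint and bounded on $L^{2}(\mb{R}^{d})$: each $k_j$ lies in $L^{1}$, so $T_j$ is bounded by Young's inequality, and $K_j\ge 0$ real forces $k_j(-x)=\overline{k_j(x)}$. It therefore suffices to prove
\[
0\le\langle T_2f,f\rangle\le\langle T_1f,f\rangle \qquad \text{for every } f\in L^{2}(\mb{R}^{d}).
\]

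\emph{Step 1: Fourier form of the quadratic form.} For $f\in L^2(\mb{R}^d)$ I set $F:=1_{\Omega}f$. Boundedness of $\Omega$ gives $F\in L^{1}\cap L^{2}$, so its Fourier transform $\widehat F$ is continuous and lies in $L^{2}$. The quadratic form is
\[
\langle T_jf,f\rangle=\iint k_j(x-y)F(y)\overline{F(x)}\,\mr{d}y\,\mr{d}x .
\]
I would insert the inversion formula for $k_j$ and apply Fubini to obtain
\[
\langle T_jf,f\rangle=(2\pi)^{-d}\int_{\mb{R}^{d}}K_j(\xi)\Bigl|\int F(x)e^{-\imath\xi^{\top}x}\,\mr{d}x\Bigr|^{2}\mr{d}\xi .
\]
This is the same computation as in the proof of Lemma~\ref{Lemma: injectivity of translation-invariant kernel integral operators}.

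\emph{Step 2: Comparison.} Since $0\le K_2\le K_1$ almost everywhere, the integrands are ordered pointwise. This gives $0\le\langle T_2f,f\rangle\le\langle T_1f,f\rangle$, that is, $0\preceq T_2\preceq T_1$.

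\emph{Main obstacle: justifying Fubini in Step 1.} The absolute integrand is $K_j(\xi)|F(x)||F(y)|$, and its integral is finite only if $K_j\in L^{1}$. That is not assumed; only $k_j\in L^{1}$ is. I would handle this by regularization. Take a Gaussian $g_\varepsilon(\xi)=e^{-\varepsilon|\xi|^{2}}$ and replace $K_j$ by $K_jg_\varepsilon$, which is integrable once we know $K_j$ is bounded. Boundedness holds because $|K_j|\le\|k_j\|_{L^1}$. The corresponding kernel is $k_j*\varphi_\varepsilon$, where $\varphi_\varepsilon$ is a Gaussian approximate identity, and $k_j*\varphi_\varepsilon\to k_j$ in $L^{1}$. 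The identity of Step 1 holds for the regularized kernels. On the left side, the forms converge, since $|\langle Tf,f\rangle|\le\|k\|_{L^1}\|F\|_{L^2}^{2}$. On the right side, monotone convergence applies because $g_\varepsilon\uparrow1$ and the integrand is nonnegative. Together these yield the identity for $K_j$ itself, which may a priori take the value $+\infty$ but is in fact finite. Alternatively, one can bypass the regularization: write $\langle T_jf,f\rangle=\langle k_j*F,F\rangle$ and apply Plancherel together with the convolution theorem for $L^{1}*L^{2}$. Then $\widehat{k_j*F}=K_j\widehat F$, with the appropriate $(2\pi)$-normalization, and this again gives the formula of Step 1.
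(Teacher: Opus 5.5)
Your proposal is correct and follows the paper's proof: you express $\langle T_jf,f\rangle=(2\pi)^{-d}\int K_j|\widehat{1_\Omega f}|^2\,\mr d\xi$ and compare the integrands pointwise, which the paper presents as a direct calculation without justifying the interchange of integrals. Your regularization, or the cleaner $L^1*L^2$ convolution-theorem-plus-Plancherel route, supplies that missing justification; the regularization is more than needed, though, since reading the hypothesized inversion formula as a Lebesgue integral with $K_j\ge0$ already gives $K_j\in L^1$, and Fubini then applies directly because $F\in L^1$.
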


\begin{proposition} \label{Proposition: upper bound for d(lambda;theta,gamma)}
Let $\Sigma$ be the integral operator on $L^{2}(\Omega)$ associated with the translation-invariant kernel in \eqref{eq: translation invariant kernel for random Fourier feature}. Let $0<\lambda$, $0<\gamma\le1-\theta\le1$, and
\[
d(\lambda;\theta,\gamma) :=\operatorname{tr}\left[\left(\Sigma^{1-\theta}+\lambda I\right)^{-\frac{\gamma}{1-\theta}}\Sigma^{\gamma}\right].
\]
(1) Let $s\gamma>d/2$ and suppose that, for some $0<c_{\tau}\le C_{\tau}<\infty$,
\[
c_{\tau}(1+|w|^{2})^{-s} \le \frac{\mr{d}\tau}{\mr{d}w}(w) \le C_{\tau}(1+|w|^{2})^{-s} \quad\text{for a.e. }w\in\mb{R}^{d}.
\]
Then, there exist constants $c_{1}>0$ and $M_{0}>0$, depending on $d,s,\theta,\gamma,\Omega,c_{\tau}$, and $C_{\tau}$, such that for $\lambda=c_{1}(|\Omega|/M)^{2s(1-\theta)/d}$, $d(\lambda)\le M$ whenever $M\ge M_{0}$. \\ (2) Let $R,\kappa>0$, $s \ge 1$, $\Omega = (-R, R)^{d}$ and $\mr{d}\tau \propto e^{- 2\kappa |w|^{1/s}} \mr{d} w$. Then, for any
\begin{equation} \label{eq: def of kappa_s in subexponential and exponential convergence rate, interpolation}
0 < \kappa_{s} <
\begin{cases}
\frac{(1-\theta)\kappa}{2^{2/(sd)}sd^{2-1/(2s)}R^{1/s}},     & \text{ for } s>1 , \\
\frac{(1-\theta) \boldsymbol{K}(\operatorname{sech}  \pi R \sqrt{d}\kappa^{-1})}{2^{1/d}d \boldsymbol{K}(\tanh  \pi R \sqrt{d}\kappa^{-1})},   & \text{ for } s=1 , \\
\end{cases}
\end{equation}
there exists a constant $c_{2}$ depending on $\kappa,d,s,\gamma,R$ such that, for
\[
\lambda=c_{2}\exp\bigl(-\kappa_sM^{1/(sd)}\bigr),
\]
one has $d(\lambda)\le M$ for all sufficiently large $M$. \\ (3) Let $R,\kappa>0$, $s>1$, $\Omega=(-R,R)^d$, and $\mr d\tau\propto e^{-2\kappa|w|^s}\mr dw$. For any $0<\bar\kappa(s)<2-2/s$, there exists a constant $c_3$ depending on $\kappa,\bar\kappa,d,s,\gamma,R$ such that, for
\[
\lambda=c_3\Upsilon\left[\frac{(M/2)^{1/d}}
{e(\bar\kappa^{-1}\gamma^{-1}+3/2)}\right]^{-(1-\theta)\bar\kappa d},
\]
one has $d(\lambda)\le M$ for all sufficiently large $M$. \\ (4) Let $R,S>0$, $\Omega=(-R,R)^d$, and $\mr d\tau\propto1_{(-S,S)^d}(w)\mr dw$. For any $0<\bar\kappa<2$, there exists a constant $C_{\operatorname{Ban}}$ depending only on $SR$ such that, for
\[
\lambda=(\pi/S)^{(1-\theta)d}C_{\operatorname{Ban}}^{1-\theta} \Upsilon\left[\frac{(M/2)^{1/d}}
{e(\bar\kappa^{-1}\gamma^{-1}+3/2)}\right]^{-(1-\theta)\bar\kappa d},
\]
one has $d(\lambda)\le M$ for all sufficiently large $M$.
\end{proposition}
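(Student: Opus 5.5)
The plan is to reduce the trace to an eigenvalue-counting estimate and then feed in eigenvalue decay bounds for $\Sigma$ in each regime. Writing $\lambda_j$ for the eigenvalues of $\Sigma$, we have
\[
d(\lambda;\theta,\gamma)=\sum_{j\ge1}\Bigl(\frac{\lambda_j^{1-\theta}}{\lambda_j^{1-\theta}+\lambda}\Bigr)^{\frac{\gamma}{1-\theta}},
\]
and each summand is at most $\min\{1,(\lambda_j^{1-\theta}/\lambda)^{\gamma/(1-\theta)}\}$. Fix a cutoff index $J$. The first $J$ terms contribute at most $J$. The tail contributes at most $\lambda^{-\gamma/(1-\theta)}\sum_{j>J}\lambda_j^{\gamma}$. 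So it suffices to take $J\approx M/2$ and then choose $\lambda$ so that $\lambda^{-\gamma/(1-\theta)}\sum_{j>J}\lambda_j^\gamma\le M/2$. Everything therefore reduces to upper bounds on $\lambda_j$ for each frequency measure.

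\textbf{Eigenvalue bounds.} For these I would use Lemma~\ref{Lemma: monotonicity of certain integral operators} together with the min--max principle to compare $\Sigma$ with a model operator.

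In case (1), the density is bounded by $C_\tau(1+|w|^2)^{-s}$. This is a Bessel-potential (Matérn) multiplier, and Widom's asymptotics \cite{widom1963asymptotic1,widom1964asymptotic2} give $\lambda_j\lesssim (j/|\Omega|)^{-2s/d}$. Since $s\gamma>d/2$, the tail sum is $\lesssim J^{1-2s\gamma/d}$. With $J\simeq M$, the choice $\lambda=c_1(|\Omega|/M)^{2s(1-\theta)/d}$ makes $\lambda^{-\gamma/(1-\theta)}J^{1-2s\gamma/d}\lesssim c_1^{-\gamma/(1-\theta)}M$, which is $\le M/2$ once $c_1$ is chosen large enough; taking $M_0$ large handles the lower-order terms.

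In cases (2)--(4), $\Omega$ is a cube and the density is comparable to a product density only after a modification. For $|w|^{1/s}$, I would use $|w|\ge d^{-1/2}\sum_i|w_i|$, which bounds the radial weight by a product of one-dimensional weights; this inequality is the source of the $d$-dependent constants in \eqref{eq: def of kappa_s in subexponential and exponential convergence rate, interpolation}. By Lemma~\ref{Lemma: monotonicity of certain integral operators}, $\Sigma$ is then dominated by a tensor product of one-dimensional operators on $(-R,R)$, whose eigenvalues are products $\prod_i\mu_{j_i}$. For the one-dimensional operators:
\begin{itemize}
\item For $s=1$, the multiplier is $e^{-2\kappa|w|/\sqrt d}$, a Cauchy/Poisson kernel. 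Its eigenvalues decay geometrically with ratio $\exp(-\pi\boldsymbol{K}(\operatorname{sech})/\boldsymbol{K}(\tanh))$ (Widom), which explains the elliptic-integral constant.
\item For $s>1$ in case (2), Widom's results give $\mu_n\approx\exp(-c n^{1/s})$.
\item For the Gaussian-like case (3) and the bandlimited case (4), the eigenvalues are super-exponential, $\mu_n\lesssim (C/n)^{\bar\kappa n}$. In case (4) these are the prolate eigenvalues, with explicit bounds from \cite{Osipov2012ExplicitUpper}. In case (3) I would dominate the multiplier by a bandlimited piece plus a small tail and optimize the band.
\end{itemize}

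\textbf{Counting tensor products.} I would then count the tensor-product eigenvalues. For geometric decay, $\#\{\prod\mu_{j_i}\ge\epsilon\}$ is the number of lattice points in a simplex, which is $\asymp(\log 1/\epsilon)^d$; inverting with $M/2$ indices gives a threshold of order $\exp(-cM^{1/(sd)})$. In the super-exponential cases I would bound the tail by indices with $\max_i j_i\ge (M/2)^{1/d}$. This produces the factor $\Upsilon[(M/2)^{1/d}/e(\cdots)]^{-\bar\kappa d}$, where the $\gamma^{-1}$ enters through summing $\lambda_j^\gamma$ and the $3/2$ absorbs the polynomial counting factors. Raising the result to the power $1-\theta$ gives the stated $\lambda$.

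\textbf{Main obstacle.} I expect the hardest part to be turning Widom-type one-dimensional asymptotics into non-asymptotic upper bounds with explicit constants. This matters most in case (3), where there is no off-the-shelf result and one must split the multiplier. It also matters in the tensor counting, where the constants must be tracked sharply enough to obtain the stated ranges of $\kappa_s$ and $\bar\kappa$. For this step I would first try the bandlimited-domination route, taking Osipov's prolate bounds as the basic input.
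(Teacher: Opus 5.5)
Your head/tail reduction is sound, and so are case (1) and case (2), where you count lattice points in a simplex. The tensor counting you propose for cases (3) and (4), however, does not reach the stated $\lambda$ in general.

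Suppose the tail is $\{j:\max_i j_i\ge J\}$ with $J:=(M/2)^{1/d}$. The head then costs $J^d=M/2$. But the tail contains $(J,1,\dots,1)$, whose eigenvalue $\propto\mu_J\mu_1^{d-1}$ is, by Widom's asymptotics, $e^{-(2-2/s+o(1))J\ln J}$ in (3) and $e^{-(2+o(1))J\ln J}$ in (4). Your tail term $\lambda^{-\gamma/(1-\theta)}\sum_{\mathrm{tail}}\lambda_j(\Sigma)^\gamma$ is therefore at most $M/2$ only if
$-\ln\lambda\le(1-\theta)(2-2/s+o(1))J\ln J\approx(1-\theta)(2-2/s)(M/2)^{1/d}\ln M/d$,
with $2$ in place of $2-2/s$ in (4).

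The stated choice is $\lambda=c\,\Upsilon(A)^{-(1-\theta)\bar\kappa d}$ with $A=J/E$ and $E:=e(\bar\kappa^{-1}\gamma^{-1}+3/2)$. It has $-\ln\lambda=(1-\theta)\bar\kappa\,dA\ln A+O(1)\approx(1-\theta)\bar\kappa(M/2)^{1/d}\ln M/E$. So the cube route works only if $d\bar\kappa\le E(2-2/s)$, respectively $d\bar\kappa\le 2E$. For $\gamma=1$ and $\bar\kappa$ near its upper limit (with $s$ large in (3)), this forces $d<2e$, so the argument fails for $d\ge6$.

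The cube overcounts the level sets of $\sum_p j_p\ln j_p$ by roughly $d!\approx(d/e)^d$, which costs a factor of order $d/e$ in the exponent. In particular, cube truncation does not ``produce'' $\Upsilon[(M/2)^{1/d}/e(\cdots)]^{-\bar\kappa d}$. The $e$ inside $\Upsilon$ and the factor $d$ in its exponent are the signature of simplex counting.

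The fix is to use a simplex here too, as you already do in case (2):
\begin{itemize}
\item Convexity of $x\ln x$ (Jensen) gives $\sum_p j_p\ln j_p\ge|j|_1\ln(|j|_1/d)$.
\item Take the head to be $\{|j|_1\le\tilde M\}$, which has at most $\binom{\lfloor\tilde M\rfloor}{d}\le(e\tilde M/d)^d$ points.
\item Put the threshold at $\lambda^{\gamma/(1-\theta)}\propto(\tilde M/d)^{-\bar\kappa\gamma\tilde M}$.
\item Sum the tail over the shells $|j|_1=t$, each with $\binom{t-1}{d-1}$ points. After the threshold is factored out, the shell sum is controlled by a geometric series and contributes a factor $(\bar\kappa^{-1}\gamma^{-1}+3/2)^d$.
\end{itemize}
This gives $d(\lambda)\le2(\bar\kappa^{-1}\gamma^{-1}+3/2)^d(e\tilde M/d)^d$. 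Setting the right-hand side equal to $M$ yields exactly the stated $\lambda$ in (3) and (4); this is the paper's route.

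Your other departures from the paper are harmless. In (4), Osipov's explicit prolate bound can replace Widom's Corollary~2. In (3), the bandlimited-plus-remainder domination works: for the one-dimensional operators, $T\preceq T_{B_n}+e^{-cB_n^s}I$ and min--max give $\lambda_n(T)\le\mu_n(B_nR)+e^{-cB_n^s}$. Choosing $B_n^s\asymp n\ln n$ then recovers every rate $\bar\kappa<2-2/s$ non-asymptotically. That detour is optional, though: contrary to your remark, there is an off-the-shelf result. Widom's Corollary~1, which the paper cites, already gives $\ln\lambda_j/(j\ln j)\to-(2-2/s)$.
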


\paragraph{Proofs of the spectral estimates.}
If $\{\lambda_j\}_{j\ge1}$ are the positive eigenvalues of $\Sigma$, then
\[
d(\lambda;\theta,\gamma) =\operatorname{tr}\left[\left(\Sigma^{1-\theta}+\lambda I\right)^{-\frac{\gamma}{1-\theta}}\Sigma^\gamma\right]
=\sum_{j=1}^{\infty}\frac{\lambda_j^\gamma}{\left(\lambda_j^{1-\theta}+\lambda\right)^{\frac{\gamma}{1-\theta}}}.
\]
The eigenvalue decay estimates below are based on \cite{widom1963asymptotic1,widom1964asymptotic2}.

\begin{proof}[Proof of Lemma \ref{Lemma: monotonicity of certain integral operators}]
For every $f\in L^{2}(\mb{R}^{d})$, a direct calculation gives
\[
\begin{aligned}
\langle T_jf,f\rangle_{L^{2}(\mb{R}^{d})}
&=\int_{\mb{R}^{d}}\int_{\mb{R}^{d}}1_{\Omega}(x)k_j(x-y)1_{\Omega}(y)f(y)\overline{f(x)}\mr{d}y\mr{d}x\\
&=(2\pi)^{-d}\int_{\mb{R}^{d}}K_j(\xi)
\left|\int_{\mb{R}^{d}}1_{\Omega}(x)f(x)e^{-\imath \xi^{\top}x}\mr{d}x\right|^{2}\mr{d}\xi.
\end{aligned}
\]
Hence $0\le\langle T_2f,f\rangle_{L^{2}(\mb{R}^{d})}\le\langle T_1f,f\rangle_{L^{2}(\mb{R}^{d})}$ for every $f\in L^{2}(\mb{R}^{d})$, which proves the claim and completes the proof.
\end{proof}

\begin{proof}[Proof of Proposition \ref{Proposition: upper bound for d(lambda;theta,gamma)}]
The inequality $(a^{t}+b^{t})/2\le(a+b)^{t}\le a^{t}+b^{t}$ for $a,b>0$ and $0\le t\le1$ gives
\begin{equation} \label{ineq: d(lambda;theta,gamma) bounded by eigenvalues of Sigma}
\sum_{j=1}^{\infty}\frac{\lambda_{j}^{\gamma}}{\lambda_{j}^{\gamma} + \lambda^{\frac{\gamma}{1-\theta}}}
\le d(\lambda;\theta,\gamma)
\le \sum_{j=1}^{\infty}\frac{2\lambda_{j}^{\gamma}}{\lambda_{j}^{\gamma} + \lambda^{\frac{\gamma}{1-\theta}}} .
\end{equation}
It remains to estimate the eigenvalues and the resulting series in the four cases of the proposition.

\emph{Case (1): polynomial decay.} Let $\Sigma_s$ be the integral operator corresponding to the Fourier multiplier $(1+|w|^2)^{-s}$. The upper comparison assumption on $\mr d\tau$ and Lemma~\ref{Lemma: monotonicity of certain integral operators} give
\[
0\preceq\Sigma\preceq C_\tau\Sigma_s, \qquad \lambda_j(\Sigma)\le C_\tau\lambda_j(\Sigma_s).
\]
By \cite[Theorem II]{widom1963asymptotic1}, the nonincreasing function equimeasurable with $1_{\Omega}(x)(1+|w|^{2})^{-s}$ on $\mb{R}^{d}\times\mb{R}^{d}$ is
\begin{equation*}
\phi_{0}(t) = \left[\left(\frac{t}{|\Omega||B_{1}(0)|}\right)^{2/d} +1\right]^{-s} ,
\end{equation*}
and hence $\lambda_j(\Sigma_s)\lesssim_{d,s,\Omega}j^{-2s/d}$ for all sufficiently large $j$. Enlarging the constant to cover the finitely many remaining eigenvalues yields
\[
\lambda_j(\Sigma)\lesssim_{d,s,\Omega,C_\tau}j^{-2s/d} \qquad (j\in\mb N_+).
\]
Substituting this estimate into \eqref{ineq: d(lambda;theta,gamma) bounded by eigenvalues of Sigma} and applying Lemma \ref{Lemma: control sum series, Sobolev} yield
\[
d(\lambda) \lesssim_{d,s,\theta,\gamma,\Omega,C_\tau} \lambda^{-\frac{d}{2s(1-\theta)}},
\]
which implies the desired bound after choosing $c_1$ and $M_0$ sufficiently large in terms of the parameters specified in the proposition.

\emph{Case (2): subexponential and exponential decay.} For any $R > 0$, the eigenvalues of the operator in \eqref{eq: form of integral operators in Widom1963} are unchanged if $1_{\Omega}(\cdot)$ is replaced by $1_{\Omega}(R\cdot)$ and $k(\cdot)$ by $R^{d} k(R\cdot)$. We only need to consider the operator in \eqref{eq: form of integral operators in Widom1963} with $\Omega = (-1,1)^{d}$ and kernel
\begin{equation*}
\tilde{k}(x) = C_{\kappa, d, s}^{-1} \int_{\mb{R}^{d}}  e^{-2\kappa R^{-1/s} |w|^{1/s}+\imath w^{\top}x}  \mr{d} w .
\end{equation*}
It follows from H\"older's inequality that
\[
d^{1/(2s)-1}\sum_{p=1}^{d}|w_{p}|^{1/s} \le |w|^{1/s}.
\]
To control $\lambda_{j}(\Sigma)$, Lemma \ref{Lemma: monotonicity of certain integral operators} reduces the problem to the operator $T_{\operatorname{Gev}}$ of the form \eqref{eq: form of integral operators in Widom1963} with $\Omega = (-1,1)^{d}$ and kernel
\begin{equation*}
\begin{aligned}
k_{\operatorname{Gev}}(x)
& = C_{\kappa, d, s}^{-1}   \prod_{p=1}^{d} \int_{\mb{R}}   \exp\left(-\kappa R^{-1/s} d^{1/(2s)-1}|w_{p}|^{1/s} + \imath w_{p}x_{p}\right)  \mr{d} w_{p}.
\end{aligned}
\end{equation*}
Denote by $T_{\operatorname{Gev}}^{(p)}$ the one-dimensional integral operators of the form \eqref{eq: form of integral operators in Widom1963} on $(-1,1)$ with kernel
\[
k_{\operatorname{Gev}}^{(p)}(x_{p}) := \int_{\mb{R}} \exp\left(-\kappa R^{-1/s} d^{1/(2s)-1}|w_{p}|^{1/s} + \imath w_{p}x_{p}\right) \mr{d} w_{p}.
\]
Then, the eigenvalues of $T_{\operatorname{Gev}}$ are \cite{Berlinet2004RKHS}
\[
C_{\kappa, d, s}^{-1} \prod_{p=1}^{d} \lambda_{j_{p}}(T_{\operatorname{Gev}}^{(p)}), \qquad j_{1},j_{2},\ldots,j_{d}\in \mb{N}_{+}.
\]
For $s>1$, \cite[Theorem I]{widom1964asymptotic2} gives
\[
\lim_{j\to \infty} \frac{\lambda_{j}(T_{\operatorname{Gev}}^{(p)})}{ 2\pi \exp\left(-\kappa R^{-1/s} d^{1/(2s)-1}[\pi j/2 +o(j)]^{1/s} \right) } = 1 .
\]
Thus, for
\[
\tilde{\kappa} < \frac{\kappa}{d}\left(\frac{\pi\sqrt{d}}{2R}\right)^{1/s},
\]
there exists a constant $C_{\operatorname{Gev}}$ such that $\lambda_{j}(T_{\operatorname{Gev}}^{(p)}) \le C_{\operatorname{Gev}}^{1/d} e^{-\tilde{\kappa}j^{1/s}}$ for all $j$. For $s=1$, \cite[Theorem II]{widom1964asymptotic2} gives
\[
\lim_{j\to \infty} \frac{\ln\lambda_{j}(T_{\operatorname{Gev}}^{(p)})}{j \pi} = -\frac{\boldsymbol{K}(\operatorname{sech} \pi R
\sqrt{d}\kappa^{-1})}{\boldsymbol{K}(\tanh \pi R \sqrt{d}\kappa^{-1})}.
\]
Hence, for
\[
\tilde{\kappa} < \pi \frac{\boldsymbol{K}(\operatorname{sech} \pi R \sqrt{d}\kappa^{-1})}{\boldsymbol{K}(\tanh \pi R \sqrt{d}\kappa^{-1})},
\]
there exists a constant $C_{\operatorname{Gev}}$ such that $\lambda_{j}(T_{\operatorname{Gev}}^{(p)}) \le C_{\operatorname{Gev}}^{1/d} e^{-\tilde{\kappa}j}$ for all $j$. By \eqref{ineq: d(lambda;theta,gamma) bounded by eigenvalues of Sigma},
\begin{equation*}
\begin{aligned}
d(\lambda;\theta,\gamma) & \le \sum_{j \in \mb{N}_{+}^{d}}\frac{2C_{\kappa, d, s}^{-\gamma} C_{\operatorname{Gev}}^{\gamma}  e^{-\tilde{\kappa}\gamma\sum_{p=1}^{d} j_{p}^{1/s}}}{C_{\kappa, d, s}^{-\gamma} C_{\operatorname{Gev}}^{\gamma}  e^{-\tilde{\kappa}\gamma\sum_{p=1}^{d} j_{p}^{1/s}} + \lambda^{\frac{\gamma}{1-\theta}}} \\
& \le \sum_{j \in \mb{N}_{+}^{d}}\frac{2  e^{-\tilde{\kappa}\gamma|j|_{1}^{1/s}}}{e^{-\tilde{\kappa}\gamma|j|_{1}^{1/s}} + C_{\kappa, d, s}^{\gamma} C_{\operatorname{Gev}}^{-\gamma}\lambda^{\frac{\gamma}{1-\theta}}} ,
\end{aligned}
\end{equation*}
where we used $\sum_{p=1}^{d} j_{p}^{1/s} \ge |j|_{1}^{1/s}$. Lemma \ref{Lemma: control sum series, Gevrey classes s>1 and Analytic class2} yields
\[
d(\lambda;\theta,\gamma) \le 2(2+1_{\{d=1\}}) \left[sd \tilde{\kappa}^{-1}\ln(C_{\kappa, d, s}^{-1} C_{\operatorname{Gev}}\lambda^{-\frac{1}{1-\theta}})
\right]^{sd} .
\]
Consequently, $d(\lambda)\le M$ is guaranteed by
\[
\lambda = C_{\kappa, d, s}^{\theta-1} C_{\operatorname{Gev}}^{1-\theta} \exp\left( - \frac{(1-\theta) \tilde{\kappa} M^{1/(sd)}
}{\left[2(2+1_{\{d=1\}})\right]^{1/(sd)}sd} \right),
\]
which implies the desired bound in (2).

\emph{Case (3): super-exponential decay.} As in Case (2), scaling reduces the problem to \eqref{eq: form of integral operators in Widom1963} on $(-1,1)^d$ with kernel
\[
\tilde{k}(x) = C_{\kappa, d, s}^{-1} \int_{\mb{R}^{d}} e^{-2\kappa R^{-s} |w|^{s}+\imath w^{\top}x} \mr{d} w .
\]
H\"older's inequality gives
\[
\min(1, d^{s/2-1})\sum_{p=1}^{d}|w_{p}|^{s} \le |w|^{s}.
\]
Lemma \ref{Lemma: monotonicity of certain integral operators} reduces the estimate to $T_{\operatorname{Sup}}$ with kernel
\[
k_{\operatorname{Sup}}(x) = C_{\kappa, d, s}^{-1} \prod_{p=1}^{d} \int_{\mb{R}} \exp\left(-\kappa \min(1, d^{s/2-1}) R^{-s}|w_{p}|^{s} + i w_{p}x_{p}\right)
\mr{d} w_{p}.
\]
Let $T_{\operatorname{Sup}}^{(p)}$ be the corresponding one-dimensional operator. The eigenvalues of $T_{\operatorname{Sup}}$ are \cite{Berlinet2004RKHS}
\[
C_{\kappa, d, s}^{-1} \prod_{p=1}^{d} \lambda_{j_{p}}(T_{\operatorname{Sup}}^{(p)}), \qquad j_{1},j_{2},\ldots,j_{d}\in\mb{N}_{+}.
\]
By \cite[Corollary 1]{widom1964asymptotic2},
\[
\lim_{j\to \infty} \frac{\ln\lambda_{j}(T_{\operatorname{Sup}}^{(p)})}{j \ln j} = - (2-2 / s).
\]
Thus, for any $0<\bar{\kappa}(s)<2-2/s$, there exists a constant $C_{\operatorname{Sup}}=C_{\operatorname{Sup}}(\kappa,\bar\kappa,d,s,R)$ such that
\[
\lambda_{j}(T_{\operatorname{Sup}}^{(p)}) \le C_{\operatorname{Sup}}^{1/d} e^{-\bar{\kappa} j \ln j}
\]
for all $j\in\mb{N}_{+}$. It follows from \eqref{ineq: d(lambda;theta,gamma) bounded by eigenvalues of Sigma} that
\begin{equation*}
\begin{aligned}
d(\lambda;\theta,\gamma) & \le \sum_{j\in \mb{N}_{+}^{d}} \frac{2C_{\kappa, d, s}^{-\gamma} C_{\operatorname{Sup}}^{\gamma}  e^{-\bar{\kappa}\gamma \sum_{p=1}^{d} j_{p} \ln j_{p}}}{C_{\kappa, d, s}^{-\gamma} C_{\operatorname{Sup}}^{\gamma}  e^{-\bar{\kappa}\gamma \sum_{p=1}^{d} j_{p} \ln j_{p}} + \lambda^{\frac{\gamma}{1-\theta}}} \\
& \le \sum_{j\in \mb{N}_{+}^{d}} \frac{2 e^{-\bar{\kappa}\gamma \sum_{p=1}^{d} j_{p} \ln j_{p}}}{e^{-\bar{\kappa}\gamma \sum_{p=1}^{d} j_{p} \ln j_{p}} + C_{\kappa, d, s}^{\gamma} C_{\operatorname{Sup}}^{-\gamma} \lambda^{\frac{\gamma}{1-\theta}}} .
\end{aligned}
\end{equation*}
By Lemma \ref{Lemma: control sum series, Fourier transform decays superexponentially2}, let $\tilde{M} \ge d$ satisfy
\[
C_{\kappa, d, s}^{\gamma} C_{\operatorname{Sup}}^{-\gamma} \lambda^{\frac{\gamma}{1-\theta}} = \left(\tilde{M}/d\right)^{-\bar{\kappa} \gamma\tilde{M}}.
\]
Then
\[
d(\lambda) \le 2 (\bar{\kappa}^{-1}\gamma^{-1}+3/2)^{d}\left(e\tilde{M}/d\right)^{d}.
\]
Consequently,
\begin{equation} \label{eq: the choice of lambda in Fourier transform decays superexponentially, interpolation}
\lambda = C_{\kappa, d, s}^{\theta-1} C_{\operatorname{Sup}}^{1-\theta}  \Upsilon\left[e^{-1} (\bar{\kappa}^{-1}\gamma^{-1}+3/2)^{-1}(M/2)^{1/d}\right]^{-(1-\theta)\bar{\kappa} d}
\end{equation}
guarantees $d(\lambda) \le M$.

\emph{Case (4): bandlimited features.} Scaling reduces the problem to \eqref{eq: form of integral operators in Widom1963} on $(-1,1)^d$ with kernel
\[
\tilde{k}_{\eta}(x) = (2S)^{-d} \int_{(-SR,SR)^{d}} e^{\imath w^{\top}x} \mr{d} w .
\]
The comparison operator $T_{\operatorname{Ban}}$ has the same kernel. As in \cite{widom1964asymptotic2}, let $\lambda_j(\gamma)$ be the eigenvalues of the one-dimensional operators on $(-1,1)$ with kernel
\[
(2\pi)^{-1} \int_{(-\gamma,\gamma)} e^{i w x}\mr{d}w = \frac{\sin(\gamma x)}{\pi x}.
\]
The eigenvalues of $T_{\operatorname{Ban}}$ are \cite{Berlinet2004RKHS}
\[
\pi^{d} S^{-d} \prod_{p=1}^{d} \lambda_{j_{p}}(SR), \qquad j_{1},j_{2},\ldots,j_{d}\in\mb{N}_{+}.
\]
For fixed $SR$, \cite[Corollary 2]{widom1964asymptotic2} gives
\[
\lim_{j\to \infty} \frac{\ln\lambda_{j}(SR)}{j \ln j} = - 2.
\]
Thus, for every $0<\bar{\kappa}<2$, there exists $C_{\operatorname{Ban}}$ such that
\[
\lambda_{j}(SR) \le C_{\operatorname{Ban}}^{1/d} e^{-\bar{\kappa} j \ln j}
\]
for all $j\in\mb{N}_{+}$. By \eqref{ineq: d(lambda;theta,gamma) bounded by eigenvalues of Sigma},
\begin{equation*}
\begin{aligned}
d(\lambda;\theta,\gamma) & \le \sum_{j\in \mb{N}_{+}^{d}} \frac{2(\pi/S)^{d\gamma}  C_{\operatorname{Ban}}^{\gamma}  e^{-\bar{\kappa}\gamma \sum_{p=1}^{d} j_{p} \ln j_{p}}}{(\pi/S)^{d\gamma}  C_{\operatorname{Ban}}^{\gamma}  e^{-\bar{\kappa}\gamma \sum_{p=1}^{d} j_{p} \ln j_{p}} + \lambda^{\frac{\gamma}{1-\theta}}}  \\
& \le \sum_{j\in \mb{N}_{+}^{d}} \frac{2 e^{-\bar{\kappa}\gamma \sum_{p=1}^{d} j_{p} \ln j_{p}}}{ e^{-\bar{\kappa}\gamma \sum_{p=1}^{d} j_{p} \ln j_{p}} + (S/\pi)^{d\gamma}  C_{\operatorname{Ban}}^{-\gamma} \lambda^{\frac{\gamma}{1-\theta}}} .
\end{aligned}
\end{equation*}
Let $\tilde{M}\ge d$ satisfy
\[
(S/\pi)^{d\gamma} C_{\operatorname{Ban}}^{-\gamma} \lambda^{\frac{\gamma}{1-\theta}} =\left(\tilde{M}/d\right)^{-\bar{\kappa}\gamma\tilde{M}}.
\]
Lemma \ref{Lemma: control sum series, Fourier transform decays superexponentially2} gives
\[
d(\lambda)\le 2(\bar{\kappa}^{-1}\gamma^{-1}+3/2)^d(e\tilde M/d)^d.
\]
Therefore,
\[
\lambda = (\pi/S)^{(1-\theta)d} C_{\operatorname{Ban}}^{1-\theta} \Upsilon\left[e^{-1}
(\bar{\kappa}^{-1}\gamma^{-1}+3/2)^{-1}(M/2)^{1/d}\right]^{-(1-\theta)\bar{\kappa} d}
\]
guarantees $d(\lambda)\le M$, which completes the proof.
\end{proof}

The effective-dimension bounds determine admissible penalty scales. To apply the abstract estimate, it remains to place the target classes in the source interpolation spaces and to embed the error interpolation spaces into the norms appearing in Theorem~\ref{Theorem: interpolation improved convergence rate of RFM}.

\subsection{Fourier descriptions and interpolation-space embeddings}
\label{Subsection: Fourier descriptions and interpolation embeddings}

This subsection provides the two deterministic space identifications needed by the uniform approximation theorem. We first recall the whole-space Fourier description of translation-invariant RKHSs and the corresponding weighted $L^2$ interpolation identity. These facts yield the embedding of the weighted Fourier target classes into the kernel interpolation scale. We then identify or embed that scale into the Sobolev norms used to measure the approximation error. The subsection closes by connecting the weighted Fourier assumptions to the spatial Gevrey and analytic conditions in Assumption~\ref{Assumption: regularities}.

\paragraph{Whole-space Fourier descriptions.}

Let $w$ be a positive measurable weight function. Define weighted space $L^{2}_{w}(\mb{R}^{d})$ as the space of all measurable functions for which $\|f\|_{L^{2}_{w}(\mb{R}^{d})} = \|w f\|_{L^2(\mb{R}^{d})}<\infty$. The following proposition shows that the RKHS associated with certain translation invariant kernels on $\mb{R}^{d}$ may be characterized as weighted $L^2$ spaces in Fourier space.
\begin{proposition}[{\cite[Theorem 10.12]{Wendland2005Scattered}}] \label{Proposition: Fourier characterization for RKHS a.w. translation invariant kernels}
Suppose that $k \in C(\mb{R}^{d}) \cap L^{1}(\mb{R}^{d})$ is a real-valued positive definite function. Define $$\mc{G}:=\left\{f \in L^{2}(\mb{R}^{d}) \cap
C(\mb{R}^{d}): \wh{f} / \sqrt{\wh{k}} \in L^{2}(\mb{R}^{d})\right\}$$ and equip this space with the bilinear form $$\la f, g\ra_{\mc{G}}:=(2 \pi)^{-d /
2}\la\wh{f} / \sqrt{\wh{k}}, \wh{g} / \sqrt{\wh{k}}\ra_{L^{2}(\mb{R}^{d})}=(2 \pi)^{-d / 2} \int_{\mb{R}^{d}} \frac{\wh{f}(w) \overline{\wh{g}(w)}}{\wh{k}(w)} d
w$$. Then $\mc{G}$ is a real Hilbert space with inner product $\la\cdot, \cdot\ra_{\mc{G}}$ and reproducing kernel $k(\cdot-\cdot)$. Hence $\mc{G}$ is the RKHS
associated with $k$ on $\mb{R}^{d} $, i.e., $\mc{G} = \mc{H}_{k}(\mb{R}^{d})$, and both inner products coincide. In particular, every $f \in \mc{H}_{k}(\mb{R}^{d})$ can be recovered from its Fourier transform $\wh{f} \in L^{1}(\mb{R}^{d}) \cap L^{2}(\mb{R}^{d})$.
\end{proposition}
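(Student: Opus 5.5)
The statement to prove is Proposition~\ref{Proposition: Fourier characterization for RKHS a.w. translation invariant kernels}. It is the standard characterization of translation-invariant RKHSs. The plan has four steps: check that $\mc G$ is a Hilbert space of continuous functions, check the reproducing property for $k(\cdot-y)$, and invoke uniqueness of the RKHS (Moore--Aronszajn).

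\emph{Step 1: preliminaries on $\wh k$.} Since $k\in C\cap L^1$ is positive definite, Bochner's theorem gives $\wh k\ge0$, continuous and bounded. I will first show $\wh k\in L^1$. The cleanest route is to test positive definiteness against a Gaussian approximate identity, which gives $\int\wh k\,e^{-\varepsilon|w|^2}\mr dw\lesssim k(0)$ uniformly in $\varepsilon$. Fourier inversion then holds pointwise: $k(x)=(2\pi)^{-d/2}\int\wh k(w)e^{\mathrm i w\cdot x}\mr dw$.

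\emph{Step 2: properties of $f\in\mc G$.} For $f\in\mc G$, Cauchy--Schwarz gives
\[
\int|\wh f|\le\|\wh f/\sqrt{\wh k}\|_{L^2}\,\|\sqrt{\wh k}\|_{L^2}<\infty .
\]
So $\wh f\in L^1\cap L^2$, and by continuity $f$ equals the inverse transform of $\wh f$ everywhere. This proves the final recovery claim. The form $\la\cdot,\cdot\ra_{\mc G}$ is positive definite, because $\|f\|_{\mc G}=0$ forces $\wh f=0$ a.e. on $\{\wh k>0\}$, and $\wh f$ vanishes a.e. where $\wh k=0$ by the integrability requirement.

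\emph{Step 3: completeness.} This is the step where I expect the main care to be needed. Given a Cauchy sequence $f_n$ in $\mc G$, the functions $g_n:=\wh f_n/\sqrt{\wh k}$ converge in $L^2$ to some $g$. Set $\wh f:=g\sqrt{\wh k}$. Then $\wh f_n\to\wh f$ in $L^1$ by the Cauchy--Schwarz bound above, and in $L^2$ because $\sqrt{\wh k}$ is bounded. Consequently $f_n$ converges uniformly, and also in $L^2$, to a continuous $f$ whose transform is $\wh f$. Hence $f\in\mc G$ and $f_n\to f$ in $\mc G$.

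\emph{Step 4: reproducing property.} The kernel sections lie in $\mc G$: $k(\cdot-y)$ is continuous and in $L^2$, since $k$ is bounded and integrable. Its transform is $\wh k(w)e^{-\mathrm i w\cdot y}$, so the ratio has modulus $\sqrt{\wh k}\in L^2$. Then
\[
\la f,k(\cdot-y)\ra_{\mc G}=(2\pi)^{-d/2}\int\wh f(w)e^{\mathrm i w\cdot y}\mr dw=f(y)
\]
by Step 2. Realness of $\mc G$ and its inner product follows from real-valued $k$ and the symmetry $\wh k(-w)=\wh k(w)$, restricting to real $f$. Uniqueness of the RKHS then gives $\mc G=\mc H_k(\mb R^d)$ with equal inner products. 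The only genuinely delicate points are the integrability of $\wh k$ in Step~1 and the limit identification in Step~3; both rest on the uniform $L^1$ control of $\wh f$ by the $\mc G$-norm.
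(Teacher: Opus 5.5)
Your proposal is correct and follows essentially the standard proof in the cited source (Wendland, Theorem~10.12), which the paper invokes without reproducing. The ingredients match that proof step by step: integrability of $\widehat{k}$ via a Gaussian approximate identity, Cauchy--Schwarz control $\|\widehat f\|_{L^1}\le\|\widehat f/\sqrt{\widehat k}\|_{L^2}\|\widehat k\|_{L^1}^{1/2}$, completeness by lifting the $L^2$-limit of $\widehat f_n/\sqrt{\widehat k}$, the reproducing identity via Fourier inversion, and uniqueness of the RKHS. The two small points left implicit — that the uniform limit in Step~3 is real-valued, and that the $L^2$-limit $g$ vanishes a.e.\ on $\{\widehat k=0\}$ so that $\widehat f/\sqrt{\widehat k}=g$ — are both immediate.
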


\begin{proposition}[Bandlimited RKHS characterization]
\label{Proposition: bandlimited RKHS characterization}
Let $Q_S=(-S,S)^d$, and let $k_S$ be defined by
\[
\widehat{k_S}=(2\pi)^{-d/2}b_S1_{Q_S}, \qquad S,b_S>0.
\]
Then
\[
\mc H_{k_S}(\mb R^d) =PW_S:=\{F\in L^2(\mb R^d): \operatorname{supp}\widehat F\subset\overline{Q_S}\}, \qquad \|F\|_{\mc H_{k_S}(\mb R^d)}
=b_S^{-1/2}\|F\|_{L^2(\mb R^d)}.
\]
\end{proposition}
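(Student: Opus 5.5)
The plan is to reduce the statement to Proposition~\ref{Proposition: Fourier characterization for RKHS a.w. translation invariant kernels}, or, since $\widehat{k_S}$ vanishes outside $Q_S$ and the quotient $\wh f/\sqrt{\wh k}$ there is not defined, to reprove that characterization directly for this degenerate multiplier. First we identify $k_S$ explicitly. Since $\widehat{k_S}=(2\pi)^{-d/2}b_S1_{Q_S}\in L^1\cap L^2$, Fourier inversion gives
\[
k_S(x)=(2\pi)^{-d}b_S\int_{Q_S}e^{\mathrm i\xi^\top x}\mr d\xi,
\]
a bounded, continuous, real, even positive definite function. This is a product of sinc kernels, but it is not in $L^1$ for $d\ge1$, so Proposition~\ref{Proposition: Fourier characterization for RKHS a.w. translation invariant kernels} cannot be quoted verbatim; this is why the direct argument is needed.

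\textbf{Step 1: $PW_S$ is a reproducing kernel Hilbert space with kernel $k_S$.} Equip $PW_S$ with $\langle F,G\rangle:=b_S^{-1}\langle F,G\rangle_{L^2}$. It is a closed subspace of $L^2$, since it is the preimage under the unitary Fourier transform of the closed set $L^2(\overline{Q_S})$, and it is therefore complete. Each $F\in PW_S$ has $\widehat F\in L^2(\overline{Q_S})\subset L^1$, so $F$ has a continuous representative given by inversion, and point evaluation is bounded by $(2\pi)^{-d/2}(2S)^{d/2}\|F\|_{L^2}$.

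\textbf{Step 2: membership and the reproducing identity.} We have $k_S(\cdot-x)\in PW_S$, because its Fourier transform is $(2\pi)^{-d/2}b_S1_{Q_S}(\xi)e^{-\mathrm i\xi^\top x}$. Plancherel then gives
\[
b_S^{-1}\langle F,k_S(\cdot-x)\rangle_{L^2}=b_S^{-1}\int\widehat F(\xi)(2\pi)^{-d/2}b_S e^{\mathrm i\xi^\top x}\mr d\xi=F(x),
\]
using the paper's normalization $\hat f=(2\pi)^{-d/2}\int f e^{-\mathrm i x\cdot\xi}$. One must track the conjugation and the $(2\pi)$ constants carefully.

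\textbf{Step 3: conclusion.} By the uniqueness of the RKHS for a given kernel (Moore--Aronszajn, as recalled in Subsection~\ref{subsection: RKHS, interpolation spaces and feature representation}), $\mc H_{k_S}(\mb R^d)=PW_S$ with the stated norm. The boundary of $Q_S$ has measure zero, so $\overline{Q_S}$ versus $Q_S$ is immaterial.

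The only delicate point is the constant bookkeeping in Step 2, namely confirming that the factor $(2\pi)^{-d/2}$ in $\widehat{k_S}$ exactly cancels the inversion constant, so that the norm is $b_S^{-1/2}\|F\|_{L^2}$ with no stray powers of $2\pi$.
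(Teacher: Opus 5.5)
Your proposal is correct and follows the paper's proof: equip $PW_S$ with $b_S^{-1}\langle\cdot,\cdot\rangle_{L^2}$, verify the reproducing identity $F(x)=(2\pi)^{-d/2}\int_{Q_S}\widehat F(\xi)e^{\mathrm i\xi^\top x}\,\mathrm d\xi$ via Plancherel and Fourier inversion, and conclude by uniqueness of the RKHS. Your constants check out. You also spell out details the paper leaves implicit: completeness of $PW_S$, that $k_S(\cdot-x)\in PW_S$, bounded point evaluation, and that $k_S\notin L^1$, so Proposition~\ref{Proposition: Fourier characterization for RKHS a.w. translation invariant kernels} cannot be quoted directly.
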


\begin{proof}
Equip $PW_S$ with the inner product $\langle F,G\rangle_{PW_S}=b_S^{-1}\langle F,G\rangle_{L^2}$. Since $\widehat F\in L^1(Q_S)$, Fourier inversion and Plancherel's identity give, for every $F\in PW_S$,
\[
\langle F,k_S(\cdot-x)\rangle_{PW_S} =(2\pi)^{-d/2}\int_{Q_S}\widehat F(w)e^{\imath w^\top x}\mr dw =F(x).
\]
Thus $PW_S$ is an RKHS with reproducing kernel $k_S(\cdot-\cdot)$. The uniqueness of the RKHS proves the claim.
\end{proof}

We recall a result from \cite{Peetre1964interpolation} stating the interpolation relationship between weighted $L^2$ spaces.
\begin{proposition}
Let $w_{1}$, $w_{2}$ be positive measurable functions. Then, for $0<\theta<1$,
\[
(L^{2}_{w_{1}}(\mb{R}^{d}),L^{2}_{w_{2}}(\mb{R}^{d}))_{\theta,2} =L^{2}_{w}(\mb{R}^{d}), \qquad w=w_{1}^{1-\theta}w_{2}^{\theta}.
\]
\end{proposition}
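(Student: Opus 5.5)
We would prove this by computing the $K$-functional of the couple $(L^{2}_{w_1},L^{2}_{w_2})$ explicitly and pointwise, and then integrating it against the weight $t^{-2\theta}\,\mr dt/t$ that defines the $(\cdot,\cdot)_{\theta,2}$ norm. Since $K(t,f)^2\le K_2(t,f)^2\le 2K(t,f)^2$, it is harmless to work with the quadratic functional
\[
K_2(t,f)^2:=\inf_{f=f_1+f_2}\left(\|w_1f_1\|_{L^2}^2+t^2\|w_2f_2\|_{L^2}^2\right).
\]
The resulting interpolation norm is then equivalent, with constants depending only on $\theta$, to the one defined through $K$.

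\textbf{Step 1: pointwise minimization.} For fixed $t>0$ and $x$, minimize the scalar quadratic $a\mapsto w_1(x)^2|a|^2+t^2w_2(x)^2|f(x)-a|^2$. The minimizer is
\[
a_t(x)=\frac{t^2w_2^2}{w_1^2+t^2w_2^2}\,f(x),
\]
and the minimal value is $\dfrac{t^2w_1^2w_2^2}{w_1^2+t^2w_2^2}|f(x)|^2$.

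Since $a_t$ is a measurable multiple of $f$, the splitting $f_1=a_t$, $f_2=f-a_t$ is admissible, so the infimum defining $K_2$ is at most the integral of these pointwise values. Conversely, any admissible splitting has an integrand pointwise at least the pointwise minimum. Hence
\[
K_2(t,f)^2=\int_{\mb R^d}\frac{t^2w_1^2w_2^2}{w_1^2+t^2w_2^2}|f|^2\,\mr dx,
\]
valid for every measurable $f$, including the value $+\infty$. In particular, $f$ lies in the sum space $L^2_{w_1}+L^2_{w_2}$ exactly when this quantity is finite for some (equivalently every) $t$. Every $f\in L^2_w$ satisfies this, because the $\theta$-norm computed below is finite only if $K_2(t,f)<\infty$ for a.e. $t$.

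\textbf{Step 2: integration in $t$.} By Tonelli's theorem,
\[
\|f\|_{\theta,2}^2=\int_0^\infty t^{-2\theta}K_2(t,f)^2\,\frac{\mr dt}{t}=\int_{\mb R^d}|f(x)|^2\,I(x)\,\mr dx,
\qquad I(x)=\int_0^\infty t^{2-2\theta}\frac{w_1^2w_2^2}{w_1^2+t^2w_2^2}\,\frac{\mr dt}{t}.
\]
Substituting $t=(w_1/w_2)u$, which is legitimate pointwise because the weights are positive, gives
\[
I(x)=w_1^{2-2\theta}w_2^{2\theta}\,c_\theta,\qquad c_\theta=\int_0^\infty\frac{u^{1-2\theta}}{1+u^2}\,\mr du=\frac{\pi}{2\sin(\pi\theta)}.
\]
The constant $c_\theta$ is finite precisely because $0<\theta<1$: convergence at $0$ needs $\theta<1$ and convergence at $\infty$ needs $\theta>0$.

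Thus $\|f\|_{\theta,2}^2=c_\theta\|w_1^{1-\theta}w_2^{\theta}f\|_{L^2}^2$, which gives equality of the spaces with equivalent norms. With the quadratic $K_2$ the identity is exact up to the constant $c_\theta$, and with $K$ the norms are equivalent up to a factor $\sqrt2$.

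\textbf{Main obstacle.} The only delicate points are bookkeeping rather than estimates: checking measurability of the optimal splitting, justifying the interchange of the $t$- and $x$-integrals (Tonelli applies since everything is nonnegative), and making sure membership in the interpolation space, which is defined inside the sum space, is correctly characterized. Step 1 settles all of these at once, since the optimal splitting is measurable and explicit.
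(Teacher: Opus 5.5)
Your proof is correct. It is not the paper's route only because the paper gives no proof here: it quotes the result from Peetre \cite{Peetre1964interpolation}, i.e.\ it imports it from the general interpolation theory of Hilbert couples.

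\textbf{What your route does.} You verify the identity directly. Both norms are given by multiplication operators, so the couple is ``diagonal'' and the quadratic $K$-functional can be minimized pointwise. This gives the closed form $K_2(t,f)^2=\int_{\mb R^d} t^2w_1^2w_2^2(w_1^2+t^2w_2^2)^{-1}|f|^2\,\mr dx$. Tonelli and the substitution $t=(w_1/w_2)u$ then give the exact identity $\int_0^\infty t^{-2\theta}K_2(t,f)^2\,\mr dt/t=c_\theta\|w_1^{1-\theta}w_2^{\theta}f\|_{L^2}^2$ with $c_\theta=\pi/(2\sin\pi\theta)$.

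\textbf{What each approach buys.} The citation buys brevity and places the result inside the abstract framework. Your computation buys three things.
\begin{enumerate}
\item An exact constant.
\item An explicit sum space, whose weight is comparable to $\min(w_1,w_2)$.
\item Transparent hypotheses: only positivity and measurability of the weights are used. That is exactly the paper's use case, namely $w_1\equiv1$ and $w_2=e^{\bar\kappa|\cdot|^a}$ on the Fourier side.
\end{enumerate}
Your argument also runs verbatim on any measure space. With counting measure on the eigen-index set, once $L^2$ and $\mc H$ are written as weighted $\ell^2$ spaces in the eigenbasis, it yields the identification $(L^2,\mc H)_{a,2}=\mc H^a$ of Proposition~\ref{Proposition: interpolation between RKHS and L2}.

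\textbf{Two small repairs.}
\begin{enumerate}
\item The comparison between $K$ and $K_2$ goes the other way: $K_2(t,f)^2\le K(t,f)^2\le 2K_2(t,f)^2$. This is harmless, since the two $\theta,2$-norms still agree up to the factor $\sqrt2$.
\item Membership of $L^2_w$ in the sum space follows most cleanly from the pointwise bound $w_1^2w_2^2/(w_1^2+w_2^2)\le\min(w_1,w_2)^2\le w^2$. It gives $K_2(1,f)\le\|wf\|_{L^2}$ directly and avoids the slightly circular wording at the end of your Step~1.
\end{enumerate}
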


We recall extension and restriction theorems for RKHSs associated with the same kernel.
\begin{proposition}[{\cite[Theorem 10.46 and Theorem 10.47]{Wendland2005Scattered}}] \label{Proposition: extension and restriction theorems for RKHSs associated with the same kernel}
Let $\Omega_{1}\subseteq\Omega_{2}\subseteq\mb{R}^{d}$ and $k$ be a positive definite kernel on $\Omega_{2}$. Then, \\ (1) Each function  $f \in \mc{H}_{k}(\Omega_{1})$  has a natural extension to a function  $E f \in \mc{H}_{k}(\Omega_{2})$. Furthermore,  $\|E f\|_{\mc{H}_{k}(\Omega_{2})}=\|f\|_{\mc{H}_{k}(\Omega_{1})}$. \\ (2) The restriction $f|_{\Omega_{1}}$ of any function $f \in \mc{H}_{k}(\Omega_{2})$ is contained in $\mc{H}_{k}(\Omega_{1})$ with $\|f|_{\Omega_{1}}\|_{\mc{H}_{k}(\Omega_{1})} \le \|f\|_{\mc{H}_{k}(\Omega_{2})}$.
\end{proposition}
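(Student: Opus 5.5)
The plan is to realize $\mc H_k(\Omega_1)$ isometrically inside $\mc H_k(\Omega_2)$ as the closed span of the kernel sections centred in $\Omega_1$, and then read off both statements from an orthogonal decomposition. Set
\[
F:=\overline{\operatorname{span}}\{k(\cdot,x):x\in\Omega_1\}\subset\mc H_k(\Omega_2),
\]
with the closure taken in $\mc H_k(\Omega_2)$. Let $R$ denote pointwise restriction of functions from $\Omega_2$ to $\Omega_1$. Two facts are used throughout: the reproducing property, and the fact that norm convergence in an RKHS implies pointwise convergence, since $|f(x)|\le\|f\|\,k(x,x)^{1/2}$.

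\emph{Step 1: an isometry on finite sums.} Take $g=\sum_i c_i k(\cdot,x_i)$ with $x_i\in\Omega_1$. Then
\[
\|g\|^2_{\mc H_k(\Omega_2)}=\sum_{i,j}c_i\overline{c_j}\,k(x_j,x_i)=\|Rg\|^2_{\mc H_k(\Omega_1)},
\]
because $Rg=\sum_i c_i k(\cdot,x_i)|_{\Omega_1}$ is the same kernel expansion read in $\mc H_k(\Omega_1)$, and both norms are given by the same Gram matrix. Hence $R$ is isometric from the dense subspace $\operatorname{span}\{k(\cdot,x):x\in\Omega_1\}$ of $F$ onto the dense subspace of kernel sums in $\mc H_k(\Omega_1)$.

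\emph{Step 2: extension to $F$.} Let $g_n$ be kernel sums converging to $g$ in $F$. By Step 1, $Rg_n$ is Cauchy in $\mc H_k(\Omega_1)$, so it converges there to some $h$. Both $g_n\to g$ and $Rg_n\to h$ hold pointwise, so $h=Rg$. Thus $R|_F$ is a well-defined isometry into $\mc H_k(\Omega_1)$. Its range is complete and contains a dense subset, so it is all of $\mc H_k(\Omega_1)$. I expect this step to need the most care: one must check that the limit identification is consistent and that surjectivity really follows. The pointwise-convergence argument above is what I would rely on for both. Define $E:=(R|_F)^{-1}$. Then $Ef$ is an extension of $f$ with $\|Ef\|_{\mc H_k(\Omega_2)}=\|f\|_{\mc H_k(\Omega_1)}$, which proves (1).

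\emph{Step 3: restriction.} For $f\in\mc H_k(\Omega_2)$, decompose $f=f_F+f_\perp$ with $f_F\in F$ and $f_\perp\perp F$. For every $x\in\Omega_1$, the reproducing property gives
\[
f_\perp(x)=\langle f_\perp,k(\cdot,x)\rangle=0,
\]
so $Rf=Rf_F\in\mc H_k(\Omega_1)$. Combining this with the isometry of Step 2,
\[
\|f|_{\Omega_1}\|_{\mc H_k(\Omega_1)}=\|f_F\|_{\mc H_k(\Omega_2)}\le\|f\|_{\mc H_k(\Omega_2)},
\]
which proves (2).
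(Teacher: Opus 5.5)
Your argument is correct. It is essentially the standard proof behind the cited results in Wendland (Theorems 10.46--10.47), which the paper invokes without proof. Kernel sums centred in $\Omega_1$ embed isometrically, and the embedding extends by continuity, with pointwise convergence identifying the limits. The restriction bound then follows from the decomposition $\mc H_k(\Omega_2)=F\oplus F^{\perp}$, whose second summand vanishes on $\Omega_1$. Defining $E$ as the inverse of $R|_F$, rather than by extending Cauchy sequences of kernel sums directly, is only a cosmetic difference; together with Step 3 it also shows that $Ef$ is the minimal-norm extension of $f$.
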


\paragraph{Target classes in the kernel interpolation scale.}
\begin{proposition}[Fourier source classes in the kernel interpolation scale]
\label{Proposition: Fourier source class embeds into kernel interpolation space}
Let $R>0$, $x_0\in\mb R^d$, $Q_R=x_0+(-R,R)^d$, $a>0$, $0<\kappa\le\bar\kappa$, and $\sigma=\kappa/\bar\kappa$. Let $k_{\bar\kappa,a}$ be the kernel induced by
\[
\mr d\tau_{\bar\kappa,a}(w)\propto e^{-2\bar\kappa|w|^a}\mr dw.
\]
Then
\[
\mc F_{\kappa,a}(Q_R) \hookrightarrow \mc H_{k_{\bar\kappa,a}}^\sigma(Q_R), \qquad \|u\|_{\mc H_{k_{\bar\kappa,a}}^\sigma(Q_R)} \lesssim_{\kappa,\bar\kappa,d,a}
\|u\|_{\kappa,a}.
\]
\end{proposition}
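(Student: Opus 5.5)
The plan is to work first on the whole space and then restrict to $Q_R$. Write $k=k_{\bar\kappa,a}$, so that $\widehat k(w)\propto e^{-2\bar\kappa|w|^a}$, and let $w_1\equiv1$ and $w_2(w)=e^{\bar\kappa|w|^a}$. By Proposition~\ref{Proposition: Fourier characterization for RKHS a.w. translation invariant kernels}, $F\mapsto\widehat F$ identifies $\mc H_k(\mb R^d)$ isometrically (up to constants) with $L^2_{w_2}$, and Plancherel identifies $L^2(\mb R^d)$ with $L^2_{w_1}$. Peetre's weighted interpolation identity then gives
\[
(L^2(\mb R^d),\mc H_k(\mb R^d))_{\sigma,2}\cong\{F:e^{\sigma\bar\kappa|\cdot|^a}\widehat F\in L^2\}=\{F:e^{\kappa|\cdot|^a}\widehat F\in L^2\},
\]
with equivalent norms, because $\sigma\bar\kappa=\kappa$. (If $\sigma=1$, this is just the RKHS itself.) Now take $u\in\mc F_{\kappa,a}(Q_R)$ and an extension $U$ whose weighted Fourier norm is at most $2\|u\|_{\kappa,a}$. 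Then $U$ lies in the whole-space interpolation space with norm $\lesssim\|u\|_{\kappa,a}$.

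Next I pass from the whole space to $Q_R$ through the interpolation space. Let $\mc R$ denote restriction to $Q_R$. It is bounded $L^2(\mb R^d)\to L^2(Q_R)$ with norm $\le1$, and by Proposition~\ref{Proposition: extension and restriction theorems for RKHSs associated with the same kernel}(2) it is bounded $\mc H_k(\mb R^d)\to\mc H_k(Q_R)$ with norm $\le1$. The real method is an exact interpolation functor, so $\mc R$ maps $(L^2(\mb R^d),\mc H_k(\mb R^d))_{\sigma,2}$ into $(L^2(Q_R),\mc H_k(Q_R))_{\sigma,2}$ with norm $\le1$. For $\sigma\in(0,1)$, Proposition~\ref{Proposition: interpolation between RKHS and L2}, applied with $\rho$ the Lebesgue measure on $Q_R$, identifies this space with $\mc H^\sigma_k(Q_R)$ with equivalent norms. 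The hypotheses of that proposition hold: injectivity follows from Lemma~\ref{Lemma: injectivity of translation-invariant kernel integral operators}, since $\tau$ has full support, and the trace condition holds because $k(x,x)$ is constant. Combining the two steps gives $\|u\|_{\mc H^\sigma(Q_R)}\lesssim\|\mc R U\|\lesssim\|u\|_{\kappa,a}$.

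The main obstacle is constant bookkeeping. First, the equivalence constant in Proposition~\ref{Proposition: interpolation between RKHS and L2} is a normalization constant of the $K$-method and depends only on $\sigma$. Second, the normalization of $\tau$, and hence of $\widehat k$, depends on $\bar\kappa,d,a$. These together produce the stated dependence on $\kappa,\bar\kappa,d,a$ alone. The remaining point is the translation by $x_0$, which I handle using the translation invariance of $k$. I also need to check that the whole-space characterization applies, i.e.\ that $k\in C\cap L^1$. Continuity is immediate since $\widehat k\in L^1$. If integrability of $k$ is in doubt, I would instead define the whole-space RKHS directly from the Fourier description, as in the proof of Proposition~\ref{Proposition: bandlimited RKHS characterization}, which only needs $\widehat k\in L^1$.
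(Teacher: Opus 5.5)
Your proposal is correct and follows essentially the same route as the paper. Both arguments use the whole-space Fourier characterization of $\mc H_{k}(\mb R^d)$ and Peetre's weighted-$L^2$ interpolation identity with $\sigma\bar\kappa=\kappa$, then interpolate the restriction map, which is contractive at both endpoints, identify the result with $\mc H^\sigma_k(Q_R)$ via the Steinwart--Scovel interpolation proposition, and treat $\sigma=1$ separately. The hypothesis checks you add (integrability and continuity of $k$, injectivity and the trace condition, the translation by $x_0$) are not spelled out in the paper, but they are correct and harmless.
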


\begin{proof}
Let $\mc R_{Q_R}$ denote restriction from $\mb R^d$ to $Q_R$. By Proposition \ref{Proposition: Fourier characterization for RKHS a.w. translation invariant kernels}, the whole-space RKHS of $k_{\bar\kappa,a}$ has Fourier norm equivalent to
\[
\left\|e^{\bar\kappa|\cdot|^a}\widehat U\right\|_{L^2(\mb R^d)}.
\]
If $0<\sigma<1$, the weighted $L^2$ interpolation identity above gives
\[
\bigl(L^2(\mb R^d),\mc H_{k_{\bar\kappa,a}}(\mb R^d)\bigr)_{\sigma,2} = \left\{U: e^{\kappa|\cdot|^a}\widehat U\in L^2(\mb R^d)\right\},
\]
with equivalent norms. The map $\mc R_{Q_R}$ is contractive from $L^2(\mb R^d)$ to $L^2(Q_R)$ and, by Proposition \ref{Proposition: extension and restriction theorems for RKHSs associated with the same kernel}, from $\mc H_{k_{\bar\kappa,a}}(\mb R^d)$ to $\mc H_{k_{\bar\kappa,a}}(Q_R)$. Interpolating these two restriction estimates and using Proposition \ref{Proposition: interpolation between RKHS and L2} yields
\[
\|\mc R_{Q_R}U\|_{\mc H_{k_{\bar\kappa,a}}^\sigma(Q_R)} \lesssim_{\kappa,\bar\kappa,d,a} \left\|e^{\kappa|\cdot|^a}\widehat U\right\|_{L^2(\mb R^d)}.
\]
Taking the infimum over all admissible whole-space extensions $U$ of $u$ proves the claim when $0<\sigma<1$. If $\sigma=1$, then $\kappa=\bar\kappa$ and $\mc H_{k_{\bar\kappa,a}}^\sigma =\mc H_{k_{\bar\kappa,a}}$. The Fourier characterization and the contractive restriction in Proposition \ref{Proposition: extension and restriction theorems for RKHSs associated with the same kernel} give
\[
\|\mc R_{Q_R}U\|_{\mc H_{k_{\bar\kappa,a}}^1(Q_R)} \lesssim_{\bar\kappa,d,a} \left\|e^{\bar\kappa|\cdot|^a}\widehat U\right\|_{L^2(\mb R^d)}.
\]
Taking the infimum over admissible extensions proves the endpoint and completes the proof.
\end{proof}

\paragraph{Kernel interpolation spaces and error norms.}
The following proposition relates kernel interpolation spaces to Sobolev spaces on bounded domains.
\begin{proposition} \label{Proposition: relation between fractional Sobolev and interpolation space of RKHS on bounded domains}
(1) Suppose $k\in L^{1}(\mb{R}^{d})$ and $\wh{k}\simeq (1+ |\cdot|^{2})^{-s}$ with $s>d/2$. Then, for $0\le\nu\le s$, $\mc{H}_{k}^{\nu/s}(\Omega)= H^{\nu}(\Omega)$ with $\|\cdot\|_{\mc{H}_{k}^{\nu/s}(\Omega)}\simeq_{d,s,\Omega}\|\cdot\|_{H^{\nu}(\Omega)}$. \\ (2) Suppose $k\in L^{1}(\mb{R}^{d})$ and $\wh{k}\simeq e^{-2\kappa|\cdot|^{s}}$ with $\kappa,s>0$. Let $t\ge0$, $1\le p\le\infty$, and $0<\theta\le1$. Fix $\ell\ge t$ if $p=2$, and fix any $\ell>t+d/2$ otherwise. Then every $f\in\mc{H}_{k}^{\theta}(\Omega)$ is smooth and
\begin{equation*}
\|f\|_{W^{t,p}(\Omega)}
\lesssim_{d,\kappa,s,t,p,\ell,\theta,\Omega}
\|f\|_{\mc{H}_{k}^{\theta}(\Omega)}.
\end{equation*}
(3) Let $k=k_S$ be the bandlimited kernel in Proposition \ref{Proposition: bandlimited RKHS characterization}. Let $t,p,\theta$, and $\ell$ be as in (2). Then every $f\in\mc{H}_{k}^{\theta}(\Omega)$ is smooth and
\begin{equation*}
\|f\|_{W^{t,p}(\Omega)}
\lesssim_{d,t,p,\ell,\theta,\Omega}
b_S^{\theta/2}(1+dS^2)^{\ell/2}
\|f\|_{\mc{H}_{k}^{\theta}(\Omega)}.
\end{equation*}
\end{proposition}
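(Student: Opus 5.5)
The plan is to reduce everything to the whole-space Fourier description (Proposition~\ref{Proposition: Fourier characterization for RKHS a.w. translation invariant kernels} and Proposition~\ref{Proposition: bandlimited RKHS characterization}). We then move between $\Omega$ and $\mb R^d$ using the extension/restriction isometries of Proposition~\ref{Proposition: extension and restriction theorems for RKHSs associated with the same kernel}. The fractional index $\theta$ is handled by interpolation (Proposition~\ref{Proposition: interpolation between RKHS and L2} together with Peetre's weighted $L^2$ identity).

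\textbf{Part (1).} When $\wh k\simeq(1+|\cdot|^2)^{-s}$, the whole-space RKHS is $H^s(\mb R^d)$ with equivalent norm. Restriction is onto $\mc H_k(\Omega)$, with the quotient (minimal-extension) norm equal to the $\mc H_k(\Omega)$ norm. Since $\Omega$ is Lipschitz, Rychkov's universal extension operator shows that $H^s(\Omega)$, defined via restriction, carries the same quotient norm up to constants. Hence $\mc H_k(\Omega)=H^s(\Omega)$ and $\mc H^0=L^2(\Omega)$. For $0<\nu<s$, Proposition~\ref{Proposition: interpolation between RKHS and L2} gives $\mc H^{\nu/s}=(L^2(\Omega),H^s(\Omega))_{\nu/s,2}$. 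This equals $H^\nu(\Omega)$ because the Rychkov operator is a common extension operator that is bounded on all $H^r$ simultaneously: it retracts the scale on $\Omega$ onto the whole-space scale, where interpolation is Peetre's identity with weights $(1+|\xi|^2)^{r/2}$.

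\textbf{Parts (2) and (3).} We only need the embedding, and by $\mc H^\theta\hookrightarrow\mc H^{\theta'}$ for $\theta\ge\theta'$ we cannot simply reduce to $\theta=1$. Instead we argue at the level of the whole-space interpolated space. Given $f\in\mc H_k^\theta(\Omega)$, write it as a restriction of some $F\in(L^2(\mb R^d),\mc H_k(\mb R^d))_{\theta,2}=L^2_{w}$ with $w=e^{\theta\kappa|\xi|^s}$ and norm $\lesssim\|f\|_{\mc H^\theta(\Omega)}$. This requires a bounded extension from $\mc H^\theta(\Omega)$. To build it, interpolate the minimal-norm extension $E$ of Proposition~\ref{Proposition: extension and restriction theorems for RKHSs associated with the same kernel}, which is isometric $\mc H(\Omega)\to\mc H(\mb R^d)$. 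Its $L^2$ boundedness is not automatic, and this is the main obstacle. My first attempt would be to take $E=\Sigma_{\mb R^d}$-type extension $f\mapsto$ the kernel-integral representation $\int_\Omega k(\cdot-y)g(y)\,dy$ with $f=\Sigma g$ on the range. This gives bounded maps $\mc H^{a}(\Omega)\to\mc H^{a}(\mb R^d)$ for $a\in[1,2]$ (shifting the scale by $\Sigma$) and for $\theta<1$ one uses instead the $\mc H^{\theta}\to \mc H^{\theta}$ statement obtained by interpolating $L^2\to L^2$ (extension by the Fourier-side minimal-norm operator which is orthogonal projection, hence contractive in both norms since restriction is a partial isometry with respect to both Gram structures). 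If this fails, a fallback is to avoid extension altogether: use the eigen-expansion $f=\sum f_je_j$ with $e_j=\lambda_j^{-1}\Sigma e_j$, bound $\|e_j\|_{W^{t,p}}\lesssim\lambda_j^{-1/2}\|$derivatives of $k\|$ via the reproducing property, and sum using $\sum\lambda_j^{1-\theta}\cdots$; however, this likely requires $\theta=1$.

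Once $F\in L^2_w$ is available, the estimate is elementary. For $p=2$ and integer or fractional $t\le\ell$, we have $\|F\|_{H^\ell(\mb R^d)}=\|(1+|\xi|^2)^{\ell/2}\wh F\|_{L^2}\le\sup_\xi (1+|\xi|^2)^{\ell/2}e^{-\theta\kappa|\xi|^s}\,\|F\|_{L^2_w}$, and the supremum is finite. For $p\ne2$, Sobolev embedding $H^{\ell}(\mb R^d)\hookrightarrow W^{t,\infty}$ for $\ell>t+d/2$ gives the bound together with smoothness. Since $\Omega$ is bounded, $W^{t,\infty}(\Omega)\hookrightarrow W^{t,p}(\Omega)$. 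In the bandlimited case, Proposition~\ref{Proposition: bandlimited RKHS characterization} gives the weight $b_S^{-1/2}1_{Q_S}$ at $\theta=1$, so the interpolated norm is $b_S^{-\theta/2}\|F\|_{L^2}$ and $\operatorname{supp}\wh F\subset\overline{Q_S}$. Then $|\xi|^2\le dS^2$ yields the factor $(1+dS^2)^{\ell/2}b_S^{\theta/2}$ explicitly.
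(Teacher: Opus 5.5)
Your Part~(1) is correct and is essentially the paper's argument.

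\textbf{The gap in Parts~(2) and~(3).} These parts rest on a step that cannot be carried out in general: writing every $f\in\mc H_k^\theta(\Omega)$ as the restriction of some $F\in(L^2(\mb R^d),\mc H_k(\mb R^d))_{\theta,2}$ with controlled norm.

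The justification you sketch is false, because the minimal-norm extension $E$ is not $L^2$-bounded. In the bandlimited case, take an eigenfunction $e_j$ of $\Sigma$ with $\|e_j\|_{L^2(\Omega)}=1$. Proposition~\ref{Proposition: bandlimited RKHS characterization} and the isometry of $E$ give $\|Ee_j\|_{L^2(\mb R^d)}^2=b_S\|e_j\|_{\mc H_k(\Omega)}^2=b_S\lambda_j^{-1}\to\infty$.

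Worse, in case~(3) no such extension exists at all when $\theta<1$. Since $PW_S$ is a closed subspace of $L^2(\mb R^d)$, any $F$ with a nonzero component $F_\perp$ orthogonal to $PW_S$ satisfies $K(t,F)\ge\|F_\perp\|_{L^2}$ for every $t>0$. Its $(\theta,2)$-norm is therefore infinite, so $(L^2(\mb R^d),PW_S)_{\theta,2}\subset PW_S$. The restrictions to $\Omega$ of such functions then lie in $\mc H_k(\Omega)=\mc H^1(\Omega)$. This is a proper subspace of $\mc H^\theta(\Omega)$, because $\lambda_j\to0$.

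So interpolating the kernel scale between $\Omega$ and $\mb R^d$ cannot work: that pair of scales admits no retraction. As you suspected, the eigen-expansion fallback only reaches $\theta=1$.

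\textbf{The missing idea.} Interpolate on the Sobolev side on $\Omega$, where a retraction does exist (Rychkov's extension, which you already used in Part~(1)). Use the kernel extension only at the endpoint $\theta=1$, where it is isometric. The paper proceeds in three steps.
\begin{enumerate}
\item For $f\in\mc H_k(\Omega)$ it shows $\|f\|_{H^{\ell/\theta}(\Omega)}\le\|Ef\|_{H^{\ell/\theta}(\mb R^d)}\lesssim\|f\|_{\mc H_k(\Omega)}$. This holds because the weight $e^{\kappa|\xi|^s}$ dominates $(1+|\xi|^2)^{\ell/(2\theta)}$. In case~(3) the constant is $b_S^{1/2}(1+dS^2)^{\ell/(2\theta)}$, since $\operatorname{supp}\wh{Ef}\subset\overline{Q_S}$.
\item It interpolates the identity map between this bound and the trivial bound $L^2(\Omega)\to L^2(\Omega)$. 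Using Proposition~\ref{Proposition: interpolation between RKHS and L2} and $(L^2(\Omega),H^{\ell/\theta}(\Omega))_{\theta,2}=H^\ell(\Omega)$, this gives $\|f\|_{H^\ell(\Omega)}\lesssim\|f\|_{\mc H_k^\theta(\Omega)}$, with the factor $b_S^{\theta/2}(1+dS^2)^{\ell/2}$ in case~(3).
\item Sobolev embedding and the arbitrariness of $\ell$ give the estimate and smoothness.
\end{enumerate}
The key device is raising the upper endpoint to $H^{\ell/\theta}$. This costs nothing, because $\mc H_k(\Omega)$ embeds into every Sobolev space. It makes the $\theta$-interpolant land in $H^\ell$ without any extension from $\mc H_k^\theta(\Omega)$.
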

\begin{proof}
(1) The case $\nu=0$ is immediate. The case $\nu=s$ follows as in \cite[Corollary 10.48]{Wendland2005Scattered}. We extend the argument to fractional Sobolev spaces. By Proposition \ref{Proposition: extension and restriction theorems for RKHSs associated with the same kernel} and Proposition \ref{Proposition: Fourier characterization for RKHS a.w. translation invariant kernels}, every $f\in \mc{H}_{k}(\Omega)$ has an extension $E f \in \mc{H}_{k}(\mb{R}^{d}) = H^{s}(\mb{R}^{d})$ and
\begin{equation} \label{ineq: mcH_k(Omega) continuously embed in H^s(Omega)}
\|f\|_{H^{s}(\Omega)} \le \|Ef\|_{H^{s}(\mb{R}^{d})} \lesssim_{d,s} \|Ef\|_{\mc{H}_{k}(\mb{R}^{d})} \lesssim_{d,s} \|f\|_{\mc{H}_{k}(\Omega)} .
\end{equation}
On the other hand, by \cite{Vyacheslav1999ExtensionsBesov}, every $f\in H^{s}(\Omega)$ has an extension $\tilde{E} f \in H^{s}(\mb{R}^{d}) = \mc{H}_{k}(\mb{R}^{d}) $ satisfying $\|\tilde{E} f\|_{H^{s}(\mb{R}^{d})} \lesssim_{d,s,\Omega} \|f\|_{H^{s}(\Omega)}$. Thus, $\tilde{E} f\in \mc{H}_{k}(\mb{R}^{d})$ and
\begin{equation} \label{ineq: H^s(Omega) continuously embed in mcH_k(Omega)}
\|f\|_{\mc{H}_{k}(\Omega)} \le \|\tilde{E}f\|_{\mc{H}_{k}(\mb{R}^{d})} \lesssim_{d,s}\|\tilde{E}f\|_{H^{s}(\mb{R}^{d})}\lesssim_{d,s,\Omega} \|f\|_{H^{s}(\Omega)} .
\end{equation}
For $0<\nu<s$, Proposition \ref{Proposition: interpolation between RKHS and L2} gives
\[
\mc{H}^{\nu/s}_{k}(\Omega) = (L^{2}(\Omega),\mc{H}_{k}(\Omega))_{\nu/s,2}.
\]
Moreover, $H^{\nu}(\Omega)=(L^{2}(\Omega),H^{s}(\Omega))_{\nu/s,2}$ with equivalent norms. The real interpolation theorem for bounded linear operators, together with \eqref{ineq: mcH_k(Omega) continuously embed in H^s(Omega)} and \eqref{ineq: H^s(Omega) continuously embed in mcH_k(Omega)}, yields the desired estimate.

(2) Fix $t,p,\theta$, and $\ell$ as in the statement. Propositions \ref{Proposition: extension and restriction theorems for RKHSs associated with the same kernel} and \ref{Proposition: Fourier characterization for RKHS a.w. translation invariant kernels} give, for every $f\in\mc H_k(\Omega)$,
\[
\|f\|_{H^{\ell/\theta}(\Omega)} \lesssim_{d,\kappa,s,\ell,\theta} \|f\|_{\mc H_k(\Omega)},
\]
because the exponential Fourier weight dominates $(1+|\cdot|^2)^{\ell/(2\theta)}$. For $0<\theta<1$, Proposition \ref{Proposition: interpolation between RKHS and L2}, the identity $H^\ell=(L^2,H^{\ell/\theta})_{\theta,2}$ on $\Omega$, and the real interpolation theorem imply
\[
\|f\|_{H^\ell(\Omega)} \lesssim_{d,\kappa,s,\ell,\theta,\Omega} \|f\|_{\mc H_k^\theta(\Omega)}.
\]
The same estimate follows directly from the endpoint bound when $\theta=1$. Finally, $H^\ell(\Omega)\hookrightarrow W^{t,p}(\Omega)$ by the choice of $\ell$.
Since $t$ is arbitrary, every $f\in\mc H_k^\theta(\Omega)$ is smooth. This proves (2). \\ (3) The argument is the same, but Proposition \ref{Proposition: bandlimited RKHS characterization} gives
\[
\|Ef\|_{L^2(\mb R^d)} =b_S^{1/2}\|Ef\|_{\mc H_k(\mb R^d)}
\]
for the minimum-norm bandlimited extension $Ef$. Hence
\[
\|f\|_{H^{\ell/\theta}(\Omega)} \le b_S^{1/2}(1+dS^2)^{\ell/(2\theta)} \|f\|_{\mc H_k(\Omega)}.
\]
Interpolating as in (2) yields
\[
\|f\|_{H^\ell(\Omega)} \lesssim_{d,\ell,\theta,\Omega} b_S^{\theta/2}(1+dS^2)^{\ell/2} \|f\|_{\mc H_k^\theta(\Omega)}.
\]
The Sobolev embedding $H^\ell(\Omega)\hookrightarrow W^{t,p}(\Omega)$ proves (3), including smoothness, and completes the proof.
\end{proof}

\paragraph{Spatial regularity and Fourier decay.}
We finally verify the implications used to interpret the Fourier source conditions in terms of standard smoothness classes.

\begin{proof}[Proof of Lemma \ref{Lemma: Fourier decay from Gevrey and analytic regularity}]
Suppose first that Assumption \ref{Assumption: regularities}(b) holds, and let $\widetilde u\in G^s(\Omega')$ be an extension of $u$. Choose $\chi\in G_0^s(\Omega')$ such that $\chi=1$ on a neighborhood of $\overline\Omega$, and extend $U:=\chi\widetilde u$ by zero to $\mb R^d$. Then $U\in G_0^s(\mb R^d)$ and $U|_\Omega=u$. By the Fourier characterization of compactly supported Gevrey functions \cite[Theorem 1.6.1]{rodino1993linear}, there exist $C,c>0$ such that
\[
|\widehat U(w)|\le C\exp(-c|w|^{1/s}),\qquad w\in\mb R^d.
\]
Consequently, for every $0<\kappa<c$, $e^{\kappa|\cdot|^{1/s}}\widehat U\in L^2(\mb R^d)$, and hence $\|u\|_{\kappa,1/s}<\infty$.

If Assumption \ref{Assumption: regularities}(c) holds, the Paley--Wiener type result \cite[Theorem IX.13]{Reed1978Methods} gives
\[
e^{\kappa|\cdot|}\widehat u\in L^2(\mb R^d) \qquad\text{for every }0<\kappa<\rho.
\]
Taking $U=u$ in the definition of $\|u\|_{\kappa,1}$ proves $\|u\|_{\kappa,1}<\infty$ and completes the proof.
\end{proof}

\subsection{Proof of the uniform approximation theorem}
\label{Section: proof uniform approximation regularity balls}

\begin{proof}[Proof of Theorem \ref{Theorem: interpolation improved convergence rate of RFM}]
We first prove a probabilistic implication which will be used repeatedly. Fix $\varrho\in\{\operatorname{ph},\operatorname{cx}\}$ and $0\le\theta<1$, set $\gamma=1-\theta$, and assume that the chosen value of $\lambda$ satisfies
\begin{equation} \label{ineq: d lambda smaller than M in interpolation improved theorem}
d(\lambda;\theta,\gamma)\le M.
\end{equation}
Both feature representations induce the same kernel integral operator $\Sigma$ and hence the same effective dimension $d(\lambda;\theta,\gamma)$. By Definition~\ref{Definition: Leverage score sampling} and \eqref{eq: leverage score sampling density}, the optimal density $q^*_{\lambda,\varrho}=q^*_{\lambda,\varrho}(\cdot;\theta,\gamma)$ associated with the chosen representation satisfies $d_{\max}(q^*_{\lambda,\varrho},\lambda;\theta,\gamma) =d(\lambda;\theta,\gamma)\le M$. Since $M\ge e\delta/14$, one has $14M/\delta\ge e$ and therefore $\ln(14M/\delta)\ge1$. Moreover, \eqref{ineq: d lambda smaller than M in interpolation improved theorem} implies
\[
\ln\frac{14d(\lambda;\theta,\gamma)}{\delta} \le \ln\frac{14M}{\delta}.
\]
Consequently,
\[
\begin{aligned}
N&\ge 3M\ln(14M/\delta)\\
&\ge 3d_{\max}(q^*_{\lambda,\varrho},\lambda;\theta,\gamma)
\max\left\{\ln\frac{14d(\lambda;\theta,\gamma)}{\delta},1\right\}.
\end{aligned}
\]
Thus the sufficient sampling condition \eqref{ineq: N > d_max(lambda) ln(d(lambda)/delta), interpolation} holds at this value of $\lambda$. By the definition of the critical penalty $\varsigma_N(\delta,q^*_{\lambda,\varrho},\theta,\gamma)$, this gives
\begin{equation} \label{ineq: varsigma no larger than lambda interpolation improved theorem}
\varsigma_N(\delta,q^*_{\lambda,\varrho},\theta,1-\theta)\le \lambda.
\end{equation}
The event supplied by Theorem \ref{Theorem: abstract error estimate for regression in interpolation spaces} has probability at least $1-\delta$, depends only on the sampled features, and is uniform over the unit ball of $\mc H_k^{\sigma}$. On this event, let $f\in\mc H_k^{\sigma}$, $\theta\le\sigma\le1$, and let $\zeta$ satisfy $0\le\zeta\le\theta$ and $2\theta-1\le\zeta$. Then $\bar p=\max\{\zeta,2\theta-1\}=\zeta$ in Theorem \ref{Theorem: abstract error estimate for regression in interpolation spaces}(1). Combining that theorem with \eqref{ineq: varsigma no larger than lambda interpolation improved theorem} and using homogeneity gives, simultaneously for all $f\in\mc H_k^{\sigma}$,
\begin{equation} \label{ineq: abstract consequence interpolation improved theorem}
\begin{aligned}
\|f-\Phi_{\varrho}\boldsymbol\beta^*\|_{\mc H_k^{\zeta}}
&\le 16\lambda^{\frac{\sigma-\zeta}{2(1-\theta)}}\|f\|_{\mc H_k^{\sigma}},\\
|\boldsymbol\beta^*|
&\le 16N^{-1/2}\lambda^{\frac{\sigma-1}{2(1-\theta)}}\|f\|_{\mc H_k^{\sigma}}.
\end{aligned}
\end{equation}
The RF function produced by $\boldsymbol\beta^*$ has the representation-independent form
\[
\Phi_{\varrho}\boldsymbol\beta^*(x) =\sum_{j=1}^{N}\beta_j^*q^*_{\lambda,\varrho}(v_j^{\varrho})^{-1/2} \phi_{\varrho}(x,v_j^{\varrho}).
\]
Set $\alpha_j^{\varrho}=q^*_{\lambda,\varrho}(v_j^{\varrho})^{-1/2}\beta_j^*$, $1\le j\le N$. Therefore
\begin{equation} \label{eq: beta alpha relation interpolation improved theorem}
|\alpha^{\varrho}|_{\ell^2(q^*_{\lambda,\varrho})}^{2}
=\sum_{j=1}^{N}q^*_{\lambda,\varrho}(v_j^{\varrho})|\alpha_j^{\varrho}|^2
=\sum_{j=1}^{N}|\beta_j^*|^2
=|\boldsymbol\beta^*|^2.
\end{equation}
For the complex-exponential representation and a real-valued target, taking the real part gives \eqref{Eq: linear combination of random Fourier features, cosine-sine}. Since the real-part map is contractive and $|\alpha_j^{\operatorname{cx}}|^2=|a_j|^2+|b_j|^2$, both estimates in \eqref{ineq: abstract consequence interpolation improved theorem} and the coefficient identity above remain valid for this real cosine--sine realization.

We now prove the four cases. For a fixed target $u$ and representation $\varrho$, abbreviate $u_N^{\varrho}$ and $\alpha^{\varrho}(u)$ by $u_N$ and $\alpha$, respectively.

\emph{Proof of (1).} For $\mr d\tau\simeq(1+|w|^2)^{-\bar s}\mr dw$, the kernel in \eqref{eq: translation invariant kernel for random Fourier feature} satisfies $\widehat k\eqsim(1+|\cdot|^2)^{-\bar s}$. Put
\[
\theta=\frac{\nu}{\bar s},\qquad \gamma=1-\frac{\nu}{\bar s},\qquad \sigma=\frac{s}{\bar s},\qquad \zeta=\frac{t}{\bar s}.
\]
The assumptions imply $0\le\theta\le\sigma\le1$, $0\le\zeta\le\theta$, $2\theta-1\le\zeta$, and $\bar s(1-\theta)=\bar s-\nu>d/2$. Proposition \ref{Proposition: upper bound for d(lambda;theta,gamma)}(1) therefore gives \eqref{ineq: d lambda smaller than M in interpolation improved theorem} for $\lambda=c_1M^{-2(\bar s-\nu)/d}$, after the fixed factor involving $|\Omega|$ is absorbed into $c_1$.

By Proposition \ref{Proposition: relation between fractional Sobolev and interpolation space of RKHS on bounded domains}(1),
\[
\|f\|_{\mc H_k^{r/\bar s}(\Omega)} \simeq_{d,\bar s,r,\Omega}\|f\|_{H^r(\Omega)}, \qquad 0\le r\le\bar s.
\]
Applying \eqref{ineq: abstract consequence interpolation improved theorem} and this equivalence at $r=s$ and $r=t$ yields
\[
\|u-u_N\|_{H^t(\Omega)} \lesssim_{d,\bar s,s,\nu,t,\Omega} M^{-(s-t)/d}\|u\|_{H^s(\Omega)}.
\]
Similarly, \eqref{eq: beta alpha relation interpolation improved theorem} and the coefficient estimate in \eqref{ineq: abstract consequence interpolation improved theorem} give
\[
|\alpha|_{\ell^2(q^*_{\lambda,\varrho})} \lesssim_{d,\bar s,s,\nu,\Omega} N^{-1/2}M^{(\bar s-s)/d}\|u\|_{H^s(\Omega)}.
\]
Taking the supremum over $\mb B_{H^s(\Omega)}$ proves (1).

\emph{Common preparation for (2)--(4).} Since $\Omega\subset\widetilde\Omega$,
\begin{equation}
\label{eq: restriction from outer cube to Lipschitz domain}
\|g\|_{W^{t,p}(\Omega)}\le\|g\|_{W^{t,p}(\widetilde\Omega)},
\qquad t\ge0,\quad 1\le p\le\infty.
\end{equation}
Translation invariance shows that the kernel integral operator on $\widetilde\Omega$ is unitarily equivalent to the corresponding operator on $(-R,R)^d$. Hence Proposition \ref{Proposition: upper bound for d(lambda;theta,gamma)} applies on $\widetilde\Omega$. For every $0<\zeta\le1$, Proposition \ref{Proposition: relation between fractional Sobolev and interpolation space of RKHS on bounded domains}(2) gives, for the exponential kernels in (2) and (3),
\begin{equation}
\label{eq: exponential embedding interpolation improved theorem}
\|g\|_{W^{t,p}(\widetilde\Omega)}
\lesssim_{d,t,p,\zeta,R}\|g\|_{\mc H_k^\zeta(\widetilde\Omega)},
\qquad t\ge0,\quad 1\le p\le\infty.
\end{equation}
For the bandlimited kernel in (4), part (3) of the same proposition gives
\begin{equation}
\label{eq: bandlimited embedding interpolation improved theorem}
\|g\|_{W^{t,p}(\widetilde\Omega)}
\lesssim_{d,t,p,\zeta,R}
\left(\frac{\pi}{S}\right)^{d\zeta/2}
(1+dS^2)^{(2t+d+1)/4}
\|g\|_{\mc H_k^\zeta(\widetilde\Omega)}.
\end{equation}
The sampling event, the coefficients, and $u_N$ in \eqref{ineq: abstract consequence interpolation improved theorem} are independent of $t$ and $p$. Since \eqref{eq: restriction from outer cube to Lipschitz domain}-- \eqref{eq: bandlimited embedding interpolation improved theorem} are deterministic, the estimates below hold simultaneously for all $t\ge0$ and $1\le p\le\infty$.

\emph{Proof of (2).} Put $\sigma=\kappa/\bar\kappa$ and set
\[
\theta=\frac\sigma2,\qquad \gamma=1-\frac\sigma2,\qquad \zeta=\frac\sigma4.
\]
Then $0<\zeta\le\theta<\sigma\le1$ and $2\theta-1<\zeta$. Choose an admissible whole-space extension $U$ of $u$ such that
\[
\|e^{\kappa|\cdot|^{1/s}}\widehat U\|_{L^2(\mb R^d)} \le2\|u\|_{\kappa,1/s},
\]
and put $f:=U|_{\widetilde\Omega}$. Proposition \ref{Proposition: Fourier source class embeds into kernel interpolation space} gives
\begin{equation}
\label{eq: source stretched exponential interpolation improved theorem}
\|f\|_{\mc H_k^\sigma(\widetilde\Omega)}
\lesssim_{\kappa,\bar\kappa,d,s}\|u\|_{\kappa,1/s}.
\end{equation}
Let $b_2$ be the right-hand side of \eqref{eq: def of kappa_s in subexponential and exponential convergence rate, interpolation} with $\theta=\sigma/2$ and decay parameter $\bar\kappa$, and set $a_\lambda:=b_2/2$. Proposition \ref{Proposition: upper bound for d(lambda;theta,gamma)}(2) then gives \eqref{ineq: d lambda smaller than M in interpolation improved theorem} for $\lambda=c_\lambda\exp(-a_\lambda M^{1/(sd)})$ and all sufficiently large $M$.

Applying \eqref{ineq: abstract consequence interpolation improved theorem} to $f$, followed by \eqref{eq: restriction from outer cube to Lipschitz domain}, \eqref{eq: exponential embedding interpolation improved theorem}, and \eqref{eq: source stretched exponential interpolation improved theorem}, we obtain
\[
\|u-u_N\|_{W^{t,p}(\Omega)} \lesssim_{\kappa,\bar\kappa,d,s,t,p,R} \|u\|_{\kappa,1/s} \exp(-a_{\mathrm e}M^{1/(sd)}),
\]
where $a_{\mathrm e}:=(\sigma-\zeta)a_\lambda/[2(1-\theta)]>0$. The coefficient estimate and \eqref{eq: beta alpha relation interpolation improved theorem} similarly give
\[
|\alpha|_{\ell^2(q^*_{\lambda,\varrho})} \lesssim_{\kappa,\bar\kappa,d,s,R} \|u\|_{\kappa,1/s}N^{-1/2} \exp(a_{\mathrm c}M^{1/(sd)}),
\]
where $a_{\mathrm c}:=(1-\sigma)a_\lambda/[2(1-\theta)]\ge0$. Taking the supremum over $\mb B_{\mc F_{\kappa,1/s}(\Omega)}$ proves (2).

\emph{Proof of (3).} Keep the indices $\sigma,\theta,\gamma,\zeta$ from (2). Choose an admissible extension $U$ satisfying
\[
\|e^{\kappa|\cdot|^s}\widehat U\|_{L^2(\mb R^d)} \le2\|u\|_{\kappa,s},
\]
and set $f:=U|_{\widetilde\Omega}$. Proposition \ref{Proposition: Fourier source class embeds into kernel interpolation space} implies
\begin{equation}
\label{eq: source superexp interpolation improved theorem}
\|f\|_{\mc H_k^\sigma(\widetilde\Omega)}
\lesssim_{\kappa,\bar\kappa,d,s}\|u\|_{\kappa,s}.
\end{equation}

Fix $\xi_s:=1-1/s\in(0,2-2/s)$ and put
\[
A_M:=\frac{(M/2)^{1/d}}
{e(\xi_s^{-1}\gamma^{-1}+3/2)}.
\]
Proposition \ref{Proposition: upper bound for d(lambda;theta,gamma)}(3) provides $c_3>0$ such that
\[
\lambda_0:=c_3\Upsilon(A_M)^{-(1-\theta)\xi_s d} \quad\text{satisfies}\quad d(\lambda_0;\theta,\gamma)\le M
\]
for all sufficiently large $M$. Set
\[
b_3:=\frac{2^{-1/d}}
{e\bigl(\xi_s^{-1}\gamma^{-1}+3/2\bigr)},
\qquad a_\lambda:=\frac12\gamma\xi_s b_3.
\]
Then $A_M=b_3M^{1/d}$ and
\[
(1-\theta)\xi_s d A_M\ln A_M =\gamma\xi_s b_3M^{1/d}\bigl(\ln M+d\ln b_3\bigr) \ge a_\lambda M^{1/d}\ln M
\]
for all sufficiently large $M$. Consequently,
\[
\lambda:=c_3\exp(-a_\lambda M^{1/d}\ln M)\ge\lambda_0
\]
for all sufficiently large $M$. The effective dimension is nonincreasing in $\lambda$, so \eqref{ineq: d lambda smaller than M in interpolation improved theorem} holds for this value of $\lambda$.

The same restriction and embedding argument as in (2) yields
\[
\|u-u_N\|_{W^{t,p}(\Omega)} \lesssim_{\kappa,\bar\kappa,d,s,t,p,R} \|u\|_{\kappa,s}\exp(-a_{\mathrm e}M^{1/d}\ln M),
\]
where $a_{\mathrm e}:=(\sigma-\zeta)a_\lambda/[2(1-\theta)]>0$. Moreover,
\[
|\alpha|_{\ell^2(q^*_{\lambda,\varrho})} \lesssim_{\kappa,\bar\kappa,d,s,R} \|u\|_{\kappa,s}N^{-1/2} \exp(a_{\mathrm c}M^{1/d}\ln M),
\]
where $a_{\mathrm c}:=(1-\sigma)a_\lambda/[2(1-\theta)]\ge0$. Taking the supremum over $\mb B_{\mc F_{\kappa,s}(\Omega)}$ proves (3).

\emph{Proof of (4).} Choose $U\in L^2(\mb R^d)$ such that $U|_\Omega=u$, $\operatorname{supp}\widehat U\subset[-S,S]^d$, and $\|U\|_{L^2(\mb R^d)}\le2\|u\|_{\mc B_S(\Omega)}$. Put $f:=U|_{\widetilde\Omega}$. The Fourier characterization of the bandlimited RKHS gives
\begin{equation}
\label{eq: source bandlimited interpolation improved theorem}
\|f\|_{\mc H_k(\widetilde\Omega)}
\lesssim_{d,S}\|U\|_{L^2(\mb R^d)}.
\end{equation}
Set $\theta=\gamma=1/2$, $\sigma=1$, and $\zeta=1/4$. Fix $\xi=1$ and put
\[
A_M:=\frac{(M/2)^{1/d}}{e(\xi^{-1}\gamma^{-1}+3/2)}.
\]
Proposition \ref{Proposition: upper bound for d(lambda;theta,gamma)}(4) provides a constant $c_4>0$ such that $\lambda_0:=c_4\Upsilon(A_M)^{-(1-\theta)\xi d}$ satisfies $d(\lambda_0;\theta,\gamma)\le M$ for all sufficiently large $M$. Here
\[
A_M=b_4M^{1/d}, \qquad b_4:=\frac{2^{-1/d}}{e(2+3/2)} =\frac{2^{1-1/d}}{7e}.
\]
Set $a_\lambda:=b_4/4=2^{1-1/d}/(28e)$. Since
\[
(1-\theta)\xi d A_M\ln A_M =\frac{b_4}{2}M^{1/d}\bigl(\ln M+d\ln b_4\bigr) \ge a_\lambda M^{1/d}\ln M
\]
for all sufficiently large $M$. Thus $\lambda:=c_4\exp(-a_\lambda M^{1/d}\ln M)\ge\lambda_0$, and \eqref{ineq: d lambda smaller than M in interpolation improved theorem} holds.

Using \eqref{ineq: abstract consequence interpolation improved theorem}, \eqref{eq: restriction from outer cube to Lipschitz domain}, \eqref{eq: bandlimited embedding interpolation improved theorem}, and \eqref{eq: source bandlimited interpolation improved theorem}, we obtain
\[
\|u-u_N\|_{W^{t,p}(\Omega)} \lesssim_{d,t,p,S,R}\|u\|_{\mc B_S(\Omega)} \exp(-a_{\mathrm e}M^{1/d}\ln M), \qquad a_{\mathrm e}:=\frac34a_\lambda.
\]
Since $\sigma=1$, the coefficient estimate contains no power of $\lambda$:
\[
|\alpha|_{\ell^2(q^*_{\lambda,\varrho})} \lesssim_{d,S,R}\|u\|_{\mc B_S(\Omega)}N^{-1/2}.
\]
Taking the supremum over $\mb B_{\mc B_S(\Omega)}$ proves (4) and completes the proof.
\end{proof}

\subsection{Effective dimension for growing uniform supports}
\label{Subsection: growing bandwidth effective dimension}

\begin{lemma}[A non-asymptotic effective-dimension bound for growing bandwidth]
\label{Lemma: growing bandwidth PSWF effective dimension}
Let $0<a<1$ and $R,S>0$, put $c=SR$, and define $\mr{d}\tau_S(w)=(2S)^{-d}1_{Q_S}(w)\mr{d}w$. Let $\Sigma_S$ be the integral operator on $L^2(Q_R)$ generated by the kernel associated with $\tau_S$. There is a constant $C_{a,d}\ge1$, independent of $S$, $R$, and $J$, with the following property: for every integer $J\ge \max\{2c,2\}$, the parameter $\lambda_{J,S}$ defined by \eqref{eq: lambda growing bandwidth PSWF} satisfies $d(\lambda_{J,S};1-a,a)\le 2J^d$.
\end{lemma}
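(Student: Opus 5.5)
Since $\tau_S$ is a product measure, the kernel factorizes, and after the scaling used in Case (4) of Proposition~\ref{Proposition: upper bound for d(lambda;theta,gamma)} the eigenvalues of $\Sigma_S$ on $L^2(Q_R)$ are
\[
(\pi/S)^d\prod_{p=1}^d\lambda_{j_p}(c),\qquad j\in\mb N_+^d,
\]
where $\lambda_j(c)$ are the eigenvalues of the one-dimensional sinc operator on $(-1,1)$ with kernel $\sin(cx)/(\pi x)$. I would first use the two-sided bound \eqref{ineq: d(lambda;theta,gamma) bounded by eigenvalues of Sigma} with $\theta=1-a$ and $\gamma=a$, so that $\gamma/(1-\theta)=1$. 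This gives
\[
d(\lambda;1-a,a)\le\sum_{j\in\mb N_+^d}\min\Bigl\{2,\;2\mu_j^{a}\lambda^{-1}\Bigr\},\qquad \mu_j=(\pi/S)^d\prod_p\lambda_{j_p}(c).
\]
The target is therefore to show that the tail outside the box $\{1,\dots,J\}^d$ contributes at most $J^d$. The box itself has $J^d$ indices, each bounded by $2$, which is too much; so I would instead count only indices with some $j_p>J$ in the tail and bound the box by $J^d$ using the $\lambda_j^\gamma/(\lambda_j^\gamma+\lambda)\le1$ form. This gives $d\le J^d+\text{tail}$.

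The key non-asymptotic input, and the main obstacle, is a \emph{uniform-in-$c$} upper bound on the prolate eigenvalues. I would use Osipov's bound in the form
\[
\lambda_j(c)\le C\,(ec/(2j))^{2j}\quad\text{or}\quad\lambda_{j}(c)\le C(1+c)\,(c/j)^{2j}\,j^{-1}
\]
for $j\ge 2c$, from \cite{Osipov2012ExplicitUpper,OsipovRokhlinXiao2013PSWF}, together with the trivial bound $\lambda_j(c)\le1$ for all $j$. One must check carefully that the constant in Osipov's bound does not depend on $c$. Writing $\beta_j:=(c/j)^{2j}$, which is decreasing in $j\ge c$, the tail is bounded as follows. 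For each index with $j_1>J$, say, and arbitrary other coordinates, we have $\mu_j^a\le(\pi/S)^{da}\bigl(C(1+c)j_1^{-1}\beta_{j_1}\bigr)^a$. Summing over the other $d-1$ coordinates cannot use only $\lambda\le1$, since the sum would diverge. I would instead sum $\lambda_{j_p}(c)^a$ over $j_p$, which is bounded by $2c+1+C\sum_{j\ge2c}(1/2)^{2aj}\lesssim_a J$ because $J\ge2c$. Hence
\[
\text{tail}\le 2d\,\lambda^{-1}(\pi/S)^{da}\bigl(C_aJ\bigr)^{d-1}\sum_{j_1>J}\bigl(C(1+c)j_1^{-1}\beta_{j_1}\bigr)^a .
\]

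Next, the sum over $j_1>J$ is geometric-dominated. For $j_1\ge J\ge2c$, the ratio $\beta_{j+1}/\beta_j\le(c/j)^2\le1/4$, so the sum is $\lesssim_a (1+c)^aJ^{-a}\beta_J^a$. Inserting $\lambda=\lambda_{J,S}$ from \eqref{eq: lambda growing bandwidth PSWF}, the factors $(\pi/S)^{da}$, $(1+c)^a$ and $(c/J)^{2aJ}=\beta_J^a$ cancel exactly. This leaves a tail $\lesssim_{a,d}C_{a,d}^{-1}J^{d-1}\cdot J\cdot J^{-a}\le C'_{a,d}C_{a,d}^{-1}J^d$. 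Choosing $C_{a,d}\ge \max\{1,C'_{a,d}\}$ gives a tail of at most $J^d$, and hence $d(\lambda_{J,S};1-a,a)\le2J^d$.

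I expect the delicate point to be making the one-dimensional prolate bound fully explicit and uniform for $j\ge 2c$, including the regime $c\to0$ (where $J\ge2$ is used) and the powers of $j$. If the exponent of $j$ in Osipov's bound differs from the one above, I would absorb it via the $J^{-1}$ factor in $\lambda_{J,S}$ and the slack $J^{-a}$.
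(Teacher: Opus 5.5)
Your proposal is correct and is essentially the paper's argument. The shared steps are: the tensor-product eigenvalues $(\pi/S)^d\prod_\ell\mu_{n_\ell}(c)$; Osipov's explicit (hence $c$-uniform) bound, reduced to $\mu_n(c)\le\frac{2c}{\pi}(c/n)^{2n}$; the exact identity $d(\lambda;1-a,a)=\sum_{\boldsymbol n}\Lambda_{\boldsymbol n}^a/(\Lambda_{\boldsymbol n}^a+\lambda)$, where no factor $2$ is needed since $\gamma/(1-\theta)=1$; a split into the box $\{0,\ldots,J-1\}^d$ and a union-bounded tail controlled by $\sum_n\mu_n(c)^a\le C_aJ$ and a geometric series; and exact cancellation against $\lambda_{J,S}$.

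One small slip concerns indexing. With your $1$-based indices, $\lambda_j(c)\sim c^{2j-1}$ as $c\to0$, so your proposed forms with $(c/j)^{2j}$ fail for small $c$. They must be shifted to $\lambda_j(c)\le\frac{2c}{\pi}\bigl(c/(j-1)\bigr)^{2(j-1)}$. Once you drop the factor $J^{-a}$ (it is not needed, since $\lambda_{J,S}^{-1}$ supplies exactly one power of $J$), your tail bound $\lesssim_a(1+c)^a\beta_J^a$ with $\beta_J=(c/J)^{2J}$ is exactly what this corrected estimate gives.
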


\begin{proof}
Let $\mu_n(c)$, $n\ge0$, be the decreasing eigenvalues of the one-dimensional time-frequency concentration operator
\[
(Q_cf)(x)=\int_{-1}^1 \frac{\sin(c(x-y))}{\pi(x-y)}f(y)\mr{d}y.
\]
If $\lambda_n(c)$ is the corresponding eigenvalue of the finite Fourier transform, then $\mu_n(c)=\frac{c}{2\pi}|\lambda_n(c)|^2$. This relation is recorded in \cite[Equation (9)]{Osipov2013CertainInequalities}. The explicit prolate spheroidal wave-function estimate in \cite[Theorem 4]{Osipov2012ExplicitUpper}, also stated in \cite[Theorem 3.20]{OsipovRokhlinXiao2013PSWF}, is
\[
|\lambda_n(c)| \le \frac{\sqrt{\pi}\,c^n(n!)^2}
{(2n)!\,\Gamma(n+3/2)}.
\]
For $n\ge1$,
\[
\Gamma(n+3/2) =\frac{\sqrt\pi}{2}\prod_{k=1}^n\left(k+\frac12\right) \ge\frac{\sqrt\pi}{2}n!, \qquad (2n)!=n!\prod_{k=1}^n(n+k)\ge(n!)n^n.
\]
Consequently,
\begin{equation}
\label{eq: elementary explicit PSWF tail}
|\lambda_n(c)|\le2\left(\frac cn\right)^n,
\qquad
\mu_n(c)\le\frac{2c}{\pi}\left(\frac cn\right)^{2n}.
\end{equation}

Let $q=c/J\le1/2$. From \eqref{eq: elementary explicit PSWF tail}, for $n\ge J$, $\mu_n(c)^a\le C_a(1+c)^a q^{2an}$. Therefore
\begin{equation}
\label{eq: one dimensional PSWF tail sum}
T_J:=\sum_{n=J}^{\infty}\mu_n(c)^a
\le C_a(1+c)^a q^{2aJ}.
\end{equation}
Since $0<\mu_n(c)<1$ and the second term below is controlled by \eqref{eq: one dimensional PSWF tail sum},
\begin{equation}
\label{eq: one dimensional PSWF full sum}
A_c:=\sum_{n=0}^{\infty}\mu_n(c)^a
\le J+T_J\le C_aJ.
\end{equation}

By the product-kernel construction \cite[Theorem~13]{Berlinet2004RKHS}, $\Sigma_S$ is the $d$-fold tensor product of the corresponding one-dimensional integral operator. Hence, its eigenvalues are
\[
\Lambda_{\boldsymbol n} =\left(\frac{\pi}{S}\right)^d \prod_{\ell=1}^d\mu_{n_\ell}(c), \qquad \boldsymbol n\in\mb N_0^d.
\]
Let $\mathcal I_J=\{0,\ldots,J-1\}^d$. A union bound together with \eqref{eq: one dimensional PSWF tail sum} and \eqref{eq: one dimensional PSWF full sum} gives
\[
\sum_{\boldsymbol n\notin\mathcal I_J} \prod_{\ell=1}^d\mu_{n_\ell}(c)^a \le dT_JA_c^{d-1} \le C'_{a,d}J^{d-1}(1+c)^aq^{2aJ}.
\]
Since $x/(x+\lambda)\le\min\{1,x/\lambda\}$,
\[
\begin{aligned}
d(\lambda_{J,S};1-a,a)
&=\sum_{\boldsymbol n\in\mb N_0^d}
\frac{\Lambda_{\boldsymbol n}^{a}}
{\Lambda_{\boldsymbol n}^{a}+\lambda_{J,S}}\\
&\le J^d+
\left(\frac{\pi}{S}\right)^{da}\lambda_{J,S}^{-1}
\sum_{\boldsymbol n\notin\mathcal I_J}
\prod_{\ell=1}^d\mu_{n_\ell}(c)^a
\le2J^d.
\end{aligned}
\]
Choosing $C_{a,d}\ge C'_{a,d}$ in \eqref{eq: lambda growing bandwidth PSWF} completes the proof.
\end{proof}

\subsection{Proof of the growing-bandwidth leverage estimate}
\label{Subsection: proof growing bandwidth leverage approximation}

\begin{proof}[Proof of Proposition~\ref{Proposition: leverage approximation growing bandwidth target}]
Applying \eqref{eq: RKHS feature-space characterization} with $\tau=\tau_S$ gives
\[
\|U|_{Q_R}\|_{\mc H_{k_S}(Q_R)} \le (S/\pi)^{d/2}\|U\|_{L^2(\mb R^d)}.
\]

Since $\widehat k_S=(2\pi)^{-d/2}(\pi/S)^d1_{(-S,S)^d}$, Proposition \ref{Proposition: relation between fractional Sobolev and interpolation space of RKHS on bounded domains}(3), applied on $Q_R$ with $b_S=(\pi/S)^d$ and then restricted to $\Omega$, using the Sobolev order $t+(d+1)/2$, gives
\begin{equation}
\label{eq: explicit bandlimited interpolation embedding growing bandwidth}
\|f|_\Omega\|_{W^{t,p}(\Omega)}
\le C_{d,t,p,a,R}
(\pi/S)^{d(1-a)/2}
(1+dS^2)^{(2t+d+1)/4}
\|f\|_{\mc H_{k_S}^{1-a}(Q_R)}.
\end{equation}

Lemma~\ref{Lemma: growing bandwidth PSWF effective dimension} gives $d(\lambda_{J,S};1-a,a)\le2J^d$. Leverage-score sampling gives $d_{\max}(q^*_{\lambda_{J,S},\varrho}(\,\cdot\,;1-a,a),\allowbreak \lambda_{J,S};1-a,a)\allowbreak=d(\lambda_{J,S};1-a,a)$. Hence \eqref{eq: sample condition growing bandwidth leverage} makes $\lambda_{J,S}$ admissible in the sense of \eqref{ineq: N > d_max(lambda) ln(d(lambda)/delta), interpolation}; hence the associated critical penalty satisfies $\varsigma_N\le\lambda_{J,S}$. Applying Theorem \ref{Theorem: abstract error estimate for regression in interpolation spaces}(1) with source index $1$ and regression and error indices $1-a$, and then using homogeneity, gives
\[
\|U-u_{N,S}^{\varrho}\|_{\mc H_{k_S}^{1-a}(Q_R)} \le16\lambda_{J,S}^{1/2} \|U|_{Q_R}\|_{\mc H_{k_S}(Q_R)}.
\]
Combining this estimate with \eqref{eq: explicit bandlimited interpolation embedding growing bandwidth} and using $a+(1-a)=1$ cancels all powers $(\pi/S)^{da/2}$, $(\pi/S)^{d(1-a)/2}$, and $(S/\pi)^{d/2}$. This proves \eqref{eq: finite bandwidth leverage error explicit}. For each fixed $\varrho$, the sampling event and $u_{N,S}^{\varrho}$ are independent of $t$ and $p$, so the estimate holds simultaneously for all the stated error norms. The coefficient bound follows from the coefficient estimate in the same theorem and $|\alpha^{\varrho}|_{\ell^2(q^*_{\lambda_{J,S},\varrho} (\,\cdot\,;1-a,a))} =|\boldsymbol\beta^*|$; the power of $\lambda_{J,S}$ is zero because the source index is $s=1$. This completes the proof.
\end{proof}

\subsection{Proof of the growing uniform reference measure theorem}
\label{Subsection: proof growing uniform reference measures}

\begin{proof}[Proof of Theorem~\ref{Theorem: optimal growing bandwidth leverage rates}]
\emph{Representation-wise sampling event.} Fix $\varrho\in\{\operatorname{ph},\operatorname{cx}\}$ throughout the proof. Fix $a=1/2$ in Lemma \ref{Lemma: growing bandwidth PSWF effective dimension} and Proposition \ref{Proposition: leverage approximation growing bandwidth target}. For each case, set $S=S_J$. In (1) and (2), $S_JR/J\le1/4$. In (3),
\[
\frac{S_JR}{J} \le\frac14J^{-(s-1)/s}(\log J)^{1/s}\longrightarrow0.
\]
Thus $J\ge\max\{2S_JR,2\}$ for all sufficiently large $J$, and Proposition \ref{Proposition: leverage approximation growing bandwidth target} applies. Let $\mc A_J^{\varrho}$ be its sampling event. By the uniformity in Theorem \ref{Theorem: abstract error estimate for regression in interpolation spaces} and homogeneity, $\mc A_J^{\varrho}$ depends only on the sampled features, and \eqref{eq: finite bandwidth leverage error explicit} holds on $\mc A_J^{\varrho}$ for every $V\in L^2(\mb R^d)$ satisfying $\operatorname{supp}\widehat V\subset\overline{Q_{S_J}}$. In each case, $u_{N,J}^{\varrho}$ is obtained by applying that proposition to the bandlimited target constructed below.

\emph{Sobolev targets.} Apply \cite[Theorem D.1]{MingYu2026Spectral} on $\Omega$ with $\varepsilon=S_J^{-1}$. Using the fixed construction in its proof, choose a single approximant $V_J$, independent of $t$ and $p$. For $S_J\ge1$, that theorem gives $\operatorname{supp}\widehat V_J\subset B_{S_J}\subset\overline{Q_{S_J}}$ and, with $\beta_p:=d(1/p-1/2)_+$,
\begin{equation}
\label{eq: Wsp bandlimited smoothing growing leverage}
\begin{aligned}
\|u-V_J\|_{W^{t,p}(\Omega)}
&\le C S_J^{-(s-t)}\|u\|_{W^{s,p}(\Omega)},\\
\|V_J\|_{L^2(\mb R^d)}
&\le C S_J^{\beta_p}\|u\|_{W^{s,p}(\Omega)},
\end{aligned}
\end{equation}
for all $1\le p\le\infty$ such that $u\in W^{s,p}(\Omega)$ and $0\le t\le s$. Since $S_J=J/(4R_*)$ and $S_JR/J\le1/4$, Proposition \ref{Proposition: leverage approximation growing bandwidth target} and \eqref{eq: Wsp bandlimited smoothing growing leverage} imply
\[
\begin{aligned}
\|u-u_{N,J}^{\varrho}\|_{W^{t,p}(\Omega)}
&\le \|u-V_J\|_{W^{t,p}(\Omega)}
   +\|V_J-u_{N,J}^{\varrho}\|_{W^{t,p}(\Omega)}\\
&\le C\|u\|_{W^{s,p}(\Omega)}
\left[J^{-(s-t)}+J^{t+d/2+\beta_p+a/2}4^{-aJ}\right].
\end{aligned}
\]
The quantity $J^{s+d/2+\beta_p+a/2}4^{-aJ}$ is bounded for $J\ge1$; hence the second term is bounded by a constant times $J^{-(s-t)}$. Since $u\in\mb B_{W^{s,p}(\Omega)}$, this proves (1).

\emph{Fourier-regular targets.} Put $q=1/s$ in (2) and $q=s$ in (3). By the definition of the quotient norm, choose an extension $U\in L^2(\mb R^d)$, independent of $t,p$, such that
\[
U|_\Omega=u, \qquad \|e^{\kappa_0|\cdot|^q}\widehat U\|_2 \le2\|u\|_{\kappa_0,q}.
\]
For $S>0$, define $\widehat U_S=1_{Q_S}\widehat U$. With $\ell=t+(d+1)/2$ and $\kappa=\kappa_0/2$, Sobolev embedding, Plancherel's identity, and the definition of $U_S$ give
\begin{equation}
\label{eq: weighted Fourier tail growing leverage}
\begin{aligned}
\|u-U_S\|_{W^{t,p}(\Omega)}
&\le C\|U-U_S\|_{H^\ell(\mb R^d)}\\
&\le C\sup_{w\notin Q_S}
 (1+|w|^2)^{\ell/2}e^{-\kappa_0|w|^q}
 \|e^{\kappa_0|\cdot|^q}\widehat U\|_2\\
&\le C\|u\|_{\kappa_0,q}e^{-\kappa S^q}.
\end{aligned}
\end{equation}
The last inequality follows because $(1+|w|^2)^{\ell/2}e^{-(\kappa_0-\kappa)|w|^q}$ is bounded and $|w|\ge S$ outside $Q_S$. Moreover,
\[
\|U_S\|_{L^2(\mb R^d)} \le\|U\|_{L^2(\mb R^d)} \le2\|u\|_{\kappa_0,q}.
\]
We apply Proposition \ref{Proposition: leverage approximation growing bandwidth target} to $U_{S_J}$ and use
\[
\|u-u_{N,J}^{\varrho}\|_{W^{t,p}(\Omega)} \le\|u-U_{S_J}\|_{W^{t,p}(\Omega)} +\|U_{S_J}-u_{N,J}^{\varrho}\|_{W^{t,p}(\Omega)}.
\]

In (2), $S_J=J/(4R_*)$. Equation \eqref{eq: weighted Fourier tail growing leverage} and \eqref{eq: finite bandwidth leverage error explicit} give, respectively,
\[
C\|u\|_{\kappa_0,1/s}e^{-cJ^{1/s}} \quad\text{and}\quad C\|u\|_{\kappa_0,1/s}J^A4^{-aJ},
\]
for some $A=A(d,t,p,a)\ge0$. Since $J^A4^{-aJ}\le C e^{-cJ^{1/s}}$ for $s\ge1$ and $u\in\mb B_{\mc F_{\kappa_0,1/s}(\Omega)}$, assertion (2) follows, including the endpoint $s=1$.

In (3), $S_J=(J\log J)^{1/s}/(4R_*)$, and \eqref{eq: weighted Fourier tail growing leverage} is bounded by $C\|u\|_{\kappa_0,s}e^{-cJ\log J}$. Furthermore,
\[
\log\left(\frac{J}{S_JR}\right) \ge \log4+\frac{s-1}{s}\log J-\frac1s\log\log J \ge\frac{s-1}{2s}\log J
\]
for all sufficiently large $J$. Therefore,
\[
\left(\frac{S_JR}{J}\right)^{aJ} \le \exp\left(-\frac{a(s-1)}{2s}J\log J\right).
\]
The other factors in \eqref{eq: finite bandwidth leverage error explicit} grow at most algebraically in $J$ and $\log J$ and are absorbed by reducing the exponential constant. Since $u\in\mb B_{\mc F_{\kappa_0,s}(\Omega)}$, this proves (3). The extension $U$, its truncations, $\mc A_J^{\varrho}$, and $u_{N,J}^{\varrho}$ are independent of $t,p$, which proves the stated simultaneity in (2) and (3).

\emph{Conversion to $N$.}
To justify the stated sample-size choice, set $L=N/\log(N/\delta)$ and choose $J=\lfloor c_0L^{1/d}\rfloor$, where $c_0^d\le1/12$. For all sufficiently large $N$, one has
\[
\log(28J^d/\delta)\le2\log(N/\delta),
\]
because $J^d\le N$ and $N/\delta\ge28$ in this regime, and hence
\[
6J^d\log(28J^d/\delta) \le12c_0^dN\le N.
\]
Thus \eqref{eq: sample condition growing bandwidth leverage} holds. Also, once $c_0L^{1/d}\ge2$, $(c_0/2)L^{1/d}\le J\le c_0L^{1/d}$. Hence $J\asymp L^{1/d}$ and $\log J\asymp\log L$, and substitution in the three $J$-rates gives \eqref{eq: optimal rates in sample number N}, which completes the proof.
\end{proof}

\section{Technical Lemmas} \label{}

We prove the following technical lemma to control the summation series.
\begin{lemma} \label{Lemma: control sum series, Sobolev}
For arbitrary $C, \lambda>0$, $s>1$, it holds
\begin{equation*}
\sum_{j=1}^{\infty} \frac{C j^{-s}}{C j^{-s} + \lambda}
\le \frac{2s-1}{s-1}(C / \lambda)^{1/s}.
\end{equation*}
\end{lemma}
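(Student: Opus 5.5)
The plan is to split the sum at the index where the two terms in the denominator balance, i.e. at $j_0:=(C/\lambda)^{1/s}$. Each summand is at most $\min\{1,\,Cj^{-s}/\lambda\}$, because $x/(x+\lambda)\le\min\{1,x/\lambda\}$ for $x\ge0$. We then bound the head and the tail separately.

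\emph{Head.} For $j\le j_0$ we use the bound $1$. The number of such indices is at most $\lfloor j_0\rfloor\le j_0$, so this part contributes at most $j_0=(C/\lambda)^{1/s}$.

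\emph{Tail.} For $j>j_0$ we use $Cj^{-s}/\lambda$ and compare the sum with an integral. Since $j^{-s}$ is decreasing, $\sum_{j>j_0}j^{-s}\le m^{-s}+\int_{m}^{\infty}x^{-s}\,\mr dx$, where $m=\lfloor j_0\rfloor+1$ is the first index beyond $j_0$. Together with $m>j_0$ this gives
\[
\sum_{j>j_0}\frac{Cj^{-s}}{\lambda}\le \frac{C}{\lambda}\Bigl(j_0^{-s}+\frac{j_0^{1-s}}{s-1}\Bigr).
\]
The first term is $1$, and the second term equals $j_0/(s-1)$ because $C/\lambda=j_0^{s}$.

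\emph{The extra $1$.} If we simply added the pieces, the stray $1$ would leave a total of $j_0+1+j_0/(s-1)$, which is not quite the claimed bound. We absorb it as follows.
\begin{itemize}
\item When $j_0\ge1$, we have $1\le j_0$. Rather than absorbing it crudely, it is cleaner to bound the head together with the single term at $j=m$: that term is at most $1$ and there are at most $\lfloor j_0\rfloor+1\le 2j_0$ indices in total. Then only the integral tail $\int_{j_0}^\infty$ remains, giving $2j_0+j_0/(s-1)=\frac{2s-1}{s-1}j_0$.
\item When $j_0<1$, the head is empty and the tail starts at $j=1$. Then $\sum_{j\ge1}j^{-s}\le 1+\frac{1}{s-1}=\frac{s}{s-1}$. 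So the whole sum is at most $\frac{C}{\lambda}\cdot\frac{s}{s-1}=j_0^{s}\frac{s}{s-1}\le \frac{s}{s-1}j_0$, using $j_0^{s}\le j_0$ for $j_0<1$. This is at most $\frac{2s-1}{s-1}j_0$, since $s\le 2s-1$.
\end{itemize}

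The main (minor) obstacle is this bookkeeping of the boundary term at the split index. Splitting into the two cases $j_0\ge1$ and $j_0<1$ as above is what makes the constant come out as $\frac{2s-1}{s-1}$.
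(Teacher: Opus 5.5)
Your proof is correct and follows essentially the same route as the paper: you bound each summand by $\min\{1, Cj^{-s}/\lambda\}$, split at $(C/\lambda)^{1/s}$, and treat $\lambda\le C$ and $\lambda>C$ separately, where the paper counts $\lceil (C/\lambda)^{1/s}\rceil$ head terms and you count $\lfloor j_0\rfloor+1$. Your explicit step $j_0^{s}\le j_0$ for $j_0<1$ spells out the final conversion that the paper leaves implicit in its second case.
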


\begin{proof}
Note that $$\frac{C j^{-s}}{C j^{-s} + \lambda} \le \min(1, Cj^{-s}/\lambda)$$. For $\lambda\le C$, a direct calculation yields
\begin{equation*}
\begin{aligned}
\sum_{j=1}^{\infty} \frac{C j^{-s}}{C j^{-s} + \lambda} & \le \left\lceil (C / \lambda)^{1/s}\right\rceil + \frac{C}{\lambda}\int_{(C / \lambda)^{1/s}}^{\infty} t^{-s} d t \\
& = \left\lceil (C / \lambda)^{1/s}\right\rceil + \frac{1}{s-1}(C / \lambda)^{1/s} \\
& \le \frac{2s-1}{s-1}(C / \lambda)^{1/s} .
\end{aligned}
\end{equation*}
For $\lambda> C$,
\begin{equation*}
\begin{aligned}
\sum_{j=1}^{\infty} \frac{C j^{-s}}{C j^{-s} + \lambda} & \le  \frac{C}{\lambda}\sum_{j=1}^{\infty} j^{-s} \le \frac{sC}{(s-1)\lambda} ,
\end{aligned}
\end{equation*}
which completes the proof.
\end{proof}

We prove the following technical lemma to control the summation series.

\begin{lemma} \label{Lemma: control sum series, Gevrey classes s>1 and Analytic class2}
For any $\tilde{\kappa}>0$, $s\ge 1$ and $0< \lambda\le e^{-\max(\tilde{\kappa},1)}$, it holds
\begin{equation*}
\begin{aligned}
\sum_{j \in \mb{N}_{+}^{d}}\frac{e^{-\tilde{\kappa}|j|_{1}^{1/s}}}{ e^{-\tilde{\kappa}|j|_{1}^{1/s}} + \lambda}
& \le (2+1_{\{d=1\}}) \left[sd \tilde{\kappa}^{-1}\ln(1 / \lambda) \right]^{sd} .
\end{aligned}
\end{equation*}
\end{lemma}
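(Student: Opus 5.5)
We plan to bound each summand by $\min\{1,\lambda^{-1}e^{-\tilde\kappa|j|_1^{1/s}}\}$ and split the lattice $\mb N_+^d$ at the level set where $e^{-\tilde\kappa|j|_1^{1/s}}=\lambda$. Put $T:=(\tilde\kappa^{-1}\ln(1/\lambda))^{s}$; the hypothesis $\lambda\le e^{-\max(\tilde\kappa,1)}$ gives $\ln(1/\lambda)\ge\tilde\kappa$, so $T\ge1$, and also $\ln(1/\lambda)\ge1$. All summands depend only on $n=|j|_1$, and the number of $j\in\mb N_+^d$ with $|j|_1=n$ is $\binom{n-1}{d-1}\le n^{d-1}/(d-1)!$.

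The first step treats the head $\{|j|_1\le T\}$, where each term is at most $1$. Its cardinality is $\#\{j\in\mb N_+^d:|j|_1\le T\}\le T^d/d!$. We will then check that $T^d/d!\le[sd\tilde\kappa^{-1}\ln(1/\lambda)]^{sd}$. This holds because $T^d=(\tilde\kappa^{-1}\ln(1/\lambda))^{sd}$ and $(sd)^{sd}\ge1$, so the head uses only a fraction of the budget.

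The second step treats the tail $\{n>T\}$, which contributes at most
\[
\lambda^{-1}\sum_{n>T}\frac{n^{d-1}}{(d-1)!}e^{-\tilde\kappa n^{1/s}}.
\]
We will compare this sum with an integral, noting that $x\mapsto x^{d-1}e^{-\tilde\kappa x^{1/s}}$ is unimodal, so sum and integral differ by at most one maximal term. Substituting $y=\tilde\kappa x^{1/s}$ gives $x=(y/\tilde\kappa)^s$ and $dx=s\tilde\kappa^{-s}y^{s-1}dy$. The integral becomes
\[
s\tilde\kappa^{-sd}\int_{L}^{\infty}y^{sd-1}e^{-y}\,dy,\qquad L:=\ln(1/\lambda),
\]
an upper incomplete Gamma function $\Gamma(sd,L)$. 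Multiplying by $\lambda^{-1}=e^{L}$ removes the exponential factor, leaving $e^{L}\Gamma(sd,L)$ to estimate.

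We expect the main obstacle to be this incomplete-Gamma tail. The plan is the standard bound $e^{L}\Gamma(a,L)\le \sum_{k<\lceil a\rceil}\frac{\Gamma(a)}{\Gamma(a-k)}L^{a-1-k}+(\text{remainder})$, or more crudely $e^{L}\Gamma(a,L)\le \int_0^\infty(L+u)^{a-1}e^{-u}du\le 2^{a-1}(L^{a-1}+\Gamma(a))$ with $a=sd$. Since $L\ge1$, this gives a bound of the form $C_a L^{a}$ with $C_a\lesssim 2^{a}\Gamma(a)\le (sd)^{sd}$. We then absorb the factors $s\tilde\kappa^{-sd}/(d-1)!$ together with the boundary maximal term into $[sd\tilde\kappa^{-1}L]^{sd}$ times $1$, or times $2$ when $d=1$.

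The case $d=1$ is where the missing combinatorial factor $1/(d-1)!$ offers no slack. This is why the statement carries the extra $1_{\{d=1\}}$, and we expect a separate short computation there. Combining the head and tail bounds then yields the stated constant $(2+1_{\{d=1\}})$.
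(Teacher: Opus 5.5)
Your architecture is the paper's: split at the level $T=(\tilde\kappa^{-1}L)^{s}$ (the paper's $r$), count lattice points in the head, and bound the tail by $\lambda^{-1}\Gamma(d)^{-1}\sum_n h(n)$ with $h(x)=x^{d-1}e^{-\tilde\kappa x^{1/s}}$. You then compare with $\int_T^\infty h$ via unimodality and reduce to $s\tilde\kappa^{-sd}e^{L}\Gamma(sd,L)$.

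Two details differ from the paper.
\begin{itemize}
\item The paper puts $|j|_1\le r+1$ into the head, paying the factor $1+1_{\{d\le2\}}$ in $\binom{\lfloor r\rfloor+1}{d}$, so that a decreasing tail needs no boundary term.
\item The paper bounds the interior maximum $h(x^*)$, with $x^*=[s(d-1)/\tilde\kappa]^{s}$, by $\frac{2e\tilde\kappa}{(e+1)s}\int_r^\infty h$, using concavity of $h$ on $[x^*,T_c]$.
\end{itemize}
Your ``absorb the maximal term'' needs a justification, because that term is multiplied by $\lambda^{-1}=e^{\tilde\kappa T^{1/s}}$. The justification is easy. For $x\ge T$ one has $\lambda^{-1}h(x)=x^{d-1}e^{-\tilde\kappa(x^{1/s}-T^{1/s})}\le x^{d-1}$. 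Hence the boundary term is at most $\max\{(T+1)^{d-1},(x^*)^{d-1}\}/(d-1)!$. Finally, $L\ge\max(\tilde\kappa,1)$ gives $(x^*)^{d-1}\le(sd)^{-s}[sd\tilde\kappa^{-1}L]^{sd}$. This is a genuine simplification of the paper's concavity step.

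The constant is what needs tightening, since the lemma is an explicit inequality. At $s=d=1$ your three contributions can each only be bounded by $T$: the head is $\lfloor T\rfloor$, the boundary term is $\le1\le T$, and the integral term equals $\tilde\kappa^{-1}e^{L}\Gamma(1,L)=\tilde\kappa^{-1}\le T$. Together they use the whole budget $3T$.

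Your crude estimate $e^{L}\Gamma(a,L)\le2^{a-1}(L^{a-1}+\Gamma(a))$ gives $2$ instead of $1$ at $a=1$, hence a total of $4T$, which exceeds the budget (e.g.\ $\tilde\kappa=L=1$). The claimed $2^{a}\Gamma(a)\le a^{a}$ is also false for $a\in[1,2)$. This endpoint matters: the case $s=1$ is exactly what is used for analytic targets and for the $\tanh$ singular-value bound.

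You need the sharper bounds the paper uses:
\begin{itemize}
\item $e^{L}\Gamma(b,L)\le bL^{b-1}$ for $1\le b\le2$. Your truncated expansion gives this once you drop the nonpositive remainder.
\item $e^{L}\Gamma(b,L)\le2^{b-2}(\Gamma(b)+1)L^{b-1}$ for $b\ge2$.
\end{itemize}
Then make the explicit checks. For $d=1$, check $2+s^{2}\le3s^{s}$ on $[1,2]$ and $2+2^{s-2}s(\Gamma(s)+1)\le3s^{s}$ for $s\ge2$. For $d\ge2$, use a Stirling comparison of $s2^{sd-2}(\Gamma(sd)+1)/\Gamma(d)$ with $(sd)^{sd}$.

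With these bounds your version closes. For $d=1$ it yields the same total $(2+s^{2})T$ (for $1\le s\le2$) as the paper. For $d\ge2$ the head, boundary and integral terms are at most $\tfrac18$, $\tfrac12$ and $\tfrac12$ of $[sd\tilde\kappa^{-1}L]^{sd}$, well within the factor $2$.
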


\begin{proof}
First, we denote $r=\tilde{\kappa}^{-s}\ln^{s} (1 / \lambda)\ge1$ and divide the summation into two parts
\begin{equation*}
\begin{aligned}
\operatorname{I} = \sum_{|j|_{1} \le r+1} \frac{e^{-\tilde{\kappa}|j|_{1}^{1/s}}}{ e^{-\tilde{\kappa}|j|_{1}^{1/s}} + \lambda}, \qquad
\operatorname{II} = \sum_{|j|_{1} > r+1} \frac{e^{-\tilde{\kappa}|j|_{1}^{1/s}}}{ e^{-\tilde{\kappa}|j|_{1}^{1/s}} + \lambda}.
\end{aligned}
\end{equation*}
Since each summand is less than $1$, we have \begin{equation} \label{ineq: bound for operatornameI}
\operatorname{I} \le\begin{pmatrix}
 \lfloor r\rfloor+1\\ d
\end{pmatrix} \le (1+1_{\{d\le2\}}) \Gamma(d+1)^{-1} r^{d}.
\end{equation}
To bound $\operatorname{II}$, note that the number of $j\in \mb{N}_{+}^{d}$ for which $|j|_{1} =T$ is $$\begin{pmatrix}
 T-1\\ d-1
\end{pmatrix} \le \Gamma(d)^{-1}(T-1)^{d-1} .$$
Then, with the above estimate, $\operatorname{II}$ is bounded by
\begin{equation*}
\begin{aligned}
\lambda^{-1} \sum_{T = \lfloor r\rfloor+2}^{\infty}  \sum_{|j|_{1} = T} e^{-\tilde{\kappa}|j|_{1}^{1/s}}
\le \lambda^{-1}\Gamma(d)^{-1} \sum_{T = \lfloor r\rfloor+2}^{\infty} T^{d-1}e^{-\tilde{\kappa}T^{1/s}} .
\end{aligned}
\end{equation*}
Denote $h(T) := T^{d-1}e^{-\tilde{\kappa}T^{1/s}}$ for $T>0$. We shall use the integral of $h$ to control the above summation. For $d=1$, since $h(T)$ is a positive decreasing function of $T$,
\begin{equation} \label{ineq: sum h(T) bounded by integral of h, simple}
\begin{aligned}
\sum_{T = \lfloor r\rfloor+2}^{\infty} h(T) & \le \int_{r}^{\infty} h(T) \mr{d}T .
\end{aligned}
\end{equation}
For $d\ge2$, as $T$ increases, $h(T)$ increases and then decreases, attaining its maximum at $T^{*}=[s(d-1)/\tilde{\kappa}]^{s}$. If $\tilde{\kappa}\ge s(d-1)$, then $T^{*}\le1$ and (\ref{ineq: sum h(T) bounded by integral of h, simple}) holds. Otherwise, $\tilde{\kappa}<s(d-1)$, $h(T)\le \int_{T}^{T+1}h(t)\mr{d}t$ for $T\le T^{*}-1$, and $h(T)\le \int_{T-1}^{T}h(t)\mr{d}t$ for $T\ge T^{*}+1$. There are one or two integers in $(T^{*}-1,T^{*}+1)$. If there is one, its function
value does not exceed $h(T^{*})$. If there are two, denoted by $T_{1}<T_{2}$, then $$\min(h(T_{1}),h(T_{2})) \le \int_{T_{1}}^{T_{2}}h(t)\mr{d}t, \qquad
\max(h(T_{1}),h(T_{2}))\le h(T^{*})$$. Therefore,
\begin{equation} \label{ineq: sum h(T) bounded by integral of h, plus max value h(T*)}
\begin{aligned}
\sum_{T = \lfloor r\rfloor+2}^{\infty} h(T) & \le \int_{r}^{\infty} h(T) \mr{d}T + h(T^{*}) 1_{\{\lfloor r\rfloor+1\le T^{*}\}}.
\end{aligned}
\end{equation}
The second term on the right-hand side above vanishes unless $r<T^{*}$. To control it with the first term, we employ the concavity of $h$. In fact,
\begin{equation*}
\begin{aligned}
h^{\prime\prime}(T) = \left[\left(\frac{\tilde{\kappa}}{s}T^{1/s}\right)^{2} - \left(2d-3+\frac{1}{s}\right)\frac{\tilde{\kappa}}{s}T^{1/s} + (d-1)(d-2)\right] T^{d-3}e^{-\tilde{\kappa}T^{1/s}} ,
\end{aligned}
\end{equation*}
and $h(T)$ is concave on $[T^{*},T_{c}]$, where $T_{c}$ satisfies
\begin{equation*}
\begin{aligned}
\frac{\tilde{\kappa}}{s}T_{c}^{1/s} & = \frac{1}{2}\left[\left(2d-3+\frac{1}{s}\right) + \sqrt{\left(2d-3+\frac{1}{s}\right)^{2}-4(d-1)(d-2)}\right] \\
& \ge d-1+\frac{1}{s} .
\end{aligned}
\end{equation*}
Denote $T_{a} = (s/\tilde{\kappa})^{s}(d-1+1/s)^{s}$ with $T_{a}\in(T^{*},T_{c}]$. Since $s\ge1$ and $\tilde{\kappa}< s(d-1)$, the length of $[T^{*},T_{a}]$ is at least
\begin{equation*}
\begin{aligned}
\left(\frac{s}{\tilde{\kappa}}\right)^{s}\left[\left(d-1+\frac{1}{s}\right)^{s}-(d-1)^{s}\right] & \ge \left(\frac{s}{\tilde{\kappa}}\right)^{s}(d-1)^{s-1} \ge \frac{s}{\tilde{\kappa}} ,
\end{aligned}
\end{equation*}
and $h(T_{a})\ge h(T^{*})/e$. Then, the concavity of $h$ on $[T^{*},T_{c}]$ ensures
\begin{equation*}
\begin{aligned}
\frac{1}{2}(T_{a}-T^{*})\left(h(T^{*})+h(T_{a})\right) & \le \int_{T^{*}}^{T_{a}} h(T) \mr{d}T ,
\end{aligned}
\end{equation*}
which gives that for $r\le T^{*}$,
\begin{equation} \label{ineq: control max value h(T*) with integral of h}
\begin{aligned}
h(T^{*}) & \le \frac{2e\tilde{\kappa}}{(e+1)s}\int_{T^{*}}^{T_{a}} h(T) \mr{d}T \\
& \le \frac{2e\tilde{\kappa}}{(e+1)s}\int_{r}^{\infty} h(T) \mr{d}T.
\end{aligned}
\end{equation}

It remains to estimate the integral of $h$. For $a\ge1$, the substitution $t=u+a$ gives
\[
\begin{aligned}
\int_{a}^{\infty} t^{b-1} e^{-t} \mr{d} t & \le e^{-a} \int_{0}^{\infty}\left[a^{b-1} + (b-1)a^{b-2}u\right] e^{-u} \mr{d} u \\
& \le b e^{-a} a^{b-1}, &&1\le b\le2,\\[2pt]
\int_{a}^{\infty} t^{b-1} e^{-t} \mr{d} t & \le 2^{b-2} e^{-a} \int_{0}^{\infty}\left(u^{b-1}+a^{b-1}\right) e^{-u} \mr{d} u \\
& \le 2^{b-2} e^{-a} (\Gamma(b)+1)a^{b-1}, &&b\ge2.
\end{aligned}
\]
With the above facts and the variable substitution $t = \tilde{\kappa}T^{1/s}$, we obtain
\begin{equation} \label{ineq: control the integral of h(T)}
\begin{aligned}
\int_{r}^{\infty} h(T) \mr{d}T & \le s\tilde{\kappa}^{-sd}\int_{\tilde{\kappa}r^{1/s}}^{\infty} t^{sd-1}e^{-t} \mr{d}t, \\
& \le [s 1_{\{sd<2\}} + 2^{sd-2}(\Gamma(sd)+1)1_{\{sd\ge2\}}] s\lambda \tilde{\kappa}^{-1}  r^{d-1/s}  ,
\end{aligned}
\end{equation}
where $\tilde{\kappa} r^{1/ s}= \ln (1 / \lambda) \ge 1$. Plugging this into (\ref{ineq: sum h(T) bounded by integral of h, simple}) yields for $d=1$,
\begin{equation*}
\begin{aligned}
\operatorname{II} & \le [s 1_{\{s<2\}} + 2^{s-2}(\Gamma(s)+1)1_{\{s\ge2\}}] s r ,
\end{aligned}
\end{equation*}
and then a combination with (\ref{ineq: bound for operatornameI}) gives
\begin{equation*}
\begin{aligned}
\sum_{j \in \mb{N}_{+}^{d}}\frac{e^{-\tilde{\kappa}|j|_{1}^{1/s}}}{ e^{-\tilde{\kappa}|j|_{1}^{1/s}} + \lambda}
& \le [2 + s^{2} 1_{\{s<2\}} + 2^{s-2}s(\Gamma(s)+1)1_{\{s\ge2\}}]r \le 3s^{s} r ,
\end{aligned}
\end{equation*}
where we used elementary facts $2 + s^{2}\le3s^{s}$ and for $s\ge2$,
\begin{equation*}
\begin{aligned}
2^{s-2}s(\Gamma(s)+1)+2 & \le 2^{s-3/2}\sqrt{\pi}s^{s+1/2}e^{-s+1/24}+ (2^{s-2}s+2) \le 3s^s
\end{aligned}
\end{equation*}
where the last inequality follows from (\ref{ineq: bound for Gamma function from Artin1964gamma}). Similarly, for $d\ge2$, a combination of (\ref{ineq: sum h(T) bounded by integral of h, plus max value h(T*)}), (\ref{ineq: control max value h(T*) with integral of h}) and (\ref{ineq: control the integral of h(T)}) yields
\begin{equation*}
\begin{aligned}
\sum_{T = \lfloor r\rfloor+2}^{\infty} h(T)
& \le \left[1+\frac{2e\tilde{\kappa}}{(e+1)s}\right]\int_{r}^{\infty} h(T) \mr{d}T \\
& \le \left[1+\frac{2e\tilde{\kappa}}{(e+1)s}\right] 2^{sd-2}(\Gamma(sd)+1) s\lambda \tilde{\kappa}^{-1}  r^{d-1/s} \\
& \le 2^{sd-2}\left(s +\frac{2e}{e+1}\right) (\Gamma(sd)+1)\lambda r^{d} \\
& \le 2^{sd-2}\left(s +\frac{3}{2}\right) (\Gamma(sd)+1)\lambda r^{d},
\end{aligned}
\end{equation*}
and then with (\ref{ineq: bound for operatornameI}), we get
\begin{equation*}
\begin{aligned}
& \sum_{j \in \mb{N}_{+}^{d}}\frac{e^{-\tilde{\kappa}|j|_{1}^{1/s}}}{ e^{-\tilde{\kappa}|j|_{1}^{1/s}} + \lambda} \le \left[2^{sd-2}\left(s +\frac{3}{2}\right) \frac{\Gamma(sd)+1}{\Gamma(d)} + \frac{1}{\Gamma(d)} \right] r^{d}.
\end{aligned}
\end{equation*}
Note that $\Gamma(sd)+1\ge2$. To simplify the prefactor before $r^{d}$, we employ the fact
\begin{equation} \label{ineq: bound for Gamma function from Artin1964gamma}
\sqrt{2 \pi} x^{x-1/2} e^{-x}\le \Gamma(x) \le \sqrt{2 \pi} x^{x-1/2} e^{-x+1 /(12 x)}
\end{equation}
from \cite[Chapter 3]{Artin1964gamma} and calculate
\begin{equation*}
\begin{aligned}
2^{sd-1}(s +2) \frac{\Gamma(sd)}{\Gamma(d)}
& \le 2^{sd-1}(s +2) \frac{(sd)^{sd-1/2} e^{-sd+1 /24}}{d^{d-1/2} e^{-d}} \\
& \le  \frac{e^{1 /24}(s +2)}{2\sqrt{s}} \left(\frac{2sd}{e}\right)^{sd}\left(\frac{e}{d}\right)^{d} \\
& \le  2\sqrt{s} (2/e)^{2(s-1)} (sd)^{sd} \le 2  (sd)^{sd}.
\end{aligned}
\end{equation*}
Recalling that $r=\tilde{\kappa}^{-s}\ln^{s}(1/\lambda)$ gives the claimed estimate and completes the proof.
\end{proof}

We prove the following technical lemma to control the summation series.
\begin{lemma} \label{Lemma: control sum series, Fourier transform decays superexponentially2}
Let $0<\tilde{\kappa}<2$ and $0< \lambda\le 1$. Choose $M \ge d$ such that $\lambda=\left(M/d\right)^{-\tilde{\kappa} M}$. Then,
\[
\sum_{j \in \mb{N}_{+}^{d}}\frac{e^{-\tilde{\kappa} \sum_{p=1}^{d} j_{p} \ln j_{p}}}{ e^{-\tilde{\kappa} \sum_{p=1}^{d} j_{p} \ln j_{p}} + \lambda} \le
\left(\frac{1}{\tilde{\kappa}}+\frac{3}{2}\right)^{d} \left(\frac{eM}{d}\right)^{d}.
\]
\end{lemma}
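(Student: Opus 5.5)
Write $E(j):=\sum_{p=1}^d j_p\ln j_p$ and $g(j):=e^{-\tilde\kappa E(j)}/(e^{-\tilde\kappa E(j)}+\lambda)\le\min\{1,\lambda^{-1}e^{-\tilde\kappa E(j)}\}$. I would split $\mb N_+^d$ by the threshold $\tilde\kappa E(j)\le\ln(1/\lambda)=\tilde\kappa M\ln(M/d)$, that is, $E(j)\le M\ln(M/d)$. This mirrors the two-part splitting in Lemma~\ref{Lemma: control sum series, Gevrey classes s>1 and Analytic class2}. On the set $\mc I:=\{j: E(j)\le M\ln(M/d)\}$ each summand is at most $1$, and on its complement I use $g(j)\le\lambda^{-1}e^{-\tilde\kappa E(j)}$.

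\emph{Counting term.} I need $|\mc I|\lesssim (eM/d)^d$. The function $x\mapsto x\ln x$ is convex, so Jensen gives $E(j)\ge |j|_1\ln(|j|_1/d)$. Hence $j\in\mc I$ forces $|j|_1\ln(|j|_1/d)\le M\ln(M/d)$, and monotonicity of $x\ln(x/d)$ for $x\ge d$ yields $|j|_1\le M$ (and if $|j|_1<d$ the bound is trivial). The number of $j\in\mb N_+^d$ with $|j|_1\le M$ is $\binom{\lfloor M\rfloor}{d}\le M^d/d!\le (eM/d)^d$, using $d!\ge(d/e)^d$. This contributes the ``$1$'' part of the prefactor $(1/\tilde\kappa+3/2)^d$ once it is combined with the tail in product form.

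\emph{Tail term.} The exponential factorizes as $e^{-\tilde\kappa E(j)}=\prod_p j_p^{-\tilde\kappa j_p}$, but the region $\mc I^c$ does not. I would therefore use a product-friendly domination instead of the exact split. For each coordinate, bound
\[
g(j)\le\prod_{p=1}^d \min\Bigl\{1,\ \bigl(\lambda^{-1/d}\bigr)\,j_p^{-\tilde\kappa j_p}\Bigr\}\cdot C,
\]
which follows from $\min\{1,\prod_p a_p\}\le\prod_p\min\{1,a_p\}$ whenever all $a_p\ge$ \dots. This inequality fails in general, so I would instead use $\min\{1,x\}\le x^{\epsilon}$-type interpolation. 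A cleaner route is the following. Set $m:=M/d$, so that $\lambda^{1/d}=m^{-\tilde\kappa m}$, and write $\lambda^{-1}e^{-\tilde\kappa E(j)}=\prod_p (m^{m}/j_p^{j_p})^{\tilde\kappa}$. Then $g(j)\le\prod_p\min\{1,(m^m/j_p^{j_p})^{\tilde\kappa}\}$ holds provided each factor is handled by $\min\{1,\prod a_p\}\le\prod\min\{1,a_p\}$ for $a_p\le1$ or via a sorting argument. The key point to check is this: if some $a_p>1$ (i.e.\ $j_p<m$), bounding that factor by $1$ only loses when the product of the others is larger than $1$. To justify this I would use $\min\{1,xy\}\le\min\{1,x\}\max\{1,y\}$ and absorb the $\max$ factors, each at most $(m^m)^{\tilde\kappa}/1$. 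That is too lossy, which is why this step is the \textbf{main obstacle}.

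\emph{Fallback.} If the factorization cannot be made sharp, I would revert to the split by $\mc I$ and bound the tail by summing over shells. For $T:=|j|_1>M$, Jensen gives $e^{-\tilde\kappa E(j)}\le (T/d)^{-\tilde\kappa T}$, so the tail is at most
\[
\lambda^{-1}\sum_{T>M}\binom{T-1}{d-1}(T/d)^{-\tilde\kappa T}.
\]
The ratio of consecutive terms behaves like $(T/d)^{-\tilde\kappa}e^{-\tilde\kappa}(1+(d-1)/T)$. For $T\ge M\ge d$ this is controlled, and a geometric comparison gives the tail at most $(M/d)^{\tilde\kappa M}\cdot\binom{M}{d-1}(M/d)^{-\tilde\kappa M}\cdot\frac{1}{1-\rho}$, which is $\lesssim (eM/d)^d/\tilde\kappa$-ish. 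Matching the exact constant $(1/\tilde\kappa+3/2)^d$ would then require careful one-dimensional estimates of the form $\sum_{n\ge m}(m/n)^{\tilde\kappa n}\le (1/\tilde\kappa+\tfrac12)m$ in each coordinate. This is why I expect the product structure to be essential, with the bookkeeping of constants as the hardest part.
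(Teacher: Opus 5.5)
Your counting step is correct and matches the paper's bound on $\{|j|_1\le M\}$. The tail, however, is not proved, and it is where the stated constant comes from. You note yourself that the product-domination route fails, and the fallback has two defects.

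First, $\mathcal I^c$ also contains indices with $|j|_1\le M$, and your shell sum over $T>M$ omits them. Since you showed $\mathcal I\subseteq\{|j|_1\le M\}$ and every summand is at most $1$, you should split at $|j|_1\le M$ instead.

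Second, and more seriously, a geometric comparison anchored at the first tail term is not valid. The ratio of consecutive shell terms is $\frac{T}{T-d+1}\bigl(\frac{T+1}{d}\bigr)^{-\tilde\kappa}\bigl(1+\frac1T\bigr)^{-\tilde\kappa T}$; note the factor $\frac{T}{T-d+1}$, not $1+\frac{d-1}{T}$. For $M=d$ and $T=d+1$ this ratio is at least $\frac{d+1}{2}e^{-2\tilde\kappa}$, which exceeds $1$ for small $\tilde\kappa$. So the terms first increase, over a range of $T$ whose length grows as $\tilde\kappa\to0$. Your predicted size $(eM/d)^d/\tilde\kappa$ is also false. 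For $M=d$ one has $\lambda=1$, so each summand is at least $\frac12 e^{-\tilde\kappa\sum_p j_p\ln j_p}$. The left-hand side is then at least $\frac12\bigl(\sum_{n\ge1}n^{-\tilde\kappa n}\bigr)^d$, which is of order at least $(\tilde\kappa\ln(1/\tilde\kappa))^{-d}$ as $\tilde\kappa\to0$. Hence the factor $(1/\tilde\kappa+3/2)^d$ in the statement has the right order, and no bound with a single power of $1/\tilde\kappa$ can hold for $d\ge2$.

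No product structure is needed. What is missing is a way to dominate the whole shell sum, including its increasing stretch, by one geometric series. The paper does this in two moves.

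\emph{Absorb $\lambda^{-1}$.} Since $x\mapsto(x/d)^{\tilde\kappa x}$ is increasing on $[d,\infty)$, writing $T=T'+\lfloor M\rfloor+1$ gives $\lambda^{-1}(T/d)^{-\tilde\kappa T}\le (T/d)^{-\tilde\kappa T'}$. Combined with $\binom{T-1}{d-1}\le T^{d-1}/(d-1)!$ and Stirling, this yields $\mathrm{II}\le\frac{e^d}{\sqrt{2\pi d}}\sum_{T'\ge0}x_{T'}^{\,d-1-\tilde\kappa T'}$, where $x_{T'}:=(T'+\lfloor M\rfloor+1)/d$.

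\emph{Use a single base.} Replace every $x_{T'}$ by $B:=\frac{d-1}{\tilde\kappa d}+\frac{\lfloor M\rfloor+1}{d}$. For $T'<(d-1)/\tilde\kappa$ the exponent is positive and $x_{T'}<B$. For $T'\ge(d-1)/\tilde\kappa$ the exponent is nonpositive and $x_{T'}\ge B$. Hence $\mathrm{II}\le\frac{e^d}{\sqrt{2\pi d}}B^{d-1}(1-B^{-\tilde\kappa})^{-1}$, a genuine geometric series that absorbs the initial growth.

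For $d\ge2$, $B^{d-1}\le\bigl(\frac1{\tilde\kappa}+\frac{3M}{2d}\bigr)^{d-1}\le\bigl(\frac1{\tilde\kappa}+\frac32\bigr)^{d-1}\bigl(\frac Md\bigr)^{d-1}$. Also $B\ge1+\frac1{2\tilde\kappa}$, and $\bigl[1-\bigl(1+\frac1{2\tilde\kappa}\bigr)^{-\tilde\kappa}\bigr]^{-1}\le2\max(\tilde\kappa^{-1},2)$ supplies the last factor. Together these give $\mathrm{II}\le\pi^{-1/2}\bigl(\frac1{\tilde\kappa}+\frac32\bigr)^d\bigl(\frac{eM}{d}\bigr)^d$. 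For $d=1$ the same absorption leaves $\sum_{T'\ge0}(T'+\lfloor M\rfloor+1)^{-\tilde\kappa T'}\le(1-2^{-\tilde\kappa})^{-1}$. With this tail bound, your count $\binom{\lfloor M\rfloor}{d}\le M^d/d!$ is enough to close the argument.
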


\begin{proof}
Since $x \ln x$ is convex, Jensen's inequality gives $$\frac{1}{d} \sum_{p=1}^{d} j_{p} \ln j_{p} \ge \frac{|j|_{1}}{d} \ln \frac{|j|_{1}}{d}$$, which implies
\begin{equation*}
\begin{aligned}
\sum_{j \in \mb{N}_{+}^{d}}\frac{e^{-\tilde{\kappa} \sum_{p=1}^{d} j_{p} \ln j_{p}}}{ e^{-\tilde{\kappa} \sum_{p=1}^{d} j_{p} \ln j_{p}} + \lambda}
& \le  \sum_{j \in \mb{N}_{+}^{d}}\frac{e^{-\tilde{\kappa} |j|_{1} \ln(|j|_{1}/d) }}{ e^{-\tilde{\kappa} |j|_{1} \ln(|j|_{1}/d)} + \lambda} . \\
\end{aligned}
\end{equation*}
Similar as in the proof of Lemma \ref{Lemma: control sum series, Gevrey classes s>1 and Analytic class2}, we divide the summation into two parts
\begin{equation*}
\begin{aligned}
\operatorname{I} & := \sum_{|j|_{1} \le M}\frac{e^{-\tilde{\kappa} |j|_{1} \ln(|j|_{1}/d) }}{ e^{-\tilde{\kappa} |j|_{1} \ln(|j|_{1}/d)} + \lambda} \le
\begin{pmatrix}
 \lfloor M\rfloor\\ d
\end{pmatrix} , \\\operatorname{II} & := \sum_{|j|_{1} > M}\frac{e^{-\tilde{\kappa} |j|_{1} \ln(|j|_{1}/d) }}{ e^{-\tilde{\kappa} |j|_{1} \ln(|j|_{1}/d)} + \lambda}
\le \frac{1}{\lambda} \sum_{T=\lfloor M\rfloor+1}^{\infty} \begin{pmatrix}
T-1\\ d-1
\end{pmatrix} \left(\frac{T}{d}\right)^{-\tilde{\kappa} T}.
\end{aligned}
\end{equation*}
Using $\Gamma(d+1) = d\Gamma(d)$ and (\ref{ineq: bound for Gamma function from Artin1964gamma}), we bound $\operatorname{I}$ by $M$ for $d=1$ and for $d\ge2$,
\begin{equation} \label{ineq: control for operatornameI in pf of Lemma: control sum series, Fourier transform decays superexponentially2}
\begin{aligned}
\operatorname{I} & \le \frac{\left(M-(d-1)/2\right)^{d}}{\sqrt{2\pi}d^{d+1/2}e^{-d}} \le  \frac{1}{\sqrt{2\pi d}}\left(\frac{e M}{d}\right)^{d}.
\end{aligned}
\end{equation}
Since $\left(M/d\right)^{\tilde{\kappa} M}$ is an increasing function of $M$ when $M\ge d$, we bound $\operatorname{II}$ for $d=1$ by
\begin{equation*}
\begin{aligned}
\operatorname{II} & \le \sum_{T=0}^{\infty} \left(T+\lfloor M\rfloor+1\right)^{-\tilde{\kappa} (T+\lfloor M\rfloor+1)} M^{\tilde{\kappa} M} \\
& \le \sum_{T=0}^{\infty} \left(T+\lfloor M\rfloor+1\right)^{-\tilde{\kappa}T} \le \left(1-2^{-\tilde{\kappa}}\right)^{-1} .\end{aligned}
\end{equation*}
Then, applying Lemma \ref{Auxiliary technical Lemma to pf Lemma: control sum series, Fourier transform decays superexponentially}, we get $\operatorname{II}\le 2 \max(\tilde{\kappa}^{-1},2)$ and
\begin{equation*}
\begin{aligned}
\sum_{j \in \mb{N}_{+}^{d}}\frac{e^{-\tilde{\kappa} \sum_{p=1}^{d} j_{p} \ln j_{p}}}{ e^{-\tilde{\kappa} \sum_{p=1}^{d} j_{p} \ln j_{p}} + \lambda} & \le \left[1 + 2 \max(\tilde{\kappa}^{-1},2)\right] M ,
\end{aligned}
\end{equation*}
which implies the desired bound for $d=1$. For $d\ge2$, we employ (\ref{ineq: bound for Gamma function from Artin1964gamma}) to lower-bound the factorial in the binomial coefficient and obtain
\begin{equation*}
\begin{aligned}
\operatorname{II} & \le \frac{1}{\lambda} \sum_{T=0}^{\infty} \frac{\left(T+\lfloor M\rfloor\right)^{d-1}}{\sqrt{2\pi}d^{d-1/2}e^{-d}} \left(\frac{T+\lfloor M\rfloor+1}{d}\right)^{-\tilde{\kappa} (T+\lfloor M\rfloor+1)}\\
& \le \frac{e^{d}}{\sqrt{2\pi d}}\sum_{T=0}^{\infty}  \left(\frac{T+\lfloor M\rfloor+1}{d}\right)^{d-1 -\tilde{\kappa}T} .
\end{aligned}
\end{equation*}
According to $T<(d-1)/\tilde{\kappa}$ or $T\ge(d-1)/\tilde{\kappa}$, we decompose the above sum into two parts and control each with an arithmetic sequence
\begin{equation*}
\begin{aligned}
\operatorname{II} & \le \frac{e^{d}}{\sqrt{2\pi d}} \left[ \sum_{T<(d-1)/\tilde{\kappa}}\left(\frac{d-1}{\tilde{\kappa} d}+\frac{\lfloor M\rfloor+1}{d}\right)^{d-1-\tilde{\kappa} T} + \sum_{T\ge (d-1)/\tilde{\kappa}} \left(\frac{d-1}{\tilde{\kappa} d}+\frac{\lfloor M\rfloor+1}{d}\right)^{d-1-\tilde{\kappa} T}\right] \\
& \le \frac{e^{d}}{\sqrt{2\pi d}} \left(\frac{1}{\tilde{\kappa}}+\frac{3M}{2d}\right)^{d-1}\left[1-\left(\frac{1}{2\tilde{\kappa} }+1\right)^{-\tilde{\kappa}}\right]^{-1} ,
\end{aligned}
\end{equation*}
where we used $d\le M <\lfloor M\rfloor+1 \le 3M/2$ in the second inequality. Then, applying Lemma \ref{Auxiliary technical Lemma to pf Lemma: control sum series, Fourier transform decays superexponentially}, we get
\begin{equation*}
\begin{aligned}
\operatorname{II} & \le \frac{2e^{d}}{\sqrt{2\pi d}} \left(\frac{1}{\tilde{\kappa}}+\frac{3M}{2d}\right)^{d-1}\max(\tilde{\kappa}^{-1},2)
\le \frac{1}{\sqrt{\pi}} \left(\frac{1}{\tilde{\kappa}}+\frac{3}{2}\right)^{d} \left(\frac{e M}{d}\right)^{d}.
\end{aligned}
\end{equation*}
Since $d\ge2$ implies $(2\pi d)^{-1/2}+\pi^{-1/2}<1$, combining the above estimate with \eqref{ineq: control for operatornameI in pf of Lemma: control sum series, Fourier transform decays superexponentially2} completes the proof.
\end{proof}

\begin{lemma} \label{Auxiliary technical Lemma to pf Lemma: control sum series, Fourier transform decays superexponentially}
For all $\tilde{\kappa} >0$, it holds that $\left(1-2^{-\tilde{\kappa}}\right)^{-1} \le 2 \max(\tilde{\kappa}^{-1},2)$ and
\begin{equation}
\begin{aligned} \label{eq: [1-(2k/2k+1)^k]^-1<2max(k^-1,2)}
& \left[1-\left(\frac{1}{2\tilde{\kappa}}+1\right)^{-\tilde{\kappa}}\right]^{-1} \le 2\max(\tilde{\kappa}^{-1},2).
\end{aligned}
\end{equation}
\end{lemma}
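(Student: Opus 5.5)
The plan is to prove both inequalities by the same case split, namely $\tilde\kappa\le 1/2$ versus $\tilde\kappa\ge 1/2$. This is exactly where $\max(\tilde\kappa^{-1},2)$ switches from $\tilde\kappa^{-1}$ to $2$. In each case it suffices to bound the quantity $1-e^{-g}$ from below, where $g>0$ is the relevant exponent:
\[
g=\tilde\kappa\ln 2 \ \text{in the first inequality},\qquad g=g(\tilde\kappa):=\tilde\kappa\ln\Bigl(1+\frac{1}{2\tilde\kappa}\Bigr)\ \text{in \eqref{eq: [1-(2k/2k+1)^k]^-1<2max(k^-1,2)}},
\]
since $2^{-\tilde\kappa}=e^{-\tilde\kappa\ln2}$ and $(1/(2\tilde\kappa)+1)^{-\tilde\kappa}=e^{-g(\tilde\kappa)}$.

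\emph{Case $\tilde\kappa\ge1/2$.} Here the target is $1-e^{-g}\ge 1/4$, which gives the bound $4=2\max(\tilde\kappa^{-1},2)$.
\begin{itemize}
\item For the first inequality, $g=\tilde\kappa\ln2\ge \tfrac12\ln 2$.
\item For the second, I will first show that $g$ is increasing in $\tilde\kappa$. With $x=1/(2\tilde\kappa)$,
\[
g'(\tilde\kappa)=\ln(1+x)-\frac{x}{1+x}>0 .
\]
This follows from the elementary inequality $\ln(1+x)>x/(1+x)$ for $x>0$. Hence $g(\tilde\kappa)\ge g(1/2)=\tfrac12\ln2$.
\item In both cases, therefore, $e^{-g}\le 2^{-1/2}<3/4$, so $1-e^{-g}\ge1/4$.
\end{itemize}

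\emph{Case $\tilde\kappa\le1/2$.} Here the target is $1-e^{-g}\ge \tilde\kappa/2$, which gives the bound $2/\tilde\kappa=2\max(\tilde\kappa^{-1},2)$.
\begin{itemize}
\item In both inequalities $g\ge\tilde\kappa\ln2$. For the second this holds because $1+1/(2\tilde\kappa)\ge2$.
\item Set $y=\tilde\kappa\ln2\le \tfrac12\ln2$. Using $1-e^{-y}\ge y-y^2/2$, we get
\[
1-e^{-g}\ge 1-e^{-y}\ge y\Bigl(1-\frac{\ln 2}{4}\Bigr)\ge 0.57\,\tilde\kappa\ge\tilde\kappa/2 .
\]
\end{itemize}

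The only step requiring any care is the monotonicity of $g$ in the second inequality. The derivative computation above handles it, so I do not expect a genuine obstacle; the rest is numerical verification of the constants $2^{-1/2}<3/4$ and $\ln2\,(1-\ln2/4)>1/2$.
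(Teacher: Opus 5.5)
Your proof is correct. For $\tilde\kappa\ge1/2$ it is the paper's argument: the paper shows that $\bigl(\frac{1}{2\tilde\kappa}+1\bigr)^{-\tilde\kappa}$ is nonincreasing from the sign of $\ln t+1-t$ with $t=2\tilde\kappa/(2\tilde\kappa+1)$, which is exactly your $-g'(\tilde\kappa)$, and then evaluates at $\tilde\kappa=1/2$. The routes differ only on $0<\tilde\kappa\le1/2$ for the second inequality. The paper proves by an explicit derivative computation that $\tilde\kappa^{-1}\bigl[1-\bigl(\frac{1}{2\tilde\kappa}+1\bigr)^{-\tilde\kappa}\bigr]$ is decreasing there; this reduces to $u-e^{u}+1\le0$ with $u=-\tilde\kappa\ln t$, so the minimum is the endpoint value $2(1-2^{-1/2})\approx0.586$. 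You instead note that $1+1/(2\tilde\kappa)\ge2$ on this range, so $\bigl(\frac{1}{2\tilde\kappa}+1\bigr)^{-\tilde\kappa}\le2^{-\tilde\kappa}$ and the second inequality reduces to the first. You then prove $1-2^{-\tilde\kappa}\ge\tilde\kappa/2$, which the paper only asserts, via $1-e^{-y}\ge y-y^2/2$. Your route is shorter and avoids the fiddly derivative of the ratio. Its constant $\ln2\,(1-\ln2/4)\approx0.573$ is slightly weaker than the paper's sharp $0.586$, but that is immaterial since only $1/2$ is needed.
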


\begin{proof}
Note that $\left(1-2^{-\tilde{\kappa}}\right)^{-1}$ is monotonically decreasing, which implies $\left(1-2^{-\tilde{\kappa}}\right)^{-1} \le 4$ for $\tilde{\kappa} \ge 1/2$. Then, the first inequality follows from $\tilde{\kappa} /2 \le 1-2^{-\tilde{\kappa}}$ for $\tilde{\kappa}<1/2$. For (\ref{eq: [1-(2k/2k+1)^k]^-1<2max(k^-1,2)}), the left-hand side is monotonically decreasing with respect to $\tilde{\kappa}$ because
\begin{equation*}
\begin{aligned}
& \left[\left(\frac{1}{2\tilde{\kappa}}+1\right)^{-\tilde{\kappa}}\right]^{\prime} = \left(\frac{2\tilde{\kappa}}{2\tilde{\kappa}+1}\right)^{\tilde{\kappa}} \left[\ln \frac{2\tilde{\kappa}}{2\tilde{\kappa}+1} + 1- \frac{2\tilde{\kappa}}{2\tilde{\kappa}+1} \right] \le 0.
\end{aligned}
\end{equation*}
Thus, for all $\tilde{\kappa} \ge 1/2$, the left-hand side is no more than $\left(1-2^{-1/2}\right)^{-1} < 4$.
Then, it remains to prove
\begin{equation*}
\begin{aligned}
\tilde{\kappa}^{-1} \left[1-\left(\frac{1}{2\tilde{\kappa}}+1\right)^{-\tilde{\kappa}}\right] \ge 2\left(1-2^{-1/2}\right) \ge \frac{1}{2}, \quad \text{for } 0<\tilde{\kappa} \le 1/2.
\end{aligned}
\end{equation*}
The above inequality holds because the left-hand side is a decreasing function with derivative
\begin{equation*}
\begin{aligned}
\frac{1}{\tilde{\kappa}^{2}}\left(\frac{2\tilde{\kappa}}{2\tilde{\kappa}+1}\right)^{\tilde{\kappa}}\left[-\tilde{\kappa}\ln \left(\frac{2\tilde{\kappa}}{2\tilde{\kappa}+1}\right)-\left(\frac{2\tilde{\kappa}}{2\tilde{\kappa}+1}\right)^{-\tilde{\kappa}}+\frac{\tilde{\kappa}+1}{2\tilde{\kappa}+1}\right] \le -\frac{1}{\tilde{\kappa}(2\tilde{\kappa}+1)}\left(\frac{2\tilde{\kappa}}{2\tilde{\kappa}+1}\right)^{\tilde{\kappa}}<0 ,
\end{aligned}
\end{equation*}
which completes the proof.
\end{proof}

\section{Proofs for RFM solvers}
\label{Section: proofs RFM solvers}

\subsection{Proof of the strong-form error estimate}
\label{Section: proof strong-form RFM}

The proof of Theorem~\ref{Theorem: abstract error estimate for Strong-form RFM} uses the following stability and continuity estimates.

\begin{proposition}
\label{Proposition: control H^(s+l+/2) norm of error by loss}
Under the assumptions of Theorem~\ref{Theorem: abstract error estimate for Strong-form RFM}, there exists a constant $C>0$, independent of $u$ and $u_N$, such that
\[
\inf_{v\in V} \|u-u_N-v\|_{H^{s+l+1/2}(\Omega)} \le C\sqrt{\mathcal L(u_N)} \qquad\text{for all }u_N\in H^2(\Omega).
\]
\end{proposition}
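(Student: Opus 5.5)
The plan is to obtain the estimate from the classical a priori estimate for properly elliptic boundary value problems with normal, covering boundary operators, in the Lions--Magenes / Agmon--Douglis--Nirenberg framework. Put $e:=u-u_N\in H^2(\Omega)$. Since $Lu=f$ and $Bu=g$, linearity gives $Le=f-Lu_N$ and $Be=g-Bu_N$. Hence
\[
\|Le\|_{L^2(\Omega)}^2+\gamma\|Be\|_{H^s(\partial\Omega)}^2=\mathcal L(u_N).
\]
The claim therefore reduces to the deterministic stability inequality
\[
\inf_{v\in V}\|e-v\|_{H^{s+l+1/2}(\Omega)}\le C\bigl(\|Le\|_{L^2(\Omega)}+\|Be\|_{H^s(\partial\Omega)}\bigr),
\]
valid for all $e\in H^2(\Omega)$, with $C$ depending only on $L$, $B$, $\Omega$, $s$, $\gamma$.

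\emph{Step 1: Fredholm property.} I would invoke the Lions--Magenes theory: for smooth $\partial\Omega$, smooth coefficients, $L$ properly elliptic and $B$ normal of order $l$ and covering $L$, the map
\[
\mathcal P_r: H^{r}(\Omega)\to H^{r-2}(\Omega)\times H^{r-l-1/2}(\partial\Omega),\qquad e\mapsto (Le,Be),
\]
is Fredholm for every real $r$, in the Lions--Magenes sense when $r<2$. Its kernel equals $V$, consists of $C^\infty(\bar\Omega)$ functions, and is finite dimensional, hence closed in every $H^r$. The a priori estimate gives
\[
\|e\|_{H^r}\le C\bigl(\|Le\|_{H^{r-2}}+\|Be\|_{H^{r-l-1/2}(\partial\Omega)}+\|e\|_{L^2}\bigr).
\]
The term $\|e\|_{L^2}$ is removed in the standard way on a complement of $V$: if the estimate without it failed, compactness of $H^r\hookrightarrow L^2$ would produce a nonzero element of $V$ orthogonal to $V$, a contradiction. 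This yields
\[
\inf_{v\in V}\|e-v\|_{H^r}\le C\bigl(\|Le\|_{H^{r-2}}+\|Be\|_{H^{r-l-1/2}}\bigr).
\]

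\emph{Step 2: choice of $r$.} I would take $r=s+l+1/2$, so the boundary norm is exactly $H^{s}(\partial\Omega)$. The interior norm is $H^{s+l-3/2}(\Omega)$. Because $s\le 3/2-l$, we have $s+l-3/2\le0$, so $\|Le\|_{H^{r-2}}\lesssim\|Le\|_{L^2}$. Here negative-order spaces are understood as duals, or as the Lions--Magenes modified spaces near the critical indices. I also need $e\in H^2\subset H^r$; this holds since $r\le2$.

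\emph{Main obstacle.} The hard step is applying the Fredholm estimate at the low, possibly non-integer and negative-interior-order index $r=s+l+1/2\in[l+1/2,2]$. At exceptional half-integer values of $r-2$, the natural data space for $Le$ must be the Lions--Magenes space $\Xi^{r-2}$ rather than $H^{r-2}$. I would handle this by noting that $L^2(\Omega)$ embeds continuously into the relevant data space $\Xi^{r-2}(\Omega)$ whenever $r\le2$, which is all the argument uses. If needed, I would alternatively interpolate between the $r=2$ estimate and a transposition estimate at $r=l+1/2$.

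Finally, combining the two steps and absorbing $\gamma$ into $C$ via $\|Be\|_{H^s}\le\gamma^{-1/2}\sqrt{\mathcal L(u_N)}$ gives the stated bound.
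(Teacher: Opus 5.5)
Your proposal is correct and follows essentially the same route as the paper. Both apply the Lions--Magenes Fredholm estimate for $w\mapsto(Lw,Bw)$ at $r=s+l+1/2\in(0,2]$ to $w=u-u_N$, with interior data in $\Xi^{r-2}(\Omega)$, and both use the continuous embedding $L^2(\Omega)\hookrightarrow\Xi^{r-2}(\Omega)$; the only difference is that you spell out the linearity step and the compactness argument that removes the $\|e\|_{L^2}$ term, which the paper absorbs into its citation of Lions--Magenes, Chapter~2, Theorems~5.4 and~7.4.
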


\begin{proof}
By \cite[Chapter 2, Theorems 5.4 and 7.4]{LionsMagenes1972}, the boundary value operator $w\mapsto(Lw,Bw)$ satisfies, for every $0<r\le2$,
\begin{equation}
\label{ineq: Fredholm estimate for boundary value operator w mapsto (Lw,Bw)}
\inf_{z\in V}\|w-z\|_{H^r(\Omega)}
\le C_r\left(
\|Lw\|_{\Xi^{r-2}(\Omega)}
+\|Bw\|_{H^{r-l-1/2}(\partial\Omega)}
\right),
\end{equation}
where $C_r$ is independent of $w$ and $\Xi^{r-2}(\Omega)$ denotes the interior data space in the notation of Lions--Magenes. For $0<r\le2$, the continuous embedding $L^2(\Omega)\hookrightarrow\Xi^{r-2}(\Omega)$ follows from \cite[Chapter 2, equations (6.20) and (6.22)]{LionsMagenes1972}. Taking $r=s+l+1/2$ and $w=u-u_N$ in \eqref{ineq: Fredholm estimate for boundary value operator w mapsto (Lw,Bw)} gives the result and completes the proof.
\end{proof}

\begin{lemma}
\label{Lemma: Lv and Bv bounded by W^2,p and W^1,p norm of v}
The following estimates hold.
\begin{enumerate}
\item Under the assumptions of Theorem~\ref{Theorem: abstract error estimate for Strong-form RFM},
\[
\sqrt{\mathcal L(v)} \lesssim_{\Omega,L,B,\gamma} \|u-v\|_{H^2(\Omega)} \qquad\text{for all }v\in H^2(\Omega).
\]
\item Under Assumption~\ref{Assumption: Linf norm of the coefficients of linear differential operators L, B},
\[
\|Lv\|_{L^\infty(\Omega)} +\|Bv\|_{L^\infty(\partial\Omega)} \lesssim_{\Omega,d,\{\Lambda_i\}_{i=1}^5} \|v\|_{W^{2,\infty}(\Omega)} \qquad\text{for all }v\in
W^{2,\infty}(\Omega).
\]
\end{enumerate}
\end{lemma}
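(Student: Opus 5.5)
Both estimates come from the strong-form loss $\mathcal L(v)=\|Lv-f\|_{L^2(\Omega)}^2+\gamma\|Bv-g\|_{H^s(\partial\Omega)}^2$ once the data are eliminated. For part (1) we use that $u$ solves \eqref{elliptic equation with variable coefficients}, so $f=Lu$ and $g=Bu$. Hence
\[
\sqrt{\mathcal L(v)}\le \|L(u-v)\|_{L^2(\Omega)}+\sqrt{\gamma}\,\|B(u-v)\|_{H^s(\partial\Omega)},
\]
and the task is to bound each operator on $w:=u-v\in H^2(\Omega)$ by $\|w\|_{H^2(\Omega)}$.

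\textbf{Interior term.} The coefficients of $L$ lie in $C^\infty(\bar\Omega)$, so they are bounded. Each term $a_{kl}\partial_{kl}w$, $b_k\partial_k w$, $cw$ therefore has $L^2$ norm at most a coefficient sup-norm times $\|w\|_{H^2(\Omega)}$.

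\textbf{Boundary term.} This is the main obstacle. The operator $B$ has order $l$, and we need $Bw\in H^s(\partial\Omega)$ with $s\le 3/2-l$. Write $Bw=g_1\,\mathbf n\cdot\gamma_0(\nabla w)+g_2\gamma_0 w$ for $l=1$, and $Bw=g_2\gamma_0 w$ for $l=0$. For $\partial\Omega$ smooth, the trace theorem gives
\[
\gamma_0:H^{2}(\Omega)\to H^{3/2}(\partial\Omega),\qquad \gamma_0\nabla:H^2(\Omega)\to H^{1/2}(\partial\Omega).
\]
Multiplication by the smooth functions $g_1\mathbf n$ and $g_2$ is bounded on $H^{\sigma}(\partial\Omega)$ for every $\sigma$. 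Consequently $\|Bw\|_{H^{3/2-l}(\partial\Omega)}\lesssim\|w\|_{H^2(\Omega)}$, and since $s\le 3/2-l$ the $H^s(\partial\Omega)$ norm is weaker still.

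Two points need care here. First, for the mixed Dirichlet/Neumann example the indicator-type $g_1,g_2$ are not smooth. The theorem's hypotheses nonetheless assume smooth coefficients and a normal boundary operator, so we work under that assumption. Second, at the endpoint $s=3/2-l$ no extra smoothness is required beyond the sharp trace theorem. Combining the interior and boundary bounds proves (1), with constants depending on $\Omega,L,B,\gamma$.

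For part (2), fix $v\in W^{2,\infty}(\Omega)$. Assumption~\ref{Assumption: Linf norm of the coefficients of linear differential operators L, B} gives pointwise a.e.
\[
|Lv|\le \|A\|_2\,|D^2v|_F\sqrt d+|b|\,|\nabla v|+|c|\,|v|,
\]
using $|\operatorname{tr}(AD^2v)|\le\|A\|_2\sum_i|\lambda_i(D^2v)|\lesssim_d \|A\|_2|D^2v|_F$. This bounds $\|Lv\|_{L^\infty(\Omega)}\lesssim_{d,\Lambda_1,\Lambda_2,\Lambda_3}\|v\|_{W^{2,\infty}(\Omega)}$.

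On the boundary, $\Omega$ is Lipschitz, so $W^{2,\infty}(\Omega)$ functions have Lipschitz representatives with $v$ and $\nabla v$ continuous up to $\bar\Omega$ (via $W^{1,\infty}=C^{0,1}$ on Lipschitz domains, applied to $v$ and $\nabla v$). Their traces are thus bounded by the $L^\infty$ norms, and $|\partial v/\partial\mathbf n|\le|\nabla v|$ a.e. on $\partial\Omega$. This yields $\|Bv\|_{L^\infty(\partial\Omega)}\le\Lambda_4\|\nabla v\|_{L^\infty}+\Lambda_5\|v\|_{L^\infty}$, and adding the interior and boundary bounds completes (2).
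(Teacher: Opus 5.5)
Your proof is correct and takes the same route as the paper. The paper only asserts that (1) follows from the definitions of $\mathcal L$, $L$, $B$ and the trace theorem, and (2) from the coefficient bounds; you supply the omitted details, namely $f=Lu$ and $g=Bu$, boundedness of $B\colon H^2(\Omega)\to H^{3/2-l}(\partial\Omega)$ with $s\le 3/2-l$, and continuity of $v$ and $\nabla v$ up to $\bar\Omega$ for the $L^\infty(\partial\Omega)$ bound, while your remark that indicator-type $g_1,g_2$ fall outside the smooth-coefficient hypothesis is a fair side observation rather than a gap.
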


The first estimate follows from the definitions of $\mathcal L$, $L$, and $B$, together with the trace theorem. The second follows directly from the coefficient bounds in Assumption~\ref{Assumption: Linf norm of the coefficients of linear differential operators L, B}.

\begin{proof}[Proof of Theorem~\ref{Theorem: abstract error estimate for Strong-form RFM}]
For any $w\in V_{\mathrm{RF}}$, the minimality of $u_{\mathrm{RF}}$ gives $\mathcal L(u_{\mathrm{RF}})\le\mathcal L(w)$. Proposition~\ref{Proposition: control H^(s+l+/2) norm of error by loss} and Lemma~\ref{Lemma: Lv and Bv bounded by W^2,p and W^1,p norm of v}(1) therefore yield
\[
\inf_{v\in V} \|u-u_{\mathrm{RF}}-v\|_{H^{s+l+1/2}(\Omega)} \le C\sqrt{\mathcal L(w)} \le C\|u-w\|_{H^2(\Omega)}.
\]
Taking the infimum over $w\in V_{\mathrm{RF}}$ proves the theorem and completes the proof.
\end{proof}

\section{Proofs for singular-value estimates and condition-number lower bounds}
\label{Section: proofs for singular value estimates and condition number lower bounds}

This appendix collects the auxiliary estimates and the proofs of Theorems~\ref{Theorem: fast decay of singular values of RFMtx} and \ref{Theorem: exponential ill conditionality of random feature matrices}.

\subsection*{Auxiliary lemmas}

\begin{lemma} \label{Lemma: equivalent norm used in pf of exponential ill conditionality}
Let $s \in \mb{N}_{+}$, $1\le p\le\infty$ and $\Omega=(-1,1)^d$. Denote $$\varsigma_{1}(x) = \prod_{i=1}^{d}\frac{2}{\pi x_{i}}\sin\left(\frac{\pi
x_{i}}{2}\right), \quad \text{and }\quad \varsigma_{2}(x) = e^{-|x|^{2}/4}$$. Then, $\|v\|_{W^{s,p}(\Omega)} \lesssim_{d,s}
\|\varsigma_{j}v\|_{W^{s,p}(\Omega)}$ for all $v \in W^{s,p}(\Omega)$ and $j=1,2$.
\end{lemma}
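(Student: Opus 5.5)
The plan is to reduce the estimate to a Leibniz-rule argument. The idea is to show that on $\Omega=(-1,1)^d$ each weight $\varsigma_j$ is smooth, bounded away from zero, and has reciprocal $1/\varsigma_j$ with bounded derivatives of all orders up to $s$. Once that is known, writing $v=(1/\varsigma_j)\,(\varsigma_j v)$ and applying the Leibniz formula
\[
\partial^{\eta}v=\sum_{\mu\le\eta}\binom{\eta}{\mu}\,\partial^{\eta-\mu}\bigl(\varsigma_j^{-1}\bigr)\,\partial^{\mu}(\varsigma_j v),\qquad |\eta|\le s,
\]
gives $\|\partial^\eta v\|_{L^p(\Omega)}\le C_{d,s}\sum_{\mu\le\eta}\|\partial^\mu(\varsigma_jv)\|_{L^p(\Omega)}$. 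Summing over $|\eta|\le s$ then yields $\|v\|_{W^{s,p}(\Omega)}\lesssim_{d,s}\|\varsigma_jv\|_{W^{s,p}(\Omega)}$ for every $1\le p\le\infty$. The Leibniz rule holds for a $W^{s,p}$ function multiplied by a $C^s(\overline\Omega)$ function, e.g.\ by density of smooth functions for $p<\infty$ and directly in the weak sense for $p=\infty$.

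For $\varsigma_2(x)=e^{-|x|^2/4}$ this is immediate. Its reciprocal $e^{|x|^2/4}$ is entire, and on $\overline\Omega$ it satisfies $|x|^2\le d$. Hence each derivative of order $\le s$ is a polynomial in $x$ times $e^{|x|^2/4}$, which is bounded by a constant depending only on $d,s$.

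For $\varsigma_1$ I would use the product structure $\varsigma_1(x)=\prod_i g(x_i)$ with $g(t)=\frac{2}{\pi t}\sin(\pi t/2)=\operatorname{sinc}$-type. Then $1/\varsigma_1=\prod_i 1/g(x_i)$, and mixed derivatives factor into one-dimensional derivatives of $1/g$. It suffices to check that $1/g$ is smooth on $[-1,1]$ with bounded derivatives up to order $s$. The function $g(t)=\int_0^1\cos(\pi t\xi/2)\,\mr d\xi$ is entire, even, and decreasing on $[0,1]$ with $g(1)=2/\pi>0$, so $g\ge 2/\pi$ on $[-1,1]$. Therefore $1/g$ is real-analytic on a neighborhood of $[-1,1]$. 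Its derivatives up to order $s$ are bounded there by compactness, or explicitly via the Faà di Bruno formula using $|g^{(k)}|\le(\pi/2)^k$ from the integral representation and $g\ge2/\pi$. The resulting constant depends only on $s$, and after taking products over the $d$ coordinates only on $d,s$.

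The main point to verify carefully is the uniform positive lower bound on $\varsigma_1$. This requires the zeros of $\sin(\pi t/2)/t$ (at $t=\pm2,\pm4,\dots$) to lie outside $[-1,1]$, and the removable singularity at $t=0$ to be handled, which the integral representation does cleanly. The remaining work is bookkeeping of Leibniz constants.
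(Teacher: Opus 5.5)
Your proposal is correct and follows essentially the same route as the paper: both show that $\varsigma_j$ and $1/\varsigma_j$ are smooth with bounded derivatives up to order $s$ on $\overline\Omega$, and then apply the product-rule (multiplier) estimate to $v=(1/\varsigma_j)(\varsigma_j v)$. The only difference is in how $1/\varsigma_1$ is handled. The paper deduces its regularity from the analyticity of $z/\sin z$ on $|z|\le 3\pi/4$. You instead use the integral representation $g(t)=\int_0^1\cos(\pi t\xi/2)\,\mathrm{d}\xi$ to get the explicit lower bound $g\ge 2/\pi$ on $[-1,1]$, and then bound the derivatives of $1/g$ via Fa\`a di Bruno.
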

\begin{proof}
Since $\sin(z)/z$ and $z/\sin(z)$ are analytic on $\{z\in\mb{C}:|z|\le 3\pi/4\}$, we have $\varsigma_{1}, 1/\varsigma_{1} \in C^{\infty}(\Omega)$. For $\varsigma_{2}$, it also holds that $e^{-|x|^{2}/4}, e^{|x|^{2}/4}\in C^{\infty}(\Omega)$. Denote $w(x) = \varsigma_{j}(x)v(x)$. Then $w \in W^{s,p}(\Omega)$
because $$\|w\|_{W^{s,p}(\Omega)} \lesssim_{d,s} \|\varsigma_{j}\|_{W^{s,\infty}(\Omega)} \|v\|_{W^{s,p}(\Omega)}$$. Since $\Omega$ is bounded, $1/\varsigma_{j}
\in W^{s,\infty}(\Omega)$ and $$\|v\|_{W^{s,p}(\Omega)} = \|w/\varsigma_{j}\|_{W^{s,p}(\Omega)} \lesssim_{d,s} \|1/\varsigma_{j}\|_{W^{s,\infty}(\Omega)}
\|w\|_{W^{s,p}(\Omega)}$$, which completes the proof.
\end{proof}

\begin{lemma}
\label{Lemma: Fourier decay of Gaussian weighted tanh features}
Let $\varsigma_{2}(x) = e^{-|x|^{2}/4}$, $S>0$ and $\kappa = \min(1,11\pi/(96\sqrt{d}S))$. Then,
\begin{equation*}
\begin{aligned}
& \|\varsigma_{2}\tanh(k^{\top}\cdot + v)\|_{\kappa,1} \lesssim_{d} 1 + S^{d}, \quad \text{ for $k \in [-S,S]^{d}$ and $v\in\mb{R}$}.
\end{aligned}
\end{equation*}
\end{lemma}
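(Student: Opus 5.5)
The plan is to take the most natural extension and estimate its weighted Fourier norm through analytic continuation into a tube, a Paley--Wiener argument of the kind used in the proof of Lemma~\ref{Lemma: Fourier decay from Gevrey and analytic regularity}(ii). Since $\|\cdot\|_{\kappa,1}$ is an infimum over whole-space extensions, it suffices to bound $\|e^{\kappa|\cdot|}\widehat U\|_{L^2(\mb R^d)}$ for one extension. I would take
\[
U(x)=e^{-|x|^2/4}\tanh(k^\top x+v)\qquad\text{on all of }\mb R^d .
\]
This $U$ lies in $L^2(\mb R^d)$ because $|\tanh|\le1$ on the reals and the Gaussian is square integrable.

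\textbf{Step 1 (analytic continuation).} Put $z=x+\mathrm i y$ and extend $U$ by $U(z)=e^{-z^\top z/4}\tanh(k^\top z+v)$. The Gaussian factor is entire and satisfies $|e^{-z^\top z/4}|=e^{-(|x|^2-|y|^2)/4}$. The factor $\tanh(k^\top z+v)$ is holomorphic as long as $|\operatorname{Im}(k^\top z)|=|k^\top y|<\pi/2$. Moreover $|\tanh(a+\mathrm i b)|\le1$ whenever $|b|\le\pi/4$, since $|\tanh(a+\mathrm i b)|^2=(\sinh^2a+\sin^2b)/(\sinh^2a+\cos^2b)$. Because $|k^\top y|\le|k|\,|y|\le\sqrt d\,S|y|$, we get, for every $|y|\le\pi/(4\sqrt d S)$,
\[
|U(x+\mathrm i y)|\le e^{|y|^2/4}e^{-|x|^2/4},
\qquad
\|U(\cdot+\mathrm i y)\|_{L^2}\le e^{|y|^2/4}\|e^{-|\cdot|^2/4}\|_{L^2}\lesssim_d e^{|y|^2/4}.
\]
Cauchy's theorem, justified by the Gaussian decay in $x$, then gives $\widehat{U(\cdot+\mathrm i y)}(\xi)=e^{-y^\top\xi}\widehat U(\xi)$. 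Choosing the sign of $y$ appropriately, Plancherel yields
\[
\int_{\mb R^d} e^{2\kappa'\theta^\top\xi}|\widehat U(\xi)|^2\,\mr d\xi\lesssim_d e^{\kappa'^2/2}
\]
for every unit vector $\theta$ and every $0\le\kappa'\le\pi/(4\sqrt dS)$.

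\textbf{Step 2 (from directional to radial weights).} Fix a finite set $\Theta_d\subset\mb S^{d-1}$, depending only on $d$, such that every $\omega\in\mb S^{d-1}$ satisfies $\max_{\theta\in\Theta_d}\theta^\top\omega\ge 11/24$. Such a set exists by compactness, since $11/24<1$. Then
\[
e^{\kappa|\xi|}\le \sum_{\theta\in\Theta_d} e^{(24\kappa/11)\,\theta^\top\xi}.
\]
With $\kappa=\min(1,11\pi/(96\sqrt dS))$, the directional rate is $\kappa'=24\kappa/11\le\pi/(4\sqrt dS)$, exactly the admissible range from Step 1. In addition $\kappa'\le 24/11$, so the factor $e^{\kappa'^2/2}$ is bounded by an absolute constant. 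Summing over the $\#\Theta_d$ directions, which only costs a $d$-dependent constant, gives
\[
\|e^{\kappa|\cdot|}\widehat U\|_{L^2}\lesssim_d 1\le 1+S^d,
\]
uniformly in $k\in[-S,S]^d$ and $v\in\mb R$.

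\textbf{Main obstacle.} The delicate point is the bookkeeping between the strip width of $\tanh$, which depends on the direction of $k$, and the radial weight $e^{\kappa|\xi|}$. A naive coordinatewise splitting $e^{\kappa|\xi|}\le\prod_j(e^{\kappa\xi_j}+e^{-\kappa\xi_j})$ shifts by $|y|=\kappa\sqrt d$ and loses a factor $\sqrt d$, which pushes $|k^\top y|$ beyond $\pi/4$ in high dimension. The net-of-directions device avoids this loss, and the constant $11/24$ is what makes the stated $\kappa$ compatible with the bound $|\tanh|\le1$ for $|\operatorname{Im}|\le\pi/4$.

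A secondary check is the contour shift in Step 1. It is legitimate because the continued integrand is holomorphic on the closed tube $|y|\le\pi/(4\sqrt dS)$, which stays away from the poles of $\tanh$, and it is Gaussian-decaying in $x$ uniformly in $y$ there. If a constant growing with $S$ appears anywhere, it is harmless, since the claim already allows the factor $1+S^d$.
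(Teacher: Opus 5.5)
Your proof is correct, and it takes a genuinely different route from the paper. The paper uses the same whole-space extension $\varsigma_2\tanh(k^\top\cdot+v)$ and also shifts the contour. Its shift, however, depends on $\xi$: $\eta=-2\xi$ for $|\xi|\le 11\pi/(48|k|)$, and $\eta=-\frac{11\pi}{24|k|}\,\xi/|\xi|$ otherwise. This works in the wider strip $|k^\top\eta|\le 11\pi/24$, where only the crude bound $|\tanh|\le 30$ is available. The result is a pointwise bound on $|\widehat u(\xi)|$: Gaussian near the origin and exponential at rate $11\pi/(24|k|)$ beyond. The paper then integrates this bound against $e^{2\kappa|\xi|}$ in polar coordinates. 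Since $\kappa\le 11\pi/(96|k|)$, the integral converges, but turning an $L^\infty$-type bound into an $L^2$ bound costs a volume factor $|k|^d$, which is where $1+S^d$ comes from.

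You instead use the exact Plancherel identity for $U(\cdot+\mathrm i y)$ along finitely many fixed directions. The price is the covering number of $\mathbb S^{d-1}$ by the caps $\{\theta^\top\omega\ge 11/24\}$, together with the narrower strip $|\operatorname{Im}|\le\pi/4$, where $|\tanh|\le 1$. The factor $24/11$ lost in the covering exactly offsets the narrower strip. You therefore recover the same $\kappa$ and in fact a sharper, $S$-independent bound $\|\varsigma_2\tanh(k^\top\cdot+v)\|_{\kappa,1}\lesssim_d 1$.

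In short, the paper's route gives more information, namely an explicit pointwise decay profile of $\widehat u$, and needs no covering argument because the shift adapts to $\xi$. Yours is shorter and avoids the lossy passage from pointwise to weighted $L^2$ bounds.
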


\begin{proof}
Denote $u:=\varsigma_{2}\tanh(k^{\top}\cdot+v)$. The case $k=0$ follows directly from the Gaussian Fourier transform, so we assume $k\ne0$ below. The function
$u$ has an analytic continuation to the set $\{\zeta\in \mb{C}^{d} :|k^{\top}\operatorname{Im} \zeta|<\pi/2\}$ as $$u(\lambda+i
\eta)=\left[1-\frac{2}{e^{2\left(k^{\top}\lambda+v+i k^{\top} \eta\right)}+1}\right] e^{-\left(|\lambda|^{2}-|\eta|^{2}+2 i \lambda^{\top} \eta\right) / 4}, $$
for $\lambda,\eta\in\mb{R}^{d}$ and $|k^{\top}\eta|<\pi/2$. For each such $\eta$, $u(\lambda+i \eta)$ is in $\ms{S}'(\mb{R}^{d})$ as a function of $\lambda$. By Cauchy's theorem, we can shift the region of integration in Fourier transform so that
\begin{equation} \label{eq: Fourier of varsigma_2 tanh(k cdot + v)}
\begin{aligned}
\wh{u}(\xi)=(2 \pi)^{-d / 2} \int_{\mathbb{R}^{d}} u(\lambda+i \eta) e^{-i \xi^{\top}(\lambda+i \eta)} d \lambda .
\end{aligned}
\end{equation}
For $|k^{\top}\eta|\le 11\pi/24$, note that
\begin{equation*}
\begin{aligned}
\left|\tanh(k^{\top}\lambda+v + i k^{\top}\eta)\right| & = \left|\frac{e^{4(k^{\top}\lambda+v)} - 1 + 2ie^{2(k^{\top}\lambda+v)} \sin(2 k^{\top}\eta)}{e^{4(k^{\top}\lambda+v)} + 1+ 2e^{2(k^{\top}\lambda+v)} \cos(2 k^{\top}\eta)}\right| \\
& \le \frac{e^{4(k^{\top}\lambda+v)} + 1}{e^{4(k^{\top}\lambda+v)} + 1+ 2e^{2(k^{\top}\lambda+v)} \cos(2 k^{\top}\eta)} \\
& \le 1 - \frac{ \cos(11\pi/12)}{1 + \cos(11\pi/12)} \le 30 .\\
\end{aligned}
\end{equation*}
Then, we take $\eta = -2\xi$ for $|\xi| \le \frac{11\pi}{48|k|}$ and $\eta = -\frac{11\pi\xi}{24|k| |\xi|}$ otherwise in (\ref{eq: Fourier of varsigma_2 tanh(k cdot + v)}), which gives
\begin{equation*}
\begin{aligned}
|\wh{u}(\xi)| & \le 30 \cdot 2^{d / 2} \left\{\begin{array}{ll}
e^{-|\xi|^{2}}, & |\xi| \le \frac{11\pi}{48|k|},\\
e^{\left(\frac{11\pi}{48|k|}\right)^{2} -\frac{11\pi|\xi|}{24|k|} }, & |\xi| \ge \frac{11\pi}{48|k|}.
\end{array}\right.
\end{aligned}
\end{equation*}
Since $\kappa = \min(1,11\pi/(96\sqrt{d}S))$, we have $\kappa\le 11\pi/(96|k|)$. By direct calculation,
\begin{equation*}
\begin{aligned}
\|u\|_{\kappa,1}^{2} & \lesssim_{d}  \int_{0}^{\frac{11\pi}{48|k|}} r^{d-1}e^{-2r^{2}+2\kappa r}\mr{d}r + \int_{\frac{11\pi}{48|k|}}^{\infty} r^{d-1} e^{2\left(\frac{11\pi}{48|k|}\right)^{2} - \frac{11\pi r}{12|k|} +2\kappa r}\mr{d}r \\
& \lesssim_{d}  \int_{0}^{\infty} e^{-2r^{2}+(2\kappa+1) r}\mr{d}r + e^{-\left(\frac{11\pi}{48|k|}\right)^{2}} \int_{0}^{\infty} \left(r+\frac{11\pi}{48|k|}\right)^{d-1} e^{- \frac{11\pi}{16|k|}r}\mr{d}r \\
& \lesssim_{d}  e^{(2\kappa+1)^{2}/8} + 1 + |k|^{d}, \\
\end{aligned}
\end{equation*}
which completes the proof.
\end{proof}

\begin{lemma}[Low-frequency Taylor approximation for Fourier features]
\label{Lemma: low frequency Taylor approximation for Fourier features}
For every $T\ge2$ and $0<|k|<(2\sqrt d)^{-1}$, the polynomial
\[
P_T(k,x):=\sum_{\ell=0}^{T}\frac{(\imath k^{\top}x)^\ell}{\ell!}
\]
satisfies
\[
\left\| e^{\imath k^{\top}(\cdot)}-P_T(k,\cdot) \right\|_{W^{2,\infty}([-1,1]^d)} \lesssim_d |k|^2\frac{2^{-(T-1)}}{(T-1)!}.
\]
The same estimate holds for the real and imaginary parts.
\end{lemma}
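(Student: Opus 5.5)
The plan is to bound the Taylor remainder of the exponential directly, together with its first and second derivatives, using that differentiation in $x$ only produces factors of $k$. Set $z:=k^{\top}x$. For $x\in[-1,1]^d$ we have $|z|\le|k|\,|x|\le\sqrt d\,|k|<1/2$. Write
\[
R_T(x):=e^{\imath k^{\top}x}-P_T(k,x)=\sum_{\ell=T+1}^{\infty}\frac{(\imath z)^{\ell}}{\ell!}.
\]
This is a function of the single variable $z$, say $R_T(x)=g_T(k^{\top}x)$ with $g_T(z):=\sum_{\ell>T}(\imath z)^\ell/\ell!$. By the chain rule, $\partial_{j}R_T=k_j\,g_T'(z)$ and $\partial_{j}\partial_{m}R_T=k_jk_m\,g_T''(z)$. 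It therefore suffices to bound $|g_T|$, $|g_T'|$ and $|g_T''|$ on $|z|\le1/2$, each multiplied by the appropriate power of $|k|\le1$.

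The derivatives are again tails of the exponential series: $g_T'(z)=\imath\sum_{\ell\ge T}(\imath z)^{\ell}/\ell!$ and $g_T''(z)=-\sum_{\ell\ge T-1}(\imath z)^{\ell}/\ell!$. For $|z|\le1/2$ the standard tail estimate gives
\[
\Bigl|\sum_{\ell\ge m}\frac{(\imath z)^\ell}{\ell!}\Bigr|\le\frac{|z|^m}{m!}\sum_{j\ge0}|z|^j\le\frac{2|z|^m}{m!}.
\]
This yields:
\begin{itemize}
\item $|g_T''|\le 2|z|^{T-1}/(T-1)!$;
\item $|g_T'|\le 2|z|^{T}/T!$;
\item $|g_T|\le 2|z|^{T+1}/(T+1)!$.
\end{itemize}

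Next I convert these into powers of $|k|$. Writing $|z|\le\sqrt d|k|$ and using $\sqrt d|k|<1/2$, we get $|z|^{T-1}\le(\sqrt d|k|)^{T-1}\le 2^{-(T-1)}$ and $|z|^{T}\le 2^{-(T-1)}\sqrt d|k|$. The second-derivative term therefore contributes $|k_jk_m|\,2\cdot 2^{-(T-1)}/(T-1)!\le 2|k|^2 2^{-(T-1)}/(T-1)!$.

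To obtain the factor $|k|^2$ for the lower-order terms, I extract two powers from $|z|^T$ (respectively $|z|^{T+1}$). This is possible because $T\ge2$. For the zeroth-order term,
\[
|z|^{T+1}\le d|k|^2(\sqrt d|k|)^{T-1}\le d|k|^2 2^{-(T-1)},
\]
and $1/(T+1)!\le1/(T-1)!$. For the first-order term, $|k_j||z|^T\le|k|\,\sqrt d|k|\,(\sqrt d|k|)^{T-1}$, which gives the same bound.

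Taking the maximum over all multi-indices of order at most two gives the $W^{2,\infty}$ bound $\lesssim_d|k|^2 2^{-(T-1)}/(T-1)!$. The real and imaginary parts of $e^{\imath k^\top x}-P_T$ are the remainders of $\cos(k^{\top}x)$ and $\sin(k^{\top}x)$ against the corresponding parts of $P_T$. Since $|\operatorname{Re}w|,|\operatorname{Im}w|\le|w|$ and differentiation commutes with taking real parts for real $k,x$, the same estimate holds for both parts.

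The only step needing care is bookkeeping: ensuring that every derivative order retains the full factor $|k|^2$ together with $2^{-(T-1)}/(T-1)!$. This is exactly where the hypotheses $T\ge2$ and $\sqrt d|k|<1/2$ are used.
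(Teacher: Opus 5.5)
Your proof is correct and follows essentially the same route as the paper's: differentiating only shifts the index of the exponential-series tail while producing factors of $k$, the tail is bounded by a constant times $|z|^{T-r+1}/(T-r+1)!$ on $|z|\le\sqrt d\,|k|<1/2$, and you then extract $|k|^2$ and $(\sqrt d|k|)^{T-1}\le 2^{-(T-1)}$. The only cosmetic difference is that you bound the tail by a geometric series (constant $2$) where the paper uses the factor $e^{|k^{\top}x|}\le e^{1/2}$.
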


\begin{proof}
Let $\alpha$ be a multi-index with $r:=|\alpha|\le2$. Since $\partial^\alpha P_T(k,x)=(\imath k)^\alpha P_{T-r}(k,x)$, the exponential-series remainder gives
\[
\left| \partial^\alpha\bigl(e^{\imath k^{\top}x}-P_T(k,x)\bigr) \right| \le |k|^r e^{|k^{\top}x|} \frac{|k^{\top}x|^{T-r+1}}{(T-r+1)!}.
\]
For $x\in[-1,1]^d$ and $|k|<(2\sqrt d)^{-1}$, one has $|k^{\top}x|\le\sqrt d\,|k|<1/2$. Consequently,
\[
|k|^r e^{|k^{\top}x|} \frac{|k^{\top}x|^{T-r+1}}{(T-r+1)!} \le d^{(2-r)/2}e^{1/2}|k|^2 \frac{(\sqrt d\,|k|)^{T-1}}{(T-1)!} \lesssim_d |k|^2
\frac{2^{-(T-1)}}{(T-1)!}.
\]
Taking the maximum over $|\alpha|\le2$ proves the asserted bound. Taking real or imaginary parts cannot increase the norm, which completes the proof.
\end{proof}

\begin{lemma}[Low-frequency Taylor approximation for $\tanh$ features]
\label{Lemma: low frequency Taylor approximation for tanh features}
Set $\rho_{\mathrm T}:=4/(7\sqrt d)\in(0,1)$. For every $T\ge2$, $0<|k|<\rho_{\mathrm T}$, and $|v|\le dS$, the polynomial
\[
P_{T,k,v}(x):= \sum_{\ell=0}^{T}\frac{\tanh^{(\ell)}(v)}{\ell!}(k^{\top}x)^\ell
\]
satisfies
\begin{equation}
\label{ineq: low frequency tanh Taylor remainder}
\left\|\tanh(k^{\top}\cdot+v)-P_{T,k,v}\right\|_{W^{2,\infty}([-1,1]^d)}
\lesssim_{d,S}
|k|^2\left(\frac{|k|}{2\rho_{\mathrm T}}\right)^{T-1}.
\end{equation}
\end{lemma}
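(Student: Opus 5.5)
We prove \eqref{ineq: low frequency tanh Taylor remainder} by complex analysis in one variable. The key fact is that $\tanh$ is analytic in the strip $|\operatorname{Im} z|<\pi/2$; its poles nearest the real axis are at $\pm\mathrm i\pi/2$. Put $g(z):=\tanh(v+z)$. Then $g$ is holomorphic on the disc $|z|<\pi/2$, since any $z$ there satisfies $|\operatorname{Im}(v+z)|<\pi/2$, and this holds for every real $v$. Take $r:=\sqrt d\,\rho_{\mathrm T}\cdot\frac{7}{4}\cdot\frac{1}{2}\cdot 2=$ some radius strictly below $\pi/2$; the natural choice is $r=1$. Then $\sqrt d\,\rho_{\mathrm T}=4/7$, and for $x\in[-1,1]^d$ we have $|k^{\top}x|\le\sqrt d|k|<4/7$.

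On the closed disc $|z|\le 1<\pi/2$ we need a uniform bound for $|g|$. The denominator $|\cosh(v+z)|$ is bounded below uniformly in $v$. Indeed, $|\cosh(a+\mathrm i b)|^2=\sinh^2 a+\cos^2 b$, and $\cos^2 b\ge\cos^2 1>0$ for $|b|\le1$. The numerator satisfies $|\sinh(v+z)|\le|\cosh(v+z)|+1$. Hence $|g|\le 1+1/\cos 1=:G$ on $|z|\le1$, independently of $v$; the assumption $|v|\le dS$ is therefore not even needed here. Cauchy's estimates then give $|\tanh^{(\ell)}(v)|/\ell!\le G$ for all $\ell$.

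Next, write $y=k^{\top}x$ and $R_T(y):=g(y)-\sum_{\ell\le T}\frac{g^{(\ell)}(0)}{\ell!}y^\ell=\sum_{\ell>T}c_\ell y^\ell$. For $|\alpha|=m\le2$ we have $\partial^\alpha_x R_T(k^{\top}x)=k^\alpha R_T^{(m)}(y)$. Differentiating the tail termwise gives
\[
|R_T^{(m)}(y)|\le G\sum_{\ell>T}\ell^m|y|^{\ell-m},\qquad |y|\le\sqrt d|k|=:\tau<4/7 .
\]
This bound is trivial for $d$ fixed but not yet in the required form. A cleaner route is to use the Cauchy integral over $|z|=1$ for the remainder, which yields $|R_T^{(m)}(y)|\lesssim G\,|y|^{T+1-m}(1-|y|)^{-m-1}\lesssim |y|^{T+1-m}$ since $|y|\le4/7$. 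In either case we get
\[
|\partial^\alpha R_T|\lesssim |k|^m(\sqrt d|k|)^{T+1-m}.
\]
The worst case is $m=2$, which gives $|k|^2 d^{(T-1)/2}|k|^{T-1}$. Because $\sqrt d|k|=|k|\cdot(7/4)\rho_{\mathrm T}^{-1}\cdot(4/7)\sqrt d/\sqrt d$, this equals $|k|^2\bigl(\tfrac{4}{7}\cdot|k|/\rho_{\mathrm T}\bigr)^{T-1}$. Since $4/7\ge1/2$, this is slightly weaker than the claimed base $|k|/(2\rho_{\mathrm T})$, so the rate does not close with radius $1$ and must be fixed.

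The remedy is to enlarge the Cauchy radius to $r=8/7<\pi/2$. Then $|y|/r\le \tfrac{4}{7}\cdot\tfrac{7}{8}\cdot|k|/\rho_{\mathrm T}=|k|/(2\rho_{\mathrm T})$. The bound on $|\cosh|$ from below still holds because $\cos(8/7)>0$. The remainder estimate becomes
\[
|R_T^{(m)}(y)|\lesssim G\,r^{-m}(|y|/r)^{T+1-m}(1-|y|/r)^{-m-1},
\]
with $|y|/r\le1/2$, so the factor $(1-|y|/r)^{-m-1}$ is bounded. For $m\le2$, the factor $|k|^m(|y|/r)^{T+1-m}$ is at most a constant times $|k|^2(|k|/(2\rho_{\mathrm T}))^{T-1}$. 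For $m<2$ this uses $|y|\lesssim|k|$ and $|y|/r\le1$. Taking the maximum over $|\alpha|\le2$ gives \eqref{ineq: low frequency tanh Taylor remainder}.

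The main point of care is the choice of radius: it must be below $\pi/2$ and at least $2\sqrt d\rho_{\mathrm T}=8/7$. This is exactly why $\rho_{\mathrm T}=4/(7\sqrt d)$ was selected. The constant then depends only on $d$, so the dependence on $S$ is harmless.
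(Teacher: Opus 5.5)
Your strategy is the paper's: Cauchy estimates for $g(z)=\tanh(v+z)$ on a disc of radius $r<\pi/2$, uniform in $v$ as you correctly note, followed by a tail bound for the Taylor remainder along $y=k^{\top}x$. But the step that closes the rate fails at your chosen radius $r=8/7$.

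\textbf{The gap.} The bound
\[
|R_T^{(m)}(y)|\lesssim G\,r^{-m}(|y|/r)^{T+1-m}(1-|y|/r)^{-m-1},
\]
with a constant independent of $T$, does not follow from $|g|\le G$ on $|z|\le r$. Write
$R_T(y)=\frac{y^{T+1}}{2\pi\mathrm i}\oint_{|z|=r}\frac{g(z)}{z^{T+1}(z-y)}\,\mathrm dz$.
Differentiating $y^{T+1}$ in $y$ produces factors $(T+1)T\cdots$. These are the same $\ell^m$ factors that appear in your termwise bound $G\sum_{\ell>T}\ell^m|y|^{\ell-m}$, so the Cauchy-integral route does not remove them.

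A concrete example: $g(z)=(z/r)^{T+1}$ satisfies $|g|\le1$ on $|z|\le r$ and $R_T=g$. Yet $|R_T''(y)|=(T+1)T\,r^{-2}(|y|/r)^{T-1}$, so boundedness on the disc of radius $r$ alone forces a loss of order $T^m$.

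At $r=8/7$, the ratio $|y|/r$ is controlled only by the target base $s:=|k|/(2\rho_{\mathrm T})$, with nothing to spare. Your argument therefore yields only $|k|^2T^2s^{T-1}$. This is not a $T$-independent multiple of $|k|^2s^{T-1}$, which the lemma requires; the lemma is later used with $T\asymp M^{1/d}\to\infty$.

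\textbf{The repair.} Take the Cauchy radius strictly between $8/7$ and $\pi/2$, not exactly $8/7$. The extra geometric factor then absorbs $\ell^m$. The paper uses $r=4/3$:
\begin{itemize}
\item Cauchy's estimate gives $|\tanh^{(\ell)}(v)|/\ell!\lesssim(3/4)^\ell$.
\item Since $3/4=(6/7)(7/8)$ and $\sup_{\ell}(1+\ell)^2(6/7)^\ell<\infty$, one gets $\ell^m|\tanh^{(\ell)}(v)|/\ell!\lesssim(7/8)^\ell$ for $m\le2$.
\item Hence
\[
\sum_{\ell>T}\ell^m\frac{|\tanh^{(\ell)}(v)|}{\ell!}(\sqrt d\,|k|)^{\ell-m}\lesssim\sum_{\ell>T}s^{\ell-m}\lesssim s^{T+1-m},\qquad s=\frac{7\sqrt d\,|k|}{8}<\frac12 .
\]
\end{itemize}
The rest of your argument then goes through unchanged: the factor $|k|^m$ from the chain rule, and $|y|\lesssim|k|$ when $m<2$.

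So $8/7$ is not the Cauchy radius in the paper's design. It is the effective radius left after the polynomial factor $\ell^2$ has been absorbed by the slack between $4/3$ and $8/7$. Your claim that the radius need only be "at least $8/7$" is exactly where the proof breaks.
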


\begin{proof}
For $v\in[-dS,dS]$, the function $w\mapsto\tanh(v+w)$ is analytic in $|w|<\pi/2$. Cauchy's estimate on the circle $|w|=4/3$ gives
\[
\frac{|\tanh^{(\ell)}(v)|}{\ell!} \lesssim_{d,S}\left(\frac34\right)^\ell.
\]
Since $\sup_{\ell\ge0}(1+\ell)^2(6/7)^\ell<\infty$, we have
\[
\frac{|\tanh^{(\ell)}(v)|}{\ell!}\,\ell^r \lesssim_{d,S}\left(\frac78\right)^\ell, \qquad \ell\ge r,\quad 0\le r\le2.
\]
Let $\alpha$ be a multi-index with $r:=|\alpha|\le2$ and put $s:=7\sqrt d\,|k|/8=|k|/(2\rho_{\mathrm T})<1/2$. Termwise differentiation of the Taylor series and $|k^{\top}x|\le\sqrt d\,|k|$ yield, uniformly for $x\in[-1,1]^d$,
\begin{align*}
\left|\partial^\alpha
\bigl(\tanh(k^{\top}x+v)-P_{T,k,v}(x)\bigr)\right|
&\lesssim_{d,S}|k|^r\sum_{\ell=T+1}^{\infty}s^{\ell-r}\\
&\lesssim_{d,S}|k|^r s^{T+1-r}\\
&\lesssim_{d,S}|k|^2s^{T-1}.
\end{align*}
The implicit constant in the last step is uniform for $0\le r\le2$ because $s/|k|=(2\rho_{\mathrm T})^{-1}$. Taking the maximum over $|\alpha|\le2$ proves \eqref{ineq: low frequency tanh Taylor remainder} and completes the proof.
\end{proof}

\subsection*{Proof of Theorem~\ref{Theorem: fast decay of singular values of RFMtx}}

\begin{proof}
Put $K:=m-1$. The hypothesis on $m$ gives $K\ge3M\ln(15M)$. Note that $\sigma_{m}(\mf{\Psi}) \leq \|\mf{\Psi}\|_{F} /\sqrt{m}$ always holds for any matrix and $m$. We shall prove the upper bound on $\sigma_{m}$ for $m \geq 3$. By the minimax principle,
\begin{equation*}
\begin{aligned}
\sigma_{m} = \min _{\operatorname{dim} H=2N-m+1} \max _{\boldsymbol{\beta} \in H} \frac{|\mf{\Psi} \boldsymbol{\beta}|}{|\boldsymbol{\beta}|}  ,
\end{aligned}
\end{equation*}
where $H$ is a linear subspace of $\mb{R}^{2N}$. To control $\sigma_{m}$, it suffices to find a $(2N-m+1)$-dimensional linear subspace on which $|\mf{\Psi}\boldsymbol\beta|$ is small relative to $|\boldsymbol\beta|$. We reduce this problem to mutual approximation of the features $\psi_j$ in $W^{2,\infty}(\Omega)$, thereby minimizing the dependence on the collocation points, domain, and differential equation.

By Lemma \ref{Lemma: Lv and Bv bounded by W^2,p and W^1,p norm of v} and the trace theorem, we obtain
\begin{equation*}
\begin{aligned}
|\mf{\Psi} \beta|^{2} & \le n_{1}\left\|\sum_{j=1}^{2 N} \beta_{j}L\psi_{j}\right\|_{L^{\infty}(\Omega)}^{2} + n_{2} \left\|\sum_{j=1}^{2 N} \beta_{j}B\psi_{j}\right\|_{L^{\infty}(\partial\Omega)}^{2} \\
& \lesssim_{\Omega,d,\{\Lambda_{i}\}_{i=1}^{5}} n_{1} \left\|\sum_{j=1}^{2 N} \beta_{j}\psi_{j}\right\|_{W^{2, \infty}(\Omega)}^{2} +  n_{2}  \left\|\sum_{j=1}^{2 N} \beta_{j}\psi_{j}\right\|_{W^{1, \infty}(\partial\Omega)}^{2} \\
& \lesssim_{\Omega,d,\{\Lambda_{i}\}_{i=1}^{5}} n   \left\|\sum_{j=1}^{2 N} \beta_{j}\psi_{j}\right\|_{W^{2, \infty}(\Omega)}^{2} . \\
\end{aligned}
\end{equation*}
Denote $\widetilde{\Omega}=(-1,1)^d$. Then, it follows from $\Omega\subset\widetilde{\Omega}$ and Lemma \ref{Lemma: equivalent norm used in pf of exponential ill conditionality} that
\begin{equation*}
\begin{aligned}
|\mf{\Psi} \beta| & \lesssim_{\Omega,d,\{\Lambda_{i}\}_{i=1}^{5}} \sqrt{n}  \left\|\sum_{j=1}^{2 N} \beta_{j}\varsigma\psi_{j}\right\|_{W^{2, \infty}(\widetilde{\Omega})}  ,
\end{aligned}
\end{equation*}
where $\varsigma$ can be $1$, $\varsigma_{1}$ or $\varsigma_{2}$ as in Lemma \ref{Lemma: equivalent norm used in pf of exponential ill conditionality}.
For each $j$, suppose $\varphi_{j}$ is an approximation of $\varsigma\psi_{j}$ in some linear subspace of dimension at most $m-1$. Then there exists a $(2N-m+1)$-dimensional subspace $H \subset\mb{R}^{2N}$ such that
\begin{equation*}
\sum_{j=1}^{2 N} \beta_{j} \varphi_{j}(x) = 0, \quad \text{for any } \beta\in H.
\end{equation*}
Using the above relation, the triangle inequality, and the Cauchy--Schwarz inequality, we obtain
\begin{equation*}
\begin{aligned}
\left\|\sum_{j=1}^{2 N} \beta_{j}\varsigma\psi_{j} \right\|_{W^{2, \infty}(\widetilde{\Omega})} & = \left\|\sum_{j=1}^{2 N} \beta_{j}\left(\varsigma\psi_{j}- \varphi_{j}\right) \right\|_{W^{2, \infty}(\widetilde{\Omega})} \\
& \le |\beta|\left(\sum_{j=1}^{2 N}  \left\|\varsigma\psi_{j}- \varphi_{j} \right\|_{W^{2, \infty}(\widetilde{\Omega})}^{2} \right)^{1/2} ,
\end{aligned}
\end{equation*}
and therefore
\begin{equation}
\label{ineq: bound matrix beta by W2infty approximation errors}
\sigma_{m}
\lesssim_{\Omega,d,\{\Lambda_{i}\}_{i=1}^{5}}
\sqrt{n}\left(\sum_{j=1}^{2 N}
\left\|\varsigma\psi_{j}-\varphi_{j}\right\|_{W^{2,\infty}(\widetilde\Omega)}^{2}
\right)^{1/2}.
\end{equation}
So far, we have reduced bounding $\sigma_{m}$ to the approximation to $\varsigma\psi_{j}$ by $\varphi_{j}$. We shall choose appropriate $\varsigma$ and $\varphi_{j}$ in each case.

It remains to construct the approximants $\varphi_j$. Fix $\delta_0\in(14/15,1)$. Since $14/\delta_0<15$, the assumed lower bound on $K$ implies the sampling condition in Theorem~\ref{Theorem: interpolation improved convergence rate of RFM} with $N=K$ and $\delta=\delta_0$.

For Fourier features, take $\varsigma=\varsigma_1$ and set $S_{\mathrm F}:=S+\pi/2$. Direct calculation gives
\begin{equation*}
\begin{aligned}
& \left(\varsigma_{1} \cos(k^{\top} \cdot)\right)^{\wedge}(\xi) = \frac{(2/\pi)^{d/2}}{2}\left(1_{|\xi+k|_{\infty}< \frac{\pi}{2}} + 1_{|\xi-k|_{\infty}< \frac{\pi}{2}}\right),  \\
& \left(\varsigma_{1} \sin(k^{\top} \cdot)\right)^{\wedge}(\xi) = \frac{(2/\pi)^{d/2}}{2 i}\left(1_{|\xi+k|_{\infty}< \frac{\pi}{2}} - 1_{|\xi-k|_{\infty}< \frac{\pi}{2}}\right) .
\end{aligned}
\end{equation*}
Hence both functions are supported in $\overline{Q_{S_{\mathrm F}}}$ in frequency and have $L^2(\mb R^d)$ norms bounded by a constant depending only on $d$. Put
\[
a_{\lambda,\mathrm F}:=\frac{2^{1-1/d}}{28e}, \qquad a_{\mathrm F}:=\frac34a_{\lambda,\mathrm F} =\frac{3\cdot 2^{1-1/d}}{112e}.
\]
Apply Theorem~\ref{Theorem: interpolation improved convergence rate of RFM}(4) on $\widetilde\Omega$ using the randomly shifted cosine representation and taking $t=2$ and $p=\infty$. With $\lambda_{\mathrm F}=c_{\lambda,\mathrm F} \exp\bigl(-a_{\lambda,\mathrm F}M^{1/d}\ln M\bigr)$, the theorem supplies one set of $K$ sampled frequency--phase pairs and, simultaneously for all $2N$ target functions, approximants $\varphi_j$ in the resulting $K$-dimensional real trial space such that
\[
\|\varsigma_1\psi_j-\varphi_j\|_{W^{2,\infty}(\widetilde\Omega)} \lesssim_{d,S} \exp\bigl(-a_{\mathrm F}M^{1/d}\ln M\bigr), \qquad 1\le j\le2N.
\]
The event has probability at least $1-\delta_0>0$ and is uniform over the target ball. Since the features and targets are real, taking the real parts of the coefficients does not increase either the approximation error or the coefficient norm. We therefore fix one realization for which these estimates hold. Since $K=m-1$, substituting these estimates into \eqref{ineq: bound matrix beta by W2infty approximation errors} proves \eqref{ineq: Fourier fast decay of singular values}.

For $\tanh$ features, take $\varsigma=\varsigma_2$ and put
\[
\kappa_{\mathrm T}:=\min\left\{1, \frac{11\pi}{96\sqrt d\,S}\right\}.
\]
Lemma~\ref{Lemma: Fourier decay of Gaussian weighted tanh features} yields, uniformly for $k_j\in[-S,S]^d$ and $v_j\in\mb R$, $\|\varsigma_2\psi_j\|_{\kappa_{\mathrm T},1}\lesssim_d 1+S^d$. Apply Theorem~\ref{Theorem: interpolation improved convergence rate of RFM}(2) with the randomly shifted cosine representation, $s=1$, and $\kappa=\bar\kappa=\kappa_{\mathrm T}$, taking $t=2$ and $p=\infty$. Thus $\sigma=1$ and $\theta=1/2$. Since $\kappa_{\mathrm T}$ depends only on $d$ and $S$, and since the application domain $\widetilde\Omega=(-1,1)^d$ is fixed, the theorem yields a constant $a_{\mathrm T}=a_{\mathrm T}(d,S)>0$. The same uniform-event argument supplies approximants in a common $K$-dimensional real trial space satisfying
\[
\|\varsigma_2\psi_j-\varphi_j\|_{W^{2,\infty}(\widetilde\Omega)} \lesssim_{d,S}\exp\bigl(-a_{\mathrm T}M^{1/d}\bigr), \qquad 1\le j\le2N.
\]
Substitution into \eqref{ineq: bound matrix beta by W2infty approximation errors} proves \eqref{ineq: tanh fast decay of singular values} and completes the proof.
\end{proof}

\subsection*{Proof of Theorem~\ref{Theorem: exponential ill conditionality of random feature matrices}}

\begin{proof}
If $\sigma_{2N}=0$, the conclusion follows from the stated convention. Assume henceforth that $\sigma_{2N}>0$. Theorem \ref{Theorem: fast decay of singular values of RFMtx} applies with $m=2N$ because $2N-1\ge3M\ln(15M)$. Thus \eqref{ineq: Fourier fast decay of singular values} and \eqref{ineq: tanh fast decay of singular values}, with $m=2N$, apply in the Fourier and $\tanh$ cases, respectively.

We first prove part~(1). For $1\le i\le n_1$, ellipticity gives $-k_j^{\top}A(x_i)k_j\ge\underline a|k_j|^2$. Direct differentiation and cancellation of the cross terms in each cosine--sine pair give
\begin{equation}
\label{eq: Fourier pair identities}
\begin{aligned}
&|L\cos(k_j^{\top}\cdot)(x_i)|^2
+|L\sin(k_j^{\top}\cdot)(x_i)|^2
=\bigl(-k_j^{\top}A(x_i)k_j+c(x_i)\bigr)^2
+\bigl(b(x_i)^{\top}k_j\bigr)^2,
\qquad 1\le i\le n_1,\\
&|B\cos(k_j^{\top}\cdot)(x_i)|^2
+|B\sin(k_j^{\top}\cdot)(x_i)|^2
=g_1^2(x_i)\bigl(\mf n(x_i)^{\top}k_j\bigr)^2+g_2^2(x_i),
\quad n_1<i\le n.
\end{aligned}
\end{equation}
Set $\mc H:=\{j:|k_j|\ge(2\sqrt d)^{-1}\}$ and $\mc L:=\{j:|k_j|<(2\sqrt d)^{-1}\}$.

Suppose first that $|\mc H|\ge N/2$. Summing the two identities in \eqref{eq: Fourier pair identities} and using $c\ge0$ yields
\begin{align*}
\|\mf\Psi\|_{\mathrm F}^2
&\ge \frac{|\mc H|n_1\underline a^2}{16d^2}
+Nn_1C^2+Nn_2G^2\\
&\gtrsim_d
N\bigl(n_1\underline a^2+n_1C^2+n_2G^2\bigr).
\end{align*}
Since $\sigma_1\ge\|\mf\Psi\|_{\mathrm F}/\sqrt{2N}$,
\[
\sigma_1\gtrsim_d \sqrt{n_1}\,\underline a+\sqrt{n_1}C+\sqrt{n_2}G.
\]
Combining this estimate with \eqref{ineq: Fourier fast decay of singular values} proves part~(1) in this case.

It remains to consider $|\mc L|>N/2$. If $k_j=0$ for some $j\in\mc L$, the associated sine column is zero, contrary to $\sigma_{2N}>0$. Hence every low frequency is nonzero. Put
\[
\rho_*:=\max_{j\in\mc L}|k_j|\in\bigl(0,(2\sqrt d)^{-1}\bigr)
\]
and choose $j_*$ attaining the maximum. By \eqref{eq: Fourier pair identities},
\[
\|\mf\Psi_{\cdot,j_*}\|_2^2 +\|\mf\Psi_{\cdot,j_*+N}\|_2^2 \ge n_1\underline a^2\rho_*^4+n_1C^2+n_2G^2.
\]
Thus at least one of these columns has norm at least the square root of the right-hand side divided by $\sqrt2$, and therefore
\begin{equation}
\label{ineq: largest singular value low Fourier frequencies}
\sigma_1\gtrsim
\sqrt{n_1}\,\underline a\,\rho_*^2
+\sqrt{n_1}C+\sqrt{n_2}G.
\end{equation}

We next construct a vector that provides an upper bound for $\sigma_{2N}$. Let
\begin{equation}
\label{eq: polynomial degree for condition number lower bounds}
T:=\left\lfloor\frac{d}{4e}M^{1/d}\right\rfloor,
\qquad
D_T:=\dim\mc P_T(\mb R^d)=\binom{T+d}{d},
\end{equation}
where $\mc P_T(\mb R^d)$ is the space of real polynomials of total degree at most $T$. For all sufficiently large $M$, one has $d\le dM^{1/d}/(4e)$, and hence $D_T\le(e(T+d)/d)^d\le 2^{-d}M\le M<N<2|\mc L|$. The coefficient bounds and the trace estimate used in the proof of Theorem~\ref{Theorem: fast decay of singular values of RFMtx} give, for every $f\in W^{2,\infty}((-1,1)^d)$,
\begin{equation}
\label{ineq: collocation evaluation continuity for condition number lower bounds}
\left|\left(\widetilde Lf(x_i)\right)_{i=1}^n\right|
\lesssim_{\Omega,d,\{\Lambda_i\}_{i=1}^5}
\sqrt n\,\|f\|_{W^{2,\infty}((-1,1)^d)}.
\end{equation}
For $j\in\mc L$, set
\[
p_{j,c}:=|k_j|^{-2}\operatorname{Re}P_T(k_j,\cdot), \qquad p_{j,s}:=|k_j|^{-2}\operatorname{Im}P_T(k_j,\cdot).
\]
The $2|\mc L|$ vectors $(\widetilde Lp_{j,c}(x_i))_{i=1}^n$ and $(\widetilde Lp_{j,s}(x_i))_{i=1}^n$ span a subspace of $\mb R^n$ of dimension at most $D_T<2|\mc L|$. Hence there exists $\gamma=(\gamma_{j,c},\gamma_{j,s})_{j\in\mc L} \in\mb R^{2|\mc L|}$ with $|\gamma|=1$ such that
\begin{equation}
\label{eq: annihilation of normalized Fourier Taylor polynomials}
\sum_{j\in\mc L}
\left[
\gamma_{j,c}(\widetilde Lp_{j,c}(x_i))_{i=1}^n
+\gamma_{j,s}(\widetilde Lp_{j,s}(x_i))_{i=1}^n
\right]=0.
\end{equation}
Define $\beta\in\mb R^{2N}$ by $\beta_j:=\gamma_{j,c}/|k_j|^2$ and $\beta_{j+N}:=\gamma_{j,s}/|k_j|^2$ for $j\in\mc L$, and set all remaining components equal to zero. Then
\begin{equation}
\label{ineq: normalized Fourier coefficient lower bound}
|\beta|^2
=\sum_{j\in\mc L}
\frac{|\gamma_{j,c}|^2+|\gamma_{j,s}|^2}{|k_j|^4}
\ge\rho_*^{-4}.
\end{equation}
Using \eqref{eq: annihilation of normalized Fourier Taylor polynomials}, Lemma~\ref{Lemma: low frequency Taylor approximation for Fourier features}, \eqref{ineq: collocation evaluation continuity for condition number lower bounds}, and Cauchy--Schwarz, we obtain
\begin{align*}
|\mf\Psi\beta|
&=\left|\left(\widetilde L\!\left\{
\sum_{j\in\mc L}\left[
\gamma_{j,c}\frac{\cos(k_j^{\top}\cdot)-\operatorname{Re}P_T(k_j,\cdot)}
{|k_j|^2}
+\gamma_{j,s}\frac{\sin(k_j^{\top}\cdot)-\operatorname{Im}P_T(k_j,\cdot)}
{|k_j|^2}\right]
\right\}(x_i)\right)_{i=1}^n\right|\\
&\lesssim_{\Omega,d,\{\Lambda_i\}_{i=1}^5}
\sqrt n\,\frac{2^{-(T-1)}}{(T-1)!}
\sum_{j\in\mc L}(|\gamma_{j,c}|+|\gamma_{j,s}|)\\
&\lesssim_{\Omega,d,\{\Lambda_i\}_{i=1}^5}
\sqrt{nN}\,\frac{2^{-(T-1)}}{(T-1)!}.
\end{align*}
Together with \eqref{ineq: normalized Fourier coefficient lower bound}, this gives
\begin{equation}
\label{ineq: smallest singular value low Fourier frequencies}
\sigma_{2N}\le\frac{|\mf\Psi\beta|}{|\beta|}
\lesssim_{\Omega,d,\{\Lambda_i\}_{i=1}^5}
\rho_*^2\sqrt{nN}\,\frac{2^{-(T-1)}}{(T-1)!}.
\end{equation}
Combining \eqref{ineq: largest singular value low Fourier frequencies} and \eqref{ineq: smallest singular value low Fourier frequencies}, and using $\rho_*<1$, yields
\begin{equation}
\label{ineq: Fourier condition number before factorial estimate}
\frac{\sigma_1}{\sigma_{2N}}
\gtrsim_{\Omega,d,\{\Lambda_i\}_{i=1}^5}
\frac{\sqrt{n_1}\,\underline a+\sqrt{n_1}C+\sqrt{n_2}G}
{\sqrt{nN}}\,2^{T-1}(T-1)!.
\end{equation}

It remains to compare the factorial decay with the explicit rate constant $a_{\mathrm F}$. The elementary bound $(T-1)!\ge((T-1)/e)^{T-1}$ gives
\[
\ln\bigl(2^{T-1}(T-1)!\bigr) \ge (T-1)\ln\left(\frac{2(T-1)}e\right).
\]
For all sufficiently large $M$, one has
\[
T-1\ge\frac{3d}{16e}M^{1/d}, \qquad \ln\left(\frac{2(T-1)}e\right) \ge\frac{3}{4d}\ln M.
\]
It follows that
\[
\ln\bigl(2^{T-1}(T-1)!\bigr) \ge\frac{9}{64e}M^{1/d}\ln M.
\]
Since
\[
a_{\mathrm F}=\frac{3\cdot 2^{1-1/d}}{112e} \le\frac{3}{56e}<\frac{9}{64e},
\]
we have $2^{T-1}(T-1)!\ge \exp(a_{\mathrm F}M^{1/d}\ln M)$ for all sufficiently large $M$. Substitution into \eqref{ineq: Fourier condition number before factorial estimate} proves part~(1) in the low-frequency case.

We now prove part~(2). The argument first converts Assumption~\eqref{assump: uniform directional spread of interior collocation points} into a lower bound for individual columns. For any $k\ne0$ and $v\in\mb R$, the choice $\theta=k/|k|$ gives
\begin{equation}
\label{ineq: directional spread controls tanh preactivation}
\begin{aligned}
\max_{1\le i\le n_1}|k^{\top}x_i+v|
&\ge\frac12\left(
\max_{1\le i\le n_1}(k^{\top}x_i+v)
-\min_{1\le i\le n_1}(k^{\top}x_i+v)
\right)\\
&\ge r_0|k|.
\end{aligned}
\end{equation}
Since $k_j\in[-S,S]^d$, $|v_j|\le dS$, and $x_i\in(-1,1)^d$, all preactivations $z_{ij}:=k_j^{\top}x_i+v_j$ satisfy $|z_{ij}|\le2dS$. The function $|\tanh''z|/|z|$, initially defined for $z\ne0$, extends continuously to $z=0$ with value $2$ and is positive on $[-2dS,2dS]$. Consequently,
\[
\mu_{\mathrm T}:= \min_{|z|\le2dS}\frac{|\tanh''z|}{|z|}>0,
\]
where the quotient at the origin is understood by continuity.

Put $q_{ij}:=-k_j^{\top}A(x_i)k_j\ge\underline a|k_j|^2$. Because $b\equiv0$ and $\tanh''z=-2\tanh z\,\operatorname{sech}^2z$,
\[
L\psi_j(x_i) =\bigl(2q_{ij}\operatorname{sech}^2z_{ij}+c(x_i)\bigr) \tanh z_{ij}.
\]
The two terms on the right have the same sign. Hence
\[
|L\psi_j(x_i)| \ge q_{ij}|\tanh''z_{ij}| \ge\underline a|k_j|^2|\tanh''z_{ij}|.
\]
Combining this estimate with \eqref{ineq: directional spread controls tanh preactivation} shows that every column satisfies
\begin{equation}
\label{ineq: tanh column lower bound from directional spread}
\max_{1\le i\le n_1}|L\psi_j(x_i)|
\ge\mu_{\mathrm T}r_0\underline a|k_j|^3.
\end{equation}

Let $\rho_{\mathrm T}:=4/(7\sqrt d)$ be as in Lemma~\ref{Lemma: low frequency Taylor approximation for tanh features}, and set $\mc L_{\mathrm T}:=\{j:|k_j|<\rho_{\mathrm T}\}$ and $\mc H_{\mathrm T}:=\{j:|k_j|\ge\rho_{\mathrm T}\}$. By \eqref{ineq: directional spread controls tanh preactivation}, every $j\in\mc H_{\mathrm T}$ has an interior preactivation of magnitude at least $r_0\rho_{\mathrm T}$. We distinguish two exhaustive cases.

Suppose first that $|\mc L_{\mathrm T}|<N$. Then $|\mc H_{\mathrm T}|>N$, and \eqref{ineq: tanh column lower bound from directional spread} yields
\[
\|\mf\Psi\|_{\mathrm F}^2 \ge\sum_{j\in\mc H_{\mathrm T}} \max_{1\le i\le n_1}|L\psi_j(x_i)|^2 \ge N\mu_{\mathrm T}^2r_0^2\underline a^2\rho_{\mathrm T}^6.
\]
It follows that
\[
\sigma_1\ge\frac{\|\mf\Psi\|_{\mathrm F}}{\sqrt{2N}} \ge\frac{\mu_{\mathrm T}r_0\rho_{\mathrm T}^3}{\sqrt2}\,\underline a.
\]
Together with \eqref{ineq: tanh fast decay of singular values} and $\widetilde a_{\mathrm T}\le a_{\mathrm T}$, this proves \eqref{ineq: tanh condition number lower bound} in the high-frequency case.

It remains to consider $|\mc L_{\mathrm T}|\ge N$. Let $T$ and $D_T$ be as in \eqref{eq: polynomial degree for condition number lower bounds}. The estimate $D_T\le M$ proved above and the sampling condition imply $D_T<N-1$ for all sufficiently large $M$. If two indices in $\mc L_{\mathrm T}$ had zero frequency, their features would be constant and the associated columns would be linearly dependent. A zero frequency with $v_j=0$ would itself give a zero column. Both alternatives contradict $\sigma_{2N}>0$. Thus the set
\[
\mc L_{\mathrm T}^*:=\{j\in\mc L_{\mathrm T}:|k_j|>0\}
\]
has cardinality at least $N-1>D_T$.

For $j\in\mc L_{\mathrm T}^*$, let $P_j:=P_{T,k_j,v_j}$ be the polynomial from Lemma~\ref{Lemma: low frequency Taylor approximation for tanh features}. Since all $P_j$ belong to the $D_T$-dimensional space $\mc P_T(\mb R^d)$, there is a vector $\gamma=(\gamma_j)_{j\in\mc L_{\mathrm T}^*}$ such that
\[
|\gamma|=1, \qquad \sum_{j\in\mc L_{\mathrm T}^*}\gamma_jP_j=0.
\]
Extend $\gamma$ by zero to a vector in $\mb R^{2N}$ and define
\[
\rho_\gamma:=\max_{\gamma_j\ne0}|k_j|, \qquad \rho_*:=\max_{j\in\mc L_{\mathrm T}^*}|k_j|.
\]
Then $0<\rho_\gamma\le\rho_*<\rho_{\mathrm T}$. By \eqref{ineq: collocation evaluation continuity for condition number lower bounds}, Lemma~\ref{Lemma: low frequency Taylor approximation for tanh features}, and Cauchy--Schwarz,
\begin{align*}
\sigma_{2N}
&\le|\mf\Psi\gamma|
=\left|\left(\widetilde L\!\left\{
\sum_{j\in\mc L_{\mathrm T}^*}\gamma_j(\psi_j-P_j)
\right\}(x_i)\right)_{i=1}^n\right|\\
&\lesssim_{\Omega,d,\{\Lambda_i\}_{i=1}^5,S}
\sqrt n\sum_{j\in\mc L_{\mathrm T}^*}
|\gamma_j||k_j|^2
\left(\frac{|k_j|}{2\rho_{\mathrm T}}\right)^{T-1}\\
&\lesssim_{\Omega,d,\{\Lambda_i\}_{i=1}^5,S}
\sqrt{nN}\,\rho_\gamma^2
\left(\frac{\rho_\gamma}{2\rho_{\mathrm T}}\right)^{T-1}.
\end{align*}
On the other hand, \eqref{ineq: tanh column lower bound from directional spread} gives $\sigma_1\ge\mu_{\mathrm T}r_0\underline a\rho_*^3$. Since $\rho_*\ge\rho_\gamma$, we conclude that
\begin{align*}
\frac{\sigma_1}{\sigma_{2N}}
&\gtrsim_{\Omega,d,\{\Lambda_i\}_{i=1}^5,S,r_0}
\frac{\underline a}{\sqrt{nN}}
\frac{\rho_*^3}{\rho_\gamma^2}
\left(\frac{\rho_\gamma}{2\rho_{\mathrm T}}\right)^{-(T-1)}\\
&\ge
\frac{2\rho_{\mathrm T}\underline a}{\sqrt{nN}}
\left(\frac{\rho_\gamma}{2\rho_{\mathrm T}}\right)^{-(T-2)}
\ge
\frac{\rho_{\mathrm T}\underline a}{\sqrt{nN}}\,2^{T-1}.
\end{align*}
Finally, $T=\lfloor dM^{1/d}/(4e)\rfloor$ and $\widetilde a_{\mathrm T}\le d\ln2/(4e)$ imply
\[
2^{T-1}\ge\frac14 \exp\left(\frac{d\ln2}{4e}M^{1/d}\right) \ge\frac14\exp\bigl(\widetilde a_{\mathrm T}M^{1/d}\bigr).
\]
This proves \eqref{ineq: tanh condition number lower bound} in the low-frequency case and completes the proof.
\end{proof}

\end{document}